\theoremstyle{definition}
\newtheorem{theorem}{Theorem}
\newtheorem{lemma}[theorem]{Lemma}
\newtheorem{proposition}[theorem]{Proposition}
\newtheorem{corollary}[theorem]{Corollary}
\numberwithin{equation}{section}
\numberwithin{theorem}{section}
\begin{document}

\begin{center}
{\bf{\Large Fuchsian differential equations with modular forms  }}
\end{center}

\begin{center}
By Kazuhide Matsuda
\end{center}

\begin{center}
Faculty of Fundamental Science,  \\
National Institute of Technology (KOSEN),  Niihama College,\\
7-1 Yagumo-chou, Niihama, Ehime, Japan, 792-8580. \\
E-mail: matsuda@sci.niihama-nct.ac.jp  \\
Fax: 81-0897-37-7809 
\end{center}

\noindent
{\bf Abstract}
The aim of this paper is to derive new results about Jacobi's inversion formulas for modular forms of levels 5 and 6. 
For this purpose, we use Farkas and Kra's theory of theta functions with rational characteristics. 
\newline
{\bf Key Words:} theta function; theta constant; rational characteristics; cubic theta functions.
\newline
{\bf MSC(2010)}  14K25;  11E25

\section{Introduction}
There have been many researches about relationship between linear ordinary differential equations (ODEs) and modular forms. 
In particular, Jacobi proved that the complete elliptic integral can be expressed by the theta constant:
\begin{equation}
\label{eqn:Jacobi-inversion-level-2}
K=
\int_0^1 
\frac{dt}{\sqrt{(1-t^2)(1-xt^2)   } } =\frac{\pi}{2}\theta_3^2,
\end{equation}
where $K$ is a solution of the ODE, 
\begin{equation*}
x(1-x)
\frac{d^2 y}{d x^2}
+(1-2x)
\frac{dy}{d x}
-\frac14
y=0. 
\end{equation*}
\par
In his second notebook, Ramanujan made a different and concise approach to {\it Jacobi's inversion formula} (\ref{eqn:Jacobi-inversion-level-2}).  
Following Ramanujan, Berndt \cite[pp. 119]{Berndt-1} proved that
\begin{equation*}
\left( \sum_{n=-\infty}^{\infty} q^{n^2} \right)^2
=
{ }_2 F_1 
\left(
\frac12, \frac12; 1; x
\right),  \,\,
%%%%%%%%%%%%%%
x=16 q \prod_{n=1}^{\infty} \frac{ (1+q^{2n})^8 }{(1+q^{2n-1})^8  },
\end{equation*}
where ${ }_2 F_1(a,b;c;z)$ is {\it the Gaussian hypergeometric function} defined by 
\begin{equation*}
{ }_2 F_1(a,b;c;z)
=
\sum_{n=0}^{\infty}
\frac{ (a)_n (b)_n  }{ (c)_n n!  } z^n, \,\,
(a)_0=1 \,\,
\mathrm{and} \,\,
(a)_n=a(a+1)(a+2)\cdots (a+n-1).
\end{equation*}
It is known that ${ }_2 F_1(a,b;c;z)$ satisfies {\it the Gauss hypergeometric ODE},
\begin{equation*}
x(1-x)\frac{d^2 y}{d x^2}
+
\left\{
c-(a+b+1)x
\right\}
\frac{dy}{d x}
-ab y
=0. 
\end{equation*}
Jacobi's inversion formula for modular forms of level 3 was given by the Borweins \cite{Borweins} and Berndt et al. \cite{Berndt-Bhargava-Garvan}. 
\par
Beukers \cite{Beukers} derived Jacobi's inversion formula for modular forms of level 5 in order to prove Ap\'ery's work which state that 
the value $\zeta(3)$ of the Rieman's zeta-function is irrational. 
\par
Inspired by Beukers' work, Zagier was concerned with the sequence $\left\{ t(n) \right\} \,\,(n\geq 0) $ defined by the recurrence formula, 
\begin{equation}
\label{eqn:Zagier-sporadic}
(n+1)^2t(n+1)=(\alpha n^2+\alpha n+\beta)t(n)+\gamma n^2 t(n-1), t(0)=1,
\end{equation}
where $\alpha, \beta$ and $\gamma$ are integers. 
He studied whether the recurrence formula (\ref{eqn:Zagier-sporadic}) takes only integer values, and 
obtained {\it Zagier's sporadic sequence}.
\par
Cooper \cite{Cooper} used Ramanujan's theory of theta functions to systematically derive Jacobi's inversion formulas for modular forms of levels 1-12. 
In addition, he considered the relationship between the formulas and Zagier's sporadic sequences. 
\par
The aim of this paper is to derive new results about Jacobi's inversion formulas for modular forms of levels 5 and 6. 
For this purpose, we first use Farkas and Kra's theory of theta functions with rational characteristics to obtain nonlinear differential equations satisfied by modular forms. 
We next derive Jacobi's inversion formulas from the nonlinear ODEs.

\section{Notations}
\label{sec:notations}
For the positive integers $j,k,$ and $n\in\mathbb{N}$ 
$d_{j,k}(n)$ denotes the number of positive divisors $d$ of $n$ such that $d\equiv j \,\,\mathrm{mod} \,k,$ 
and 
$d_{j,k}^{*}(n)$ denotes the number of positive divisors $d$ of $n$ such that $d\equiv j \,\,\mathrm{mod} \,k$ and $n/d$ is odd,   
which implies that 
$
d_{j,k}^{*}(n)=d_{j,k}(n)-d_{j,k}(n/2). 
$ 
Moreover, 
for $k$ and $n\in\mathbb{N}$ 
the arithmetical function $\sigma_k(n)$ is the sum of the $k$-th power of the positive divisors of $n,$ 
and 
$d_{j,k}(n)=\sigma_k(n)=0$ for $n\in\mathbb{Q}\setminus\mathbb{N}_0.$ 
\par
For a function $y=y(x)$ the Schwartzian derivative is defined by 
\begin{equation*}
\{y, x \}
=
\frac{y^{\prime \prime \prime}}{y^{\prime}}
-\frac32
\left(
\frac{y^{\prime \prime }}{y^{\prime}}
\right)^2. 
\end{equation*}
Throughout this paper
the {\it upper half plane} is defined by
$
\mathbb{H}^2=
\{
\tau\in\mathbb{C} \,\, | \,\, \Im \tau>0
\}.
$
We set $q=\exp(2\pi i  \tau)$ and write 
\begin{align*}
&\varphi(q)=\sum_{n\in\mathbb{Z}} q^{n^2}, \,\,
\psi(q)=\sum_{n=0}^{\infty}  q^{\frac{n(n+1)}{2}},  \,\,
%%%%%%%%%%%%%%%%%%%%%%%%%%%%%%%%%%%%%%%%%%%%%%%%
a(q)=a(\tau)
=\sum_{m,n\in\mathbb{Z}} q^{m^2+mn+n^2},  \\
&
b(q)=b(\tau)
=\sum_{m,n\in\mathbb{Z}} \omega^{n-m} q^{m^2+mn+n^2},  \,\,
c(q)=c(\tau)=\sum_{m,n\in\mathbb{Z}} q^{(n+\frac13)^2+(n+\frac13)(m+\frac13)+(m+\frac13)^2}, \,\,\omega=e^{2\pi i/3}.
\end{align*}
Klein's $J$-function is defined by 
\begin{equation*}
J(\tau)
=\frac{j(\tau)}{12^3}
=
\frac{  E_4(\tau)^3  }{E_4(\tau)^3-E_6(\tau)^2 }, 
\end{equation*}
where 
\begin{align*}
E_2(\tau)=&1-24\sum_{n=1}^{\infty} \sigma_1(n) q^n, \,\,
E_4(\tau)=1+240\sum_{n=1}^{\infty} \sigma_3(n) q^n, \\
E_6(\tau)=&1-504\sum_{n=1}^{\infty} \sigma_5(n) q^n, \,\, \sigma_k(n)=\sum_{d|n} d^k \,\,(k=1,2,3,4,5,\ldots),   \,\,
q=\exp(2 \pi i \tau). 
\end{align*}

\begin{lemma}
\label{lem:hypergeometric-limit-1-divergence}
{\it
Suppose that $\gamma-\alpha-\beta=0.$ 
Then 
\begin{equation*}
\lim_{x\to 1-0} 
\frac{
{  }_2 F_1(\alpha, \beta  ; \gamma; x)
}
{
\frac
{
\Gamma(\alpha+\beta)
}
{
\Gamma(\alpha) \Gamma(\beta)
}
\log
\frac{1}{1-x}
}
=1
\end{equation*}
}
\end{lemma}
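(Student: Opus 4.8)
The plan is to work directly from the power-series definition of ${}_2F_1$ and compare it with the series $\sum_{n\ge 1} x^n/n = \log\frac{1}{1-x}$. Since $\gamma=\alpha+\beta$, the $n$-th coefficient is
\[
c_n = \frac{(\alpha)_n(\beta)_n}{(\alpha+\beta)_n\, n!} = \frac{\Gamma(\alpha+\beta)}{\Gamma(\alpha)\Gamma(\beta)}\cdot \frac{\Gamma(n+\alpha)\,\Gamma(n+\beta)}{\Gamma(n+\alpha+\beta)\,\Gamma(n+1)},
\]
after rewriting each Pochhammer symbol as a quotient of Gamma functions. Set $A=\Gamma(\alpha+\beta)/(\Gamma(\alpha)\Gamma(\beta))$, which is exactly the constant appearing in the denominator of the asserted limit. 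The whole statement then reduces to showing that $\sum_{n\ge 0} c_n x^n \sim A\log\frac{1}{1-x}$ as $x\to 1-0$.

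First I would pin down the asymptotics of $c_n$. Using the standard ratio estimate $\Gamma(n+a)/\Gamma(n+b)=n^{a-b}\bigl(1+O(1/n)\bigr)$, a consequence of Stirling's formula, the two factors give $\Gamma(n+\alpha)/\Gamma(n+1)=n^{\alpha-1}\bigl(1+O(1/n)\bigr)$ and $\Gamma(n+\beta)/\Gamma(n+\alpha+\beta)=n^{-\alpha}\bigl(1+O(1/n)\bigr)$, so that
\[
c_n = \frac{A}{n}\bigl(1+O(1/n)\bigr) = \frac{A}{n} + O\!\left(\frac{1}{n^2}\right).
\]
The point of carrying the error to order $1/n^2$ is that the remainder sequence $c_n-A/n$ is then absolutely summable.

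With this in hand I would split
\[
\sum_{n=0}^{\infty} c_n x^n = A\sum_{n=1}^{\infty}\frac{x^n}{n} + \Bigl( c_0 + \sum_{n=1}^{\infty}\bigl(c_n-\tfrac{A}{n}\bigr)x^n \Bigr) = A\log\frac{1}{1-x} + R(x).
\]
Because $\sum_{n\ge1}(c_n-A/n)$ converges absolutely, Abel's theorem shows that $R(x)$ tends to the finite value $c_0+\sum_{n\ge1}(c_n-A/n)$ as $x\to 1-0$, and hence stays bounded. Dividing by $A\log\frac{1}{1-x}$, which diverges to $+\infty$, the contribution of $R(x)$ vanishes in the limit and the ratio tends to $1$, which is the claim.

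The only delicate point is the passage from $c_n\sim A/n$ to the logarithmic asymptotic of the series; this is where the quantitative $O(1/n^2)$ bound does the work, turning what would otherwise require a genuine Abelian/Tauberian argument into an elementary splitting. Establishing that bound rigorously — in particular tracking the error in the ratio-of-Gamma-functions estimate and verifying that it is uniform in $n$ over the admissible ranges of $\alpha,\beta$ — is the main technical step, but it is routine once Stirling's expansion is invoked.
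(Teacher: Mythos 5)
Your proof is correct, but it is worth noting that the paper does not actually prove this lemma at all: it simply cites Whittaker and Watson \cite[p.~299]{WW}, where the result is obtained from the classical connection formula for ${}_2F_1$ near $x=1$ in the logarithmic case $\gamma=\alpha+\beta$ (a full expansion whose constant term involves digamma functions). Your route is different and more elementary: you extract the coefficient asymptotics $c_n=\frac{A}{n}+O(1/n^2)$ with $A=\Gamma(\alpha+\beta)/(\Gamma(\alpha)\Gamma(\beta))$ from the standard Gamma-ratio estimate, split off $A\sum_{n\geq 1}x^n/n=A\log\frac{1}{1-x}$, and observe that the remainder has absolutely summable coefficients and hence stays bounded as $x\to 1-0$. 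This buys a self-contained proof of exactly the leading-order statement the paper needs, at the cost of the finer information (the constant term) that the reference provides. Two small remarks: first, you do not even need Abel's theorem, since $|R(x)|\leq |c_0|+\sum_{n\geq 1}|c_n-A/n|$ uniformly on $[0,1)$, and boundedness is all that is used; second, your argument (like the lemma itself) tacitly assumes $\alpha,\beta$ are not non-positive integers, so that $A\neq 0$ and the series does not terminate --- this is harmless here, as every application in the paper has $\alpha,\beta$ positive rationals, but it deserves a sentence. The uniformity you worry about at the end is a non-issue: $\alpha,\beta$ are fixed, so the Stirling error needs to be controlled only in $n$.
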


\begin{proof}
For the proof, see Whittaker and Watson \cite[p. 299]{WW}. 
\end{proof}

\section{The Hypergeometric equations and the Eisenstein series}
\label{sec:hypergeometric-E_4-E_6}
We recall Ramanujan's system of ODEs, 

\begin{align}
DE_2(\tau)=&\frac{E_2(\tau)^2-E_4(\tau)  }{12  }, \,\,
DE_4(\tau)=\frac{E_2(\tau) E_4(\tau)-E_6(\tau)  }{3  }, \notag  \\
DE_6(\tau)=&\frac{E_2(\tau) E_6(\tau)-E_4(\tau)^2  }{2  },  \,\,D=\frac{1}{2\pi i}  \frac{d}{d  \tau}.  \label{eqn:Ramanujan-ODE-proof}
\end{align}

\subsection{On Klein's $J$-function}

\begin{theorem}
(Dedekind \cite{Dedekind})
\label{thm:ODE-J-j}
{\it
For every $\tau\in\mathbb{H}^2$ we have 
\begin{equation*}
\left\{J, \tau \right\}
+
\frac{36 J^2-41 J+32}{72  J^2 (J-1)^2} (J^{\prime})^2
=0.
\end{equation*}
}
\end{theorem}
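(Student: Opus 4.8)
The plan is to prove the statement by direct computation from Ramanujan's system (\ref{eqn:Ramanujan-ODE-proof}). Since $\tfrac{d}{d\tau}=2\pi i\,D$, both $\{J,\tau\}$ and $(J')^2$ scale by the same factor $(2\pi i)^2$, so it is equivalent to establish $\{J,\tau\}_D+\frac{36J^2-41J+32}{72J^2(J-1)^2}(DJ)^2=0$, where $\{J,\tau\}_D=\frac{D^3J}{DJ}-\frac32\bigl(\frac{D^2J}{DJ}\bigr)^2$ denotes the Schwarzian formed with the operator $D$. Writing $\delta=E_4^3-E_6^2$, the first step is to record the two factorizations $J=E_4^3/\delta$ and $J-1=E_6^2/\delta$, together with $D\delta=E_2\delta$ (immediate from (\ref{eqn:Ramanujan-ODE-proof})). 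These already express the factor $J^2(J-1)^2$ entirely in terms of $E_4,E_6,\delta$.

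Next I would exploit that logarithmic derivatives collapse: from $J=E_4^3/\delta$ one gets $DJ/J=-E_6/E_4$, hence $DJ=-E_4^2E_6/\delta$, and likewise $D(J-1)/(J-1)=-E_4^2/E_6$. Using the reduction $\{J,\tau\}_D=DP-\tfrac12P^2$ with $P=D^2J/DJ$ (which follows from $\frac{D^3J}{DJ}=DP+P^2$), the logarithmic derivative of $DJ=-E_4^2E_6/\delta$ yields the compact form $P=\frac{E_2}{6}-\frac{2E_6}{3E_4}-\frac{E_4^2}{2E_6}$. One then computes $DP$ by differentiating each term through (\ref{eqn:Ramanujan-ODE-proof}).

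The decisive phenomenon occurs when forming $DP-\tfrac12P^2$: the $E_2^2$ term and all terms linear in $E_2$ cancel — precisely the assertion that the Schwarzian is genuinely modular, purged of the quasimodular anomaly of $E_2$ — leaving $\{J,\tau\}_D=-\frac{4E_6^2}{9E_4^2}-\frac{3E_4^4}{8E_6^2}+\frac{23E_4}{72}$. It then remains to match this against the rational term. Substituting $J=E_4^3/\delta$, $J-1=E_6^2/\delta$, $DJ=-E_4^2E_6/\delta$ and $\delta=E_4^3-E_6^2$ transforms $\frac{36J^2-41J+32}{72J^2(J-1)^2}(DJ)^2$ into $\frac{27E_4^6-23E_4^3E_6^2+32E_6^4}{72E_4^2E_6^2}=\frac{4E_6^2}{9E_4^2}+\frac{3E_4^4}{8E_6^2}-\frac{23E_4}{72}$, which is exactly the negative of the Schwarzian just computed, so their sum vanishes.

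The main obstacle is organizational rather than conceptual: one must keep the several rational monomials in $E_4$ and $E_6$ in order and trust the $E_2$-cancellation to clear the quasimodular part. As a conceptual cross-check I would note that $\tau$ is the Schwarz map of the hypergeometric equation with exponent differences $1/3,1/2,0$ at $J=0,1,\infty$ (the orders of the two elliptic points and the cusp of $\mathrm{PSL}_2(\Z)$); the standard formula gives $\{\tau,J\}=\frac{36J^2-41J+32}{72J^2(J-1)^2}$, and combining this with the inversion rule $\{J,\tau\}=-\{\tau,J\}(J')^2$ reproduces the claimed identity.
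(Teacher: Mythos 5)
Your proposal is correct and follows essentially the same route as the paper: both proofs compute the Schwarzian from Ramanujan's system (\ref{eqn:Ramanujan-ODE-proof}) by logarithmic differentiation of $DJ$ (equivalently $Dj$), rely on the cancellation of all $E_2$-terms to arrive at the same key expression $\frac{23E_4}{72}-\frac{3E_4^4}{8E_6^2}-\frac{4E_6^2}{9E_4^2}$, and then identify this with the rational term in $J$. The only cosmetic differences are that the paper works with $j=12^3J$ and substitutes $E_4,E_6$ in terms of $j,Dj$ via (\ref{eqn:expression-Q-R}) to land on (\ref{eqn:ODE-j}), whereas you instead rewrite $\frac{36J^2-41J+32}{72J^2(J-1)^2}(DJ)^2$ in terms of $E_4,E_6$ and compare; the underlying verification is the same.
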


\begin{proof}
We first set 
\begin{equation*}
P=E_2(\tau), Q=E_4(\tau), R=E_6(\tau) \,\,\mathrm{and} \,\,D=\frac{1}{2 \pi i} \frac{d}{d\tau},
\end{equation*}
which imply that 
\begin{equation*}
j=12^3\frac{Q^3}{Q^3-R^2}  \,\,\mathrm{and} \,\, \frac{Dj}{j}=-\frac{R}{Q}.
\end{equation*}
Direct computation yields 
\begin{equation*}
\frac{12^3}{j}=\frac{Q^3-R^2}{Q^3}=1-\frac{R^2}{Q^3}
=
1-\left(
\frac{Dj}{j}
\right)^2\frac{1}{Q},
\end{equation*}
which implies that 
\begin{equation}
\label{eqn:expression-Q-R}
Q=\frac{ (Dj)^2 }{j}\frac{1}{j-12^3} \,\,\mathrm{and} \,\, R=-\frac{ (Dj)^3 }{j^2}\frac{1}{j-12^3}. 
\end{equation}
\par
Taking the logarithmic differentiation of $Dj/j$ yields 
\begin{equation*}
\frac{D^2 j}{Dj}-\frac{D j}{j}=
\frac{1}{6} \left(P-\frac{3 Q^2}{R}+\frac{2 R}{Q}\right).
\end{equation*}
The differentiation of both sides implies that 
\begin{equation*}
\frac{D^3 j}{D j}
-
\left(
\frac{
D^2 j
}
{
D j
}
\right)^2
-
\frac{D^2 j}{j}
+
\left(
\frac{Dj}{j}
\right)^2
=
\frac{P^2}{72}-\frac{P Q^2}{12 R}+\frac{P R}{18 Q}-\frac{Q^4}{4 R^2}+\frac{R^2}{9 Q^2}+\frac{11 Q}{72}. 
\end{equation*}
Therefore it follows that 
\begin{align*}
&
\left[
\frac{D^3 j}{D j}
-
\left(
\frac{
D^2 j
}
{
D j
}
\right)^2
-
\frac{D^2 j}{j}
+
\left(
\frac{Dj}{j}
\right)^2
\right]
-
\frac12
\left(
\frac{D^2 j}{Dj}-\frac{D j}{j}
\right)^2  \\
=&
\frac{D^3 j}{D j}
-
\frac32
\left(
\frac{
D^2 j
}
{
D j
}
\right)^2
+
\frac12
\left(
\frac{Dj}{j}
\right)^2  
%%%%%%%%%%%%%%%%%%%%%%%%%%%%%%
=
-\frac{3 Q^4}{8 R^2}+\frac{R^2}{18 Q^2}+\frac{23 Q}{72}.
\end{align*} 
Eq. (\ref{eqn:expression-Q-R}) yields 
\begin{align*}
\frac{D^3 j}{D j}
-
\frac32
\left(
\frac{
D^2 j
}
{
D j
}
\right)^2=&
-\frac{3 Q^4}{8 R^2}-\frac{4 R^2}{9 Q^2}+\frac{23 Q}{72}  
=
-\frac{\left(j^2-1968 j+2654208\right) (Dj)^2}{2 (j-1728)^2 j^2},
\end{align*}
which implies that 
\begin{equation}
\label{eqn:ODE-j}
\left\{
j,\tau
\right\}
+
\frac
{
j^2-2^4\cdot 3 \cdot 41 j+2^{15}\cdot 3^4
}
{
2 j^2(j-12^3)^2
}
(j^{\prime})^2
=0. 
\end{equation}
The theorem can be obtained by considering that $j=12^3 J.$ 
\end{proof}

\subsection{ On $E_4(\tau)$  }

\begin{proposition}
\label{prop:E_4-linear}
{\it 
The inverse function of J-function $\tau=\tau(J)$ yields $Q(J)=\sqrt[4]{ E_4(\tau(J))  }$ and 
$Q(J)$ satisfies the following differential equations:
\begin{equation}
\frac{d^2 Q}{d  J^2}
+
\left\{
\frac{\frac12}{J}+\frac{\frac12}{J-1}
\right\}
\frac{d Q}{dJ} 
-
\frac{5}{144J^2(J-1)}
Q
=0.
\end{equation}
}
\end{proposition}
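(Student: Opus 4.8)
The plan is to derive the equation from Dedekind's Schwarzian relation (Theorem~\ref{thm:ODE-J-j}) together with the standard dictionary between second-order linear ODEs and Schwarzian equations, rather than by brute-force differentiation. Write $\,'=d/dJ$, and recall that for any equation $u''+p(J)u'+q(J)u=0$ the quotient $w=u_1/u_2$ of two independent solutions satisfies $\{w,J\}=2q-\tfrac12 p^2-p'$: removing the first-order term by $u=v\exp(-\tfrac12\int p\,dJ)$ reduces it to $v''+Iv=0$ with $I=q-\tfrac14 p^2-\tfrac12 p'$, for which a short computation gives $\{w,J\}=2I$. Conversely, I must produce the correct $(p,q)$ and then show that $E_4^{1/4}$ is the \emph{relevant} solution, not merely that $\tau$ is a quotient of solutions.

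First I would invert the Schwarzian. The composition rule $\{\tau,J\}(dJ/d\tau)^2+\{J,\tau\}=0$ (specialise $\{w,t\}=\{w,z\}(dz/dt)^2+\{z,t\}$ to $w=t=\tau$, $z=J$) turns Theorem~\ref{thm:ODE-J-j} into
\[
\{\tau,J\}=\frac{36J^2-41J+32}{72J^2(J-1)^2}.
\]
A direct partial-fraction computation then confirms that the proposition's coefficients $p=\tfrac{1}{2J}+\tfrac{1}{2(J-1)}$ and $q=-\tfrac{5}{144J^2(J-1)}$ satisfy $2q-\tfrac12 p^2-p'=\{\tau,J\}$. Hence $\tau(J)$ is, up to a M\"obius transformation, the quotient of two solutions of the ODE in the statement.

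It remains to single out $E_4^{1/4}$ as an actual solution. Setting $j=12^3J$ in \eqref{eqn:expression-Q-R} gives the clean identities $E_4=(DJ)^2/(J(J-1))$ and $E_6=-(DJ)^3/(J^2(J-1))$, so that $E_4^{1/4}=(DJ)^{1/2}(J(J-1))^{-1/4}$, i.e. $E_4^{1/2}=(2\pi i)^{-1}(J(J-1))^{-1/2}\,dJ/d\tau$. On the ODE side, Abel's identity gives the Wronskian $W=u_1u_2'-u_2u_1'=C\exp(-\int p\,dJ)=C\,(J(J-1))^{-1/2}$, and from $\tau=u_1/u_2$ one has $u_2^2=-W\,(dJ/d\tau)$. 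Comparing the two displayed expressions shows $u_2^2$ is a constant multiple of $E_4^{1/2}$, whence $E_4^{1/4}$ is a scalar multiple of the solution $u_2$; by linearity it solves the equation.

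The hard part is this last step: matching the two Schwarzians only pins down $\tau$ as a ratio of solutions and leaves the individual solutions determined just up to the weight factor and a constant. The Wronskian computation is what converts the formula $E_4=(DJ)^2/(J(J-1))$ into the precise statement that $E_4^{1/4}$ — and not some other automorphic factor times it — is a solution; care is needed with the square-root branches and the overall constant, though these are irrelevant to the linear ODE. As a cross-check (and an alternative, purely computational proof) one can instead substitute $u=E_4^{1/4}$ directly, using $d/dJ=(DJ)^{-1}D$, Ramanujan's relations \eqref{eqn:Ramanujan-ODE-proof}, and the two boxed expressions for $E_4,E_6$; the local-exponent data — $\{5/12,1/12\}$ at $J=0$, $\{0,\tfrac12\}$ at $J=1$, and $0$ at $J=\infty$ where $E_4^{1/4}\to 1$ — provides a further consistency check.
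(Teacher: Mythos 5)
Your proof is correct, but it follows a genuinely different route from the paper's. The paper proves Proposition \ref{prop:E_4-linear} by direct computation: it sets $P=E_2$, $Q=\sqrt[4]{E_4}$, $R=\sqrt[6]{E_6}$, rewrites Ramanujan's system (\ref{eqn:Ramanujan-ODE-proof}) as ODEs for these roots, computes $DJ=-Q^8R^6/(Q^{12}-R^{12})$, then evaluates $dQ/dJ=DQ/DJ$ and $d^2Q/dJ^2$ as explicit rational expressions in $P,Q,R$ and checks the stated identity algebraically; Theorem \ref{thm:ODE-J-j} is never invoked. You instead run the classical Schwarzian mechanism: invert Theorem \ref{thm:ODE-J-j} to get $\{\tau,J\}=(36J^2-41J+32)/\bigl(72J^2(J-1)^2\bigr)$, verify that the proposed coefficients satisfy $2q-\tfrac12p^2-p'=\{\tau,J\}$ (your partial-fraction computation does produce the numerator $36J^2-41J+32$, so this step checks out), and then --- the genuinely nontrivial point, which you correctly isolate --- use Abel's formula $W=C\,(J(J-1))^{-1/2}$ together with $u_2^2=-W\,dJ/d\tau$ and the identity $E_4=(DJ)^2/(J(J-1))$ (eq.\ (\ref{eqn:expression-Q-R}) specialized to $j=12^3J$) to conclude $u_2^2\propto E_4^{1/2}$, so that $E_4^{1/4}$ is an honest solution rather than a solution times an unknown automorphic factor. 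That gauge-fixing step is exactly what distinguishes the many ODEs sharing one Schwarzian invariant, and without it your argument would be incomplete; with it, the proof is sound (the sign ambiguity in extracting $u_2$ from $u_2^2=cE_4^{1/2}$ is harmless, since a locally consistent branch exhibits $E_4^{1/4}$ as a constant multiple of $u_2$). As for what each approach buys: the paper's computation is elementary and self-contained given Ramanujan's system, but opaque --- the coefficients emerge from algebra with no explanation; your route explains them (the Schwarzian fixes the projective class of the equation, the choice of first-order term fixes the weight factor, and the Wronskian identifies the modular form), and it trades heavy rational-function algebra in $P,Q,R$ for the lighter Schwarzian match, at the price of depending on Theorem \ref{thm:ODE-J-j} and on (\ref{eqn:expression-Q-R}). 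Note finally that the ``purely computational cross-check'' you mention at the end is, in effect, precisely the paper's proof.
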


\begin{proof}
We first set 
\begin{equation*}
P=E_2(\tau),  \,\,
Q=\sqrt[4]{ E_4(\tau) }  \,\,
\mathrm{and} \,\,
R=\sqrt[6]{E_6(\tau)},
\end{equation*}
which imply that 
\begin{align*}
DP=&
\frac{P^2-Q^4}{12}, \,\,
DQ=
\frac{P Q^4-R^6}{12 Q^3},  \,\,
%%%%%%%%%%%%%%%%%%%
DR=
\frac{P R^6-Q^8}{12 R^5} \,\,
\mathrm{and} \,\,
J=\frac{Q^{12}}{Q^{12}-R^{12}}. 
\end{align*}
\par
We next have 
\begin{equation*}
DJ=D
\left(
\frac{Q^{12}}{Q^{12}-R^{12}}
\right)
=
-\frac{Q^8 R^6}{Q^{12}-R^{12}},
\end{equation*}
%%%%%%%%%%%%%%%%%%%%%%%%%%%%
%%%%%%%%%%%%%%%%%%%%%%%%%%%%%%
\begin{equation*}
\frac{dQ}{d J}=
\frac{d Q}{d \tau} 
\frac{d \tau}{d J}
=
\frac{ D Q}{ D J}
=
-\frac{\left(Q^{12}-R^{12}\right) \left(P Q^4-R^6\right)}{12 Q^{11} R^6},  
\end{equation*}
and
\begin{equation*}
\frac{d^2 Q}{d J^2}=
\frac{d }{d \tau}
\left(
\frac{dQ}{d J}
\right)
\frac{d \tau}{ d J}
=
\frac{\left(Q^{12}-R^{12}\right)^2 \left(6 P Q^{16}+6 P Q^4 R^{12}-Q^{12} R^6-11 R^{18}\right)}{144 Q^{23} R^{18}},
\end{equation*}
which imply that 
\begin{equation*}
J(1-J)\frac{d^2 Q}{d J^2}
-
\frac12
J\frac{dQ}{d J}
+\frac12
(1-J)\frac{dQ}{d J}
=
-\frac{5 \left(Q^{12}-R^{12}\right)}{144 Q^{11}}
=
-\frac{5 Q}{144  J},
\end{equation*}
which proves the proposition. 
\end{proof}

\begin{theorem}
\label{thm:E_4-hypergeometric}
{\it
For every $\tau\in\mathbb{H}^2$ set 
$
\displaystyle 
x=
\frac{
1
}
{
J(\tau)
}.
$ 
Then the inverse function $\tau=\tau(x)$ yields $Q(x)=\sqrt[4]{ E_4(\tau(x))  }$ and 
$Q(x)$ satisfies the hypergeometric differential equations:
\begin{equation}
x(1-x)
\frac{d^2 Q}{dx^2}
+ \left(1-\frac32x \right) 
\frac{dQ}{dx} 
-
\frac{5}{144}
Q
=0.
\end{equation}
}
\end{theorem}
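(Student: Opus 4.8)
The plan is to obtain the theorem directly from Proposition \ref{prop:E_4-linear} by the change of independent variable $x = 1/J$, equivalently $J = 1/x$. Since $x = 1/J(\tau)$ means precisely $J(\tau(x)) = 1/x$, the function $Q(x) = \sqrt[4]{E_4(\tau(x))}$ is literally the function $Q(J)$ of Proposition \ref{prop:E_4-linear} re-expressed in the new variable, so no separate identification is required. It therefore suffices to transport the second-order ODE
\[
\frac{d^2 Q}{dJ^2} + \left(\frac{1/2}{J} + \frac{1/2}{J-1}\right)\frac{dQ}{dJ} - \frac{5}{144 J^2(J-1)}Q = 0
\]
of Proposition \ref{prop:E_4-linear} from $J$ to $x$.

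First I would record the chain rule. From $x = J^{-1}$ one has $dx/dJ = -x^2$, hence $\frac{d}{dJ} = -x^2\frac{d}{dx}$, so that
\[
\frac{dQ}{dJ} = -x^2\frac{dQ}{dx}, \qquad \frac{d^2 Q}{dJ^2} = x^4\frac{d^2 Q}{dx^2} + 2x^3\frac{dQ}{dx}.
\]
The correct second-derivative formula, with its $2x^3\,dQ/dx$ term coming from differentiating the factor $-x^2$, is the one place where a slip is easy.

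Next I would rewrite the three coefficients of the proposition in terms of $x$: using $J = 1/x$ and $J - 1 = (1-x)/x$ gives $\frac{1/2}{J} = x/2$, $\frac{1/2}{J-1} = \frac{x}{2(1-x)}$, and $\frac{1}{J^2(J-1)} = \frac{x^3}{1-x}$. Substituting these together with the chain-rule formulas and collecting the coefficient of $dQ/dx$ over the common denominator $2(1-x)$ yields
\[
x^4\frac{d^2 Q}{dx^2} + \frac{x^3(2-3x)}{2(1-x)}\frac{dQ}{dx} - \frac{5}{144}\cdot\frac{x^3}{1-x}\,Q = 0.
\]
Finally, dividing by $x^3$ and multiplying by $(1-x)$ clears all denominators and produces
\[
x(1-x)\frac{d^2 Q}{dx^2} + \left(1 - \frac{3}{2}x\right)\frac{dQ}{dx} - \frac{5}{144}Q = 0,
\]
which is the asserted hypergeometric equation. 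The whole argument is elementary; the only real obstacle is the careful bookkeeping in forming $d^2Q/dJ^2$ and in combining the two first-order contributions, where the identity $2x^3 - \tfrac{x^3}{2} - \tfrac{x^3}{2(1-x)} = \tfrac{x^3(2-3x)}{2(1-x)}$ must come out exactly for the coefficient $1 - \tfrac32 x$ to appear.
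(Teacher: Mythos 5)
Your proposal is correct and is exactly the paper's proof: the paper disposes of this theorem in one line by "changing $J \to \frac{1}{J}$ in Proposition \ref{prop:E_4-linear}," and your argument simply carries out that substitution explicitly, with the chain-rule bookkeeping ($\frac{d}{dJ} = -x^2\frac{d}{dx}$, the resulting $2x^3\,dQ/dx$ term, and the recombination of coefficients) done correctly.
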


\begin{proof}
The theorem can be proved by 
changing 
$
\displaystyle
J\to \frac{1}{J}
$ 
in Proposition \ref{prop:E_4-linear}.
\end{proof}

\begin{theorem}
\label{thm:E_4-hypergeometric-function}
{\it
For every $\tau\in\mathbb{H}^2$ we have 
\begin{equation*}
\sqrt[4]{ E_4(\tau) } 
=
{ }_2 F_1 
\left(
\frac{1}{12},
\frac{5}{12}  ;
1; 
\frac{ 1}{ J(\tau) }
\right). 
\end{equation*}
}
\end{theorem}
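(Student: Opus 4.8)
The goal is to identify the function $\sqrt[4]{E_4(\tau)}$, viewed as a function of $x = 1/J(\tau)$, with a specific hypergeometric series. By Theorem \ref{thm:E_4-hypergeometric} we already know that $Q(x) = \sqrt[4]{E_4(\tau(x))}$ satisfies the hypergeometric differential equation
\begin{equation*}
x(1-x)\frac{d^2 Q}{dx^2} + \left(1 - \frac{3}{2}x\right)\frac{dQ}{dx} - \frac{5}{144}Q = 0.
\end{equation*}
Matching this against the Gauss hypergeometric ODE displayed in the introduction,
$$x(1-x)y'' + \{c - (a+b+1)x\}y' - ab\, y = 0,$$
we read off $c = 1$, $a + b + 1 = 3/2$, and $ab = 5/144$. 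Solving $a+b = 1/2$ together with $ab = 5/144$ gives $\{a,b\} = \{1/12, 5/12\}$, so ${}_2F_1(1/12, 5/12; 1; x)$ is one solution of the equation.

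Thus the plan is to show that $Q(x)$ coincides with this particular solution. First I would note that since $c = 1$, the two exponents of the hypergeometric equation at the regular singular point $x = 0$ are equal (both zero), so the two-dimensional solution space is spanned by the holomorphic solution ${}_2F_1(1/12, 5/12; 1; x)$ and a second solution involving a $\log x$ term. Since $Q(x) = \sqrt[4]{E_4(\tau(x))}$ is holomorphic near $x = 0$ (the point $x = 0$ corresponds to $J = \infty$, i.e.\ $q = 0$, where $E_4 = 1$), it cannot contain the logarithmic part, so $Q(x) = \lambda\, {}_2F_1(1/12, 5/12; 1; x)$ for some constant $\lambda$. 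Second, I would fix $\lambda$ by evaluating both sides in the limit $x \to 0$: as $q \to 0$ we have $E_4(\tau) \to 1$, hence $Q \to 1$, while ${}_2F_1(1/12, 5/12; 1; 0) = 1$, forcing $\lambda = 1$.

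The main point requiring care is the holomorphy and normalization at $x = 0$, which is why the two exponents being equal matters: one must rule out the logarithmic solution by analytic rather than purely algebraic means. Concretely, I would argue that $1/J(\tau)$ admits a $q$-expansion beginning $1/J = q + O(q^2)$, so that $x \to 0$ exactly as $\tau \to i\infty$ ($q \to 0$); since $E_4(\tau) = 1 + 240q + \cdots$ is holomorphic at the cusp, $Q(x)$ is single-valued and holomorphic on a punctured neighborhood of $x = 0$ and extends holomorphically across it with value $1$. This forces the coefficient of the logarithmic solution to vanish and pins down $\lambda = 1$, completing the identification claimed in the theorem.
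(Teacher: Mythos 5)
Your proposal is correct, and its overall strategy is the same as the paper's: start from the ODE of Theorem \ref{thm:E_4-hypergeometric}, use the two-dimensionality of its solution space, kill the singular solution by the behavior of $Q$ as $x\to 0$, and fix the remaining constant by $Q(0)=1$. The implementation of the key step differs, though. The paper writes $Q(x)=A\cdot{}_2F_1\left(\frac{1}{12},\frac{5}{12};1;x\right)+B\cdot{}_2F_1\left(\frac{1}{12},\frac{5}{12};\frac12;1-x\right)$, i.e.\ it pairs the holomorphic solution at $x=0$ with a solution attached to the singular point $x=1$, and then invokes Lemma \ref{lem:hypergeometric-limit-1-divergence} (with $\gamma-\alpha-\beta=\frac12-\frac{1}{12}-\frac{5}{12}=0$) to conclude that the second solution diverges like a constant times $\log\frac1x$ as $x\to 0^+$, so finiteness of $\lim_{x\to 0}Q(x)=1$ forces $B=0$ and $A=1$. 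You instead use the Frobenius basis at $x=0$: since $c=1$ the indicial exponents there coincide, the second solution carries a $\log x$ term, and holomorphy of $Q$ at $x=0$ rules it out. The two arguments rest on the same analytic fact (no logarithmic singularity of $Q$ at the cusp), but yours is more self-contained — it needs neither Lemma \ref{lem:hypergeometric-limit-1-divergence} nor any knowledge of solutions at $x=1$ — at the price of having to justify that $Q(x)$ really is single-valued and holomorphic across $x=0$, which you do correctly via local inversion of $x=1/J$ at $q=0$; the paper leaves that point implicit behind the citation to Apostol. One small slip that does not affect the argument: the expansion is $1/J(\tau)=12^3\,q+O(q^2)=1728\,q+O(q^2)$, not $q+O(q^2)$; only the first-order vanishing of $1/J$ at $q=0$ matters for the local biholomorphism you need.
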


\begin{proof}
Theorem \ref{thm:E_4-hypergeometric} implies that
\begin{equation*}
\sqrt[4]{ E_4(\tau) } =
A\cdot
{ }_2 F_1
\left(
\frac{1}{12}, 
\frac{5}{12} ;
1; 
x
\right)
+
B
\cdot
{ }_2 F_1
\left(
\frac{1}{12}, 
\frac{5}{12}; 
\frac12; 
1-x
\right),   \,\,\,
%%%%%%%%%%%%%%%%%%%%
x=
\frac{ 1}{ J }, 
\end{equation*}
where $A$ and $B$ are constants. 
\par
Taking the limit $\tau \to +i \infty$ along the imaginary axis 
yields $x \to +0.$ 
For the proof see Apostol \cite[p. 41]{Apostol}. 
Since 
\begin{equation*}
 \lim_{x \to +0} Q(x)=1,
\end{equation*} 
Lemma \ref{lem:hypergeometric-limit-1-divergence} shows that $A=1$ and $B=0,$ 
which proves the theorem. 
\end{proof}

\subsection{ On $E_6(\tau)$  }
In this subsection we use the same notation as Proposition \ref{prop:E_4-linear}.

\begin{proposition}
\label{prop:E_6-linear}
{\it 
The inverse function of J-function $\tau=\tau(J)$ yields $R(J)=\sqrt[6]{ E_6(\tau(J))  }$ and 
$R(J)$ satisfies the following differential equations:
\begin{equation}
\frac{d^2 R}{d  J^2}
+
\left\{
\frac{\frac23}{J}+\frac{\frac13}{J-1}
\right\}
\frac{d R}{dJ} 
+
\frac{7}{144J(J-1)^2}
Q
=0.
\end{equation}
}
\end{proposition}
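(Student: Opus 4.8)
The plan is to imitate the proof of Proposition \ref{prop:E_4-linear} line by line, but with the sixth root $R=\sqrt[6]{E_6(\tau)}$ playing the role that the fourth root $Q$ plays there. Retaining the notation $P,Q,R,D$ and Ramanujan's system, the ingredients are already available:
\[
DR=\frac{PR^6-Q^8}{12R^5},\qquad DJ=-\frac{Q^8R^6}{Q^{12}-R^{12}},\qquad J=\frac{Q^{12}}{Q^{12}-R^{12}}.
\]
First I would compute the first derivative through $\tau$,
\[
\frac{dR}{dJ}=\frac{DR}{DJ}=-\frac{\left(PR^6-Q^8\right)\left(Q^{12}-R^{12}\right)}{12\,Q^8R^{11}},
\]
a rational function of $P,Q,R$ entirely parallel to the $E_4$ case.

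Next I would differentiate once more, using $\dfrac{d^2R}{dJ^2}=\dfrac{1}{DJ}\,D\!\left(\dfrac{dR}{dJ}\right)$ and substituting the formulas for $DP,DQ,DR$ to eliminate every derivative. This writes $\dfrac{d^2R}{dJ^2}$ as a single rational function of $P,Q,R$; the point to watch is that a term in $P^2$ is introduced here, which will have to be matched against the $P$-terms carried by $\dfrac{dR}{dJ}$.

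The decisive step is to form the weighted combination that is forced by the desired first-order coefficient, namely
\[
J(1-J)\frac{d^2R}{dJ^2}+\frac23(1-J)\frac{dR}{dJ}-\frac13\,J\frac{dR}{dJ},
\]
since $\frac23(1-J)-\frac13J=\frac23-J$ reproduces, after division by $J(1-J)$, exactly the partial fractions $\frac{2/3}{J}+\frac{1/3}{J-1}$ of the statement. I expect the sole obstacle to be the algebraic bookkeeping: one must check that in this combination all terms containing $P$ (and $P^2$) cancel identically, leaving a multiple of $Q^{12}-R^{12}$ over a power of $R$. Invoking $J-1=R^{12}/(Q^{12}-R^{12})$, the combination should collapse to
\[
J(1-J)\frac{d^2R}{dJ^2}+\left(\frac23-J\right)\frac{dR}{dJ}=\frac{7\left(Q^{12}-R^{12}\right)}{144\,R^{11}}=\frac{7R}{144(J-1)},
\]
and dividing by $J(1-J)$ yields the claimed linear ODE for $R$. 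The two delicate checkpoints are the complete cancellation of the $P$-contributions and the emergence of the precise constant $7/144$; as a safeguard I would verify the indicial exponents, which at $J=1$ satisfy $\rho^2-\frac23\rho+\frac{7}{144}=0$ with difference $\tfrac12$ (reflecting the order-$2$ elliptic point $\tau=i$ where $E_6$ vanishes) and at $J=0$ give $\rho=0,\tfrac13$, both forced by the analogy with Proposition \ref{prop:E_4-linear}.
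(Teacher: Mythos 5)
Your proposal is correct and follows essentially the same route as the paper's own proof: the paper likewise computes $\frac{dR}{dJ}=\frac{DR}{DJ}$ and $\frac{d^2R}{dJ^2}=\frac{d}{d\tau}\left(\frac{dR}{dJ}\right)\frac{d\tau}{dJ}$ as rational functions of $P,Q,R$ via Ramanujan's system, then forms the same weighted combination (with opposite overall sign, namely $J(J-1)\frac{d^2R}{dJ^2}+\frac13 J\frac{dR}{dJ}+\frac23(J-1)\frac{dR}{dJ}$) and finds it collapses to $\frac{7(R^{12}-Q^{12})}{144R^{11}}=-\frac{7R}{144(J-1)}$, which is exactly the negative of your claimed identity. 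The only step you leave implicit is the explicit rational expression for the second derivative, which the paper writes out in full; your intermediate formulas, sign conventions, and indicial checks are all consistent with it.
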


\begin{proof}
We first have 
\begin{equation*}
\frac{dR}{d J}=
\frac{d R}{d \tau} 
\frac{d \tau}{d J}
=
\frac{ D R}{ D J}
=
\frac{\left(Q^{12}-R^{12}\right) \left(Q^8-P R^6\right)}{12 Q^8 R^{11}},  
\end{equation*}
and
\begin{equation*}
\frac{d^2 R}{d J^2}=
\frac{d }{d \tau}
\left(
\frac{dR}{d J}
\right)
\frac{d \tau}{ d J}
=
-\frac{\left(Q^{12}-R^{12}\right)^2 \left(-4 P Q^{12} R^6-8 P R^{18}+11 Q^{20}+Q^8 R^{12}\right)}{144 Q^{20} R^{23}}, 
\end{equation*}
which imply that 
\begin{equation*}
J(J-1)\frac{d^2 R}{d J^2}
+
\frac13
J\frac{dR}{d J}
+\frac23
(J-1)\frac{dR}{d J}
=
\frac{7 \left(R^{12}-Q^{12}\right)}{144 R^{11}}
=
-\frac{7  R}{144 (J-1) },
\end{equation*}
which proves the proposition. 
\end{proof}

\begin{theorem}
\label{thm:E_6-hypergeometric}
{\it
For every $\tau\in\mathbb{H}^2$ set 
$
\displaystyle 
y=
\frac{
1
}
{
1-J(\tau)
}.
$ 
Then the inverse function $\tau=\tau(y)$ yields $R(y)=\sqrt[6]{ E_6(\tau(y))  }$ and 
$R(y)$ satisfies the hypergeometric differential equations:
\begin{equation}
y(1-y)
\frac{d^2 R}{dy^2}
+ \left(1-\frac53 y \right) 
\frac{dR}{dy} 
-
\frac{7}{144}
R
=0.
\end{equation}
}
\end{theorem}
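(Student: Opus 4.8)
The plan is to mimic the derivation of Theorem \ref{thm:E_4-hypergeometric} from Proposition \ref{prop:E_4-linear}, where the hypergeometric form was produced by the substitution $J\to 1/J$. Here the starting point is Proposition \ref{prop:E_6-linear}, whose $J$-equation, after clearing denominators (exactly the cleared identity appearing in its proof), reads
\[
J(J-1)\frac{d^2 R}{dJ^2} + \left(J-\frac23\right)\frac{dR}{dJ} + \frac{7}{144(J-1)}R = 0,
\]
with a double pole in the zeroth-order coefficient at $J=1$. (I note in passing that the $Q$ displayed in the statement of Proposition \ref{prop:E_6-linear} should read $R$; the final line of its proof, $\frac{7(R^{12}-Q^{12})}{144R^{11}}=-\frac{7R}{144(J-1)}$, confirms this.) The aim is to send the regular point $J=\infty$ — the cusp $\tau\to+i\infty$, where $E_6\to 1$ and hence $R\to 1$ — to the origin of a new variable, and to push the troublesome point $J=1$ out to infinity. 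The M\"obius map achieving exactly this is $y=\tfrac{1}{1-J}$, so I would substitute $J=1-1/y=(y-1)/y$.

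First I would record the chain-rule data. From $y=1/(1-J)$ one gets $dy/dJ=(1-J)^{-2}=y^2$, so
\[
\frac{dR}{dJ}=y^2\frac{dR}{dy}, \qquad \frac{d^2 R}{dJ^2}=y^4\frac{d^2 R}{dy^2}+2y^3\frac{dR}{dy}.
\]
Next I would express the rational coefficients through $y$, using $J-1=-1/y$, $\ J-\tfrac23=(y-3)/(3y)$, and $J(J-1)=(1-y)/y^2$. Substituting these into the cleared equation turns the second-order term into $y^2(1-y)\,d^2R/dy^2+2y(1-y)\,dR/dy$, the first-order term into $\tfrac13 y(y-3)\,dR/dy$, and the zeroth-order term into $-\tfrac{7}{144}yR$. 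Factoring out the overall $y$ (legitimate since $y\ne 0$ away from the cusp) then yields $y(1-y)\,d^2R/dy^2$, a first-order coefficient $\bigl[2(1-y)+\tfrac13(y-3)\bigr]dR/dy$, and zeroth-order coefficient $-\tfrac{7}{144}R$.

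The only genuine work is the coefficient bookkeeping in this last step: one checks that $2(1-y)+\tfrac13(y-3)=1-\tfrac53 y$, which is precisely the required first-order coefficient, while the double pole at $J=1$ disappears after the multiplication by $y$. I expect no conceptual obstacle — the map $y=1/(1-J)$ is forced by the placement of the three Fuchsian singular points $J=0,1,\infty$ and by the normalization $R\to 1$ at the cusp — so the care needed is purely arithmetical: correctly tracking the powers of $y$ in $d^2R/dJ^2$ and verifying that the three fractional contributions collapse exactly to $1-\tfrac53 y$ and $-\tfrac{7}{144}$. This produces the asserted equation on $\mathbb{H}^2$ (equivalently $0<y<1$ near the cusp), completing the proof.
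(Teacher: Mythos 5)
Your proposal is correct and takes essentially the same route as the paper: the paper's entire proof is the one-line remark that the theorem follows by the substitution $J\to\frac{1}{1-J}$ in Proposition \ref{prop:E_6-linear}, and your argument simply carries out that substitution explicitly, with all chain-rule and coefficient computations checking out (in particular $2(1-y)+\tfrac13(y-3)=1-\tfrac53 y$). You also correctly identify that the $Q$ in the displayed equation of Proposition \ref{prop:E_6-linear} is a typo for $R$, as the last line of its proof confirms.
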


\begin{proof}
The theorem can be proved by 
changing 
$
\displaystyle
J\to \frac{1}{1-J}
$ 
in Proposition \ref{prop:E_6-linear}.
\end{proof}

\begin{theorem}
\label{thm:E_6-hypergeometric-function}
{\it
For every $\tau\in\mathbb{H}^2$ we have 
\begin{equation*}
\sqrt[6]{ E_6(\tau) } 
=
{ }_2 F_1 
\left(
\frac{1}{12},  
\frac{7}{12}  ;
1; 
\frac{ 1}{ 1-J(\tau) }
\right). 
\end{equation*}
}
\end{theorem}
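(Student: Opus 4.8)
The plan is to mimic, line for line, the proof of Theorem \ref{thm:E_4-hypergeometric-function}, with $E_4$ replaced by $E_6$ and $x=1/J$ replaced by $y=1/(1-J)$. First I would invoke Theorem \ref{thm:E_6-hypergeometric}: the function $R(y)=\sqrt[6]{E_6(\tau(y))}$ satisfies the hypergeometric equation with parameters $(\alpha,\beta,\gamma)=\bigl(\tfrac{1}{12},\tfrac{7}{12},1\bigr)$. Because $\gamma=1$, the two Frobenius exponents at $y=0$ coincide, so the equation has exactly one holomorphic solution at $y=0$, namely ${ }_2 F_1\bigl(\tfrac{1}{12},\tfrac{7}{12};1;y\bigr)$, together with a second independent solution carrying a logarithm. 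As in the $E_4$ case, I would write the general solution using the fundamental pair consisting of the solution holomorphic at $y=0$ and the Kummer solution with exponent $0$ at $y=1$:
\[
R(y)=A\,{ }_2 F_1\left(\frac{1}{12},\frac{7}{12};1;y\right)+B\,{ }_2 F_1\left(\frac{1}{12},\frac{7}{12};\frac{2}{3};1-y\right),
\]
where the parameter $\tfrac{2}{3}=\alpha+\beta-\gamma+1$ is the one attached to the local solution of exponent $0$ at $y=1$, and $A,B$ are constants.

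Next I would check that Lemma \ref{lem:hypergeometric-limit-1-divergence} applies to the second solution. Writing it as ${ }_2 F_1(\alpha',\beta';\gamma';z)$ with $z=1-y$, $\alpha'=\tfrac{1}{12}$, $\beta'=\tfrac{7}{12}$, $\gamma'=\tfrac{2}{3}$, one has $\gamma'-\alpha'-\beta'=\tfrac{2}{3}-\tfrac{8}{12}=0$, so the hypothesis of the Lemma is met and ${ }_2 F_1\bigl(\tfrac{1}{12},\tfrac{7}{12};\tfrac{2}{3};z\bigr)$ diverges (logarithmically) as $z\to 1-0$. I would then take the limit $\tau\to+i\infty$ along the imaginary axis, so that $q\to 0$ and $J\to\infty$, whence $y=1/(1-J)\to 0$ and correspondingly $1-y\to 1$; the passage $y\to+0\Leftrightarrow\tau\to+i\infty$ is justified exactly as in Apostol \cite[p.\ 41]{Apostol}. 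Since $E_6(\tau)=1-504q+\cdots\to 1$, we get $\lim_{y\to 0}R(y)=1$, a finite value. On the right-hand side the first term tends to $1$ while the second term blows up, so finiteness forces $B=0$, and then matching the boundary value gives $A=1$, which is the asserted identity.

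The only genuinely delicate point is the one isolated in the previous paragraph: confirming that the second fundamental solution actually diverges at $y=0$, so that the limit $R(y)\to 1$ can only be reconciled by killing its coefficient. This rests entirely on the arithmetic relation $\gamma'-\alpha'-\beta'=0$ for the local solution at $y=1$ (note that, by contrast, $\gamma-\alpha-\beta=\tfrac{1}{3}\ne 0$ for the solution at $y=0$, so no such divergence competes there), after which Lemma \ref{lem:hypergeometric-limit-1-divergence} does the work. Everything else — the identification of the two Kummer solutions and the elementary limits $E_6\to 1$ and $J\to\infty$ as $q\to 0$ — is routine bookkeeping identical in spirit to the $E_4$ argument.
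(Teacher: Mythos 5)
Your overall strategy is the paper's own: expand $R(y)$ in a fundamental system whose first member is the holomorphic solution ${}_2F_1(\tfrac{1}{12},\tfrac{7}{12};1;y)$, let $\tau\to+i\infty$, and use Lemma \ref{lem:hypergeometric-limit-1-divergence} to force $B=0$ and then $A=1$. But there is a genuine error in the execution, and it sits exactly at the point you flagged as delicate. As $\tau\to+i\infty$ one has $J(\tau)\to+\infty$, hence $y=1/(1-J)\to 0$ \emph{through negative values} (the paper writes $y\to -0$); your claim $y\to+0$ is wrong. Consequently $z=1-y\to 1$ \emph{from above}, not from below. Lemma \ref{lem:hypergeometric-limit-1-divergence} is a statement about the one-sided limit $x\to 1-0$, so it does not apply to your second basis function ${}_2F_1(\tfrac{1}{12},\tfrac{7}{12};\tfrac23;1-y)$ along the relevant path. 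In fact the problem is worse than a wrong-sided limit: for $y<0$ the argument $1-y>1$ lies on the branch cut $[1,\infty)$, where the defining series diverges, so on the domain where the limit is actually taken your second solution is not even well defined until you specify an analytic continuation. This is precisely why the paper does not simply recycle the $E_4$ basis (which is what you did): it instead takes the Kummer solution $(1-y)^{-\frac{1}{12}}\,{}_2F_1(\tfrac{1}{12},\tfrac{5}{12};\tfrac12;\tfrac{1}{1-y})$, whose argument equals $1-\tfrac1J\in(0,1)$ for $y<0$ and tends to $1-0$ as $y\to-0$, so that Lemma \ref{lem:hypergeometric-limit-1-divergence} applies verbatim.

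The gap is repairable, and your own first paragraph contains the repair: since $\gamma=1$, the two exponents at $y=0$ coincide, so every solution independent of ${}_2F_1(\tfrac{1}{12},\tfrac{7}{12};1;y)$ contains a nonzero multiple of $\log y$ and is therefore unbounded as $y\to 0$ from either side; boundedness of $R(y)$ along $y\to-0$ then kills its coefficient directly, with no reference to the point $y=1$ at all. Alternatively, adopt the paper's second basis element. As written, however, the step ``the second term blows up'' is justified by an appeal to Lemma \ref{lem:hypergeometric-limit-1-divergence} whose hypothesis fails on the actual path of approach, so the proof as stated does not go through.
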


\begin{proof}
Theorem \ref{thm:E_6-hypergeometric} implies that
\begin{equation*}
\sqrt[6]{ E_6(\tau) } =
A\cdot
{ }_2 F_1
\left(
\frac{1}{12}, 
\frac{7}{12};
1; 
y
\right)
+
B
\cdot
(1-y)^{ -\frac{1}{12} }
{ }_2 F_1
\left(
\frac{1}{12},
\frac{5}{12} ;
\frac12; 
\frac{1}{1-y}
\right),   \,\,\,
%%%%%%%%%%%%%%%%%%%%
y=
\frac{ 1}{ 1-J }, 
\end{equation*}
where $A$ and $B$ are constants. 
\par
Taking the limit $\tau \to +i \infty$ along the imaginary axis 
yields $y \to -0.$ 
Since 
\begin{equation*}
 \lim_{y \to -0} R(y)=1,
\end{equation*} 
Lemma \ref{lem:hypergeometric-limit-1-divergence} shows that $A=1$ and $B=0,$ 
which proves the theorem. 
\end{proof}

\section{The Hypergeometric equations and the theta constants}
\label{sec:hypergeometric-theta}
In \cite{Matsuda3}, we proved the following theorem:
\begin{theorem}(Halphen)
\label{thm:Halphen-rewrite}
{\it
For every $\tau\in\mathbb{H}^2$, set
\begin{align*}
\left(
P(\tau),
Q(\tau),
R(\tau)
\right)
=&
\left(
\theta^4
\left[
\begin{array}{c}
0 \\
0
\end{array}
\right](0,\tau), \,
%%%%%%%%%%%
\theta^4
\left[
\begin{array}{c}
1 \\
0
\end{array}
\right](0,\tau), \,
%%%%%%%%%%%%%%%%%%%%%%
E_2(\tau)
\right), \\
%%%%%%%%%%%%%%%%%%%
%%%%%%%%%%%%%%%%%%%%
&
\left(
-
\theta^4
\left[
\begin{array}{c}
1 \\
0
\end{array}
\right](0,\tau), \,
%%%%%%%%%%%
\theta^4
\left[
\begin{array}{c}
0 \\
1
\end{array}
\right](0,\tau), \,
%%%%%%%%%%%%%%%%%%%%%%
E_2(\tau)
\right), \\
%%%%%%%%%%%%%%%%%%%%%%%%%
%%%%%%%%%%%%%%%%%%%%%%%%%%%%
&
\left(
-
\theta^4
\left[
\begin{array}{c}
0 \\
1
\end{array}
\right](0,\tau), \,
%%%%%%%%%%%%%%%%
-
\theta^4
\left[
\begin{array}{c}
0 \\
0
\end{array}
\right](0,\tau), \,
%%%%%%%%%%%
%%%%%%%%%%%%%%%%%%%%%%
E_2(\tau)
\right).
\end{align*}
Then, we have
\begin{equation*}
DP=
\frac{-P^2 + 2 P Q + P R}{6},  \,\,
DQ=
\frac
{ - Q^2+2 P Q + Q R}{6}, \,\,
DR=
\frac{-P^2 + P Q - Q^2 + R^2}{12}, \,\,\,\left(D=\frac{1}{2\pi i} \frac{d}{d\tau}\right).
\end{equation*}
}
\end{theorem}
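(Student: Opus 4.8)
The plan is to reduce the entire system to three classical facts about the Jacobi theta constants, after which all three parametrizations follow from essentially one computation. Throughout write $\theta_2=\theta\left[\begin{smallmatrix}1\\0\end{smallmatrix}\right](0,\tau)$, $\theta_3=\theta\left[\begin{smallmatrix}0\\0\end{smallmatrix}\right](0,\tau)$, $\theta_4=\theta\left[\begin{smallmatrix}0\\1\end{smallmatrix}\right](0,\tau)$, and recall $D=\tfrac{1}{2\pi i}\tfrac{d}{d\tau}=q\tfrac{d}{dq}$. The three facts I would establish first are: (i) the logarithmic derivative formulas
$$D\log\theta_2=\tfrac{1}{24}\bigl(E_2+\theta_3^4+\theta_4^4\bigr),\quad D\log\theta_3=\tfrac{1}{24}\bigl(E_2+\theta_2^4-\theta_4^4\bigr),\quad D\log\theta_4=\tfrac{1}{24}\bigl(E_2-\theta_2^4-\theta_3^4\bigr);$$
(ii) Jacobi's identity $\theta_3^4=\theta_2^4+\theta_4^4$; and (iii) the theta representation $E_4=\tfrac12(\theta_2^8+\theta_3^8+\theta_4^8)$. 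Fact (i) is the analytic core: I would derive it from the heat equation $\partial_z^2\theta=4\pi i\,\partial_\tau\theta$, which turns each $D\log\theta_i$ into the value at $z=0$ of the second $z$-logarithmic derivative of $\theta_i(z,\tau)$, the latter being expressible through the theta constants; equivalently, one differentiates the Jacobi triple-product expansions termwise and resums the Lambert series against the $q$-expansion of $E_2$. Facts (ii) and (iii) are standard, and Ramanujan's identity $DE_2=\tfrac{E_2^2-E_4}{12}$ is already available from \eqref{eqn:Ramanujan-ODE-proof}.

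Granting (i)--(iii), I would treat the first triple $(P,Q,R)=(\theta_3^4,\theta_2^4,E_2)$ directly. Since $\theta_4^4=\theta_3^4-\theta_2^4=P-Q$ by (ii), formula (i) gives $DP=4P\,D\log\theta_3=\tfrac{P}{6}(R+2Q-P)$ and $DQ=4Q\,D\log\theta_2=\tfrac{Q}{6}(R+2P-Q)$, which are exactly the first two asserted equations. For the third, (ii) and (iii) combine algebraically into $E_4=P^2-PQ+Q^2$, so Ramanujan's identity yields $DR=\tfrac{1}{12}\bigl(R^2-(P^2-PQ+Q^2)\bigr)$, matching the stated $DR$.

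For the remaining two triples I would exploit two symmetries of the target system: it is invariant under $P\leftrightarrow Q$, and, since $D$ commutes with $\tau\mapsto\tau+1$ and the right-hand sides carry no explicit $\tau$, any solution remains a solution after the substitution $\tau\mapsto\tau+1$. Under $\tau\mapsto\tau+1$ one has $\theta_3^4\mapsto\theta_4^4$, $\theta_2^4\mapsto-\theta_2^4$, $\theta_4^4\mapsto\theta_3^4$, and $E_2\mapsto E_2$; applied to the first triple and combined with the $P\leftrightarrow Q$ symmetry, this produces exactly the second triple $(-\theta_2^4,\theta_4^4,E_2)$. The third triple $(-\theta_4^4,-\theta_3^4,E_2)$ I would verify by the same three-line computation as the first, now writing $\theta_2^4=\theta_3^4-\theta_4^4=P-Q$ and invoking (i) and (iii); the algebra is identical, the signs in $P,Q$ being chosen precisely so that $E_4=P^2-PQ+Q^2$ holds once more.

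The main obstacle is fact (i): the passage from the defining series (or the product expansions) to the closed logarithmic derivatives in terms of $E_2$ and the fourth powers $\theta_i^4$. Everything downstream is bookkeeping, and the only real subtlety there is tracking the signs that $\tau\mapsto\tau+1$ introduces on the theta constants so that the three triples are matched correctly. A convenient internal check at each stage is the consistency relation $D\log(\theta_2\theta_3\theta_4)=\tfrac18 E_2$, which is forced by $\theta_2\theta_3\theta_4=2\eta^3$ together with $D\log\eta=\tfrac{1}{24}E_2$.
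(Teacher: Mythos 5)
Your proposal is correct, but note that the paper itself offers no proof of this theorem at all: Theorem \ref{thm:Halphen-rewrite} is simply quoted from the author's separate preprint \cite{Matsuda3} (``Note on Halphen's system''), so there is no in-paper argument to compare against. Your route is the classical self-contained one, and it checks out in detail. Fact (i) is stated with the correct normalization for $D=\frac{1}{2\pi i}\frac{d}{d\tau}$ (summing the three formulas gives $D\log(\theta_2\theta_3\theta_4)=\frac18 E_2$, consistent with $\theta_2\theta_3\theta_4=2\eta^3$ and $D\log\eta=\frac{1}{24}E_2$, exactly as your internal check demands); facts (ii) and (iii) are the standard Jacobi quartic identity and the theta representation of $E_4$; and Ramanujan's $DE_2=\frac{E_2^2-E_4}{12}$ is indeed available in the paper as (\ref{eqn:Ramanujan-ODE-proof}). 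For the first triple your algebra is right: $\theta_4^4=P-Q$ turns (i) into the stated $DP$, $DQ$, and $E_4=\frac12\bigl(Q^2+P^2+(P-Q)^2\bigr)=P^2-PQ+Q^2$ gives $DR$. The symmetry argument for the second triple is also sound: the system is autonomous and symmetric in $P\leftrightarrow Q$, and under $\tau\mapsto\tau+1$ one has $\theta_2^4\mapsto-\theta_2^4$, $\theta_3^4\leftrightarrow\theta_4^4$, $E_2\mapsto E_2$, which together produce $(-\theta_2^4,\theta_4^4,E_2)$; the direct verification of the third triple goes through identically with $\theta_2^4=P-Q$. The only component left as a sketch is fact (i), but you correctly identify it as the analytic core, state it accurately, and both derivation routes you name (heat equation, or triple product plus Lambert-series resummation) are standard and workable. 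Compared with the paper's bare citation, your argument has the advantage of being verifiable from classical identities plus the one formula the paper does establish, at the cost of having to import (or prove) Jacobi's logarithmic-derivative formulas, which is presumably the content hidden in \cite{Matsuda3}.
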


\subsection{On $\lambda$-function  }

\begin{theorem}
\label{thm:ODE-lambda}
{\it
For every $\tau\in\mathbb{H}^2$ we have 
\begin{equation*}
\left\{  \lambda, \tau  \right\}
+
\frac
{
\lambda^2-\lambda+1
}
{
2\lambda^2(\lambda-1)^2
}
\left(
\lambda^{\prime}
\right)^2=0. 
\end{equation*}
}
\end{theorem}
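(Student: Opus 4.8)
The plan is to imitate the proof of Theorem~\ref{thm:ODE-J-j}: starting from the quadratic system for the theta constants in Theorem~\ref{thm:Halphen-rewrite}, I will compute the Schwarzian of $\lambda$ by iterating the operator $D=\frac{1}{2\pi i}\frac{d}{d\tau}$ and reduce it to a rational function of $\lambda$ alone. Throughout I use the first of the three assignments in Theorem~\ref{thm:Halphen-rewrite}, so that $P=\theta^4[{}^0_0](0,\tau)=\theta_3^4$, $Q=\theta^4[{}^1_0](0,\tau)=\theta_2^4$ and $R=E_2$, and I take the modular lambda function $\lambda=\lambda(\tau)=\theta_2^4/\theta_3^4=Q/P$. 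Since each $d/d\tau=2\pi i\,D$, the quantity $\{\lambda,\tau\}$ equals $(2\pi i)^2\bigl[\frac{D^3\lambda}{D\lambda}-\frac32(\frac{D^2\lambda}{D\lambda})^2\bigr]$ and $(\lambda')^2=(2\pi i)^2(D\lambda)^2$, so the overall factor $(2\pi i)^2$ cancels and it suffices to prove the asserted identity with $\lambda'$ replaced by $D\lambda$ and $\{\lambda,\tau\}$ by its $D$-analogue.

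First I would compute $D\lambda=\frac{P\,DQ-Q\,DP}{P^2}$. Substituting the Halphen formulas, the $PQR$ terms cancel and the remaining terms combine, leaving the decisive simplification
\begin{equation*}
D\lambda=\frac{Q(P-Q)}{2P}=\frac{P}{2}\,\lambda(1-\lambda).
\end{equation*}
Because $\log(D\lambda)=\log(P/2)+\log\lambda+\log(1-\lambda)$, the first logarithmic derivative $u:=D^2\lambda/D\lambda=D\log(D\lambda)$ is a short sum of $DP/P$ (read off from Theorem~\ref{thm:Halphen-rewrite}) and $D\lambda/\lambda-D\lambda/(1-\lambda)$. Using $D\lambda=\frac{P}{2}\lambda(1-\lambda)$ to clear the denominators $\lambda$ and $1-\lambda$, I expect $u$ to collapse to the linear form
\begin{equation*}
u=\frac{2P-4Q+R}{6}.
\end{equation*}

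The Schwarzian is then extracted from the identity $\frac{D^3\lambda}{D\lambda}-\frac32u^2=Du-\frac12u^2$, which holds since $Du=\frac{D^3\lambda}{D\lambda}-u^2$. Applying $D$ to the linear form $u$ and invoking Theorem~\ref{thm:Halphen-rewrite} for $DP,DQ,DR$ yields $Du$ as a quadratic in $P,Q,R$; subtracting $\frac12u^2=\frac{(2P-4Q+R)^2}{72}$ I expect every $R$-dependent monomial ($PR$, $QR$, $R^2$) to cancel, leaving the purely modular quantity
\begin{equation*}
Du-\tfrac12u^2=-\frac{P^2-PQ+Q^2}{8}.
\end{equation*}
Finally, since $(D\lambda)^2=\frac{P^2}{4}\lambda^2(1-\lambda)^2$, the coefficient $\frac{\lambda^2-\lambda+1}{2\lambda^2(\lambda-1)^2}$ multiplies it to $\frac{(\lambda^2-\lambda+1)P^2}{8}$, and the homogeneity identity $(\lambda^2-\lambda+1)P^2=P^2-PQ+Q^2$, immediate from $\lambda=Q/P$, cancels this against $Du-\frac12u^2$, proving the theorem.

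I anticipate the hard part to be the last cancellation, namely verifying that the quasimodular contribution $R=E_2$ disappears entirely from $Du-\frac12u^2$. This vanishing is forced on general grounds, since $\{\lambda,\tau\}/(\lambda')^2$ is a genuine modular function of $\tau$ while $E_2$ is only quasimodular; consequently the simultaneous cancellation of all $R$-terms serves both as the essential computation and as the built-in consistency check on the Halphen system.
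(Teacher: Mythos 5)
Your proof is correct, but it takes a genuinely different route from the paper's. I checked your three key intermediate identities against Halphen's system: $D\lambda=\frac{P}{2}\lambda(1-\lambda)$, then $u=\frac{D^2\lambda}{D\lambda}=\frac{2P-4Q+R}{6}$, and finally $Du-\frac12 u^2=-\frac{P^2-PQ+Q^2}{8}$, in which every $R$-dependent monomial ($PR$, $QR$, $R^2$) does cancel as you predicted; combined with $(D\lambda)^2=\frac{P^2}{4}\lambda^2(1-\lambda)^2$ and $(\lambda^2-\lambda+1)P^2=P^2-PQ+Q^2$, this closes the argument, and your remark that the factor $(2\pi i)^2$ cancels between the Schwarzian and $(\lambda')^2$ is also right. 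The paper instead proves Theorem \ref{thm:ODE-lambda} by transport from the $j$-line: it invokes the previously established ODE (\ref{eqn:ODE-j}) for $j$, the composition rule $\{j,\tau\}=\{j,\lambda\}(\lambda')^2+\{\lambda,\tau\}$, and the modular equation $j=2^8(\lambda^2-\lambda+1)^3/\lambda^2(\lambda-1)^2$, and then computes $\{j,\lambda\}$ and the coefficient term explicitly, which involves polynomials in $\lambda$ of degree up to twelve. Your approach works directly at level 2, deriving the Schwarzian of $\lambda$ from Halphen's system (Theorem \ref{thm:Halphen-rewrite}) in exactly the way the paper derives the $j$-equation from Ramanujan's system (\ref{eqn:Ramanujan-ODE-proof}) in Theorem \ref{thm:ODE-J-j}. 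What the paper's route buys is economy of inputs (only the $j$-ODE and a classical modular equation, no appeal to Halphen's system at this point); what your route buys is a far lighter computation that never leaves quadratic expressions in $P,Q,R$, independence from the $j$--$\lambda$ modular equation, and a transparent origin for the coefficient $\frac{\lambda^2-\lambda+1}{2\lambda^2(\lambda-1)^2}$ via the homogeneous form $P^2-PQ+Q^2$. Your closing observation that the vanishing of all $E_2$-terms is forced by quasimodularity is a sound sanity check, though it is the explicit cancellation, which you correctly carry out, that bears the weight of the proof.
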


\begin{proof}
We recall the ODE (\ref{eqn:ODE-j}) and the well-known property of the Schwartzian derivative,  
\begin{equation*}
\left\{  j, \tau  \right\}=\left\{  j, \lambda  \right\} (\lambda^{\prime})^2 +\left\{  \lambda, \tau  \right\} \,\,
\mathrm{and} \,\,
j=2^8
\frac
{
(\lambda^2-\lambda+1)^3
}
{
\lambda^2(\lambda-1)^2
}. 
\end{equation*}
Direct calculation yields 
\begin{align*}
\frac{d j}{d\lambda}=&
\frac{256 \left(\lambda ^2-\lambda +1\right)^2 \left(2 \lambda ^3-3 \lambda ^2-3 \lambda +2\right)}{(\lambda -1)^3 \lambda ^3}, \\
%%%%%%%%%%%%%%%%%
\frac{d^2 j}{d\lambda^2}=&
\frac{512 \left(\lambda ^8-4 \lambda ^7+6 \lambda ^6-4 \lambda ^5+10 \lambda ^4-18 \lambda ^3+22 \lambda ^2-13 \lambda +3\right)}{(\lambda -1)^4 \lambda ^4}, \\
%%%%%%%%%%%%%%%%%%%%%%%%%%%
\frac{d^3 j}{d\lambda^3}=&
-\frac{1536 \left(12 \lambda ^5-30 \lambda ^4+50 \lambda ^3-45 \lambda ^2+21 \lambda -4\right)}{(\lambda -1)^5 \lambda ^5},
\end{align*} 
which implies that 
\begin{align*}
\left\{  j, \lambda  \right\}
=&
-\frac{6 \left(\lambda ^{12}-6 \lambda ^{11}+13 \lambda ^{10}-10 \lambda ^9+40 \lambda ^8-166 \lambda ^7+257 \lambda ^6-166 \lambda ^5+40 \lambda ^4-10 \lambda ^3+13 \lambda ^2-6 \lambda +1\right)}{(1-2 \lambda )^2 (\lambda -1)^2 \lambda ^2 \left(-\lambda
   ^2+\lambda +2\right)^2 \left(\lambda ^2-\lambda +1\right)^2}. 
\end{align*}
\par
Moreover we have 
\begin{align*}
&\frac
{
j^2-2^4\cdot 3 \cdot 41 j+2^{15}\cdot 3^4
}
{
2 j^2(j-12^3)^2
}
(j^{\prime})^2=
\frac
{
j^2-2^4\cdot 3 \cdot 41 j+2^{15}\cdot 3^4
}
{
2 j^2(j-12^3)^2
}
\left(
\frac{d j}{d\lambda}
\frac{d \lambda}{d \tau}
\right)^2  \\
=&
\frac{\left(2 \lambda ^3-3 \lambda ^2-3 \lambda +2\right)^2 }{2 (\lambda -2)^4 (\lambda -1)^2 \lambda ^2 (\lambda +1)^4 (2 \lambda -1)^4 \left(\lambda ^2-\lambda +1\right)^2} \times \\
%%%%%%%%%%%%%%%%%%%%%%%%%
&
\times
\Big(
16 \lambda ^{12}-96 \lambda ^{11}+213 \lambda ^{10}-185 \lambda ^9+489 \lambda ^8-1902 \lambda ^7+2946 \lambda ^6 \\
&\hspace{45mm}
-1902 \lambda ^5+489 \lambda ^4-185 \lambda ^3+213 \lambda ^2-96 \lambda
   +16
\Big)(\lambda^{\prime})^2,
\end{align*}
which implies that 
\begin{align*}
\left\{  j, \lambda  \right\} (\lambda^{\prime})^2+
\frac
{
j^2-2^4\cdot 3 \cdot 41 j+2^{15}\cdot 3^4
}
{
2 j^2(j-12^3)^2
}
(j^{\prime})^2
=
\frac
{
\lambda^2-\lambda+1
}
{
2\lambda^2(\lambda-1)^2
}
\left(
\lambda^{\prime}
\right)^2.
\end{align*}
Therefore the theorem follows from Theorem \ref{thm:ODE-J-j}.
\end{proof}

\subsection{On the theta constants (1)  }
In this subsection we set 
\begin{equation*}
P=
\theta^2
\left[
\begin{array}{c}
0 \\
0
\end{array}
\right](0,\tau),  \,\,
%%%%%%%%%%%%%%%%%%%%%%%%%%%
Q=
\theta^2
\left[
\begin{array}{c}
1 \\
0
\end{array}
\right](0,\tau), \,\, 
\mathrm{and}  \,\,
R=E_2(\tau).
\end{equation*}

\begin{theorem}
\label{thm:theta-(0,0)-lambda-hypergeometric}
{\it
For every $\tau\in\mathbb{H}^2$ set 
$
\displaystyle 
\lambda=
\theta^4
\left[
\begin{array}{c}
1 \\
0
\end{array}
\right](0,\tau)
/
\theta^4
\left[
\begin{array}{c}
0 \\
0
\end{array}
\right](0,\tau). 
$ 
Then the inverse function $\tau=\tau(\lambda)$ yields $P=P(\lambda)=P(\tau(\lambda))$ and 
$P(\lambda)$ satisfies the hypergeometric differential equations:
\begin{equation}
\lambda(1-\lambda)
\frac{d^2 P}{d\lambda^2}
+(1-2\lambda) 
\frac{dP}{d\lambda} 
-
\frac14
P
=0.
\end{equation}
}
\end{theorem}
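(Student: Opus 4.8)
The plan is to follow the same strategy as in Propositions \ref{prop:E_4-linear} and \ref{prop:E_6-linear}: express the first and second derivatives of $P$ with respect to $\lambda$ through the derivation $D$, substitute these into the claimed hypergeometric operator, and verify that the result vanishes identically by means of the nonlinear system of Theorem \ref{thm:Halphen-rewrite}. Since the present $P=\theta^2\left[\begin{array}{c}0\\0\end{array}\right](0,\tau)$ and $Q=\theta^2\left[\begin{array}{c}1\\0\end{array}\right](0,\tau)$ are the square roots of the theta constants occurring in the first line of Theorem \ref{thm:Halphen-rewrite}, I would first record the derivatives of the squared quantities given there and divide by $2P$ and $2Q$ to obtain
\begin{equation*}
DP=\frac{P(-P^2+2Q^2+R)}{12}, \quad DQ=\frac{Q(-Q^2+2P^2+R)}{12}, \quad DR=\frac{-P^4+P^2Q^2-Q^4+R^2}{12}.
\end{equation*}

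Next, from $\lambda=Q^2/P^2$ a direct computation gives
\begin{equation*}
D\lambda=\frac{2Q\,DQ}{P^2}-\frac{2Q^2\,DP}{P^3}=\frac{P^2\lambda(1-\lambda)}{2},
\end{equation*}
in which the term $R=E_2$ cancels already at this stage, exactly as $E_2$ drops out of the corresponding step for $J$. I would then form
\begin{equation*}
\frac{dP}{d\lambda}=\frac{DP}{D\lambda}, \qquad \frac{d^2P}{d\lambda^2}=\frac{1}{D\lambda}\,D\!\left(\frac{dP}{d\lambda}\right),
\end{equation*}
the second derivative being produced by differentiating the first once more with $D$ and dividing again by $D\lambda$; the three formulas for $DP,DQ,DR$ above are all that is needed. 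Finally I would substitute these into $\lambda(1-\lambda)\frac{d^2P}{d\lambda^2}+(1-2\lambda)\frac{dP}{d\lambda}-\frac14 P$ and reduce, using $Q^2=\lambda P^2$ throughout, to show that the combination is identically $0$.

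The hard part will be the cancellation of the $R=E_2$ terms in the second-derivative step. Because $E_2$ is only quasimodular, it re-enters $\frac{dP}{d\lambda}$ and $\frac{d^2P}{d\lambda^2}$ (just as $E_2$ appears in the intermediate expressions of the proofs of Propositions \ref{prop:E_4-linear} and \ref{prop:E_6-linear}), and only the precise coefficients furnished by the Halphen system make it disappear from the final hypergeometric operator; tracking those $R$- and $R^2$-contributions is the one genuinely delicate point of the computation. Once that cancellation is confirmed, the operator matches the hypergeometric equation with $c=1$, $a+b=1$, and $ab=\tfrac14$, hence $a=b=\tfrac12$, which is precisely the equation underlying the classical level-$2$ inversion formula (\ref{eqn:Jacobi-inversion-level-2}); this consistency serves as a useful check on the algebra.
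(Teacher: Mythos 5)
Your proposal is correct and follows essentially the same route as the paper's own proof: reduce Halphen's system to derivatives of the $\theta^2$-level quantities, compute $D\lambda$ (with the $E_2$-cancellation you note), form $\frac{dP}{d\lambda}=\frac{DP}{D\lambda}$ and $\frac{d^2P}{d\lambda^2}=\frac{1}{D\lambda}D\!\left(\frac{dP}{d\lambda}\right)$, and substitute into the hypergeometric operator. Your three derivative formulas agree with the paper's (your $DQ=\frac{Q(2P^2-Q^2+R)}{12}$ is in fact the corrected form of the paper's misprinted $DQ=\frac{2P^2Q-Q^3+Q}{12}$), and the $R$-terms do cancel in the final combination, leaving exactly $\frac14 P$ as you anticipate.
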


\begin{proof}
Halphen's system (\ref{thm:Halphen-rewrite}) implies that 
\begin{equation*}
DP=\frac{-P^3+2 P Q^2+P R}{12}, \,\,
DQ=\frac{2 P^2 Q-Q^3+Q}{12},  \,\,
DR=\frac{-P^4+P^2 Q^2-Q^4+R^2}{12}. 
\end{equation*}
Then we have 
\begin{equation*}
D \lambda=D\left(\frac{Q^2}{P^2} \right)=
-\frac{Q^2 (Q-P) (P+Q)}{2 P^2}, 
\end{equation*}
%%%%%%%%%%%%%%%%%%%%%%%%%%%%%%%%%%
\begin{equation*}
\frac{dP}{d  \lambda}=
\frac{d P}{d \tau} 
\frac{d\tau}{d \lambda }
=\frac{D P}{D \lambda}
=
-\frac{P^2 \left(-P^3+2 P Q^2+P R\right)}{6 Q^2 (Q-P) (P+Q)},  
\end{equation*}
and 
\begin{equation*}
\frac{d^2 P}{d\lambda^2}
=
\frac{d}{d\tau}  \left( \frac{dP}{d  \lambda} \right) \frac{d\tau}{d\lambda} 
=
\frac{P^5 \left(2 P^4+5 Q^4+4Q^2 R-5P^2 Q^2-2 P^2 R \right)}{12 Q^4 \left(P^2-Q^2\right)^2}, 
\end{equation*}
which imply that 
\begin{align*}
&
\lambda(1-\lambda)
\frac{ d^2 P}{d \lambda^2}
-
\lambda
\frac{d P}{d \lambda}
+
(1-\lambda)
\frac{d P}{d \lambda}
=
\frac14 P,
\end{align*}
which proves the theorem. 
\end{proof}

\begin{theorem}
\label{thm:theta-(0,0)-lambda-hypergeometric-function}
{\it
For every $\tau\in\mathbb{H}^2$ we have 
\begin{equation*}
\theta^2
\left[
\begin{array}{c}
0 \\
0
\end{array}
\right](0,\tau)
=
{ }_2 F_1 
\left(
\frac12, 
\frac12; 
1; 
\lambda(\tau)
\right). 
\end{equation*}
}
\end{theorem}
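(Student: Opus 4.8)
The plan is to follow the same three-step pattern used in the proofs of Theorems \ref{thm:E_4-hypergeometric-function} and \ref{thm:E_6-hypergeometric-function}: invoke the hypergeometric ODE established in Theorem \ref{thm:theta-(0,0)-lambda-hypergeometric}, expand $P=P(\lambda)$ in a basis of two independent solutions, and then fix the two constants by letting $\tau\to +i\infty$ and applying Lemma \ref{lem:hypergeometric-limit-1-divergence}.

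First I would identify the equation $\lambda(1-\lambda)P''+(1-2\lambda)P'-\tfrac14 P=0$ of Theorem \ref{thm:theta-(0,0)-lambda-hypergeometric} as the hypergeometric equation with $\alpha=\beta=\tfrac12$ and $\gamma=1$. At $\lambda=0$ the two local exponents coincide (both equal $0$), so the solution analytic there is ${}_2F_1\!\left(\tfrac12,\tfrac12;1;\lambda\right)$, the second solution carrying a logarithm. Since $\gamma-\alpha-\beta=0$, the exponents at $\lambda=1$ also coincide, and the solution regular at $\lambda=1$ is ${}_2F_1\!\left(\tfrac12,\tfrac12;1;1-\lambda\right)$. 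Because ${}_2F_1\!\left(\tfrac12,\tfrac12;1;\lambda\right)$ instead develops a logarithmic singularity at $\lambda=1$, the two are linearly independent, so I may write
\begin{equation*}
P(\lambda)=A\cdot {}_2F_1\!\left(\tfrac12,\tfrac12;1;\lambda\right)+B\cdot {}_2F_1\!\left(\tfrac12,\tfrac12;1;1-\lambda\right)
\end{equation*}
for constants $A$ and $B$.

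Next I would take the limit $\tau\to +i\infty$ along the imaginary axis. Reading off the leading terms of the $q$-expansions of the defining theta constants gives $P\to 1$ and $\lambda\to +0$. On the right-hand side, ${}_2F_1\!\left(\tfrac12,\tfrac12;1;\lambda\right)\to 1$, while in the second term the argument $1-\lambda$ tends to $1$ with $\gamma-\alpha-\beta=0$; Lemma \ref{lem:hypergeometric-limit-1-divergence} then shows this term diverges like $\tfrac{1}{\pi}\log\tfrac{1}{\lambda}$ (using $\Gamma(1)/\Gamma(\tfrac12)^2=\tfrac{1}{\pi}$). Since the left-hand side stays bounded, the divergence must be absent, forcing $B=0$, and comparing the finite limits then gives $A=1$. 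This establishes the claimed identity.

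The step I expect to be the main obstacle is the limiting analysis at $\tau\to +i\infty$: one must verify carefully that $\lambda\to 0$ and $P\to 1$, and must confirm that ${}_2F_1\!\left(\tfrac12,\tfrac12;1;1-\lambda\right)$ is genuinely the second, linearly independent solution whose blow-up at $\lambda=0$ is what lets one conclude $B=0$. Everything else is the routine bookkeeping already carried out in the analogous $E_4$ and $E_6$ cases.
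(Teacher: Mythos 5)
Your proposal is correct and follows essentially the same route as the paper: decompose $P(\lambda)$ in the basis $\left\{{}_2F_1\!\left(\tfrac12,\tfrac12;1;\lambda\right),\ {}_2F_1\!\left(\tfrac12,\tfrac12;1;1-\lambda\right)\right\}$, let $\tau\to+i\infty$ so that $\lambda\to+0$ and $P\to 1$, and invoke Lemma \ref{lem:hypergeometric-limit-1-divergence} to force $B=0$ and $A=1$. The extra details you supply (linear independence of the two solutions and the constant $\Gamma(1)/\Gamma(\tfrac12)^2=1/\pi$ in the logarithmic divergence) are correct refinements of what the paper leaves implicit.
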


\begin{proof}
Theorem \ref{thm:theta-(0,0)-lambda-hypergeometric} implies that
\begin{equation*}
P(\lambda)=
A\cdot
{ }_2 F_1
\left(
\frac12, 
\frac12; 
1; 
\lambda
\right)
+
B
\cdot
{ }_2 F_1
\left(
\frac12, 
\frac12; 
1; 
1-\lambda
\right),   
%%%%%%%%%%%%%%%%%%%%
\end{equation*}
where $A$ and $B$ are constants. 
\par
Taking the limit $\tau \to +i \infty$ along the imaginary axis 
yields $\lambda \to +0.$ 
Since 
\begin{equation*}
 \lim_{ \lambda \to +0} P(\lambda)=1,
\end{equation*} 
Lemma \ref{lem:hypergeometric-limit-1-divergence} shows that $A=1$ and $B=0,$ 
which proves the theorem. 
\end{proof}

\subsubsection*{Remark}
The Jacobi's inversion formula of modular forms of level four can be obtained by changing 
$\tau \to 2 \tau,$ or $\tau \to \tau/2$ in Theorem \ref{thm:theta-(0,0)-lambda-hypergeometric-function}. 
Recall that 
\begin{equation*}
j\left(  \frac{\tau}{2} \right)
=
2^4
\frac
{
(\lambda^2+14\lambda+1)^3
}
{
\lambda (\lambda-1)^4
}, \,\,
\mathrm{or} \,\,
j(2\tau)
=
2^{16}
\frac
{
\left(1-\lambda+\frac{1}{16}\lambda^2\right)^3
}
{
\lambda^4 (1-\lambda)
}.
\end{equation*}

\subsection{On the theta constants (2)  }

In this subsection we use the same notations as Theorem \ref{thm:theta-(0,0)-lambda-hypergeometric}.

\begin{proposition}
\label{prop:theta-(1,0)-lambda}
{\it
For every $\tau\in\mathbb{H}^2$ set 
$
\displaystyle 
\lambda=
\theta^4
\left[
\begin{array}{c}
1 \\
0
\end{array}
\right](0,\tau)
/
\theta^4
\left[
\begin{array}{c}
0 \\
0
\end{array}
\right](0,\tau). 
$ 
Then the inverse function $\tau=\tau(\lambda)$ yields $Q=Q(\lambda)=Q(\tau(\lambda))$ and 
$Q(\lambda)$ satisfies the hypergeometric differential equations:
\begin{equation}
\frac{d^2 Q}{d \lambda^2}
+
\frac{1}{\lambda-1}
\frac{dQ}{d\lambda} 
-
\frac{1}{4 \lambda^2(\lambda-1)}
Q
=0.
\end{equation}
}
\end{proposition}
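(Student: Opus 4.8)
The plan is to carry out the same computation as in the proof of Theorem~\ref{thm:theta-(0,0)-lambda-hypergeometric}, now following $Q$ rather than $P$. First I would recall the specialized Halphen system for the squared theta constants $P,Q$ and $R=E_2$ fixed at the start of this subsection,
\[
DP=\frac{-P^3+2PQ^2+PR}{12},\quad
DQ=\frac{2P^2Q-Q^3+QR}{12},\quad
DR=\frac{-P^4+P^2Q^2-Q^4+R^2}{12},
\]
together with the derivative of the modular parameter $\lambda=Q^2/P^2$ already obtained there,
\[
D\lambda=-\frac{Q^2(Q-P)(P+Q)}{2P^2}.
\]

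Next I would form the first derivative by the chain rule, $\dfrac{dQ}{d\lambda}=\dfrac{DQ}{D\lambda}$, which is a rational function of $P,Q,R$ after dividing the two displays above. Differentiating this expression once more with respect to $\tau$ and dividing again by $D\lambda$ (the factors of $2\pi i$ in $D=\frac{1}{2\pi i}\frac{d}{d\tau}$ cancel) produces $\dfrac{d^2Q}{d\lambda^2}$ as a second rational function of $P,Q,R$; at this step $DR$ re-enters and is immediately eliminated through the Halphen relation above, so that an $R^2$-term appears.

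Finally I would substitute both expressions into the claimed combination $\dfrac{d^2Q}{d\lambda^2}+\dfrac{1}{\lambda-1}\dfrac{dQ}{d\lambda}-\dfrac{Q}{4\lambda^2(\lambda-1)}$, replacing $\lambda$ by $Q^2/P^2$ everywhere, and verify that it vanishes identically. The main obstacle is exactly the one met in the $P$-case: both $\dfrac{dQ}{d\lambda}$ and $\dfrac{d^2Q}{d\lambda^2}$ carry the non-modular Eisenstein factor $R=E_2$, and it is only for the precise coefficients $\frac{1}{\lambda-1}$ and $-\frac{1}{4\lambda^2(\lambda-1)}$ that the resulting $R$- and $R^2$-contributions cancel, leaving a purely rational expression in $P$ and $Q$ that collapses to zero once $\lambda=Q^2/P^2$ is imposed. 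As a consistency check, the indicial equation of the target equation at $\lambda=0$ is $(r-\tfrac12)^2=0$, matching the expected local behaviour $Q\sim\lambda^{1/2}$ that follows from $Q/P=\lambda^{1/2}$ and $P\to 1$ as $\tau\to+i\infty$.
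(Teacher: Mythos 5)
Your proposal is correct and follows essentially the same route as the paper: it uses the specialized Halphen system for $(P,Q,R)=(\theta^2[{0 \atop 0}],\theta^2[{1 \atop 0}],E_2)$ and $D\lambda=-\tfrac{Q^2(Q-P)(P+Q)}{2P^2}$, computes $\tfrac{dQ}{d\lambda}=\tfrac{DQ}{D\lambda}$ and $\tfrac{d^2Q}{d\lambda^2}$ by the chain rule, and verifies the stated equation (the paper checks the equivalent form $\lambda(\lambda-1)\tfrac{d^2Q}{d\lambda^2}+\lambda\tfrac{dQ}{d\lambda}=\tfrac{Q}{4\lambda}$, in which the $E_2$-terms cancel exactly as you predict). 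Incidentally, your recollection $DQ=\tfrac{2P^2Q-Q^3+QR}{12}$ corrects a typographical slip in the paper's proof of Theorem \ref{thm:theta-(0,0)-lambda-hypergeometric}, where the factor $R$ is missing from the last term.
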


\begin{proof}
We first have 
\begin{equation*}
\frac{d Q}{d  \lambda}=
\frac{d Q}{d \tau} 
\frac{d\tau}{d \lambda }
=\frac{D Q}{D \lambda}
=
\frac{P^2 \left(2 P^2-Q^2+R\right)}{6 Q \left(P^2-Q^2\right)}, 
\end{equation*}
and 
\begin{equation*}
\frac{d^2 Q}{d\lambda^2}
=
\frac{d}{d\tau}  \left( \frac{dQ}{d  \lambda} \right) \frac{d\tau}{d\lambda} 
=
-\frac{P^4 \left(3 P^4-7 P^2 Q^2+2 Q^4-2 Q^2 R\right)}{12 Q^3 (P-Q)^2 (P+Q)^2},  
\end{equation*}
which imply that 
\begin{align*}
&
\lambda(\lambda-1)
\frac{ d^2 P}{d \lambda^2  }
+
\lambda
\frac{d Q}{d \lambda}
=
\frac{P^2}{4 Q}=
\frac{Q}{4 \lambda},
\end{align*}
which proves the proposition. 
\end{proof}

\begin{theorem}
\label{thm:theta-(1,0)-lambda-hypergeometric}
{\it
For every $\tau\in\mathbb{H}^2$ set 
$
\displaystyle 
x=\frac{1}{\lambda}
$ 
Then the inverse function $\tau=\tau(x)$ yields $Q=Q(x)=P(\tau(x))$ and 
$Q(x)$ satisfies the hypergeometric differential equations:
\begin{equation}
\label{eqn:ODE-(1,0)}
x(1-x)
\frac{d^2Q}{dx^2}
+(1-2x) 
\frac{dQ}{dx} 
-
\frac14
Q
=0.
\end{equation}
}
\end{theorem}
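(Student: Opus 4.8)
The plan is to obtain Theorem~\ref{thm:theta-(1,0)-lambda-hypergeometric} from Proposition~\ref{prop:theta-(1,0)-lambda} by the single change of variable $x=1/\lambda$, in precisely the manner in which Theorem~\ref{thm:E_4-hypergeometric} was deduced from Proposition~\ref{prop:E_4-linear} via $J\to 1/J$. First I would clear denominators in the ODE of Proposition~\ref{prop:theta-(1,0)-lambda}: multiplying it through by $\lambda^2(\lambda-1)$ puts it into the polynomial form
\begin{equation*}
\lambda^2(\lambda-1)\frac{d^2 Q}{d\lambda^2}+\lambda^2\frac{dQ}{d\lambda}-\frac14 Q=0,
\end{equation*}
which is the relation I will ultimately invoke.

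Next I would record the chain-rule transformation. With $\lambda=1/x$, so that $d\lambda/dx=-1/x^2$, one has
\begin{equation*}
\frac{dQ}{dx}=-\frac{1}{x^2}\frac{dQ}{d\lambda},\qquad
\frac{d^2 Q}{dx^2}=\frac{1}{x^4}\frac{d^2 Q}{d\lambda^2}+\frac{2}{x^3}\frac{dQ}{d\lambda}.
\end{equation*}
Substituting these into the left-hand side of (\ref{eqn:ODE-(1,0)}) and collecting terms, the second-order part contributes $(1-x)x^{-3}\,d^2Q/d\lambda^2$, while the first-order coefficient is $\bigl[2(1-x)-(1-2x)\bigr]x^{-2}$. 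The key simplification is that this bracket collapses: $2(1-x)-(1-2x)=1$, so the operator reduces to
\begin{equation*}
\frac{1-x}{x^3}\frac{d^2 Q}{d\lambda^2}+\frac{1}{x^2}\frac{dQ}{d\lambda}-\frac14 Q.
\end{equation*}

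Finally I would substitute back $x=1/\lambda$, using $\tfrac{1-x}{x^3}=\lambda^2(\lambda-1)$ and $x^{-2}=\lambda^2$. This turns the displayed operator into exactly the polynomial form of the Proposition's ODE recorded above, which vanishes; hence the left-hand side of (\ref{eqn:ODE-(1,0)}) is zero and the theorem follows. The main obstacle here is purely bookkeeping rather than conceptual: one must carry the chain-rule substitution carefully through the second derivative and verify that the first-order coefficient indeed collapses to $1$, since any slip there would spoil the match with the cleared-denominator form of Proposition~\ref{prop:theta-(1,0)-lambda}.
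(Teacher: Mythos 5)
Your proposal is correct and is exactly the paper's proof: the paper's entire argument for Theorem~\ref{thm:theta-(1,0)-lambda-hypergeometric} is ``change $\lambda \to 1/\lambda$ in Proposition~\ref{prop:theta-(1,0)-lambda},'' and you have simply carried out that substitution explicitly, with the chain-rule computation and the collapse of the first-order coefficient checking out. Nothing is missing; you have just supplied the bookkeeping the paper leaves to the reader.
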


\begin{proof}
The theorem can be proved by changing 
$\displaystyle  \lambda \to \frac{1}{\lambda}$ in Proposition \ref{prop:theta-(1,0)-lambda}. 
\end{proof}

\begin{theorem}
\label{thm:theta-(1,0)-lambda-hypergeometric-function}
{\it
For every $\tau\in\mathbb{H}^2$ we have 
\begin{equation*}
\theta^2
\left[
\begin{array}{c}
1 \\
0
\end{array}
\right](0,\tau)
=
\left(
\frac{1}{\lambda}
\right)^{-\frac12}
{ }_2 F_1 
\left(
\frac12, 
\frac12; 
1; 
\frac{1}{1/\lambda(\tau)}
\right). 
\end{equation*}
}
\end{theorem}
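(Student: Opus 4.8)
The plan is to reduce the identity to the already-proved Theorem \ref{thm:theta-(0,0)-lambda-hypergeometric-function}, after noticing that the prefactor $(1/\lambda)^{-1/2}$ and the hypergeometric argument $1/(1/\lambda)$ are merely $\lambda^{1/2}$ and $\lambda$. First I would read off from the definition of the modular lambda function that
\[
\lambda
=
\frac{\theta^4\left[\begin{array}{c}1\\0\end{array}\right](0,\tau)}{\theta^4\left[\begin{array}{c}0\\0\end{array}\right](0,\tau)}
=
\left(
\frac{\theta^2\left[\begin{array}{c}1\\0\end{array}\right](0,\tau)}{\theta^2\left[\begin{array}{c}0\\0\end{array}\right](0,\tau)}
\right)^{2},
\]
so that, after extracting a square root,
\[
\theta^2\left[\begin{array}{c}1\\0\end{array}\right](0,\tau)
=
\lambda^{1/2}\,
\theta^2\left[\begin{array}{c}0\\0\end{array}\right](0,\tau).
\]
Inserting the value $\theta^2[0,0](0,\tau)={ }_2 F_1(\frac12,\frac12;1;\lambda)$ supplied by Theorem \ref{thm:theta-(0,0)-lambda-hypergeometric-function}, and rewriting $\lambda^{1/2}=(1/\lambda)^{-1/2}$ together with $\lambda=1/(1/\lambda)$, reproduces the asserted formula verbatim.

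The only point requiring care is the choice of branch, since extracting the square root a priori gives $\theta^2[1,0]=\pm\lambda^{1/2}\theta^2[0,0]$, and I must confirm the sign is $+$; this is the main (and essentially the sole) obstacle. I would settle it by comparing the two sides along the imaginary axis as $\tau\to+i\infty$: in the nome $\tilde q=e^{\pi i\tau}$ one has $\theta^2[0,0](0,\tau)\to 1$, while $\theta^2[1,0](0,\tau)$ and $\lambda$ tend to $0$ through positive values with matching leading terms, so the quotient $\theta^2[1,0]/\theta^2[0,0]$ agrees with $+\lambda^{1/2}$ near the cusp. Because $\lambda$ omits $0$ and $1$ on the simply connected domain $\mathbb{H}^2$, a single-valued holomorphic branch of $\lambda^{1/2}$ exists there, both sides of the claimed identity are holomorphic, and agreement on a neighborhood of the cusp propagates to all of $\mathbb{H}^2$ by the identity theorem.

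If instead one prefers to stay parallel with the proofs of Theorems \ref{thm:E_4-hypergeometric-function}, \ref{thm:E_6-hypergeometric-function} and \ref{thm:theta-(0,0)-lambda-hypergeometric-function}, I would argue from the hypergeometric equation (\ref{eqn:ODE-(1,0)}) provided by Theorem \ref{thm:theta-(1,0)-lambda-hypergeometric}. Here $\alpha=\beta=\frac12$ and $\gamma=1$, so the two exponents at $x=\infty$ coincide and the local solution space there is spanned by the distinguished solution $x^{-1/2}{ }_2 F_1(\frac12,\frac12;1;1/x)$ and a companion carrying a $\log x$ factor. Writing $Q(x)=\theta^2[1,0](0,\tau(x))$ as a combination of the two and letting $\tau\to+i\infty$ (hence $\lambda\to+0$ and $x=1/\lambda\to+\infty$), the known leading behavior $Q\sim\lambda^{1/2}\sim x^{-1/2}$ shows that no logarithm can be present, so the companion's coefficient vanishes and the distinguished solution enters with coefficient $1$; exactly as in the proof of Theorem \ref{thm:theta-(0,0)-lambda-hypergeometric-function}, Lemma \ref{lem:hypergeometric-limit-1-divergence} is what certifies the logarithmic divergence that must be excluded. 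In this route the genuine obstacle is the bookkeeping for the confluent logarithmic solution at $x=\infty$, which the direct argument above sidesteps completely.
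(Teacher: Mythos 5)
Your proposal is correct and takes essentially the same route as the paper: the paper's two-line proof simply combines the relation $\theta^2\left[\begin{array}{c}1\\0\end{array}\right](0,\tau)=\lambda^{1/2}\,\theta^2\left[\begin{array}{c}0\\0\end{array}\right](0,\tau)$ with Theorem \ref{thm:theta-(0,0)-lambda-hypergeometric-function}, noting that $x^{-1/2}\,{}_2F_1\left(\frac12,\frac12;1;\frac1x\right)$ solves the ODE (\ref{eqn:ODE-(1,0)}) --- which is exactly your first argument, with your second (ODE-based) route being an elaboration of that remark. Your additional care about the branch of $\lambda^{1/2}$, settled via the cusp expansion and the identity theorem, only makes explicit what the paper leaves implicit.
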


\begin{proof}
The formula follows from Theorem \ref{thm:theta-(0,0)-lambda-hypergeometric-function}. 
Note that $
\displaystyle
x^{-\frac12}  { }_2 F_1 
\left(
\frac12, 
\frac12; 
1; 
\frac{1}{x}
\right) $ is a solution of the ODE (\ref{eqn:ODE-(1,0)}). 
\end{proof}

\subsection{On the theta constants (3)  }
In this subsection we set 
\begin{equation*}
P=
\theta^2
\left[
\begin{array}{c}
0 \\
0
\end{array}
\right](0,\tau),  \,\,
%%%%%%%%%%%%%%%%%%%%%%%%%%%
Q=
\theta^2
\left[
\begin{array}{c}
0 \\
1
\end{array}
\right](0,\tau), \,\, 
\mathrm{and}  \,\,
R=E_2(\tau).
\end{equation*}

\begin{theorem}
\label{thm:theta-(0,1)-lambda-hypergeometric}
{\it
For every $\tau\in\mathbb{H}^2$ set 
$
\displaystyle 
y=
\frac{1}{1-\lambda}. 
$ 
Then the inverse function $\tau=\tau( y)$ yields $Q=Q(y)=Q(\tau( y))$ and 
$Q(y)$ satisfies the hypergeometric differential equations:
\begin{equation}
y(1-y)
\frac{d^2 Q}{d  y^2}
+(1-2 y) 
\frac{dQ}{d y } 
-
\frac14
Q
=0.
\end{equation}
}
\end{theorem}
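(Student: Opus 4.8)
The plan is to carry out the same computation as in Theorems~\ref{thm:theta-(0,0)-lambda-hypergeometric} and~\ref{thm:theta-(1,0)-lambda-hypergeometric}. Note first that the present case is the companion of the theta constant~(2) under the interchange of $\theta^4\left[\begin{array}{c}1\\0\end{array}\right](0,\tau)$ and $\theta^4\left[\begin{array}{c}0\\1\end{array}\right](0,\tau)$, that is, under $\lambda\mapsto 1-\lambda$; this is exactly why the correct variable here is $y=1/(1-\lambda)$, mirroring the choice $x=1/\lambda$ of Theorem~\ref{thm:theta-(1,0)-lambda-hypergeometric}. First I would specialize Halphen's system (Theorem~\ref{thm:Halphen-rewrite}), using its third triple, to $P^2=\theta^4\left[\begin{array}{c}0\\0\end{array}\right](0,\tau)$ and $Q^2=\theta^4\left[\begin{array}{c}0\\1\end{array}\right](0,\tau)$; differentiating $D(P^2)=2P\,DP$ and $D(Q^2)=2Q\,DQ$ then gives
\begin{equation*}
DP=\frac{P^3-2PQ^2+PR}{12},\quad DQ=\frac{Q^3-2P^2Q+QR}{12},\quad DR=\frac{-P^4+P^2Q^2-Q^4+R^2}{12}.
\end{equation*}

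By Jacobi's identity one has $P^2-Q^2=\theta^4\left[\begin{array}{c}1\\0\end{array}\right](0,\tau)=\lambda P^2$, so that $1-\lambda=Q^2/P^2$ and $y=P^2/Q^2$. A short computation then yields the clean expression
\begin{equation*}
Dy=\frac{P^2(P^2-Q^2)}{2Q^2}.
\end{equation*}
Next I would form the derivatives in $y$ through $\tau$, exactly as before, by
\begin{equation*}
\frac{dQ}{dy}=\frac{DQ}{Dy},\qquad \frac{d^2Q}{dy^2}=\frac{d}{d\tau}\!\left(\frac{dQ}{dy}\right)\frac{d\tau}{dy},
\end{equation*}
obtaining both as rational functions of $P,Q,R$, and substitute them into $y(1-y)\frac{d^2Q}{dy^2}+(1-2y)\frac{dQ}{dy}$, with the goal of reducing this combination to $\frac14 Q$.

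The step I expect to be the main obstacle is this final reduction: after substitution the left-hand side is a rational expression in $P,Q,R$ whose numerator must be shown to vanish identically once the relation $P^2-Q^2=\lambda P^2$ is used and the Eisenstein contributions are cleared. The terms linear in $R$ coming from $dQ/dy$ and the $R^2$ terms generated by $d^2Q/dy^2$ have to cancel, leaving only the theta-quartic part; this bookkeeping is delicate precisely because the third Halphen triple carries the extra signs visible in the formulas above, so sign errors are easy to make. As an independent check I would invoke Pfaff's transformation: since $Q=(1-\lambda)^{1/2}P$ and $P={ }_2F_1\!\left(\frac12,\frac12;1;\lambda\right)$ by Theorem~\ref{thm:theta-(0,0)-lambda-hypergeometric-function}, it follows that $Q={ }_2F_1\!\left(\frac12,\frac12;1;1-y\right)$, which solves the asserted equation because the hypergeometric operator with parameters $\left(\frac12,\frac12;1\right)$ is invariant under $y\mapsto 1-y$.
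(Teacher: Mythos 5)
Your proposal is correct and follows essentially the same route as the paper's own proof: the paper likewise specializes the third triple of Halphen's system (Theorem \ref{thm:Halphen-rewrite}) to the squared theta constants $P$, $Q$ and $R=E_2(\tau)$ of that subsection, obtains exactly your relations for $DP$, $DQ$, $DR$ and $Dy=\frac{P^2(P-Q)(P+Q)}{2Q^2}$, and then verifies the equation by the same substitution $\frac{dQ}{dy}=\frac{DQ}{Dy}$, $\frac{d^2Q}{dy^2}=\frac{d}{d\tau}\left(\frac{dQ}{dy}\right)\frac{d\tau}{dy}$, so the ``delicate bookkeeping'' you flag is precisely the computation the paper carries out and it does close. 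The Pfaff-transformation check you append is also valid and, given Theorem \ref{thm:theta-(0,0)-lambda-hypergeometric-function}, would by itself be a complete computation-free proof; the paper does not argue this way, though its subsequent Theorem \ref{thm:theta-(0,1)-lambda-hypergeometric-function} is exactly the inversion formula your check produces.
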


\begin{proof}
Halphen's system (\ref{thm:Halphen-rewrite}) implies that 
\begin{equation*}
DP=\frac{P^3-2 P Q^2+P R}{12}, \,\,
DQ=\frac{-2 P^2 Q+Q^3+Q R}{12}, \,\,
DR=\frac{-P^4+P^2 Q^2-Q^4+R^2}{12}. 
\end{equation*}
Then we have 
\begin{equation*}
D y=D\left(\frac{P^2}{Q^2} \right)=
\frac{P^2 (P-Q) (P+Q)}{2 Q^2}, 
\end{equation*}
%%%%%%%%%%%%%%%%%%%%%%%%%%%%%%%%%%
\begin{equation*}
\frac{dQ}{d  y}=
\frac{d Q}{d \tau} 
\frac{d\tau}{d y }
=\frac{D Q}{D y }
=
\frac{Q^2 \left(-2 P^2 Q+Q^3+Q R\right)}{6 P^2 (P-Q) (P+Q)},   
\end{equation*}
and 
\begin{equation*}
\frac{d^2 Q}{d  y^2}
=
\frac{d}{d\tau}  \left( \frac{dQ}{d  y } \right) \frac{d\tau}{d  y  } 
=
\frac{Q^5 \left(5 P^4-5 P^2 Q^2-4 P^2 R+2 Q^4+2 Q^2 R\right)}{12 P^4 (P-Q)^2 (P+Q)^2},  
\end{equation*}
which imply that 
\begin{align*}
&
y(1-y)
\frac{ d^2 Q}{d y^2}
-
y
\frac{d Q}{d \tilde{\lambda  }}
+
(1-y)
\frac{d Q}{d y }
=
\frac14 Q,
\end{align*}
which proves the theorem. 
\end{proof}

\begin{theorem}
\label{thm:theta-(0,1)-lambda-hypergeometric-function}
{\it
For every $\tau\in\mathbb{H}^2$ we have 
\begin{equation*}
\theta^2
\left[
\begin{array}{c}
0 \\
1
\end{array}
\right](0,\tau)
=
{ }_2 F_1 
\left(
\frac12, 
\frac12; 
1; 
\frac{\lambda(\tau)}{\lambda(\tau)-1} 
\right). 
\end{equation*}
}
\end{theorem}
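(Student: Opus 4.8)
The plan is to follow exactly the pattern of Theorems~\ref{thm:theta-(0,0)-lambda-hypergeometric-function} and \ref{thm:theta-(0,1)-lambda-hypergeometric}: exploit the hypergeometric equation already produced for $Q(y)$, express $Q$ in a basis of its solutions, and pin down the two constants from the behaviour of the theta constant as $\tau\to+i\infty$. The starting observation is the elementary identity
\begin{equation*}
\frac{\lambda}{\lambda-1}=1-\frac{1}{1-\lambda}=1-y,
\end{equation*}
so the argument appearing in the claim is precisely $1-y$, where $y=1/(1-\lambda)$ is the variable of Theorem~\ref{thm:theta-(0,1)-lambda-hypergeometric}. Since that theorem shows $Q=Q(y)$ solves the hypergeometric equation with $\alpha=\beta=\tfrac12,\ \gamma=1$, whose two natural solutions are ${}_2F_1(\tfrac12,\tfrac12;1;y)$ (regular at $y=0$) and ${}_2F_1(\tfrac12,\tfrac12;1;1-y)$ (regular at $y=1$), I would write
\begin{equation*}
Q(y)=A\cdot {}_2F_1(\tfrac12,\tfrac12;1;1-y)+B\cdot {}_2F_1(\tfrac12,\tfrac12;1;y)
\end{equation*}
for constants $A,B$.

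Next I would evaluate both sides in the cusp limit. As $\tau\to+i\infty$ one has $\lambda\to+0$ (as recorded in the proof of Theorem~\ref{thm:theta-(0,0)-lambda-hypergeometric-function}), hence $y=1/(1-\lambda)\to1$ and $1-y=\lambda/(\lambda-1)\to0$. The first term therefore tends to $A\cdot{}_2F_1(\tfrac12,\tfrac12;1;0)=A$, while the second term is ${}_2F_1(\tfrac12,\tfrac12;1;y)$ with $y\to1$, which diverges logarithmically by Lemma~\ref{lem:hypergeometric-limit-1-divergence} (here $\gamma-\alpha-\beta=0$). On the other hand the $q$-expansion of the theta constant gives $\lim_{\tau\to+i\infty}\theta^2\left[\begin{array}{c}0\\1\end{array}\right](0,\tau)=1$, a finite value. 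Comparing, the divergent contribution must be absent, so $B=0$, and matching the finite limit then forces $A=1$. Substituting $1-y=\lambda/(\lambda-1)$ yields the assertion.

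The main obstacle is not the differential-equation bookkeeping but the cusp analysis: one must verify that $\theta^2[0,1]\to1$ and that the second solution genuinely blows up, so that finiteness of $Q$ excludes it. A subtle point is that for $\lambda\in(0,1)$ one has $y>1$, so the branch point at $y=1$ is approached from above rather than from below as in Lemma~\ref{lem:hypergeometric-limit-1-divergence}; however the logarithmic singularity persists on both sides of $y=1$, so the bounded/unbounded dichotomy still determines the constants. To be safe I would cross-check the whole identity by the following independent shortcut, which avoids the limit entirely: Jacobi's relation $\theta_3^4=\theta_2^4+\theta_4^4$ together with $\lambda=\theta_2^4/\theta_3^4$ gives $\theta_4^2=(1-\lambda)^{1/2}\theta_3^2$, and then Theorem~\ref{thm:theta-(0,0)-lambda-hypergeometric-function} combined with Pfaff's transformation ${}_2F_1(a,b;c;z)=(1-z)^{-a}{}_2F_1(a,c-b;c;z/(z-1))$ at $a=b=\tfrac12,\ c=1,\ z=\lambda$ cancels the factor $(1-\lambda)^{1/2}$ and reproduces ${}_2F_1(\tfrac12,\tfrac12;1;\lambda/(\lambda-1))$ exactly.
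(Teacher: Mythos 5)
Your proposal is correct and, in its main line, is essentially the paper's own proof: the paper likewise takes the ODE of Theorem~\ref{thm:theta-(0,1)-lambda-hypergeometric}, writes $Q(y)=A\cdot{}_2F_1\left(\tfrac12,\tfrac12;1;1-y\right)+B\cdot(\text{second solution})$, sends $\tau\to+i\infty$ so that $y\to1+0$, and concludes $A=1$, $B=0$ from the finiteness of $Q$ and Lemma~\ref{lem:hypergeometric-limit-1-divergence}. The one substantive difference is the choice of second basis solution: the paper takes the solution adapted to $y=\infty$, namely $(-y)^{-1/2}\,{}_2F_1\left(\tfrac12,\tfrac12;1;\tfrac1y\right)$, whose divergence as $y\to1^+$ is exactly the $x\to1-0$ situation of Lemma~\ref{lem:hypergeometric-limit-1-divergence} (since $1/y\to1^-$), whereas you take the solution adapted to $y=0$, ${}_2F_1\left(\tfrac12,\tfrac12;1;y\right)$, which for $y>1$ sits on the branch cut and forces the continuation argument you flag; your resolution of that point (the logarithmic singularity survives analytic continuation around $y=1$, so the bounded/unbounded dichotomy still applies) is correct, but the paper's basis avoids the issue altogether. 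Your closing cross-check is a genuinely different and in fact cleaner route: Jacobi's identity gives $\theta^2\left[\begin{smallmatrix}0\\1\end{smallmatrix}\right](0,\tau)=(1-\lambda)^{1/2}\,\theta^2\left[\begin{smallmatrix}0\\0\end{smallmatrix}\right](0,\tau)$, and Pfaff's transformation applied to Theorem~\ref{thm:theta-(0,0)-lambda-hypergeometric-function} then yields the claim with no limit analysis and no connection-formula bookkeeping at all (it is also the natural analogue of the paper's own remark that the identity follows from the substitution $\tau\to\tau+1$).
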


\begin{proof}
Theorem \ref{thm:theta-(0,1)-lambda-hypergeometric} implies that
\begin{equation*}
Q( y)=
A\cdot
{ }_2 F_1
\left(
\frac12, 
\frac12; 
1; 
1-y
\right)
+
B
\cdot
(-y)^{-\frac12}
{ }_2 F_1
\left(
\frac12, 
\frac12; 
1; 
\frac{1}{y}
\right),   
%%%%%%%%%%%%%%%%%%%%
\end{equation*}
where $A$ and $B$ are constants. 
\par
Taking the limit $\tau \to +i \infty$ along the imaginary axis 
yields $y \to 1+0.$ 
Since 
\begin{equation*}
 \lim_{ y \to 1+0} Q(y)=1,
\end{equation*} 
Lemma \ref{lem:hypergeometric-limit-1-divergence} shows that $A=1$ and $B=0,$ 
which proves the theorem. 
\end{proof}

\subsubsection*{Remark}
The formula of Theorem \ref{thm:theta-(0,1)-lambda-hypergeometric-function} can also be obtained by 
changing $\tau \to \tau+1$ in Theorem \ref{thm:theta-(0,0)-lambda-hypergeometric-function}.

\section{The Hypergeometric equations and the cubic theta functions}
\label{sec:hypergeometric-cubic}

In \cite{Matsuda1}, we derived the systems of ODEs, 
\begin{align}
Da(\tau)=&\frac{3a^3(\tau)+a(\tau)E_2(\tau)-4b^3(\tau)}{12},   \,\,\,
D E_2(\tau)=\frac{-9a^4(\tau)+8a(\tau)b^3(\tau)+(E_2(\tau))^2}{12},  \notag \\
Db^3(\tau)=&\frac{ -a^2(\tau)b^3(\tau)+E_2(\tau) b^3(\tau) }{4   },  \label{eqn:ODE-a-b^3}
\end{align}
%%%%%%%%%%%%%%%%%%%%%%%%%%
and 
%%%%%%%%%%%%%%%%%%%%%%%%%%%
\begin{align}
Da(\tau)=&
\frac
{
-3 a^3(\tau)+3 a(\tau) E_2(3\tau)+4 c^3(\tau)
}
{
12
}, \,\,
%%%%%%%%%%%%%%%%%%%%%%%%%%%%%%%%%%%%
DE_2(3\tau)
=
\frac
{
-9 a^4(\tau)+8 a(\tau) c^3(\tau)+9 E_2(3\tau)^2
}
{
36
},   \notag   \\
%%%%%%%%%%%%%%%%%%%%%%%%%%%%%%%%%%%%%%%
Dc^3(\tau)=&
\frac
{
a^2(\tau) c^3(\tau)+3 E_2(3\tau) c^3(\tau)
}
{
4
},   \,\,\,D=\frac{1}{2 \pi i} \frac{d}{d\tau}.   \label{eqn:ODE-a-c^3}
\end{align}
\par
From Cooper \cite[p. 187, p. 184]{Cooper}, 
we recall the following formulas, 
\begin{align}
a(q)
=&\sum_{m,n\in\mathbb{Z}} q^{m^2+mn+n^2}
=1+6\sum_{n=1}^{\infty} (d_{1,3}(n)-d_{2,3}(n)) q^n,      \label{eqn:formula-a(q)} \\
b(q)
=&\sum_{m,n\in\mathbb{Z}} \omega^{n-m} q^{m^2+mn+n^2}=\prod_{n=1}^{\infty} \frac{ (1-q^n)^3  }{  (1-q^{3n})  }, \label{eqn:formula-b(q)}   \\
c(q)=&\sum_{m,n\in\mathbb{Z}} q^{(n+\frac13)^2+(n+\frac13)(m+\frac13)+(m+\frac13)^2}
=
3q^{\frac13}
\prod_{n=1}^{\infty} \frac{ (1-q^{3n})^3  }{  (1-q^{n})  }.  \label{eqn:formula-c(q)}
\end{align}

\subsection{On the modular function of level three}

\begin{theorem}
\label{thm:ODE-x-(1,1/3)}
{\it
For every $\tau\in\mathbb{H}^2$ set 
$
\displaystyle 
x(\tau)=
\frac{
c^3(\tau)
}
{
a^3(\tau)
}.
$
Then we have 
\begin{equation*}
\{x,\tau\}
+
\frac{8 x^2-8 x+9}{18  x^2  (x-1)^2}
%%%%%%%%%%%%%%%%%%%%%%%%%%%%%%%%%%
\left(x^{\prime} \right)^2=0. 
\end{equation*}
}
\end{theorem}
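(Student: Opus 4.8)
The plan is to follow the same strategy as in the proofs of Theorems \ref{thm:ODE-J-j} and \ref{thm:ODE-lambda}: express the Schwarzian derivative of $x$ with respect to $\tau$ through the operator $D=\frac{1}{2\pi i}\frac{d}{d\tau}$, and then reduce everything by means of the system (\ref{eqn:ODE-a-c^3}). To this end I abbreviate $a=a(\tau)$ and $R=E_2(3\tau)$, and I use $c^3=x a^3$ to eliminate $c^3$ throughout. Since the substitution $t=2\pi i\tau$ is affine in $\tau$, the Schwarzian obeys $\{x,\tau\}=(2\pi i)^2\bigl(Dg-\tfrac12 g^2\bigr)$ with $g=D^2x/Dx$, while $(x')^2=(2\pi i)^2(Dx)^2$; hence, after dividing by $(2\pi i)^2$, the assertion is equivalent to $Dg-\tfrac12 g^2+\frac{8x^2-8x+9}{18x^2(x-1)^2}(Dx)^2=0$.

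First I would compute $Dx$ by logarithmic differentiation. From (\ref{eqn:ODE-a-c^3}) one reads off $Dc^3/c^3=(a^2+3R)/4$ and $Da/a=\{(4x-3)a^2+3R\}/12$, and the subtraction $Dx/x=Dc^3/c^3-3\,Da/a$ collapses to the clean formula $Dx=a^2x(1-x)$. Applying the same technique to $Dx=a^2x(1-x)$ gives $g=D^2x/Dx=\frac{(3-8x)a^2+3R}{6}$, for which I also record $D(a^2)=\frac{a^2\{(4x-3)a^2+3R\}}{6}$ and $DR=\frac{(8x-9)a^4+9R^2}{36}$, both obtained directly from the system with $ac^3=xa^4$.

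The remaining task is to assemble $Dg-\tfrac12 g^2$. Differentiating $g$ once more and substituting $D(a^2)$ and $DR$ produces a quadratic form in $a^2$ and $R$: both $Dg$ and $\tfrac12 g^2$ separately carry the mixed term in $a^2R$ and the pure term in $R^2$. The crucial point — and the step I expect to be the main obstacle — is that in the combination $Dg-\tfrac12 g^2$ precisely these $R$-dependent terms cancel identically, leaving only $-\frac{(8x^2-8x+9)a^4}{18}$. Since $\frac{8x^2-8x+9}{18x^2(x-1)^2}(Dx)^2=\frac{(8x^2-8x+9)a^4}{18}$, the two contributions are negatives of one another and the identity follows. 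Conceptually this cancellation is exactly the assertion that $x=c^3/a^3$ is a Hauptmodul: the Schwarzian of its inverse must be a rational function of $x$ alone, with no residual dependence on the weight-two quasimodular quantity $E_2(3\tau)$, and verifying that the $R$-terms vanish is the analytic heart of the computation.
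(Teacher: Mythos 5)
Your proposal is correct --- I verified each step --- but it follows a genuinely different route from the paper. The paper proves this theorem the same way it proves Theorems \ref{thm:ODE-J-j} and \ref{thm:ODE-lambda}: it invokes the Schwarzian chain rule $\{j,\tau\}=\{j,x\}(x')^2+\{x,\tau\}$ together with the master ODE (\ref{eqn:ODE-j}) for $j$ and the modular equation $j=27(1+8x)^3/\bigl(x(1-x)^3\bigr)$ quoted from Cooper, and then grinds through $dj/dx$, $d^2j/dx^2$, $d^3j/dx^3$ and $\{j,x\}$ as explicit rational functions of $x$. You instead work directly from the system (\ref{eqn:ODE-a-c^3}), and your intermediate formulas all check out: $Dx=a^2x(1-x)$, $D(a^2)=\tfrac{1}{6}a^2\{(4x-3)a^2+3R\}$, $DR=\tfrac{1}{36}\{(8x-9)a^4+9R^2\}$, $g=D^2x/Dx=\tfrac{1}{6}\{(3-8x)a^2+3R\}$, and then
\begin{equation*}
Dg-\tfrac12 g^2
=\frac{\bigl[32x^2-16x-27-(3-8x)^2\bigr]a^4}{72}
=-\frac{(8x^2-8x+9)\,a^4}{18},
\end{equation*}
the mixed $a^2R$ and pure $R^2$ terms cancelling exactly as you predicted, which matches $-\frac{8x^2-8x+9}{18x^2(x-1)^2}(Dx)^2$. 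Your reduction of $\{x,\tau\}$ to $(2\pi i)^2\bigl(Dg-\tfrac12 g^2\bigr)$ is also sound, since $q$-differentiation is an affine change of variable. What your approach buys is self-containedness and brevity: it needs neither the modular equation linking $j$ to $x$ nor the $j$-ODE, only the quoted system (\ref{eqn:ODE-a-c^3}), and the cancellation of the quasimodular $R$-terms makes the Hauptmodul mechanism transparent. What the paper's approach buys is uniformity: the identical template (master ODE for $j$ plus a level-$N$ modular equation) disposes of all the Schwarzian theorems in the paper at levels 2, 3, 5 and 6, including cases where a clean closed system like (\ref{eqn:ODE-a-c^3}) is less convenient to exploit.
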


\begin{proof}
We recall the ODE (\ref{eqn:ODE-j}) and the well-known property of the Schwartzian derivative, 
\begin{equation*}
\{j, \tau \}
=\{j, x \}
\left(
\frac{d x}{d\tau}
\right)^2+\{x, \tau\} \,\,
\mathrm{and} \,\,
j=
27
\frac
{
\left( 1+8 x \right)^3
}
{
x (1-x)^3
}.
\end{equation*}
The formula of $j(\tau)$ is written in Cooper \cite[p. 272]{Cooper}. 
\par
Direct computation yields 
\begin{equation*}
\frac{dj}{d x}
=
\frac{27 (8 x+1)^2 \left(8 x^2+20 x-1\right)}{(x-1)^4 x^2},  
\end{equation*}
%%%%%%%%%%%%%%%%%%%%%%%%%%%%%%%%%%%%%%%%%%%%%%%%%
%%%%%%%%%%%%%%%%%%%%%%%%%%%%%%%%%%%%%%%%%%%%%%%%
\begin{equation*}
\frac{d^2j}{d x^2}
=
-\frac{54 (8 x+1) \left(64 x^4+320 x^3+114 x^2-13 x+1\right)}{(x-1)^5 x^3}, 
\end{equation*}
and
%%%%%%%%%%%%%%%%%%%%%%%%%%%%%%%%%%%%%%%%%%%
%%%%%%%%%%%%%%%%%%%%%%%%%%%%%%%%%%%%%%%%%%%%%
%%%%%%%%%%%%%%%%%%%%%%%%%%%%%%%%%%%%%%%%%%%%%%
\begin{align*}
\frac{d^3j}{d x^3}
=&
\frac{162 \left(512 x^6+3840 x^5+2928 x^4+20 x^3-15 x^2+6 x-1\right)}{ x^4 (x-1)^6 }, 
\end{align*}
which imply that 
\begin{equation*}
\{j, x\}
=
-\frac{324 \left(320 x^4+304 x^3+138 x^2-38 x+5\right)}{(x-1)^2 (8 x+1)^2 \left(8 x^2+20 x-1\right)^2}. 
\end{equation*} 
Moreover we have 
\begin{align*}
&
\frac
{
j^2-2^4\cdot 3 \cdot 41 j+2^{15}\cdot 3^4
}
{
2 j^2(j-12^3)^2
}
(j^{\prime})^2
=
\frac
{
j^2-2^4\cdot 3 \cdot 41 j+2^{15}\cdot 3^4
}
{
2 j^2(j-12^3)^2
}
\left(
\frac{dj}{d x}
\frac{d x}{d\tau}
\right)^2  \\
=&
\frac{32768 x^8+139264 x^7+1969152 x^6+1759616 x^5+1039888 x^4-185136 x^3+27632 x^2-224 x+9}{18 (x-1)^2 x^2 (8 x+1)^2 \left(8 x^2+20 x-1\right)^2}
(x^{\prime})^2. 
\end{align*}
Therefore it follows that 
\begin{equation*}
\{x,\tau\}
+
%%%%%%%%%%%%%%%%%%%%%%%%%%%%%%%%%%
\frac{8 x^2-8 x+9}{18  x^2 (x-1)^2}
\left(x^{\prime} \right)^2=0. 
\end{equation*}
\end{proof}

\subsection{On $a(\tau)$}

\begin{theorem}
\label{thm:a-x-hypergeometric-E_2(q)}
{\it
For every $\tau\in\mathbb{H}^2$ set 
$
\displaystyle 
x(\tau)=
\frac{
c^3(\tau)
}
{
a^3(\tau)
}.
$ 
Then the inverse function $\tau=\tau(x)$ yields $a=a(x)=a(\tau(x))$ and 
$a(x)$ satisfies the hypergeometric differential equations:
\begin{equation}
x(1-x)
\frac{d^2 a}{dx^2}
+(1-2x) 
\frac{da}{dx} 
-
\frac29 
a
=0.
\end{equation}
}
\end{theorem}

\begin{proof}
We set 
\begin{equation*}
P=a(\tau), 
Q=E_2(\tau), 
R=b(\tau) 
\,\,
\mathrm{and} \,\,
D=\frac{1}{2\pi i}
\frac{d}{d \tau}. 
\end{equation*}
Eq. (\ref{eqn:ODE-a-b^3}) implies that 
\begin{equation*}
DP=\frac{ 3 P^3+ P Q-4 R^3 }{12}, \,\,
DQ=\frac{-9 P^4+8 P R^3+Q^2}{12}, \,\,
DR=\frac{ -P^2 R+ Q R  }{12}. 
\end{equation*}
Then we have 
\begin{equation*}
D x=D\left(1-\frac{R^3}{P^3} \right)=
\frac{R^3 \left(P^3-R^3\right)}{P^4},  
\end{equation*}
%%%%%%%%%%%%%%%%%%%%%%%%%%%%%%%%%%
\begin{equation*}
\frac{d a}{dx}=
\frac{d a}{d\tau} 
\frac{d\tau}{d x}
=\frac{D P}{D x}
=
\frac{P^4 \left(3 P^3+P Q-4 R^3\right)}{12 R^3 \left(P^3-R^3\right)}, 
\end{equation*}
and 
\begin{equation*}
\frac{d^2 a}{dx^2}
=
\frac{d}{d\tau}  \left( \frac{d a}{dx}  \right) \frac{d\tau}{dx} 
=
\frac{P^3 \left(9 P^6+3 P^4 Q-22 P^3 R^3-6 P Q R^3+16 R^6\right)}{36 R^3 \left(P^3-R^3\right)},
\end{equation*}
which imply that 
\begin{align*}
&
x(1-x)
\frac{ d a}{d x}
-
x
\frac{d a}{d x}
+
(1-x)
\frac{d a}{d x}
=
\frac29 P,
\end{align*}
which proves the theorem. 
\end{proof}

\begin{theorem}
\label{thm:a(q)-hypergeometric}
{\it
For every $\tau\in\mathbb{H}^2$ we have 
\begin{equation*}
a(\tau)
=
{ }_2 F_1 
\left(
\frac13, 
\frac23; 
1; 
\frac{ c^3(\tau)}{ a^3(\tau) }
\right). 
\end{equation*}
}
\end{theorem}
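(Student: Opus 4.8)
The plan is to follow the same strategy used in the proofs of Theorems \ref{thm:E_4-hypergeometric-function}, \ref{thm:theta-(0,0)-lambda-hypergeometric-function}, and \ref{thm:theta-(0,1)-lambda-hypergeometric-function}: realize $a$ as a solution of a second-order Fuchsian equation in the modular parameter, expand it against a basis of hypergeometric solutions, and pin down the coefficients by a boundary analysis at the cusp $\tau\to+i\infty$. Concretely, Theorem \ref{thm:a-x-hypergeometric-E_2(q)} tells us that, with $x=c^3/a^3$, the function $a=a(x)$ satisfies
\begin{equation*}
x(1-x)\frac{d^2 a}{dx^2}+(1-2x)\frac{da}{dx}-\frac29 a=0.
\end{equation*}
Comparing this with the Gauss hypergeometric equation recalled in the Introduction, the third parameter equals $1$, the sum of the first two equals $1$, and their product equals $2/9$; since the roots of $t^2-t+\frac29=0$ are $1/3$ and $2/3$, the relevant parameters are $1/3,\,2/3,\,1$. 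Hence ${ }_2 F_1(1/3,2/3;1;x)$ is the solution analytic at $x=0$, normalized to value $1$ there.

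First I would produce a second, independent solution. The equation is manifestly invariant under $x\mapsto 1-x$: the coefficient $x(1-x)$ and the constant term are symmetric, while $(1-2x)\,d/dx$ is fixed because both factors change sign. Therefore ${ }_2 F_1(1/3,2/3;1;1-x)$ is also a solution. These two are linearly independent, since the first is analytic at $x=0$ with value $1$, whereas the second is ${ }_2 F_1(1/3,2/3;1;z)$ evaluated at $z=1-x\to 1$, where $\gamma-\alpha-\beta=1-1/3-2/3=0$ forces divergence by Lemma \ref{lem:hypergeometric-limit-1-divergence}. I would therefore write
\begin{equation*}
a(x)=A\cdot{ }_2 F_1\left(\frac13,\frac23;1;x\right)+B\cdot{ }_2 F_1\left(\frac13,\frac23;1;1-x\right),
\end{equation*}
with constants $A$ and $B$ to be determined.

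To fix $A$ and $B$, I would take the limit $\tau\to+i\infty$ along the imaginary axis, so that $q=\exp(2\pi i\tau)\to 0$. The $q$-expansions (\ref{eqn:formula-a(q)}) and (\ref{eqn:formula-c(q)}) give $a(q)\to 1$ and $c(q)=3q^{1/3}\prod(\cdots)\to 0$, whence $x=c^3/a^3\to 0$ and $\lim_{x\to 0}a(x)=1$. Because the second basis solution diverges as $x\to 0$ while $a(x)$ remains finite, Lemma \ref{lem:hypergeometric-limit-1-divergence} forces $B=0$; the normalization at $x=0$ then gives $A=1$, yielding the claimed identity $a(\tau)={ }_2 F_1\left(1/3,2/3;1;c^3(\tau)/a^3(\tau)\right)$.

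The step I expect to require the most care is the choice and justification of the second solution. Unlike the $E_4$ and $E_6$ cases, where the exponent difference at $x=1$ equals $1/2$ and produces a clean non-logarithmic companion solution, here both $x=0$ and $x=1$ are resonant ($1-\gamma=0$ and $\gamma-\alpha-\beta=0$), so the equation is doubly logarithmic. The argument nevertheless succeeds because I only ever use the analytic-at-$x=1$ solution ${ }_2 F_1(1/3,2/3;1;1-x)$ together with its controlled divergence as $x\to 0$ supplied by Lemma \ref{lem:hypergeometric-limit-1-divergence}; confirming linear independence directly in this degenerate setting, rather than invoking the generic connection formulas, is the one point that must be handled explicitly.
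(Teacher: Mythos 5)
Your proposal is correct and follows essentially the same route as the paper: expand $a(x)$ in the basis $\left\{ { }_2 F_1\!\left(\tfrac13,\tfrac23;1;x\right),\ { }_2 F_1\!\left(\tfrac13,\tfrac23;1;1-x\right) \right\}$ provided by Theorem \ref{thm:a-x-hypergeometric-E_2(q)}, then use the $q$-expansions (\ref{eqn:formula-a(q)})--(\ref{eqn:formula-c(q)}) to get $x\to+0$ and $a\to 1$ as $\tau\to+i\infty$, and invoke Lemma \ref{lem:hypergeometric-limit-1-divergence} to force $B=0$, $A=1$. Your additional verification that the symmetry $x\mapsto 1-x$ preserves the equation and that the two solutions are genuinely independent in this doubly resonant case is a point the paper leaves implicit, but it is added rigor rather than a different argument.
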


\begin{proof}
Theorem \ref{thm:a-x-hypergeometric-E_2(q)} implies that
\begin{equation*}
a(x)=
A\cdot
{ }_2 F_1
\left(
\frac13, 
\frac23; 
1; 
x
\right)
+
B
\cdot
{ }_2 F_1
\left(
\frac13, 
\frac23; 
1; 
1-x
\right),   \,\,\,
%%%%%%%%%%%%%%%%%%%%
x=
\frac{ c^3 (\tau) }{ a^3(\tau)  }, 
\end{equation*}
where $A$ and $B$ are constants. 
\par
From the formulas (\ref{eqn:formula-a(q)}), (\ref{eqn:formula-b(q)}) and (\ref{eqn:formula-c(q)}), 
taking the limit $\tau \to +i \infty$ along the imaginary axis 
yields $x \to +0.$ 
Since 
\begin{equation*}
 \lim_{x \to +0} a(x)=1,
\end{equation*} 
Lemma \ref{lem:hypergeometric-limit-1-divergence} shows that $A=1$ and $B=0,$ 
which proves the theorem. 
\end{proof}

\subsection{On $b(\tau)$}

\begin{proposition}
\label{prop:b-x-linear-E_2(q)}
{\it
For every $\tau\in\mathbb{H}^2$ set 
$
\displaystyle 
x(\tau)=
\frac{
c^3(\tau)
}
{
a^3(\tau)
}.
$ 
Then the inverse function $\tau=\tau(x)$ yields $b=b(x)=b(\tau(x))$ and 
$b(x)$ satisfies the following differential equations:
\begin{equation}
\frac{d^2 b}{dx^2}
+
\left\{
\frac{1}{x}+\frac{1}{3(x-1)}
\right\}
\frac{db}{dx} 
+
\frac{1}{9x(x-1)^2}
b
=0.
\end{equation}
}
\end{proposition}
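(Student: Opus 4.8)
The plan is to follow the same direct-computation strategy already used in the proofs of Theorem \ref{thm:a-x-hypergeometric-E_2(q)} and Proposition \ref{prop:E_6-linear}, since the target is once again a second-order Fuchsian equation in $x$ satisfied by a modular quantity. First I would adopt exactly the notation of Theorem \ref{thm:a-x-hypergeometric-E_2(q)}, setting $P=a(\tau)$, $Q=E_2(\tau)$, $R=b(\tau)$ and $D=\frac{1}{2\pi i}\frac{d}{d\tau}$, so that the system (\ref{eqn:ODE-a-b^3}) reads
\begin{equation*}
DP=\frac{3P^3+PQ-4R^3}{12},\quad DQ=\frac{-9P^4+8PR^3+Q^2}{12},\quad DR=\frac{-P^2R+QR}{12}.
\end{equation*}
Because $a^3=b^3+c^3$, the independent variable is $x=c^3/a^3=1-R^3/P^3$, and the computation already carried out in the proof of Theorem \ref{thm:a-x-hypergeometric-E_2(q)} supplies $Dx=R^3(P^3-R^3)/P^4$.

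Next I would compute the first derivative directly from $\frac{db}{dx}=\frac{DR}{Dx}$, obtaining
\begin{equation*}
\frac{db}{dx}=\frac{DR}{Dx}=\frac{P^4(Q-P^2)}{12R^2(P^3-R^3)},
\end{equation*}
and then the second derivative through $\frac{d^2b}{dx^2}=\frac{d}{d\tau}\!\left(\frac{db}{dx}\right)\frac{d\tau}{dx}=\frac{1}{Dx}\,D\!\left(\frac{db}{dx}\right)$, expanding the $\tau$-derivative by the quotient rule and substituting $DP,DQ,DR$ from the system. This produces a rational function of $P,Q,R$ whose denominator is a power of $R$ times a power of $(P^3-R^3)$.

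The decisive step is to form the linear combination dictated by the target equation. Multiplying the claimed ODE through by $x(x-1)$ recasts it as
\begin{equation*}
x(x-1)\frac{d^2b}{dx^2}+\tfrac13 x\frac{db}{dx}+(x-1)\frac{db}{dx}=-\frac{1}{9(x-1)}\,b,
\end{equation*}
so I would verify that the left-hand side, after inserting the expressions above and using $x=1-R^3/P^3$, collapses to $\frac{P^3}{9R^2}$; this is precisely $-\frac{1}{9(x-1)}\,b$, since $x-1=-R^3/P^3$ and $b=R$. The main obstacle is purely computational, namely carrying out the second-derivative expansion and confirming the cancellation. The structural feature that makes it succeed, exactly as in the earlier propositions, is that in this particular combination all terms carrying the quasimodular factor $Q=E_2$ cancel, leaving an expression homogeneous in $P$ and $R$ alone that reduces to a rational function of $x$; I therefore expect no conceptual difficulty beyond keeping the polynomial bookkeeping straight.
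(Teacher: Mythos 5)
Your proposal is correct and follows essentially the same route as the paper's own proof: the identical substitution $P=a(\tau)$, $Q=E_2(\tau)$, $R=b(\tau)$ with the system (\ref{eqn:ODE-a-b^3}), the same $Dx=R^3(P^3-R^3)/P^4$ borrowed from Theorem \ref{thm:a-x-hypergeometric-E_2(q)}, the same first derivative $db/dx=DR/Dx=P^4(Q-P^2)/\bigl(12R^2(P^3-R^3)\bigr)$, and the same linear combination (up to an overall sign), which the paper likewise evaluates to $-P^3/(9R^2)=-\tfrac{1}{9(1-x)}\,b$. Your structural remark that all terms involving $Q=E_2$ cancel in that combination is exactly what occurs in the paper's computation, so the proposal is sound.
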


\begin{proof}
We use the same notation as Theorem \ref{thm:a-x-hypergeometric-E_2(q)}. 
Then we have 
\begin{equation*}
\frac{ d b}{d x}=\frac{d b}{d\tau} \frac{d \tau}{d x} =\frac{ D R}{D x}
=
\frac{P^4 \left(Q R-P^2 R\right)}{12 R^3 \left(P^3-R^3\right)}, 
\end{equation*}
%%%%%%%%%%%%%%%%%%%%%%%%%%%%%%%%%%%%%
\begin{equation*}
\frac{ d^2 b}{d x^2}=
\frac{d}{d\tau} \left( \frac{d b}{d x} \right)
\frac{d x}{d \tau}
=
\frac{P^7 \left(-5 P^5+P^3 Q+8 P^2 R^3-4 Q R^3\right)}{36 R^5 \left(P^3-R^3\right)^2},
\end{equation*}
which imply that 
\begin{align*}
&
x(1-x)\frac{ d^2 b}{d x^2}
-
\frac13 x 
\frac{ d b}{d x}
+
(1-x)
\frac{ d b}{d x}
=
-\frac{P^3}{9 R^2}  \\
=&
-\frac{P^3}{9 R^3} R=
-\frac{a^3(\tau)}{9 b^3(\tau)} b(\tau) 
=
-\frac19 \frac{a^3(\tau)}{a^3(\tau)-c^3(\tau)} b(\tau)
=
-\frac19 \frac{1}{1-x} b.
\end{align*}
\end{proof}

\begin{theorem}
\label{thm:b-y-hypergeometric-E_2(q)}
{\it
For every $\tau\in\mathbb{H}^2$ set 
$
\displaystyle 
y(\tau)=
\frac{1}
{
1
-
\frac{c^3(\tau)}{a^3(\tau)}
}
=
\frac
{
a^3(\tau)
}
{
b^3(\tau)
}. 
$ 
Then the inverse function $\tau=\tau(y)$ yields $b=b(y)=b(\tau(y))$ and 
$b(y)$ satisfies the hypergeometric diferential equation:
\begin{equation}
y(1-y)
\frac{d^2 b}{dy^2}
+
\left(
\frac23-\frac53 y
\right)
\frac{db}{dy} 
-
\frac19
b
=0.
\end{equation}
}
\end{theorem}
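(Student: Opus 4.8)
The plan is to obtain this hypergeometric equation directly from the linear ODE already furnished by Proposition~\ref{prop:b-x-linear-E_2(q)}, via a single change of the independent variable. The statement itself records the identity $1/(1-x)=a^3/b^3$, i.e.\ $1-x=b^3/a^3$, so together with $x=c^3/a^3$ it is the cubic relation $a^3=b^3+c^3$; in particular $y=1/(1-x)$ is a M\"obius function of $x$. Since $b(\tau)$ is one fixed function of $\tau$, the function $b(y)$ is obtained from $b(x)$ simply by the substitution $x=(y-1)/y$, with no rescaling of the dependent variable.

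First I would record the substitution data. From $y=1/(1-x)$ one has $1-x=1/y$, $x-1=-1/y$, and $dy/dx=(1-x)^{-2}=y^2$, whence the chain rule gives
\begin{equation*}
\frac{db}{dx}=y^2\frac{db}{dy},\qquad
\frac{d^2b}{dx^2}=y^4\frac{d^2b}{dy^2}+2y^3\frac{db}{dy}.
\end{equation*}
Next I would rewrite the three coefficients of Proposition~\ref{prop:b-x-linear-E_2(q)} in the variable $y$, namely $1/x=y/(y-1)$, $1/(3(x-1))=-y/3$, and $1/(9x(x-1)^2)=y^3/(9(y-1))$, and insert these together with the derivatives above into that ODE.

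After collecting terms every summand carries the factor $y^3$; dividing it out leaves
\begin{equation*}
y\frac{d^2b}{dy^2}+\left(\frac53+\frac{1}{y-1}\right)\frac{db}{dy}+\frac{1}{9(y-1)}\,b=0.
\end{equation*}
Finally I would multiply through by $(1-y)$, whereupon the cancellation $(1-y)/(y-1)=-1$ turns the first-order coefficient into $\frac23-\frac53 y$ and the zeroth-order coefficient into $-\frac19$, yielding the asserted equation. The step demanding the most care is the transformation of the second derivative together with the collection of terms and the division by $y^3$, where stray factors of $y$ are easy to mishandle; beyond that I anticipate no genuine obstacle, the computation being the exact analogue of the passage from Proposition~\ref{prop:E_6-linear} to Theorem~\ref{thm:E_6-hypergeometric}, in which the same substitution $J\mapsto 1/(1-J)$ converts an ODE of identical shape into a hypergeometric one.
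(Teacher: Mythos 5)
Your proposal is correct and is exactly the paper's own argument: the paper proves Theorem \ref{thm:b-y-hypergeometric-E_2(q)} by the one-line remark that it follows from Proposition \ref{prop:b-x-linear-E_2(q)} under the substitution $x\to \frac{1}{1-x}$, which is precisely the change of variable $y=1/(1-x)$ you carry out. Your computation (the chain-rule formulas, the rewritten coefficients, division by $y^3$, and multiplication by $1-y$) is a correct and complete execution of that step.
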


\begin{proof}
The theorem can be proved by changing $\displaystyle x\to \frac{1}{1-x  }$ 
in Proposition \ref{prop:b-x-linear-E_2(q)}. 
\end{proof}

\begin{theorem}
\label{thm:b(q)-hypergeometric}
{\it
For every $\tau\in\mathbb{H}^2$ we have 
\begin{equation*}
b(\tau)
=
{ }_2 F_1
\left(
\frac13, 
\frac13; 
1; 
1-
\frac
{
a^3(\tau)
}
{
b^3(\tau)
}
\right).
\end{equation*}
}
\end{theorem}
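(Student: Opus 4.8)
The plan is to follow the two-step scheme already used in the proofs of Theorems \ref{thm:a(q)-hypergeometric} and \ref{thm:E_4-hypergeometric-function}: first exhibit $b$ as a solution of an explicit hypergeometric equation, then fix the two undetermined constants by examining the behavior as $\tau\to+i\infty$. First I would invoke Theorem \ref{thm:b-y-hypergeometric-E_2(q)}, which asserts that, with $y=a^3(\tau)/b^3(\tau)$, the function $b=b(y)$ satisfies $y(1-y)b''+(\frac23-\frac53 y)b'-\frac19 b=0$. Matching this with the standard hypergeometric form gives $\gamma=\frac23$, $\alpha+\beta=\frac23$ and $\alpha\beta=\frac19$, hence $\alpha=\beta=\frac13$. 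The substitution $z=1-y$ carries the equation into $z(1-z)b''+(1-\frac53 z)b'-\frac19 b=0$, whose solution analytic at $z=0$ is ${}_2F_1(\frac13,\frac13;1;1-y)$; together with the solution ${}_2F_1(\frac13,\frac13;\frac23;y)$ analytic at $y=0$ this forms a fundamental system, so that
\begin{equation*}
b(y)=A\cdot{}_2F_1\left(\tfrac13,\tfrac13;1;1-y\right)
+B\cdot{}_2F_1\left(\tfrac13,\tfrac13;\tfrac23;y\right)
\end{equation*}
for suitable constants $A$ and $B$.

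Next I would let $\tau\to+i\infty$ along the imaginary axis. The product formula (\ref{eqn:formula-b(q)}) shows $b(\tau)\to1$, and (\ref{eqn:formula-a(q)}) shows $a(\tau)\to1$, so that $y=a^3/b^3\to1$. In this limit the first basis solution tends to ${}_2F_1(\frac13,\frac13;1;0)=1$ and thus contributes $A$, while the second has $\gamma-\alpha-\beta=\frac23-\frac13-\frac13=0$, so Lemma \ref{lem:hypergeometric-limit-1-divergence} forces ${}_2F_1(\frac13,\frac13;\frac23;y)$ to diverge logarithmically as $y\to1$. Since the left-hand side $b(\tau)$ stays bounded (its limit being $1$), we are compelled to take $B=0$ and then $A=1$, which is precisely the asserted identity.

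Once the earlier results are available the computation is routine, and I expect the only delicate point to be the degenerate character of the singularity at $y=1$. Using the cubic relation $a^3=b^3+c^3$ one finds $1-y=-c^3/b^3<0$ for $\tau$ on the imaginary axis, so $y$ tends to $1$ from above, whereas Lemma \ref{lem:hypergeometric-limit-1-divergence} is phrased for approach from below. This causes no difficulty: the lemma still guarantees a genuine logarithmic singularity of the companion solution at $y=1$, so boundedness of $b(\tau)$ excludes any contribution from it regardless of the side of approach, forcing $B=0$.
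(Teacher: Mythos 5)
Your overall strategy is exactly the paper's: invoke Theorem \ref{thm:b-y-hypergeometric-E_2(q)}, write $b$ as a linear combination of two explicit solutions of the resulting hypergeometric equation, then let $\tau\to+i\infty$ (so $y\to 1^+$, $b\to 1$) and use Lemma \ref{lem:hypergeometric-limit-1-divergence} to kill one coefficient and normalize the other. The genuine gap is in your choice of fundamental system. Since $a^3=b^3+c^3$ with $a,b,c>0$ on the imaginary axis (as you yourself note), the variable $y=a^3/b^3$ lives in $(1,\infty)$, so the decomposition must hold on an interval $(1,1+\epsilon)$. But ${}_2F_1\left(\frac13,\frac13;\frac23;y\right)$ is single-valued only on $\mathbb{C}\setminus[1,\infty)$: the point $y=1$ is a logarithmic branch point, and the continuations to $(1,1+\epsilon)$ through the upper and lower half-planes are distinct complex-valued functions (they differ by a multiple of the analytic solution). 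Hence the identity $b(y)=A\,{}_2F_1\left(\frac13,\frac13;1;1-y\right)+B\,{}_2F_1\left(\frac13,\frac13;\frac23;y\right)$ is not meaningful as written on the domain where $b(y)$ exists, and your closing remark misdiagnoses the difficulty: the problem is not merely that the Lemma is phrased for $y\to1^-$, but that your companion solution is not a function on the relevant interval at all. Your underlying principle (boundedness of $b$ excludes any contribution from a logarithmically singular companion, from either side) is correct, but justifying it requires the local Frobenius structure at the exponent-difference-zero point $y=1$ — something beyond Lemma \ref{lem:hypergeometric-limit-1-divergence}, which says nothing about $y>1$.

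The paper sidesteps all of this by taking as companion the solution attached to $y=\infty$, namely $(-y)^{-\frac13}\,{}_2F_1\left(\frac13,\frac23;1;\frac1y\right)$. That function is defined for all $y>1$ (its argument $1/y$ lies in $(0,1)$), and as $y\to1^+$ its argument tends to $1$ \emph{from below}, so Lemma \ref{lem:hypergeometric-limit-1-divergence} applies verbatim to give the logarithmic divergence; boundedness of $b$ then forces $A=0$ and the value at $y=1$ gives $B=1$. Your proof becomes correct after swapping in this companion (or the logarithmic Frobenius solution at $y=1$); as it stands, the fundamental-system step would fail.
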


\begin{proof}
Theorem \ref{thm:b-y-hypergeometric-E_2(q)} implies that
\begin{equation*}
b(y)=
A\cdot
(-y)^{-\frac13}
{ }_2 F_1
\left(
\frac13, 
\frac23; 
1; 
\frac{1}{y}
\right)
+
B
\cdot
{ }_2 F_1
\left(
\frac13, 
\frac13; 
1; 
1-y
\right),   \,\,\,
%%%%%%%%%%%%%%%%%%%%
y=
\frac
{
a^3(\tau)
}
{
b^3(\tau)
}, 
\end{equation*}
where $A$ and $B$ are constants. 
\par
From the formulas (\ref{eqn:formula-a(q)}), (\ref{eqn:formula-b(q)}) and (\ref{eqn:formula-c(q)}), 
taking the limit $\tau \to +i \infty$ along the imaginary axis 
yields $y \to 1+0.$ 
Since 
\begin{equation*}
\lim_{ \tau \to +i \infty  } b(\tau)=1, 
\end{equation*}
Lemma \ref{lem:hypergeometric-limit-1-divergence} shows that $A=0$ and $B=1,$  
which proves the theorem. 
\end{proof}

\subsection{On $c(\tau)$}

Let us consider ODE satisfied by $c(\tau).$

\begin{proposition}
\label{prop:c-x-linear-E_2(q)}
{\it
For every $\tau\in\mathbb{H}^2$ set 
$
\displaystyle 
x(\tau)=
\frac{
c^3(\tau)
}
{
a^3(\tau)
}.
$ 
Then the inverse function $\tau=\tau(x)$ yields $c=c(x)=c(\tau(x))$ and 
$c(x)$ satisfies the following differential equations:
\begin{equation}
\frac{d^2 c}{dx^2}
+
\left\{
\frac{1}{3x}+\frac{1}{(x-1)}
\right\}
\frac{dc}{dx} 
-
\frac{1}{9x^2(x-1)}
c
=0.
\end{equation}
}
\end{proposition}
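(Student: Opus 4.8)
The plan is to follow the template of Theorem~\ref{thm:a-x-hypergeometric-E_2(q)} and Proposition~\ref{prop:b-x-linear-E_2(q)}, but driving the computation with the system~(\ref{eqn:ODE-a-c^3}) that couples $a(\tau)$ to $c^3(\tau)$. First I would set
\[
P=a(\tau),\quad Q=E_2(3\tau),\quad R=c(\tau),\quad D=\frac{1}{2\pi i}\frac{d}{d\tau},
\]
so that (\ref{eqn:ODE-a-c^3}) reads
\[
DP=\frac{-3P^3+3PQ+4R^3}{12},\qquad DQ=\frac{-9P^4+8PR^3+9Q^2}{36},\qquad DR^3=\frac{P^2R^3+3QR^3}{4}.
\]
From $DR^3=3R^2\,DR$ the last equation gives the scalar rule $DR=\dfrac{R(P^2+3Q)}{12}$, which is what the derivative computations will repeatedly invoke.

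Next I would introduce $x=c^3/a^3=R^3/P^3$ and compute
\[
Dx=D\!\left(\frac{R^3}{P^3}\right)=\frac{3R^2\,DR}{P^3}-\frac{3R^3\,DP}{P^4}=\frac{R^3\bigl(P^3-R^3\bigr)}{P^4},
\]
the cancellation of the $Q$-terms being identical to the one in the proof of Theorem~\ref{thm:a-x-hypergeometric-E_2(q)}. Dividing by $Dx$ yields the first derivative
\[
\frac{dc}{dx}=\frac{DR}{Dx}=\frac{P^4\bigl(P^2+3Q\bigr)}{12R^2\bigl(P^3-R^3\bigr)}.
\]
I would then differentiate once more in $\tau$ and divide again by $Dx$, i.e. $\dfrac{d^2c}{dx^2}=\dfrac{1}{Dx}\,D\!\left(\dfrac{dc}{dx}\right)$, substituting the three rules for $DP$, $DQ$, $DR$ wherever they arise, so as to express $d^2c/dx^2$ as an explicit rational function of $P,Q,R$.

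With these three expressions in hand, the final step is purely algebraic: I would form the combination
\[
x(1-x)\frac{d^2c}{dx^2}-x\frac{dc}{dx}+\frac{1-x}{3}\frac{dc}{dx}
\]
and verify that every power of $Q=E_2(3\tau)$ and the factor $P^3-R^3$ cancel, leaving exactly $-\dfrac{P^3}{9R^2}$. Since $x=R^3/P^3$ gives $1/x=P^3/R^3$, this equals $-\dfrac{1}{9x}\,R=-\dfrac{1}{9x}\,c$; dividing the resulting identity through by $x(x-1)$ then produces the asserted Fuchsian equation with coefficients $\frac{1}{3x}+\frac{1}{x-1}$ and $-\frac{1}{9x^2(x-1)}$. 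It is worth noting that, in contrast to Proposition~\ref{prop:b-x-linear-E_2(q)}, the inhomogeneous term here involves $1/x=a^3/c^3$ directly, so no appeal to the cubic identity $a^3=b^3+c^3$ is needed to close the argument; the factor $P^3-R^3$ is simply carried along and cancelled.

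The main obstacle will be the bookkeeping in the second-derivative step: after differentiating $dc/dx$ and substituting the flow, one is left with a sizeable numerator polynomial in $P,Q,R$, and the whole point of the target equation having no free parameter is that, once the lower-order terms are added in, all dependence on $Q$ must disappear. Verifying this elimination by hand is delicate, and I would organize the computation so that the common factor $R^3\bigl(P^3-R^3\bigr)$ is visibly extracted before the terms are collected, exactly as in the $b$-case.
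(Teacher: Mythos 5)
Your proposal is correct and takes essentially the same route as the paper's own proof: the same choice $P=a(\tau)$, $Q=E_2(3\tau)$, $R=c(\tau)$ with the system (\ref{eqn:ODE-a-c^3}), the same computations $Dx=\frac{R^3(P^3-R^3)}{P^4}$ and $\frac{dc}{dx}=\frac{P^4(P^2+3Q)}{12R^2(P^3-R^3)}$, and the same closing identity $x(1-x)\frac{d^2c}{dx^2}+\frac13(1-x)\frac{dc}{dx}-x\frac{dc}{dx}=-\frac{P^3}{9R^2}=-\frac{1}{9x}\,c$. Even your side remark is consistent with the paper: here $P^3/R^3=1/x$ directly, whereas in Proposition \ref{prop:b-x-linear-E_2(q)} the paper does invoke $a^3(\tau)-c^3(\tau)=b^3(\tau)$ to rewrite the right-hand side.
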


\begin{proof}
We set 
\begin{equation*}
P=a(\tau),  \,\,
Q=E_2(3\tau), \,\,
R=c(\tau) 
\,\,
\mathrm{and} \,\,
D=\frac{1}{2\pi i} 
\frac{d}{d \tau}. 
\end{equation*}
Eq. (\ref{eqn:ODE-a-c^3}) implies that 
\begin{equation*}
DP=\frac
{
-3 P^3+ 3 PQ+4 R^3
}
{
12
}, \,\,
%%%%%%%%%%%%%%%%%%%%%%%%%%%%%%%%%%%%
DQ=\frac
{
-9 P^4+8 P R^3+9 Q^2
}
{
36
}, \,\,
%%%%%%%%%%%%%%%%%%%%%%%%%%%%%%%%%%%%%%
DR
=
\frac
{
P^2 R+3 Q R
}
{
12
}.  
\end{equation*}
Then we have 
\begin{equation*}
D x=
D 
\left(
\frac{R^3}{P^3} 
\right)
=
\frac{R^3 \left(P^3-R^3\right)}{P^4},
\end{equation*}
%%%%%%%%%%%%%%%%%%%%%%%%%%%%%%%%%%%%%%%
\begin{equation*}
\frac{d c}{d x}
=\frac{d c}{d \tau} \frac{d \tau}{d x}=\frac{D R}{D x}
=
\frac{P^4 \left(P^2 R+3 Q R\right)}{12 R^3 \left(P^3-R^3\right)}, 
\end{equation*}
%%%%%%%%%%%%%%%%%%%%%%%%%%%%%%%%%%%%%%%
and 
%%%%%%%%%%%%%%%%%%%%%%%%%%%%
\begin{equation*}
\frac{d^2 c}{d x^2}
=
\frac{d}{d \tau}
\left(
\frac{d c}{d x}
\right)
\frac{d x}{d\tau}
=
\frac{P^7 \left(-5 P^5-3 P^3 Q+8 P^2 R^3+12 Q R^3\right)}{36 R^5 \left(P^3-R^3\right)^2},
\end{equation*}
which imply that 
\begin{equation*}
x(1-x)\frac{d^2 c}{d x^2}+\frac13 (1-x) \frac{d c}{d x}-x \frac{d c}{d x}=
-\frac{P^3}{9 R^2}  
=
-\frac{P^3}{9 R^3} R=-\frac{1}{9x} c,
\end{equation*}
which proves the theorem. 
\end{proof}

\begin{theorem}
\label{thm:c-z-hypergeometric-E_2(q^3)}
{\it
For every $\tau\in\mathbb{H}^2$ set 
$
\displaystyle 
z(\tau)=
\frac
{
a^3(\tau)
}
{
c^3(\tau)
}.
$ 
Then the inverse function $\tau=\tau(z)$ yields $c=c(z)=c(\tau(z))$ and 
$c(z)$ satisfies the hypergeometric differential equations:
\begin{equation}
\label{eqn:c-z-hypergeometric}
z(1-z)
\frac{d^2 c}{dz^2}
+
\left(
\frac23-\frac53 z
\right)
\frac{dc}{dz} 
-
\frac{1}{9}
c
=0.
\end{equation}
}
\end{theorem}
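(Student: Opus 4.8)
The plan is to obtain this theorem from Proposition \ref{prop:c-x-linear-E_2(q)} by a single change of the independent variable, exactly as Theorems \ref{thm:E_4-hypergeometric}, \ref{thm:E_6-hypergeometric}, and \ref{thm:b-y-hypergeometric-E_2(q)} were deduced from their corresponding linear propositions. The starting observation is that the new variable is the reciprocal of the old one: if $x=c^3(\tau)/a^3(\tau)$ denotes the variable of Proposition \ref{prop:c-x-linear-E_2(q)}, then $z=a^3(\tau)/c^3(\tau)=1/x$. Hence the required substitution is $x\to 1/x$.

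I would then transform the derivatives under $z=1/x$. Since $dz/dx=-z^2$, one has $dc/dx=-z^2\,dc/dz$ and $d^2c/dx^2=z^4\,d^2c/dz^2+2z^3\,dc/dz$. Rewriting the rational coefficients in the new variable via $1/x=z$, $1/(x-1)=z/(1-z)$, and $1/(x^2(x-1))=z^3/(1-z)$, and substituting everything into the ODE of Proposition \ref{prop:c-x-linear-E_2(q)}, the whole expression acquires a common factor $z^3$. After cancelling it and multiplying through by $1-z$, collecting the coefficient of $dc/dz$ produces $\frac23-\frac53 z$ and the constant coefficient becomes $-\frac19$, which is precisely the asserted equation.

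The computation is routine, so there is no genuine obstacle; the only place demanding attention is the bookkeeping of the powers of $z$ and the single denominator $1-z$ when converting the coefficients of Proposition \ref{prop:c-x-linear-E_2(q)}, because $x\to 1/x$ interchanges the regular singular points $x=0$ and $x=\infty$ while fixing $x=1$. As a consistency check one can read off the hypergeometric parameters $\gamma=2/3$, $\alpha+\beta=2/3$, $\alpha\beta=1/9$, forcing $\alpha=\beta=1/3$; this is the exponent data one expects for $c(\tau)$ and confirms that the transformation has been carried out correctly.
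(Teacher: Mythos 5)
Your proposal is correct and follows exactly the paper's own route: the paper proves Theorem \ref{thm:c-z-hypergeometric-E_2(q^3)} by the substitution $x\to 1/x$ in Proposition \ref{prop:c-x-linear-E_2(q)}, which is precisely the change of variable $z=1/x$ that you carry out, and your derivative transformations, the common factor $z^3$, and the resulting coefficients $\frac23-\frac53 z$ and $-\frac19$ all check out. You have simply supplied the routine computation that the paper leaves implicit.
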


\begin{proof}
The theorem can be proved by 
changing $\displaystyle x\to \frac{1}{x  }$ in Proposition \ref{prop:c-x-linear-E_2(q)}. 
\end{proof}

\begin{theorem}
\label{thm:c(q)-hypergeometric}
{\it
For every $\tau\in\mathbb{H}^2$ we have 
\begin{equation*}
c(\tau)
=
\left(
\frac
{
a^3(\tau)
}
{
c^3(\tau)
}
\right)^{-\frac13}
{ }_2 F_1
\left(
\frac13, 
\frac23; 
1; 
\frac
{
1
}
{
a^3(\tau)/c^3(\tau)
}
\right).
\end{equation*}
}
\end{theorem}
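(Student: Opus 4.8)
The plan is to exploit the equation already produced in Theorem \ref{thm:c-z-hypergeometric-E_2(q^3)} and to identify $c(\tau)$ with one specific Kummer solution of that equation at $z=\infty$, exactly in the spirit of the treatment of $\theta^2[1,0]$ in Theorem \ref{thm:theta-(1,0)-lambda-hypergeometric-function}. Writing $z=a^3(\tau)/c^3(\tau)$, Theorem \ref{thm:c-z-hypergeometric-E_2(q^3)} tells us that $c=c(z)$ solves the hypergeometric equation (\ref{eqn:c-z-hypergeometric}), whose parameters are $\alpha=\beta=\tfrac13$ and $\gamma=\tfrac23$ (from $\alpha+\beta=\tfrac23$, $\alpha\beta=\tfrac19$). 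The first step I would record is the standard fact that
\[
z^{-1/3}\,{}_2F_1\!\left(\tfrac13,\tfrac23;1;\tfrac1z\right)
\]
is a solution of (\ref{eqn:c-z-hypergeometric}): it is the Kummer solution attached to the exponent $\alpha=\tfrac13$ of the indicial equation at $z=\infty$, obtained via $w=1/z$ together with the parameter bookkeeping $\alpha\mapsto\alpha$, $\alpha-\gamma+1=\tfrac23$, $\alpha-\beta+1=1$.

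The decisive step is then simply to invoke Theorem \ref{thm:a(q)-hypergeometric}. Since $1/z=c^3(\tau)/a^3(\tau)$, and for $\tau$ on the positive imaginary axis $a,c>0$ so that $z^{-1/3}=c(\tau)/a(\tau)$ is the positive real cube root, Theorem \ref{thm:a(q)-hypergeometric} yields
\[
z^{-1/3}\,{}_2F_1\!\left(\tfrac13,\tfrac23;1;\tfrac1z\right)
=\frac{c(\tau)}{a(\tau)}\,{}_2F_1\!\left(\tfrac13,\tfrac23;1;\frac{c^3(\tau)}{a^3(\tau)}\right)
=\frac{c(\tau)}{a(\tau)}\,a(\tau)=c(\tau),
\]
which is exactly the asserted identity, extended to all of $\mathbb{H}^2$ by analytic continuation.

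An alternative that stays closer to the companion proofs of this section is to fix the coefficients by asymptotics instead of quoting Theorem \ref{thm:a(q)-hypergeometric}. Writing $c(z)=A\,z^{-1/3}{}_2F_1(\tfrac13,\tfrac23;1;1/z)+B\,w(z)$ with $w$ the second solution at $z=\infty$ (necessarily logarithmic, since $\alpha=\beta$), one uses the $q$-expansions (\ref{eqn:formula-a(q)}) and (\ref{eqn:formula-c(q)}) to see that $\tau\to+i\infty$ forces $z\to+\infty$, and then examines $z^{1/3}c(z)=a(\tau)\to 1$. The first basis element contributes $A\,{}_2F_1(\tfrac13,\tfrac23;1;1/z)\to A$, while the logarithmic solution contributes a term growing like $\log z\to\infty$; boundedness of $a(\tau)$ forces $B=0$ and then $A=1$. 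This is precisely the role that Lemma \ref{lem:hypergeometric-limit-1-divergence} plays in the neighbouring proofs, here transported to the point $z=\infty$.

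The main obstacle is bookkeeping rather than analysis: one must select the correct member of Kummer's family of solutions and, above all, fix the branch of $z^{-1/3}$ so that the prefactor equals $c/a$ and not another cube root. Once this is settled, the statement collapses, through Theorem \ref{thm:a(q)-hypergeometric}, to the algebraic identity $(c/a)\cdot a=c$, so no genuinely new estimate is needed beyond those already established.
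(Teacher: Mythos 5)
Your proposal is correct and follows essentially the same route as the paper: the paper likewise observes that $z^{-1/3}\,{}_2F_1\!\left(\tfrac13,\tfrac23;1;\tfrac1z\right)$ is a solution of the ODE (\ref{eqn:c-z-hypergeometric}) and then derives the identity directly from Theorem \ref{thm:a(q)-hypergeometric}, which is precisely your reduction to $(c/a)\cdot a=c$. Your added care about the branch of $z^{-1/3}$ and the alternative asymptotic argument are sound elaborations of what the paper leaves implicit.
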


\begin{proof}
The formula follows from Theorem \ref{thm:a(q)-hypergeometric}. 
Note that 
$
\displaystyle
z^{-\frac13}
{ }_2 F_1
\left(
\frac13, 
\frac23; 
1; 
\frac
{
1
}
{
z
}
\right)
$ 
is a solution of the ODE (\ref{eqn:c-z-hypergeometric}). 
\end{proof}

\section{Heun's equations for modular forms of level 5 (1)  }
\label{sec:Heun-level 5-(1)}  

Throughout this section 
we set 
\begin{align*}
P(\tau)=&\cot \frac{2 \pi}{5} + 4 \sin \frac{4 \pi}{5} \sum_{n=1}^{\infty} (d_{1,5}(n)-d_{4,5}(n)) q^n
-4\sin \frac{2\pi}{5} \sum_{n=1}^{\infty} (d_{2,5}(n)-d_{3,5}(n)) q^n, \\
Q(\tau)=&\cot \frac{ \pi}{5} + 4 \sin \frac{2 \pi}{5} \sum_{n=1}^{\infty} (d_{1,5}(n)-d_{4,5}(n)) q^n
+4\sin \frac{4\pi}{5} \sum_{n=1}^{\infty} (d_{2,5}(n)-d_{3,5}(n)) q^n, \\
R(\tau)=&E_2(\tau)=1-24\sum_{n=1}^{\infty} \sigma_1(n) q^n=U(\tau). 
\end{align*}
In \cite{Matsuda2}, we proved that
\begin{align*}
\label{eqn-level5-E_2(q)}
DP=&\frac{-13P^3-39P^2Q+47PQ^2-9Q^3+2PR}{24},  \,\,
DQ=\frac{9P^3+47P^2Q+39PQ^2-13Q^3+2QR}{24}, \\
DR=&\frac{5P^4-15P^3Q-155P^2Q^2+15PQ^3+5Q^4+R^2}{12}, \,\,\,D=\frac{1}{2 \pi i} \frac{d}{d \tau}. 
\end{align*}

Moreover we set 
\begin{align*}
S(\tau)=&
\frac
{
\sqrt{250-110 \sqrt{5}}
}
{
5
}
V(\tau)
=
\frac
{
\sqrt{250-110 \sqrt{5}}
}
{
5
}
\prod_{n=1}^{\infty}
\frac
{
(1-q^n)^5
}
{
(1-q^{5n})^3
}
\left(
1+
\frac{1-\sqrt{5}}{2} 
q^n
+
q^{2n} 
\right)^5=-3 P+Q  \\
%%%%%%%%%%%%%%%%%%%%%%%%%%%%%%%%%%%%%%%%%%%%%%
=&
\frac
{
\sqrt{250-110 \sqrt{5}}
}
{
5
}
\left\{
1
-
\frac52(1+\sqrt{5})
\sum_{n=1}^{\infty} (d_{1,5}(n)-d_{4,5}(n)) q^n 
+
\frac52
(7+3 \sqrt{5})
\sum_{n=1}^{\infty} (d_{2,5}(n)-d_{3,5}(n)) q^n
\right\},  
\end{align*}
and 
\begin{align*}
T(\tau)=&
\frac
{
\sqrt{250+110 \sqrt{5}}
}
{
5
}
W(\tau)
=
\frac
{
\sqrt{250+110 \sqrt{5}}
}
{
5
}
\prod_{n=1}^{\infty}
\frac
{
(1-q^n)^5
}
{
(1-q^{5n})^3
}
\left(
1+
\frac{1+\sqrt{5}}{2} 
q^n
+
q^{2n} 
\right)^5= P+3Q  \\
=&
\frac
{
\sqrt{250+110 \sqrt{5}}
}
{
5
}
\left\{
1
+
\frac52(-1+\sqrt{5})
\sum_{n=1}^{\infty} (d_{1,5}(n)-d_{4,5}(n)) q^n 
+
\frac52
(7-3 \sqrt{5})
\sum_{n=1}^{\infty} (d_{2,5}(n)-d_{3,5}(n)) q^n
\right\},  
\end{align*}
and 
\begin{equation*}
f(\tau)
=
\frac{1}{\tilde{f}(\tau)}
=
\frac{T}{S}
=
\frac{
\theta^{5}
\left[
\begin{array}{c}
 1 \\
\frac15
\end{array}
\right]
(0,\tau)
}
{
\theta^{5}
\left[
\begin{array}{c}
 1 \\
\frac35
\end{array}
\right](0,\tau)
}
=\frac{11+5\sqrt{5}}{2}
\prod_{n=1}^{\infty}
\frac{ \left(1+\frac{1+\sqrt{5}}{2}q^n+q^{2n} \right)^5 }{ \left(1+\frac{1-\sqrt{5}}{2}q^n+q^{2n} \right)^5 }, \,\,q=\exp(2\pi i \tau). 
\end{equation*}
Therefore it follows that  
\begin{align*}
D S=&
\frac{5 S^3+66 S^2 T-7 S T^2+20 S U}{240}, \,\,
%%%%%%%%%%%%%%%%%%%%%%%%%%%%%%%
DT=
\frac{-7 S^2 T-66 S T^2+5 T^3+20 T U}{240},    \\
%%%%%%%%%%%%%%%%%%%%%%%%%%%%%%%%%
DU=&
\frac{-S^4-12 S^3 T-14 S^2 T^2+12 S T^3-T^4+16 U^2}{192},
\end{align*}
and 
\begin{align*}
DV=&
\frac{5\left(25-11 \sqrt{5}\right) V^3+132 \sqrt{5} V^2 W-7 \left(25+11 \sqrt{5}\right) V W^2+50 VU }{600}  \\
%%%%%%%%%%%%%%%%%%%%%%%%%%%%%%%
DW=&
\frac{7 \left(-25+11 \sqrt{5}\right) V^2 W-132 \sqrt{5} V W^2+5 \left(25+11 \sqrt{5}\right) W^3 + 50 U W}{600} \\
%%%%%%%%%%%%%%%%%%%%%%%%%%%
DU=&
\frac{1}{120}\times 
\Big\{ 
\left( -123+55 \sqrt{5}\right) V^4+12 \left(11-5 \sqrt{5}\right) V^3 W-28 V^2 W^2  \\
&\hspace{45mm}+12 \left(11+5 \sqrt{5}\right) V W^3-\left(123+55 \sqrt{5}\right) W^4  
+10 U^2
\Big\}. 
\end{align*}

\subsection{On the modular function of level five}

\begin{theorem}
\label{thm:ODE-f-(1,1/5)-(1,3/5)}
{\it
For every $\tau\in\mathbb{H}^2$ set 
\begin{equation*}
f(\tau)=
\frac{
\theta^{5}
\left[
\begin{array}{c}
 1 \\
\frac15
\end{array}
\right]
(0,\tau)
}
{
\theta^{5}
\left[
\begin{array}{c}
 1 \\
\frac35
\end{array}
\right](0,\tau)
}
=\frac{11+5\sqrt{5}}{2}
\prod_{n=1}^{\infty}
\frac{ \left(1+\frac{1+\sqrt{5}}{2}q^n+q^{2n} \right)^5 }{ \left(1+\frac{1-\sqrt{5}}{2}q^n+q^{2n} \right)^5 }, \,\,q=\exp(2\pi i \tau). 
\end{equation*}
Then we have 
\begin{equation*}
\{f,\tau\}
+
%%%%%%%%%%%%%%%%%%%%%%%%%%%%%%%%%%
\frac{f^4-12 f^3+134 f^2+12 f+1}{2 f^2 \left(f^2-11 f-1\right)^2}
\left(f^{\prime} \right)^2=0. 
\end{equation*}
}
\end{theorem}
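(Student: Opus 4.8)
The plan is to imitate the proofs of Theorems~\ref{thm:ODE-lambda} and \ref{thm:ODE-x-(1,1/3)}: I would reduce the claim to the already established Schwarzian ODE (\ref{eqn:ODE-j}) for Klein's $j$-function via the composition law
\begin{equation*}
\{j,\tau\}=\{j,f\}\left(f^{\prime}\right)^2+\{f,\tau\}.
\end{equation*}
Writing $j^{\prime}=(dj/df)\,f^{\prime}$ and using (\ref{eqn:ODE-j}) to eliminate $\{j,\tau\}$, the assertion of the theorem is equivalent, after dividing by $\left(f^{\prime}\right)^2$, to the identity of rational functions
\begin{equation*}
\{j,f\}+\frac{j^2-2^4\cdot3\cdot41\,j+2^{15}\cdot3^4}{2j^2(j-12^3)^2}\left(\frac{dj}{df}\right)^2=\frac{f^4-12f^3+134f^2+12f+1}{2f^2\left(f^2-11f-1\right)^2},
\end{equation*}
once $j$ is expressed as a function of $f$. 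Everything therefore hinges on producing the explicit relation $j=j(f)$.

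Rather than quote it, I would derive $j=j(f)$ from the system for $DS,DT,DU$ recorded above together with Ramanujan's relations $E_4=U^2-12\,DU$ and $E_6=U E_4-3\,DE_4$. Substituting the formula for $DU$ makes the $U^2$-terms cancel and gives the closed form
\begin{equation*}
E_4=\frac{S^4+12S^3T+14S^2T^2-12ST^3+T^4}{16}.
\end{equation*}
Differentiating this with $DS,DT$ and using that $E_4$ is homogeneous of degree $4$ in $S,T$ (so the $U$-part of $DE_4$ equals $\tfrac13 U E_4$ by Euler's relation) makes $U$ disappear from $E_6=UE_4-3\,DE_4$, which then emerges as a homogeneous sextic in $S,T$. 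Setting $A(f)=E_4/S^4$ and $B(f)=E_6/S^6$, a quartic and a sextic in $f=T/S$, I obtain $j=1728\,A^3/(A^3-B^2)$. The decisive step is to verify that five leading coefficients cancel, so that $A(f)^3-B(f)^2=\kappa\,f^5\left(f^2-11f-1\right)$ for a constant $\kappa$; this reflects the four cusps of the underlying level-$5$ group (at $f=0,\infty$ and the two roots $\tfrac{11\pm5\sqrt5}{2}$ of $f^2-11f-1$, of widths $5,5,1,1$) and the absence of elliptic points. It yields the degree-$12$ rational map
\begin{equation*}
j=j(f)=c_0\,\frac{\left(f^4-12f^3+14f^2+12f+1\right)^3}{f^5\left(f^2-11f-1\right)},
\end{equation*}
with $c_0$ fixed by $j=q^{-1}+744+\cdots$.

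With $j=j(f)$ in hand the remainder is mechanical: compute $dj/df,\ d^2j/df^2,\ d^3j/df^3$, assemble $\{j,f\}$, reduce the correction term to a rational function of $f$, and check the displayed identity. As a partial verification, near each of $f=0,\infty,\tfrac{11\pm5\sqrt5}{2}$ the coefficient of the resulting double pole must equal $\tfrac12$, the value forced by a cusp; and one checks directly that the numerator $f^4-12f^3+134f^2+12f+1$ vanishes neither at $f=0$ nor at the roots of $f^2-11f-1$, so none of these poles is spurious. The theorem then follows from (\ref{eqn:ODE-j}), exactly as Theorem~\ref{thm:ODE-J-j} was used to prove Theorems~\ref{thm:ODE-lambda} and \ref{thm:ODE-x-(1,1/3)}.

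The main obstacle is computational and occurs in two places. First, obtaining $E_6$ as an exact homogeneous sextic and confirming the clean factorization $A^3-B^2=\kappa f^5(f^2-11f-1)$ demands carrying every coefficient faithfully, since a single slip corrupts the whole map $j(f)$. Second, the Schwarzian derivative of a degree-$12$ rational function is a large symbolic expression, so the final simplification is realistically carried out, and independently checked, with a computer algebra system, just as the lower-degree computations in the preceding sections already push hand calculation to its limit.
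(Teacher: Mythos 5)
Your proposal is correct and shares the paper's skeleton: both proofs reduce the claim to the Schwarzian ODE (\ref{eqn:ODE-j}) for $j$ via the composition law $\{j,\tau\}=\{j,f\}\left(f^{\prime}\right)^2+\{f,\tau\}$, substitute an explicit degree-$12$ rational map $j=j(f)$, and finish with a heavy but mechanical simplification of $\{j,f\}$ plus the correction term (the paper's proof consists of exactly those large polynomial displays). The one genuine difference is the provenance of $j(f)$: the paper simply quotes $j=\left(f^4-12f^3+14f^2+12f+1\right)^3/\bigl(f^5(f^2-11f-1)\bigr)$ from Matsuda \cite{Matsuda2}, whereas you derive it from the section's own system for $DS,DT,DU$ together with Ramanujan's relations $E_4=U^2-12\,DU$ and $E_6=UE_4-3\,DE_4$. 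That derivation checks out: substituting the stated $DU$ cancels the $U^2$ terms and gives $E_4=\tfrac{1}{16}\left(S^4+12S^3T+14S^2T^2-12ST^3+T^4\right)$, whose dehomogenization is $\tfrac{1}{16}$ times precisely the quartic in the numerator of $j(f)$; and since the $U$-parts of $DS$ and $DT$ are $\tfrac{1}{12}SU$ and $\tfrac{1}{12}TU$, Euler's relation does make $U$ drop out of $E_6=UE_4-3\,DE_4$, leaving a homogeneous sextic. So your route buys self-containedness (the paper's argument is incomplete without the external citation), at the cost of one more exact polynomial computation. One inaccuracy in your write-up: for $A^3-B^2=\kappa\,f^5\left(f^2-11f-1\right)$ you need ten coefficients of the degree-$12$ polynomial $A^3-B^2$ to vanish, not five --- the coefficients of $f^{8},\dots,f^{12}$ and also of $f^{0},\dots,f^{4}$, since the right-hand side is supported only in degrees $5$, $6$, $7$; equivalently $(E_4^3-E_6^2)/S^{12}$ must vanish to order $5$ at $f=0$ as well as at $f=\infty$, reflecting the two width-$5$ cusps. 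This does not affect correctness: the cancellations do occur, and your cusp cross-checks are consistent --- for instance, with $\alpha+\beta=11$, $\alpha\beta=-1$ one finds $N(\alpha)=\alpha^4-12\alpha^3+134\alpha^2+12\alpha+1=1375\alpha+125=125\alpha^2=\alpha^2(\alpha-\beta)^2$, so the double-pole coefficient at each root of $f^2-11f-1$ is indeed $\tfrac12$, as required at a cusp of width $1$.
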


\begin{proof}
We recall the ODE (\ref{eqn:ODE-j}) and the well-known property of the Schwartzian derivative, 
\begin{equation*}
\{j, \tau \}
=\{j, f \}
\left(
\frac{df}{d\tau}
\right)^2+\{f, \tau\} \,\,
\mathrm{and} \,\,
j=
\frac
{
\left(f^4-12 f^3+14 f^2 +12f+1\right)^3
}
{
f^5 (f^2-11f-1)
}.
\end{equation*}
For proof of the formula of $j(\tau),$ see Matsuda \cite{Matsuda2}. 
\par
Direct computation yields 
\begin{equation*}
\frac{dj}{d f}
=
\frac{5 \left(f^4-12 f^3+14 f^2+12 f+1\right)^2 \left(f^6-18 f^5+75 f^4+75 f^2+18 f+1\right)}{f^6 \left(f^2-11 f-1\right)^2}, 
\end{equation*}
%%%%%%%%%%%%%%%%%%%%%%%%%%%%%%%%%%%%%%%%%%%%%%%%%
%%%%%%%%%%%%%%%%%%%%%%%%%%%%%%%%%%%%%%%%%%%%%%%%
\begin{align*}
\frac{d^2j}{d f^2}
=&
\frac{10 }{f^7 \left(f^2-11 f-1\right)^3}  \times  \\
&\times
\Big(2 f^{16}-96 f^{15}+1830 f^{14}-17395 f^{13}+83895 f^{12}-178866 f^{11}+90598 f^{10}  \\
&\hspace{8mm}
+34875 f^9-825 f^8-55100 f^7+328122 f^6+400419 f^5+159005 f^4+30030 f^3  \\
&\hspace{8mm}
+2970 f^2+149 f+3\Big),
\end{align*}
and
%%%%%%%%%%%%%%%%%%%%%%%%%%%%%%%%%%%%%%%%%%%
%%%%%%%%%%%%%%%%%%%%%%%%%%%%%%%%%%%%%%%%%%%%%
%%%%%%%%%%%%%%%%%%%%%%%%%%%%%%%%%%%%%%%%%%%%%%
\begin{align*}
\frac{d^3j}{d f^3}
=&
\frac{30 }{f^8 \left(f^2-11 f-1\right)^4}  \times \\
&\times
\Big(2 f^{18}-108 f^{17}+2364 f^{16}-26584 f^{15}+159110 f^{14}-461196 f^{13}+425419 f^{12}  \\
&\hspace{8mm}
+191988 f^{11}+38652 f^{10}+80900 f^9-1371443 f^8+3744672 f^7+6973374 f^6  \\
&\hspace{8mm}
+3664956 f^5+918885 f^4+126564 f^3+9854 f^2+408 f+7\Big),
\end{align*}
which imply that 
\begin{align*}
\{j, f\}
=&
\frac{-12}{f^2 \left(f^4-12 f^3+14 f^2+12 f+1\right)^2 \left(f^6-18 f^5+75 f^4+75 f^2+18 f+1\right)^2} \times \\
&\times 
\Big( f^{20}-50 f^{19}+1020 f^{18}-10860 f^{17}+63875 f^{16}-201150 f^{15}+303500 f^{14}-214790 f^{13}  \\
&\hspace{8mm}
+201820 f^{12}-24450 f^{11}+2255152 f^{10}+24450 f^9+201820 f^8+214790 f^7+303500 f^6  \\
&\hspace{8mm}
+201150 f^5+63875 f^4+10860 f^3+1020 f^2+50 f+1\Big). 
\end{align*} 
Moreover we have 
\begin{align*}
&
\frac
{
j^2-2^4\cdot 3 \cdot 41 j+2^{15}\cdot 3^4
}
{
2 j^2(j-12^3)^2
}
(j^{\prime})^2
=
\frac
{
j^2-2^4\cdot 3 \cdot 41 j+2^{15}\cdot 3^4
}
{
2 j^2(j-12^3)^2
}
\left(
\frac{dj}{df}
\frac{d f}{d\tau}
\right)^2  \\
=&
\frac{25 }{2 f^2 \left(f^2+1\right)^2 \left(f^2-11 f-1\right)^2 \left(f^4-18 f^3+74 f^2+18 f+1\right)^2 \left(f^4-12 f^3+14 f^2+12 f+1\right)^2} \times   \\
&
\times
\Big(f^{24}-72 f^{23}+2244 f^{22}-39528 f^{21}+430626 f^{20}-2979240 f^{19}+12907780 f^{18}-33175560 f^{17}  \\
&\hspace{8mm}
+45886095 f^{16}-33834240 f^{15}+28236600 f^{14}-52867200 f^{13}+251679900 f^{12}+52867200 f^{11}  \\
&\hspace{8mm}
+28236600 f^{10}+33834240 f^9+45886095 f^8+33175560 f^7+12907780 f^6+2979240 f^5  \\
&\hspace{8mm}
+430626 f^4+39528 f^3+2244 f^2+72 f+1\Big) \left( f^{\prime} \right)^2. 
\end{align*}
Therefore it follows that 
\begin{equation*}
\{f,\tau\}
+
%%%%%%%%%%%%%%%%%%%%%%%%%%%%%%%%%%
\frac{f^4-12 f^3+134 f^2+12 f+1}{2 f^2 \left(f^2-11 f-1\right)^2}
\left(f^{\prime} \right)^2=0. 
\end{equation*}
\end{proof}

\subsection{Heun's equations for $S(\tau)$ and $V(\tau)$}

\begin{theorem}
\label{thm:S-Heun-level5-f}
{\it
For every $\tau\in\mathbb{H}^2$ set 
\begin{equation*}
\displaystyle 
f(\tau)
=\frac{11+5\sqrt{5}}{2}
\prod_{n=1}^{\infty}
\frac{ \left(1+\frac{1+\sqrt{5}}{2}q^n+q^{2n} \right)^5 }{ \left(1+\frac{1-\sqrt{5}}{2}q^n+q^{2n} \right)^5 }, \,\,q=\exp(2\pi i \tau). 
\end{equation*}
Then the inverse function $\tau=\tau(f)$ yields $S(f)=S(\tau(f))$ and $S(f)$ satisfies the following differential equation:
\begin{equation*}
\frac{d^2 S}{d f^2}
+
\left\{
\frac{1}{f}
+
\frac{1}{f-  \frac{11+5 \sqrt{5}}{2}  }
+
\frac{1}{f-  \frac{11-5 \sqrt{5}}{2}  }
\right\}
\frac{d S}{d f}
+
\frac
{
f-3
}
{
f(f-  \frac{11+5 \sqrt{5}}{2})(f-\frac{11-5 \sqrt{5}}{2}   )
}
S=0.
\end{equation*}
}
\end{theorem}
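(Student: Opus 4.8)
The plan is to follow the same template as the hypergeometric cases above, now applied to the level-five system for $S$, $T$, and $U=E_2$ recalled at the start of this section. Since $f=T/S$, I would substitute $T=fS$ throughout and first differentiate the modular function along $\tau$: using the formulas for $DS$ and $DT$ one computes
\begin{equation*}
Df=D\left(\frac{T}{S}\right)=\frac{S\,DT-T\,DS}{S^{2}}=\frac{fS^{2}\left(f^{2}-11f-1\right)}{20},
\end{equation*}
so that $Df$ vanishes exactly at $f=0$ and at the two roots of $f^{2}-11f-1$, namely $\frac{11\pm5\sqrt{5}}{2}$. These three values are precisely the finite regular singular points in the asserted Heun equation, which is the structural reason the inversion $\tau\mapsto f$ manufactures singularities there.

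Next I would form $\frac{dS}{df}=DS/Df$, which after simplification becomes a rational function of $f$, $S$, and $U$ that is \emph{linear} in $U$, namely $\bigl(S^{2}(-7f^{2}+66f+5)+20U\bigr)/\bigl(12fS(f^{2}-11f-1)\bigr)$, and then compute $\frac{d^{2}S}{df^{2}}=D\!\left(\frac{dS}{df}\right)\big/Df$ by the chain rule through the $S,T,U$-system. The crux of the argument, and the step I expect to be the main obstacle, is the fate of the $U=E_{2}$ terms. A priori $\frac{d^{2}S}{df^{2}}$ could carry a term in $U^{2}$, since $DU$ contains a summand proportional to $U^{2}$; but the two $U^{2}$ contributions, one from $\partial_{S}(dS/df)\cdot DS$ and one from $\partial_{U}(dS/df)\cdot DU$, cancel identically, exactly as the $E_{2}$-dependent terms drop out in the computation behind Theorem \ref{thm:a-x-hypergeometric-E_2(q)}. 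This leaves $\frac{d^{2}S}{df^{2}}$ again linear in $U$.

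Finally I would substitute $\frac{dS}{df}$ and $\frac{d^{2}S}{df^{2}}$ into the cleared combination
\begin{equation*}
f\left(f^{2}-11f-1\right)\frac{d^{2}S}{df^{2}}+\left(3f^{2}-22f-1\right)\frac{dS}{df}+(f-3)\,S,
\end{equation*}
which is the asserted equation once $\frac{1}{f}+\frac{2f-11}{f^{2}-11f-1}$ and $\frac{f-3}{f(f^{2}-11f-1)}$ are written in the partial-fraction form of the statement. The remaining terms linear in $U$ cancel against one another, and the $U$-free remainder is seen to vanish identically after using $T=fS$. This last verification is a lengthy but entirely mechanical polynomial identity in $S$, $T$, $U$; the only conceptual content is the double cancellation of every $E_{2}$-dependent term, which is exactly what forces the second-order equation for $S(f)$ to have coefficients that are rational functions of $f$ alone.
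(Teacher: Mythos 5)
Your proposal is correct and is essentially the paper's own proof: the paper likewise computes $Df=D(T/S)=-T\left(S^2+11ST-T^2\right)/(20S)$, forms $dS/df=DS/Df$ and $d^2S/df^2$ by the chain rule through the $(S,T,U)$ system, and verifies that $f(f-\alpha)(f-\beta)\frac{d^2S}{df^2}$ plus the three first-order terms equals $3S-T=S(3-f)$ with $\alpha=\frac{11+5\sqrt{5}}{2}$, $\beta=\frac{11-5\sqrt{5}}{2}$, all $E_2$-dependent terms cancelling exactly as you describe. Your early substitution $T=fS$ merely rewrites the paper's intermediate expressions as rational functions of $(f,S,U)$ (your formulas for $Df$ and $dS/df$ agree with the paper's under that substitution), so the difference is purely notational.
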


\begin{proof}
We first have 
\begin{equation*}
Df=
D
\left(
\frac{T}{S}
\right)=
-\frac{T \left(S^2+11 S T-T^2\right)}{20 S}. 
\end{equation*}
We next obtain 
\begin{equation*}
\frac{d S}{d f}
=
\frac{d S}{d\tau} \frac{d \tau}{d f}
=
\frac
{
D S
}
{
D f
}
=
-\frac{S \left(5 S^3+66 S^2 T-7 S T^2+20 S U\right)}{12 T \left(S^2+11 S T-T^2\right)},
\end{equation*}
and
\begin{equation*}
\frac{d^2 S}{d f^2}
=
\frac{d}{d \tau}
\left(
\frac{d S}{d f}
\right)
\frac{d\tau}{d f}
=
\frac{S^3 \left(5 S^4+140 S^3 T+1046 S^2 T^2+20 S^2 U-184 S T^3+440 S T U+9 T^4-60 T^2 U\right)}{12 T^2 \left(S^2+11 S T-T^2\right)^2}.
\end{equation*}
Solving the algebraic equation $f^2-11 f-1=0$ yields 
$
\displaystyle
\alpha=\frac{11+5\sqrt{5}}{2} 
$ 
and 
$
\displaystyle
\beta=\frac{11-5\sqrt{5}}{2}.
$ 
\par
Therefore it follows that  
\begin{align*}
&
f\left(f-\alpha\right) \left(f-\beta \right)\frac{d^2 S}{d f^2}
+
\left(f-\alpha\right) \left(f-\beta \right)\frac{d S}{d f}
+
f\left(f-\alpha \right)\frac{d S}{d f}  
+
f\left(f-\beta \right)\frac{d S}{d f}  \\
=&
3 S-T
=S
\left(
3-\frac{T}{S}
\right)
=
S(3-f),
\end{align*}
which proves the theorem. 
\end{proof}

\begin{corollary}
\label{coro:V-Heun-level5-v}
{\it 
For every $\tau\in\mathbb{H}^2$ set 
$
\displaystyle
v(\tau)=
\frac{2f-(11+5 \sqrt{5} ) }{5\sqrt{5} (11+5 \sqrt{5 })   }
=
\frac{1}{5 \sqrt{5}}
\left(
\frac{W-V}{V}
\right). 
$
Then the inverse function $\tau=\tau(v)$ yields $V(v)=V(\tau(v))$ and $V(v)$ satisfies 
Heun's differential equation,
\begin{equation*}
\frac{d^2 V}{d v^2}
+
\left\{
\frac{1}{v}
+
\frac{1}{v+\frac{ \sqrt{5}  }{25} }
+
\frac{1}{v-  \frac{11-5 \sqrt{5}}{2}  }
\right\}
\frac{d V}{d v}
+
\frac
{
v-
\frac{15-7\sqrt{5}}{10}
}
{
v(v+\frac{ \sqrt{5} }{ 25 })(v-\frac{11-5 \sqrt{5}}{2}   )
}
V=0.
\end{equation*}
}
\end{corollary}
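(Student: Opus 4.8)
The plan is to obtain Corollary \ref{coro:V-Heun-level5-v} from Theorem \ref{thm:S-Heun-level5-f} by a single affine change of the independent variable, together with the observation that $S$ and $V$ differ only by a multiplicative constant. First I would note that, by definition, $S=\tfrac{\sqrt{250-110\sqrt5}}{5}\,V$, so $V=cS$ for a nonzero constant $c$; since the equation of Theorem \ref{thm:S-Heun-level5-f} is linear and homogeneous, $V(f)$ satisfies exactly the same differential equation in $f$ as $S(f)$ does. Hence it suffices to rewrite that equation in the new variable $v$.

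Next I would verify that $v$ is an affine function of $f$. Writing $\alpha=\tfrac{11+5\sqrt5}{2}$ and $\beta=\tfrac{11-5\sqrt5}{2}$ for the roots of $f^2-11f-1=0$ as in the proof of Theorem \ref{thm:S-Heun-level5-f}, the relation $f=T/S$ together with the constants defining $S,T,V,W$ gives $f=\alpha\,W/V$, because $\sqrt{(250+110\sqrt5)/(250-110\sqrt5)}=\alpha$. Substituting this into $v=\tfrac{1}{5\sqrt5}\bigl(\tfrac{W-V}{V}\bigr)$ yields
\[
v=\frac{f-\alpha}{5\sqrt5\,\alpha},
\]
which agrees with the first displayed expression for $v$ in the statement; thus $v=af+b$ with $a=\tfrac{1}{5\sqrt5\,\alpha}$ and $b=-\tfrac{1}{5\sqrt5}=-\tfrac{\sqrt5}{25}$.

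I would then carry out the substitution. Since $\tfrac{d}{df}=a\tfrac{d}{dv}$ and $\tfrac{d^2}{df^2}=a^2\tfrac{d^2}{dv^2}$, dividing the equation of Theorem \ref{thm:S-Heun-level5-f} by $a^2$ replaces its first- and zeroth-order coefficients $p(f),q(f)$ by $p(f)/a$ and $q(f)/a^2$, with $f=(v-b)/a$ throughout. Using $a\alpha=\tfrac{1}{5\sqrt5}$ and $\alpha\beta=-1$, the finite singular points map as $f=\alpha\mapsto v=0$, $f=0\mapsto v=-\tfrac{\sqrt5}{25}$, and $f=\beta\mapsto v=\beta$, so each simple-pole term $\tfrac{1}{f-c}$ of $p$ becomes $\tfrac{1}{v-(ac+b)}$ and reproduces the stated first-order coefficient. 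Finally the numerator $f-3$ of $q$ becomes proportional to $v-(b+3a)$, and a short computation gives $b+3a=\tfrac{15-7\sqrt5}{10}$, which yields the stated zeroth-order coefficient.

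The only real work is the $\mathbb{Q}(\sqrt5)$-arithmetic: confirming $\sqrt{(250+110\sqrt5)/(250-110\sqrt5)}=\tfrac{11+5\sqrt5}{2}$, the image $f=\beta\mapsto v=\beta$ (which relies on $\alpha\beta=-1$), and the value $b+3a=\tfrac{15-7\sqrt5}{10}$. These steps are routine but must be done carefully so that every coefficient matches exactly; conceptually there is no obstacle, since an affine change of variable preserves the Fuchsian (Heun) type and merely relocates the finite singular points and rescales the accessory datum.
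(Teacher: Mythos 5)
Your proposal is correct and is precisely the argument the paper leaves implicit: Corollary \ref{coro:V-Heun-level5-v} is stated without proof as an immediate consequence of Theorem \ref{thm:S-Heun-level5-f}, obtained by the affine substitution $v=(f-\alpha)/(5\sqrt{5}\,\alpha)$ (where $\alpha=\frac{11+5\sqrt{5}}{2}$) together with the fact that $V$ is a constant multiple of $S$. Your $\mathbb{Q}(\sqrt{5})$-verifications all check out: $\sqrt{(250+110\sqrt{5})/(250-110\sqrt{5})}=\alpha$, the root images $f=\alpha\mapsto v=0$, $f=0\mapsto v=-\frac{\sqrt{5}}{25}$, $f=\beta\mapsto v=\beta$ (via $\alpha\beta=-1$), and $b+3a=\frac{15-7\sqrt{5}}{10}$.
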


\begin{corollary}
\label{coro:V-Heun-level5-v-sporadic}
{\it 
For every $\tau\in\mathbb{H}^2$ set 
$
\displaystyle
v(\tau)=
\frac{1}{5 \sqrt{5}}
\left(
\frac{W-V}{V}
\right)
$ 
and 
let 
$
\left\{
a_5(n)
\right\}_{n\geq 0}
$ 
be the sequence defined by the initial condition $a_5(0)=1$ and recurrence relation 
\begin{align}
&
(n+1)^2 a_5(n+1)
+\left\{\frac{\left(11+15 \sqrt{5}\right)}{2}  n (n+1)+\frac{5\left(1+\sqrt{5}\right)}{2} \right\} a_5(n) \notag  \\
&\hspace{80mm}
+
\frac{5\left(25+11 \sqrt{5}\right)}{2}  n^2 a_5(n-1)
=0.   \label{eqn:recurrence-a-(1,1/5)-(1,3/5)}
\end{align} 
Then we have 
\begin{equation*}
V(\tau)=\sum_{n=0}^{\infty} a_5(n) v^n. 
\end{equation*}
}
\end{corollary}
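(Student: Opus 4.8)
The plan is to obtain the recurrence by the power--series (Frobenius) method applied to the Heun equation of Corollary~\ref{coro:V-Heun-level5-v} at the point $v=0$. First I would clear denominators: multiplying that equation by $v(v+\tfrac{\sqrt5}{25})(v-\tfrac{11-5\sqrt5}{2})$ converts it into a Fuchsian ODE with polynomial coefficients,
\begin{equation*}
v\Bigl(v+\tfrac{\sqrt5}{25}\Bigr)\Bigl(v-\tfrac{11-5\sqrt5}{2}\Bigr)\frac{d^2V}{dv^2}
+A(v)\frac{dV}{dv}
+\Bigl(v-\tfrac{15-7\sqrt5}{10}\Bigr)V=0,
\end{equation*}
whose regular singular points are $0,\ -\tfrac{\sqrt5}{25},\ \tfrac{11-5\sqrt5}{2}$ and $\infty$, and where
\begin{equation*}
A(v)=\Bigl(v+\tfrac{\sqrt5}{25}\Bigr)\Bigl(v-\tfrac{11-5\sqrt5}{2}\Bigr)
+v\Bigl(v-\tfrac{11-5\sqrt5}{2}\Bigr)
+v\Bigl(v+\tfrac{\sqrt5}{25}\Bigr)
\end{equation*}
is the quadratic produced by the three simple poles of the first--order coefficient.

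Next I would pin down the behaviour at $v=0$. From the product expansions of $V$ and $W$ one reads off $V\to1$ and $W\to1$ as $q=\exp(2\pi i\tau)\to0$, and in fact $v=\tfrac{1}{5\sqrt5}(W/V-1)=q+O(q^2)$, so $v$ is a local uniformizer at the cusp and $V(\tau)$, viewed as a function of $v$, is holomorphic near $v=0$ with $V(0)=1$. The indicial equation of the displayed ODE at $v=0$ is $r^2=0$, a double root, so the space of solutions holomorphic at $v=0$ is one--dimensional; hence $V$ is, up to a scalar, the unique holomorphic solution, and the normalisation $V(0)=1$ forces $a_5(0)=1$. This legitimises the substitution $V=\sum_{n\ge0}a_5(n)v^n$.

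The core step is then formal. Substituting the series into the polynomial ODE and collecting the coefficient of $v^n$ yields, because the leading coefficient has degree three while the lower coefficients have degrees two and one, exactly a three--term relation linking $a_5(n+1),a_5(n),a_5(n-1)$. The coefficient of $a_5(n+1)$ is $-\tfrac{\sqrt5}{25}\cdot\tfrac{11-5\sqrt5}{2}\,(n+1)^2$, that of $a_5(n-1)$ is $n^2$, and that of $a_5(n)$ is $\bigl(\tfrac{\sqrt5}{25}-\tfrac{11-5\sqrt5}{2}\bigr)n(n+1)-\tfrac{15-7\sqrt5}{10}$. Dividing through by the leading constant $-\tfrac{\sqrt5}{25}\cdot\tfrac{11-5\sqrt5}{2}=\tfrac{25-11\sqrt5}{50}$, whose reciprocal is $\tfrac{5(25+11\sqrt5)}{2}$, normalises the $a_5(n+1)$--coefficient to $(n+1)^2$ and, after rationalising the surds, turns the other two coefficients into $\tfrac{11+15\sqrt5}{2}n(n+1)+\tfrac{5(1+\sqrt5)}{2}$ and $\tfrac{5(25+11\sqrt5)}{2}n^2$, which is precisely~(\ref{eqn:recurrence-a-(1,1/5)-(1,3/5)}).

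I expect the only genuine difficulty to be bookkeeping with the quadratic irrationality $\sqrt5$: the products $\tfrac{\sqrt5}{25}\cdot\tfrac{11-5\sqrt5}{2}$, $\bigl(\tfrac{\sqrt5}{25}-\tfrac{11-5\sqrt5}{2}\bigr)\cdot\tfrac{5(25+11\sqrt5)}{2}$, and $\tfrac{15-7\sqrt5}{10}\cdot\tfrac{5(25+11\sqrt5)}{2}$ must each be simplified with care, but all three are routine computations in $\Q(\sqrt5)$. Convergence of the series on a neighbourhood of $v=0$ of radius equal to the distance to the nearest other singularity $-\tfrac{\sqrt5}{25}$ is automatic from the general theory of Fuchsian equations, so no separate analytic estimate is needed.
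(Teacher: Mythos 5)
Your proposal is correct and follows exactly the route the paper intends: the paper states Corollary \ref{coro:V-Heun-level5-v-sporadic} as an immediate consequence of the Heun equation in Corollary \ref{coro:V-Heun-level5-v}, obtained by substituting the power series $V=\sum_{n\ge 0}a_5(n)v^n$ (legitimate since $v=q+O(q^2)$ and $V(0)=1$) and extracting the coefficient of $v^n$. Your bookkeeping in $\mathbb{Q}(\sqrt{5})$ checks out: $-ab=\tfrac{25-11\sqrt5}{50}$, and after multiplying by $\tfrac{5(25+11\sqrt5)}{2}$ the three coefficients become $(n+1)^2$, $\tfrac{11+15\sqrt5}{2}n(n+1)+\tfrac{5(1+\sqrt5)}{2}$, and $\tfrac{5(25+11\sqrt5)}{2}n^2$, which is precisely (\ref{eqn:recurrence-a-(1,1/5)-(1,3/5)}).
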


\begin{corollary}
\label{coro:V-Heun-level5-g}
{\it 
For every $\tau\in\mathbb{H}^2$ set 
\begin{equation*}
g(\tau)=
q
\prod_{n=1}^{\infty}
\frac
{
(1-q^{5n-1})^5 (1-q^{5n-4})^5 
}
{
(1-q^{5n-2})^5 (1-q^{5n-3})^5 
}, \,\,\, q=\exp(2\pi i \tau).
\end{equation*}
Then the inverse function $\tau=\tau(g)$ yields $V(g)=V(\tau(g))$ and $V(g)$ satisfies 
Heun's differential equation,
\begin{equation*}
\frac{d^2 V}{d g^2}
+
\left\{
\frac{1}{g}
+
\frac{1}{g+\frac{ 11+5\sqrt{5}  }{2} }
-
\frac{1}{g+ \frac{11-5 \sqrt{5}}{2}  }
\right\}
\frac{d V}{d g}
-
\frac
{
5(1-\sqrt{5})
}
{
2
}
\frac
{
\left\{
g+
(-2+\sqrt{5}
)
\right\}
}
{
g
\left(
g+\frac{ 11+5\sqrt{5}  }{2}
\right)
%%%%%
\left(
g+ \frac{11-5 \sqrt{5}}{2}
\right)^2
}
V=0.
\end{equation*}
}
\end{corollary}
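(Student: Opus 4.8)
The plan is to deduce the equation from Corollary~\ref{coro:V-Heun-level5-v} by a single M\"obius change of the independent variable, the real content being the identification of $g$ with a known level-five Hauptmodul. Throughout I write $\alpha=\frac{11+5\sqrt5}{2}$ and $\beta=\frac{11-5\sqrt5}{2}$, so that $\alpha+\beta=11$ and $\alpha\beta=-1$, exactly as in the proof of Theorem~\ref{thm:S-Heun-level5-f}. First I would recognise that the product defining $g$ is the fifth power of the Rogers--Ramanujan continued fraction,
\begin{equation*}
g(\tau)=\mathfrak{r}(q)^5,\qquad \mathfrak{r}(q)=q^{1/5}\prod_{n=1}^{\infty}\frac{(1-q^{5n-1})(1-q^{5n-4})}{(1-q^{5n-2})(1-q^{5n-3})},
\end{equation*}
whence $g=q+O(q^2)$ and $g\to 0$ as $\tau\to +i\infty$. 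Both $g$ and the variable $v$ of Corollary~\ref{coro:V-Heun-level5-v} are Hauptmoduls for the genus-zero group $\Gamma_1(5)$, of index twelve in $\mathrm{PSL}_2(\Z)$ (the index twelve is visible in the degree of the rational expression for $j$ in Theorem~\ref{thm:ODE-f-(1,1/5)-(1,3/5)}), so they must be related by a M\"obius transformation.

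The crux of the proof is the explicit identity
\begin{equation*}
g=\frac{v}{1+\alpha v},\qquad\text{equivalently}\qquad v=\frac{g}{1-\alpha g},
\end{equation*}
which in the variables of the previous subsections reads $g=-\beta\,\dfrac{f-\alpha}{f-\beta}=\dfrac{W-V}{\alpha W-\beta V}$. To establish it I would note that both sides are modular functions for $\Gamma_1(5)$, so comparing their $q$-expansions through more terms than the total order of their poles forces equality, since a holomorphic modular function on the compact curve $X_1(5)$ is constant. The correct M\"obius form is read off from the leading behaviour $g,v=q+O(q^2)$ and from the correspondence of singular values $v=0,-\frac{\sqrt5}{25},\beta,\infty$ with $g=0,-\alpha,\infty,-\beta$. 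Alternatively, the identity can be produced directly from the product formulas for $V$ and $W$ by means of the factorisation $\frac{1-q^{5n}}{1-q^{n}}=\bigl(1+\tfrac{1-\sqrt5}{2}q^{n}+q^{2n}\bigr)\bigl(1+\tfrac{1+\sqrt5}{2}q^{n}+q^{2n}\bigr)$ together with Ramanujan's relation for $\mathfrak{r}(q)^5$. This modular identity is the main obstacle; once it is in hand the remainder is mechanical.

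With $v=g/(1-\alpha g)$ settled I would substitute into the equation $V_{vv}+p(v)\,V_{v}+q(v)\,V=0$ of Corollary~\ref{coro:V-Heun-level5-v}. Since $dv/dg=(1-\alpha g)^{-2}$ and $d^2v/dg^2=2\alpha(1-\alpha g)^{-3}$, the chain rule gives the transformed coefficients
\begin{equation*}
\widetilde q=q\Bigl(\frac{dv}{dg}\Bigr)^2,\qquad \widetilde p=p\,\frac{dv}{dg}-\frac{d^2v/dg^2}{dv/dg},
\end{equation*}
both evaluated at $v=g/(1-\alpha g)$. The four singular points $v=0,-\frac{\sqrt5}{25},\beta,\infty$ are carried to $g=0,-\alpha,\infty,-\beta$, and because a M\"obius map preserves local exponents, the exponent pattern $\{0,0\}$ at three points and $\{1,1\}$ at the fourth is transported from $v=\infty$ to $g=-\beta$; this is exactly what produces the double factor $(g+\beta)^2$ and the anomalous sign $-\tfrac1{g+\beta}$ in the stated equation. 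Clearing denominators and simplifying the surds then yields the first-order coefficient $\frac1g+\frac1{g+\alpha}-\frac1{g+\beta}$ and reduces the zeroth-order coefficient to $-\frac{5(1-\sqrt5)}{2}\cdot\frac{g+(\sqrt5-2)}{g(g+\alpha)(g+\beta)^2}$; as a final consistency check the accessory zero $v=\frac{15-7\sqrt5}{10}$ of Corollary~\ref{coro:V-Heun-level5-v} is sent exactly to $g=2-\sqrt5$, matching the factor $g+(-2+\sqrt5)$ in the numerator. The only genuine labour in this last step is tracking the quadratic irrationalities while simplifying $\widetilde p$ and $\widetilde q$; one could equally bypass $v$ and verify the equation directly by computing $Dg$ from the system for $DV,DW,DU$ and forming $dV/dg=DV/Dg$, as in the proof of Theorem~\ref{thm:S-Heun-level5-f}.
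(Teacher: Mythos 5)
Your argument is correct and takes essentially the same route as the paper: the paper's entire proof consists of quoting from Farkas--Kra the M\"obius relation
$g=\bigl(\tfrac{-11+5\sqrt{5}}{2}f-1\bigr)\big/\bigl(f+\tfrac{-11+5\sqrt{5}}{2}\bigr)
=-\beta\,\tfrac{f-\alpha}{f-\beta}$
(with $\alpha=\tfrac{11+5\sqrt{5}}{2}$, $\beta=\tfrac{11-5\sqrt{5}}{2}$), which is precisely your identity $g=v/(1+\alpha v)$, and it leaves the chain-rule transfer of Corollary~\ref{coro:V-Heun-level5-v} implicit. The only difference is that you prove this modular identity (by comparing $q$-expansions of two Hauptmoduls for $\Gamma_1(5)$ on a genus-zero curve) where the paper simply cites it; your exponent bookkeeping at $g=-\beta$ and the consistency check that $v=\tfrac{15-7\sqrt{5}}{10}$ maps to $g=2-\sqrt{5}$ are sound.
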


\begin{proof}
From Farkas and Kra \cite[p. 262]{Farkas-Kra} 
recall the following formula:
$
\displaystyle
g=
\frac
{
\frac{-11+5\sqrt{5}}{2} f-1
}
{
f
+
\frac{-11+5\sqrt{5}}{2}
}.
$
\end{proof}

\begin{corollary}
\label{coro:V-Heun-level5-g-sporadic}
{\it 
For every $\tau\in\mathbb{H}^2$ set 
\begin{equation*}
g(\tau)=
q
\prod_{n=1}^{\infty}
\frac
{
(1-q^{5n-1})^5 (1-q^{5n-4})^5 
}
{
(1-q^{5n-2})^5 (1-q^{5n-3})^5 
}, \,\,\, q=\exp(2\pi i \tau).
\end{equation*}
and 
let 
$
\left\{
s_5(n)
\right\}_{n\geq 0}
$ 
be the sequence defined by the initial condition $s_5(0)=1$ and recurrence relation 
\begin{align}
&(n+1)^2 s_5(n+1)
+\left\{-\frac{33+5 \sqrt{5}}{2}  n^2+5 \sqrt{5} n+\frac{5\left(1+\sqrt{5}\right)}{2}   \right\} s_5(n)   \notag  \\
&\hspace{15mm}+\left\{ \frac{119+55 \sqrt{5}}{2}  n^2-\frac{33 \left(11+5 \sqrt{5}\right)  }{2}  n+\frac{ 279+125 \sqrt{5}  }{2}   \right\} s_5(n-1)  \notag \\
&\hspace{25mm}
+\frac{11+5 \sqrt{5}}{2}  (n-2)^2 s_5(n-2)
=0.
\end{align}
Then we have 
\begin{equation*}
V(\tau)=\sum_{n=0}^{\infty} s_5(n) g^n. 
\end{equation*}
}
\end{corollary}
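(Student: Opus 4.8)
The plan is to obtain the recurrence directly from the Heun equation for $V$ in the variable $g$ proved in Corollary \ref{coro:V-Heun-level5-g}, by running the Frobenius method at $g=0$. First I would record the analytic input: as $\tau\to +i\infty$ we have $q\to 0$, the product defining $g(\tau)$ behaves like $g=q+O(q^2)$ and so is a local coordinate at the cusp vanishing there, while the product defining $V(\tau)$ tends to $1$. Hence $V$, viewed as a function of $g$, is holomorphic at $g=0$ with $V(0)=1$, and it is the solution of the Heun equation that is regular at the cusp; fixing the normalization $V(0)=1$ determines it uniquely. Writing $V=\sum_{n\ge 0}s_5(n)g^n$, it then suffices to show that these Taylor coefficients obey the stated recurrence, since that recurrence together with $s_5(0)=1$ determines a unique sequence.

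Next I would clear denominators in the equation of Corollary \ref{coro:V-Heun-level5-g}. Putting $A=\tfrac{11+5\sqrt{5}}{2}$ and $B=\tfrac{11-5\sqrt{5}}{2}$, so that $A+B=11$, $AB=-1$ and $-1/B=A$, the finite singular points are $g=0,-A,-B$, and the coefficient of $V$ has a double pole at $g=-B$. Multiplying through by $g(g+A)(g+B)^2$ converts the equation into a polynomial-coefficient ODE
\begin{equation*}
P_4(g)\,V''+P_3(g)\,V'+P_1(g)\,V=0,
\end{equation*}
where $P_4(g)=g(g+A)(g+B)^2$ has degree $4$ and is divisible by $g$, the term $P_3(g)=(g+A)(g+B)^2+g(g+B)^2-g(g+A)(g+B)$ has degree $3$, and $P_1(g)=-\tfrac{5(1-\sqrt{5})}{2}\bigl(g-2+\sqrt{5}\bigr)$ has degree $1$. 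It is precisely the double pole at $g=-B$ that pushes $\deg P_4$ up to $4$ and thereby forces a four-term, rather than three-term, recurrence (by contrast with the coordinate $v$ of Corollary \ref{coro:V-Heun-level5-v}, whose Heun equation has only simple poles).

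Then I would substitute $V=\sum_{n\ge 0}s_5(n)g^n$ with its term-by-term derivatives and extract the coefficient of $g^m$. Since $P_4$ ranges over $g^1,\dots,g^4$, $P_3$ over $g^0,\dots,g^3$, and $P_1$ over $g^0,g^1$, this coefficient is a linear combination of $s_5(m+1),s_5(m),s_5(m-1),s_5(m-2)$. Using $A+B=11$ and $AB=-1$, the coefficient of $s_5(m+1)$ collapses to $-B(m+1)^2$; dividing the whole relation by $-B$ (equivalently multiplying the remaining coefficients by $A=-1/B$) turns the $s_5(m+1)$ term into $(m+1)^2$ and the $s_5(m-2)$ term into $A(m-2)^2=\tfrac{11+5\sqrt{5}}{2}(m-2)^2$, reproducing the stated recurrence with $n=m$. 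Because the indicial equation at $g=0$ is $r^2=0$, the leading factor $(m+1)^2$ never vanishes for $m\ge 0$, so the regular solution is uniquely pinned down by $V(0)=1$ and this recurrence; comparing with the definition of $s_5(n)$ gives $V(\tau)=\sum_{n\ge 0}s_5(n)g^n$.

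The main obstacle is entirely computational: carrying the exact arithmetic of $\Q(\sqrt{5})$ through the substitution and matching each of the three polynomial-in-$n$ coefficients against those asserted. Concretely one checks that the $n^2$-coefficient of the $s_5(n)$ term comes from $A(B^2-2)=-B-2A=-\tfrac{33+5\sqrt{5}}{2}$, that the $n^2$-coefficient of the $s_5(n-1)$ term comes from $A(A+2B)=A^2-2=\tfrac{119+55\sqrt{5}}{2}$, and that the corresponding linear and constant terms collapse to $5\sqrt{5}\,n+\tfrac{5(1+\sqrt{5})}{2}$ and $-\tfrac{33(11+5\sqrt{5})}{2}\,n+\tfrac{279+125\sqrt{5}}{2}$ respectively. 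These identities are routine but delicate in the sign of $\sqrt{5}$, and that is where the verification could most easily go astray.
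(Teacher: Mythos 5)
Your proposal is correct, and it is exactly the argument the paper leaves implicit: the corollary is stated without proof because it follows from the Heun equation of Corollary \ref{coro:V-Heun-level5-g} by the Frobenius method at $g=0$, which is precisely what you carry out. I checked your key identities (the $s_5(m+1)$ coefficient collapsing to $-B(m+1)^2$, and $A(11B-1)=-\tfrac{33+5\sqrt{5}}{2}$, $A(11B+2)=5\sqrt{5}$, $A\cdot\tfrac{5(7-3\sqrt{5})}{2}=\tfrac{5(1+\sqrt{5})}{2}$, $A(11+B)=\tfrac{119+55\sqrt{5}}{2}$, $A(14+5\sqrt{5})=\tfrac{279+125\sqrt{5}}{2}$), and they all reproduce the stated recurrence.
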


\subsection{Heun's equations for $T(\tau)$ and $W(\tau)$}

\begin{proposition}
\label{prop:T-Heun-level 5-f}
{\it
For every $\tau\in\mathbb{H}^2$ set 
\begin{equation*}
\displaystyle 
f(\tau)
=\frac{11+5\sqrt{5}}{2}
\prod_{n=1}^{\infty}
\frac{ \left(1+\frac{1+\sqrt{5}}{2}q^n+q^{2n} \right)^5 }{ \left(1+\frac{1-\sqrt{5}}{2}q^n+q^{2n} \right)^5 }, \,\,q=\exp(2\pi i \tau). 
\end{equation*}
Then the inverse function $\tau=\tau(f)$ yields $T(f)=T(\tau(f))$ and $T(f)$ satisfies the following differential equation:
\begin{equation*}
\frac{d^2 T}{d f^2}
+
\left\{
-
\frac{1}{f}
+
\frac{1}{f-  \frac{11+5 \sqrt{5}}{2}  }
+
\frac{1}{f-  \frac{11-5 \sqrt{5}}{2}  }
\right\}
\frac{d T}{d f}
-
\frac
{
3f+1
}
{
f^2(f-  \frac{11+5 \sqrt{5}}{2})(f-\frac{11-5 \sqrt{5}}{2}   )
}
T=0.
\end{equation*}
}
\end{proposition}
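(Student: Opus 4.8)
The plan is to follow verbatim the computation of Theorem \ref{thm:S-Heun-level5-f}, simply transporting the modular form $T=P+3Q$ through the same chain of substitutions that was applied there to $S=-3P+Q$. First I would reuse the expression
\begin{equation*}
Df=-\frac{T\left(S^2+11ST-T^2\right)}{20S}
\end{equation*}
already established in that proof. Since $T=fS$, one has $S^2+11ST-T^2=-S^2\left(f^2-11f-1\right)$, so that $Df$ is, up to the factor $S^{2}/20$, the product $f\left(f-\alpha\right)\left(f-\beta\right)$, where $\alpha=\frac{11+5\sqrt5}{2}$ and $\beta=\frac{11-5\sqrt5}{2}$ are the roots of $f^2-11f-1=0$. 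This factorization is exactly what distinguishes $0,\alpha,\beta$ (together with $\infty$) as the four singular points of the Heun equation to be derived.

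Next I would compute the two derivatives through $\tau$ by the chain rule,
\begin{equation*}
\frac{dT}{df}=\frac{DT}{Df},\qquad
\frac{d^2T}{df^2}=\frac{d}{d\tau}\left(\frac{dT}{df}\right)\frac{d\tau}{df},
\end{equation*}
inserting the system for $DS,DT,DU$ recorded just before this subsection. As in the $S$-case the outcome is a rational function of $S,T,U$ whose denominator is a power of $T\left(S^2+11ST-T^2\right)$ and whose numerator is quartic in $S,T$ and at most linear in $U$. I would then assemble the combination prescribed by the claimed equation: multiplying through by $f\left(f-\alpha\right)\left(f-\beta\right)$ and using $\alpha+\beta=11$, $\alpha\beta=-1$, the assertion collapses to the single polynomial identity
\begin{equation*}
f\left(f-\alpha\right)\left(f-\beta\right)\frac{d^2T}{df^2}
+\left(f^2+1\right)\frac{dT}{df}
=3T+S,
\end{equation*}
the right-hand side coming from $\tfrac{3f+1}{f}\,T=3T+\tfrac{T}{f}=3T+S$, which in the generators $P,Q$ is just $10Q$. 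Verifying this identity by substituting the two rational derivative expressions and reducing with $T=fS$ is then a finite, if lengthy, calculation.

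The step I expect to be the real obstacle is the cancellation of the $E_2$-terms. Both $dT/df$ and $d^2T/df^2$ carry $U=E_2(\tau)$ linearly, whereas the target $3T+S$ is free of $U$; the specific coefficients $-\tfrac1f+\tfrac1{f-\alpha}+\tfrac1{f-\beta}$ and $-\tfrac{3f+1}{f^2(f-\alpha)(f-\beta)}$ are precisely those for which every $U$-contribution cancels and the residue is linear in $S,T$. Thus the heart of the argument is confirming that no $U$ (and no $U^2$ entering through $DU$) survives the combination; once this is checked, matching the remaining polynomial against $3T+S$ is automatic. As a consistency check I would verify the exponent data at the new singular point: the indicial equation at $f=0$ has the double root $1$ for $T$, so that $T$ vanishes to first order there in agreement with $f=T/S$, against the double root $0$ obtained for $S$; this is exactly what the sign change of the $1/f$ coefficient and the extra factor $f$ in the final denominator encode.
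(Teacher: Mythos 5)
Your proposal is correct and follows essentially the same route as the paper's proof: reuse $Df=-\frac{T\left(S^{2}+11ST-T^{2}\right)}{20S}$ from Theorem \ref{thm:S-Heun-level5-f}, compute $dT/df=DT/Df$ and $d^{2}T/df^{2}$ by the chain rule from the $DS,DT,DU$ system, and verify that the combination $f(f-\alpha)(f-\beta)T''+(f^{2}+1)T'$ reduces (after the linear-in-$U$ terms cancel) to $S+3T=\frac{3f+1}{f}T$. Your added observations --- that $3T+S=10Q$ and that the indicial exponents at $f=0$ (double root $1$ for $T$ versus $0$ for $S$) are consistent with $f=T/S$ --- are sound but supplementary to the paper's argument.
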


\begin{proof}
We use the same notation as Theorem \ref{thm:S-Heun-level5-f}. 
We first have 
\begin{equation*}
\frac{d T}{d f}
=
\frac{d T}{d \tau}
\frac{d \tau}{d f}
=\frac{ D T}{D f}
=
\frac{S \left(7 S^2+66 S T-5 T^2-20 U\right)}{12 \left(S^2+11 S T-T^2\right)}, 
\end{equation*}
and 
\begin{equation*}
\frac{d^2 T}{d f^2}
=
\frac{d}{d \tau}
\left(
\frac{d T}{d f}
\right)
\frac{d \tau}{d f}
=
-\frac{S^2 \left(5 S^4+102 S^3 T+382 S^2 T^2+20 S^2 U-102 S T^3+5 T^4+20 T^2 U\right)}{12 T \left(S^2+11 S T-T^2\right)^2},
\end{equation*}
which imply that 
\begin{align*}
&
f(f-\alpha)(f-\beta)\frac{d^2 T}{d f^2}
-(f-\alpha)(f-\beta)\frac{d T}{d f}
+f(f-\alpha)+f(f-\beta)\frac{d T}{d f}   \\
=&
S+3 T
=
T
\left(
3+\frac{S}{T}
\right)
=
T
\left(
3+\frac{1}{f}
\right)
=
\frac{3 f+1}{f} T,
\end{align*}
which proves the proposition. 
\end{proof}

\begin{theorem}
\label{thm:T-Heun-f}
{\it
For every $\tau\in\mathbb{H}^2$ set 
\begin{equation*}
\displaystyle 
\tilde{f}(\tau)
=\frac{-11+5\sqrt{5}}{2}
\prod_{n=1}^{\infty}
\frac
{ \left(1+\frac{1-\sqrt{5}}{2}q^n+q^{2n} \right)^5 }
{ \left(1+\frac{1+\sqrt{5}}{2}q^n+q^{2n} \right)^5 }, \,\,q=\exp(2\pi i \tau). 
\end{equation*}
Then the inverse function $\tau=\tau( \tilde{f})$ yields $T(\tilde{f})=T(\tau(\tilde{f}))$ and $T(\tilde{f})$ satisfies the following differential equation:
\begin{equation*}
\frac{d^2 T}{d \tilde{f}^2}
+
\left\{
\frac{1}{ \tilde{f}  }
+
\frac{1}{  \tilde{f}-  \frac{-11+5 \sqrt{5}}{2}  }
+
\frac{1}{ \tilde{f}-  \frac{-11-5 \sqrt{5}}{2}  }
\right\}
\frac{d T}{d \tilde{f}  }
+
\frac
{
\tilde{f}+3
}
{
\tilde{f}( \tilde{f}-  \frac{-11+5 \sqrt{5}}{2})( \tilde{f}-\frac{-11-5 \sqrt{5}}{2}   )
}
T=0.
\end{equation*}
}
\end{theorem}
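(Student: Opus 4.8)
The plan is to observe that the new variable $\tilde f$ is simply the reciprocal of the variable $f$ of Proposition~\ref{prop:T-Heun-level 5-f}, and then to deduce the equation by the change of variable $f\mapsto 1/f$, exactly as Theorem~\ref{thm:E_4-hypergeometric} is deduced from Proposition~\ref{prop:E_4-linear}. First I would check from the two infinite products that $\tilde f(\tau)=1/f(\tau)$; this reduces to the numerical identity $\tfrac{11+5\sqrt5}{2}\cdot\tfrac{-11+5\sqrt5}{2}=\tfrac{125-121}{4}=1$, so the leading constant of $\tilde f$ is the reciprocal of that of $f$ while the product is inverted factor by factor.

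Next I would record the two algebraic facts that control the computation. Writing $\alpha=\tfrac{11+5\sqrt5}{2}$ and $\beta=\tfrac{11-5\sqrt5}{2}$ for the roots of $f^2-11f-1$, one has $\alpha+\beta=11$ and $\alpha\beta=-1$; hence $1/\alpha=-\beta=\tfrac{-11+5\sqrt5}{2}$ and $1/\beta=-\alpha=\tfrac{-11-5\sqrt5}{2}$, which are precisely the finite singular points in the target equation. The relation $\alpha\beta=-1$ also gives $(1-\alpha u)(1-\beta u)=-(u-1/\alpha)(u-1/\beta)$ for $u=\tilde f$, and it is this sign flip that converts the minus sign in front of the $T$-term of the proposition into the plus sign of the theorem.

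Setting $u=\tilde f=1/f$ and using the chain rule, one has $\dfrac{dT}{df}=-u^2\dfrac{dT}{du}$ and $\dfrac{d^2T}{df^2}=u^4\dfrac{d^2T}{du^2}+2u^3\dfrac{dT}{du}$. I would substitute these into the equation of Proposition~\ref{prop:T-Heun-level 5-f}, replace $f$ by $1/u$ throughout, and divide by $u^3$. Two elementary partial-fraction identities then finish the argument: first $\dfrac1u\Bigl(3-\dfrac{1}{1-\alpha u}-\dfrac{1}{1-\beta u}\Bigr)=\dfrac1u+\dfrac{1}{u-1/\alpha}+\dfrac{1}{u-1/\beta}$, which produces the stated first-order coefficient; second, the factorisation above turns $-\dfrac{3f+1}{f^2(f-\alpha)(f-\beta)}$ into $\dfrac{u+3}{u(u-1/\alpha)(u-1/\beta)}$, which is the stated zeroth-order coefficient. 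Substituting the values of $1/\alpha$ and $1/\beta$ gives exactly the equation in the statement.

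I do not expect a genuine obstacle, since this is a routine M\"obius change of variable. The only point that needs care is the bookkeeping of the singular points: the map $f\mapsto1/f$ interchanges $0$ and $\infty$, so the local data carried by the $-1/f$ term of the proposition (attached to $f=0$) reappears as the $+1/\tilde f$ term of the theorem, now attached to $\tilde f=0$, i.e. to the former point $f=\infty$. Checking that the residues and the numerator of the accessory term transform consistently under this interchange is the entire substance of the proof.
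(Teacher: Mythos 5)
Your proposal is correct and takes exactly the same route as the paper: the paper's entire proof of Theorem~\ref{thm:T-Heun-f} is the one-line remark that it follows by changing $f \to \frac{1}{f}$ in Proposition~\ref{prop:T-Heun-level 5-f}, which is precisely the substitution you carry out. Your write-up merely makes explicit the details the paper leaves implicit (the identity $\tilde{f}=1/f$ from the products, the chain-rule formulas, and the sign flip coming from $\alpha\beta=-1$), all of which check out.
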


\begin{proof}
The theorem can be proved by changing 
$
\displaystyle 
f
\to 
\frac{1}{f}
$ 
in Proposition \ref{prop:T-Heun-level 5-f}. 
\end{proof}

\begin{corollary}
\label{coro:W-Heun-level5-w}
{\it 
For every $\tau\in\mathbb{H}^2$ set 
$
\displaystyle
w(\tau)=
\frac{ 2\tilde{f}-(-11+5 \sqrt{5})  }{-5\sqrt{5 } (-11+5 \sqrt{5} )   }
=
\frac{1}{ -5\sqrt{5}  }
\left(
\frac{V-W}{W}
\right). 
$
Then the inverse function $\tau=\tau(w)$ yields $W( w)=W(\tau( w))$ and $W(w)$ satisfies 
Heun's differential equation,
\begin{equation*}
\frac{d^2 W}{d w^2}
+
\left\{
\frac{1}{ w }
+
\frac{1}{ w-\frac{ \sqrt{5} }{25} }
+
\frac{1}{ w-  \frac{11+5 \sqrt{5}}{2}  }
\right\}
\frac{d W}{d w}
+
\frac
{
w-
\frac{15+7\sqrt{5}}{10}
}
{
w( w-\frac{ \sqrt{5} }{25})( w-\frac{11+5 \sqrt{5}}{2}  )
}
W=0.
\end{equation*}
}
\end{corollary}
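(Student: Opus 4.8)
The plan is to obtain this corollary from Theorem \ref{thm:T-Heun-f} by an affine change of the independent variable, exactly as Theorem \ref{thm:T-Heun-f} was itself obtained from Proposition \ref{prop:T-Heun-level 5-f} via $f \to 1/f$. The key starting observation is that the defining relation
\[
w = \frac{2\tilde{f} - (-11 + 5\sqrt{5})}{-5\sqrt{5}\,(-11+5\sqrt{5})}
\]
exhibits $w$ as an affine function $w = a\tilde{f} + b$ of $\tilde{f}$, with $a = \frac{2}{-5\sqrt{5}\,(-11+5\sqrt{5})}$ and $b = \frac{\sqrt{5}}{25}$. Since $T = \frac{\sqrt{250+110\sqrt{5}}}{5}\,W$ is a nonzero constant multiple of $W$, and the equation of Theorem \ref{thm:T-Heun-f} is linear and homogeneous, $W$ satisfies the very same equation in the variable $\tilde{f}$; hence it suffices to rewrite that equation in the new variable $w$.

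First I would record the transformation of derivatives: from $dw/d\tilde{f} = a$ one gets $\frac{d}{d\tilde{f}} = a\,\frac{d}{dw}$ and $\frac{d^2}{d\tilde{f}^2} = a^2\,\frac{d^2}{dw^2}$. Dividing the equation of Theorem \ref{thm:T-Heun-f} through by $a^2$ then turns the first-order coefficient $\frac{1}{a}\sum_i (\tilde{f} - r_i)^{-1}$ into $\sum_i (w - w_i)^{-1}$, and turns the zeroth-order coefficient $\frac{1}{a^2}\cdot\frac{\tilde{f} + 3}{\prod_i(\tilde{f} - r_i)}$ into $\frac{w - w_0}{\prod_i(w - w_i)}$, where $r_1=0$, $r_2=\frac{-11+5\sqrt{5}}{2}$, $r_3=\frac{-11-5\sqrt{5}}{2}$ and $-3$ are the marked points in $\tilde{f}$, while $w_i = a r_i + b$ and $w_0 = a(-3) + b$ are their images. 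The crucial structural point is that the powers of $a$ cancel cleanly in both coefficients (using $\tilde{f} - r_i = (w - w_i)/a$), so no spurious constant survives and the residues at the finite singular points remain equal to $1$.

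The remaining work is the $\sqrt{5}$ bookkeeping for these four images. I would verify that the three regular singular points map as $\tilde{f} = 0 \mapsto w = \frac{\sqrt{5}}{25}$, $\tilde{f} = \frac{-11+5\sqrt{5}}{2} \mapsto w = 0$, and $\tilde{f} = \frac{-11-5\sqrt{5}}{2} \mapsto w = \frac{11+5\sqrt{5}}{2}$, and that the numerator zero $\tilde{f} = -3$ maps to $w = \frac{15+7\sqrt{5}}{10}$; substituting these into the two transformed coefficients reproduces exactly the stated Heun equation. I expect the only delicate part to be this arithmetic with $\sqrt{5}$, together with a preliminary check that the two given expressions for $w$ agree: using $\tilde{f} = S/T$ with $S = \frac{\sqrt{250-110\sqrt{5}}}{5}V$ and $T = \frac{\sqrt{250+110\sqrt{5}}}{5}W$, one simplifies $\sqrt{(250-110\sqrt{5})/(250+110\sqrt{5})} = \frac{25-11\sqrt{5}}{2\sqrt{5}}$ and confirms that $\frac{1}{-5\sqrt{5}}\left(\frac{V-W}{W}\right)$ coincides with the affine expression $a\tilde{f} + b$. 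With that identification in hand, the change of variable completes the proof.
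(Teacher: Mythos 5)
Your proposal is correct and is precisely the argument the paper leaves implicit: Corollary \ref{coro:W-Heun-level5-w} is stated without proof as an immediate consequence of Theorem \ref{thm:T-Heun-f}, the point being exactly that an affine substitution $w=a\tilde{f}+b$ (with $T$ a nonzero constant multiple of $W$) preserves the Heun form with unit residues, moving the singular points $0,\ \tfrac{-11+5\sqrt{5}}{2},\ \tfrac{-11-5\sqrt{5}}{2}$ to $\tfrac{\sqrt{5}}{25},\ 0,\ \tfrac{11+5\sqrt{5}}{2}$ and the accessory zero $-3$ to $\tfrac{15+7\sqrt{5}}{10}$. Your $\sqrt{5}$ arithmetic and the consistency check of the two expressions for $w$ are accurate, so the write-up fills the gap exactly as intended.
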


\begin{corollary}
\label{coro:W-Heun-level5-w-sporadic}
{\it 
For every $\tau\in\mathbb{H}^2$ set 
$
\displaystyle
w(\tau)=
\frac{1}{-5 \sqrt{5}}
\left(
\frac{V-W}{W}
\right)
$ 
and 
let 
$
\left\{
b_5(n)
\right\}_{n\geq 0}
$ 
be the sequence defined by the initial condition $b_5(0)=1$ and recurrence relation 
\begin{align}
&
(n+1)^2 b_5(n+1)
+\left\{\frac{\left(11-15 \sqrt{5}\right)}{2}  n (n+1)+\frac{5\left(1-\sqrt{5}\right)}{2} \right\} b_5(n) \notag  \\
&\hspace{80mm}
+
\frac{5\left(25-11 \sqrt{5}\right)}{2}  n^2 b_5(n-1)
=0.   \label{eqn:recurrence-b-(1,1/5)-(1,3/5)}
\end{align} 
Then we have 
\begin{equation*}
W(\tau)=\sum_{n=0}^{\infty} b_5(n) w^n. 
\end{equation*}
}
\end{corollary}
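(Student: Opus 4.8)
The plan is to read off the recurrence (\ref{eqn:recurrence-b-(1,1/5)-(1,3/5)}) from the Heun equation of Corollary \ref{coro:W-Heun-level5-w} by the Frobenius method at $w=0$. I would abbreviate $a_1=\frac{\sqrt{5}}{25}$, $a_2=\frac{11+5\sqrt{5}}{2}$, $c=\frac{15+7\sqrt{5}}{10}$, and $P(w)=w(w-a_1)(w-a_2)$. Multiplying that Heun equation through by $P(w)$, one uses that the coefficient of $dW/dw$ is the logarithmic derivative $P'(w)/P(w)$ while the coefficient of $W$ is $(w-c)/P(w)$; hence the equation is equivalent to the compact form
\begin{equation*}
P(w)\,\frac{d^2W}{dw^2}+P'(w)\,\frac{dW}{dw}+(w-c)\,W=0.
\end{equation*}

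Before expanding I would fix the normalization. At the regular singular point $w=0$ the indicial equation reads $\rho^2=0$, so up to a scalar there is a unique solution holomorphic at $w=0$, while the second, linearly independent solution involves $\log w$. Because $W(\tau)\to1$ and $w(\tau)\to0$ as $\tau\to+i\infty$ along the imaginary axis (both $V$ and $W$ tend to $1$ through their infinite products), $W$ stays bounded near $w=0$ and must therefore be the holomorphic branch, with value $1$ there; this gives $b_5(0)=1$.

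Substituting $W=\sum_{n\ge0}b_5(n)w^n$ into the compact form and collecting the coefficient of $w^n$ — where only $b_5(n+1),b_5(n),b_5(n-1)$ occur since $\deg P=3$ and $\deg P'=2$ — I would obtain, after a short computation,
\begin{equation*}
a_1a_2(n+1)^2\,b_5(n+1)-\bigl[(a_1+a_2)\,n(n+1)+c\bigr]b_5(n)+n^2\,b_5(n-1)=0.
\end{equation*}
Dividing by $a_1a_2$ and simplifying with $\frac{1}{a_1a_2}=\frac{5(25-11\sqrt{5})}{2}$, $\frac{a_1+a_2}{a_1a_2}=\frac{1}{a_1}+\frac{1}{a_2}=\frac{-11+15\sqrt{5}}{2}$ and $\frac{c}{a_1a_2}=\frac{(15+7\sqrt{5})(25-11\sqrt{5})}{4}=\frac{5(\sqrt{5}-1)}{2}$ converts the three coefficients into $(n+1)^2$, $\frac{11-15\sqrt{5}}{2}n(n+1)+\frac{5(1-\sqrt{5})}{2}$ and $\frac{5(25-11\sqrt{5})}{2}n^2$, which is precisely (\ref{eqn:recurrence-b-(1,1/5)-(1,3/5)}). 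Together with $b_5(0)=1$ this yields $W(\tau)=\sum_{n\ge0}b_5(n)w^n$.

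The only place demanding care is the coefficient bookkeeping in the substitution, but the compact shape $PW''+P'W'+(w-c)W=0$ reduces it to the three index shifts above, so it is routine rather than hard. As an independent check I note that the whole configuration is defined over $\mathbb{Q}(\sqrt{5})$ and that the field automorphism $\sqrt{5}\mapsto-\sqrt{5}$ interchanges $V\leftrightarrow W$, $v\leftrightarrow w$ and $a_5(n)\leftrightarrow b_5(n)$; applying it coefficientwise to the $q$-expansion identity of Corollary \ref{coro:V-Heun-level5-v-sporadic} gives the present statement, confirming that (\ref{eqn:recurrence-b-(1,1/5)-(1,3/5)}) is the Galois conjugate of (\ref{eqn:recurrence-a-(1,1/5)-(1,3/5)}).
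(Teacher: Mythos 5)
Your proposal is correct and is exactly the (omitted) argument the paper intends—the corollary is stated without proof as an immediate consequence of Corollary \ref{coro:W-Heun-level5-w}, namely the Frobenius expansion at $w=0$, where the indicial equation has the double root $\rho=0$, so boundedness of $W$ together with $W\to 1$, $w\to 0$ as $\tau\to+i\infty$ forces the holomorphic branch and $b_5(0)=1$. Your coefficient bookkeeping checks out, including $(n-1)(n-2)+3(n-1)+1=n^2$, $\frac{1}{a_1a_2}=\frac{5(25-11\sqrt{5})}{2}$, $\frac{1}{a_1}+\frac{1}{a_2}=\frac{-11+15\sqrt{5}}{2}$, $\frac{c}{a_1a_2}=\frac{5(\sqrt{5}-1)}{2}$, and the Galois-conjugation remark ($\sqrt{5}\mapsto-\sqrt{5}$ carrying Corollary \ref{coro:V-Heun-level5-v-sporadic} to the present statement) is a valid independent confirmation.
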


\begin{corollary}
\label{coro:W-Heun-level5-g}
{\it 
For every $\tau\in\mathbb{H}^2$ set 
\begin{equation*}
g(\tau)=
q
\prod_{n=1}^{\infty}
\frac
{
(1-q^{5n-1})^5 (1-q^{5n-4})^5 
}
{
(1-q^{5n-2})^5 (1-q^{5n-3})^5 
}, \,\,\, q=\exp(2\pi i \tau).
\end{equation*}
Then the inverse function $\tau=\tau(g)$ yields $W(g)=W(\tau(g))$ and $W(g)$ satisfies 
Heun's differential equation,
\begin{equation*}
\frac{d^2 W}{d g^2}
+
\left\{
\frac{1}{g}
+
\frac{1}{g+\frac{ 11-5\sqrt{5}  }{2} }
-
\frac{1}{g+ \frac{11+5 \sqrt{5}}{2}  }
\right\}
\frac{d W}{d g}
-
\frac
{
5(1+\sqrt{5})
}
{
2
}
\frac
{
\left\{
g-
(2+\sqrt{5}
)
\right\}
}
{
g
\left(
g+\frac{ 11-5\sqrt{5}  }{2}
\right)
%%%%%
\left(
g+ \frac{11+5 \sqrt{5}}{2}
\right)^2
}
W=0.
\end{equation*}
}
\end{corollary}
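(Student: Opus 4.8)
The plan is to imitate the proof of Corollary \ref{coro:V-Heun-level5-g}, with $T$ (equivalently $W$) playing the role that $S$ (equivalently $V$) played there. Since $T(\tau)=\frac{\sqrt{250+110\sqrt{5}}}{5}\,W(\tau)$, the two functions differ by a nonzero constant, so $W$ obeys exactly the same homogeneous linear ODE as $T$; hence it suffices to transform the $f$-equation of Proposition \ref{prop:T-Heun-level 5-f} for $T(f)$ into the asserted $g$-equation, using the same Farkas--Kra modular relation
\begin{equation*}
g=\frac{\frac{-11+5\sqrt{5}}{2}\,f-1}{f+\frac{-11+5\sqrt{5}}{2}}
\end{equation*}
already invoked for $V$. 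Writing $\gamma=\frac{-11+5\sqrt{5}}{2}$, the single arithmetic identity $\gamma\cdot\frac{-11-5\sqrt{5}}{2}=\frac{121-125}{4}=-1$ (so the conjugate of $\gamma$ equals $-1/\gamma$) is what forces the images of all four singular points to land on the prescribed locations.

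First I would record how this Möbius map moves the four regular singular points of Proposition \ref{prop:T-Heun-level 5-f}:
\begin{equation*}
f=0\ \to\ g=-\tfrac{1}{\gamma}=-\tfrac{11+5\sqrt{5}}{2},\quad
f=\tfrac{11+5\sqrt{5}}{2}\ \to\ g=0,\quad
f=\tfrac{11-5\sqrt{5}}{2}\ \to\ g=\infty,\quad
f=\infty\ \to\ g=\gamma=-\tfrac{11-5\sqrt{5}}{2},
\end{equation*}
so the singular set $\{0,\tfrac{11+5\sqrt{5}}{2},\tfrac{11-5\sqrt{5}}{2},\infty\}$ is carried bijectively onto $\{0,-\tfrac{11+5\sqrt{5}}{2},-\tfrac{11-5\sqrt{5}}{2},\infty\}$, the singular set of the claimed $g$-equation. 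Because a Möbius change of variable preserves the local exponents at corresponding singular points, I would then read off from Proposition \ref{prop:T-Heun-level 5-f} that the exponents are $\{0,0\}$ at each of $f=\tfrac{11\pm5\sqrt{5}}{2}$ and $f=\infty$, while at $f=0$ the indicial equation $\rho^{2}-2\rho+1=0$ gives the double exponent $\{1,1\}$. Transporting this data explains the shape of the target: the distinguished point $f=0$ with exponents $\{1,1\}$ is sent to $g=-\tfrac{11+5\sqrt{5}}{2}$, which is precisely where the claimed equation has the coefficient $-1/(g+\tfrac{11+5\sqrt{5}}{2})$ in the first-order term and the squared factor $(g+\tfrac{11+5\sqrt{5}}{2})^{2}$ in the denominator of the zeroth-order term. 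Indeed the residue of the first-order coefficient at each finite singular point must be $1-(\rho_{1}+\rho_{2})$, which yields $+1,+1,-1$ at $g=0,-\tfrac{11-5\sqrt{5}}{2},-\tfrac{11+5\sqrt{5}}{2}$ respectively, matching the stated coefficient of $dW/dg$ with no further computation.

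With the bookkeeping fixed I would carry out the substitution itself. For $f=\phi(g)$ the inverse of the Möbius relation above, and $Y(g)=T(\phi(g))$, the equation $T''+p\,T'+q\,T=0$ becomes
\begin{equation*}
\frac{d^{2}Y}{dg^{2}}+\Big(p(\phi)\,\phi'-\frac{\phi''}{\phi'}\Big)\frac{dY}{dg}+q(\phi)\,(\phi')^{2}\,Y=0,
\end{equation*}
and, as just noted, the first-order coefficient reproduces itself automatically from the residue data. The hard part will be pinning down the zeroth-order coefficient: after multiplying by $(\phi')^{2}$ one must verify that its numerator collapses to the single linear factor $g-(2+\sqrt{5})$ with overall constant $-\tfrac{5(1+\sqrt{5})}{2}$. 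I would establish this by matching the one free accessory parameter, i.e. by computing $q(\phi)(\phi')^{2}$ near each singular point and using the indicial data together with $\gamma\cdot\frac{-11-5\sqrt{5}}{2}=-1$ to tame the surd arithmetic; a convenient final check is that at $g=-\tfrac{11+5\sqrt{5}}{2}$ the leading coefficient of the zeroth-order term must equal $1$, so that the indicial equation there reads $(\rho-1)^{2}=0$ in agreement with the preserved exponents $\{1,1\}$.
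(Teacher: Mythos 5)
Your proposal is correct and is essentially the paper's own argument: the paper proves this corollary by repeating the proof of Corollary \ref{coro:V-Heun-level5-g}, i.e., by substituting the Farkas--Kra relation $g=\bigl(\tfrac{-11+5\sqrt{5}}{2}f-1\bigr)/\bigl(f+\tfrac{-11+5\sqrt{5}}{2}\bigr)$ into the Heun equation of Proposition \ref{prop:T-Heun-level 5-f}, exactly as you do after noting that $W$ is a constant multiple of $T$. Your exponent/residue bookkeeping and the direct evaluation of the transformed zeroth-order coefficient simply carry out in detail the computation the paper leaves implicit.
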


\begin{proof}
The corollary can be proved in the same way as 
Corollary \ref{coro:V-Heun-level5-g}. 
\end{proof}

\begin{corollary}
\label{coro:W-Heun-level5-g-sporadic}
{\it 
For every $\tau\in\mathbb{H}^2$ set 
\begin{equation*}
g(\tau)=
q
\prod_{n=1}^{\infty}
\frac
{
(1-q^{5n-1})^5 (1-q^{5n-4})^5 
}
{
(1-q^{5n-2})^5 (1-q^{5n-3})^5 
}, \,\,\, q=\exp(2\pi i \tau).
\end{equation*}
and 
let 
$
\left\{
t_5(n)
\right\}_{n\geq 0}
$ 
be the sequence defined by the initial condition $s_5(0)=1$ and recurrence relation 
\begin{align}
&(n+1)^2 t_5(n+1)
+\left\{-\frac{33-5 \sqrt{5}}{2}  n^2-5 \sqrt{5} n+\frac{5\left(1-\sqrt{5}\right)}{2}   \right\} t_5(n)   \notag  \\
&\hspace{15mm}+\left\{ \frac{119-55 \sqrt{5}}{2}  n^2-\frac{33 \left(11-5 \sqrt{5}\right)  }{2}  n+\frac{ 279-125 \sqrt{5}  }{2}   \right\} t_5(n-1)  \notag \\
&\hspace{25mm}
+\frac{11-5 \sqrt{5}}{2}  (n-2)^2 t_5(n-2)
=0.
\end{align}
Then we have 
\begin{equation*}
W(\tau)=\sum_{n=0}^{\infty} t_5(n) g^n. 
\end{equation*}
}
\end{corollary}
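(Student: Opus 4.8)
The plan is to read the recurrence straight off the Heun equation for $W$ established in Corollary \ref{coro:W-Heun-level5-g}, exactly parallel to the companion result Corollary \ref{coro:V-Heun-level5-g-sporadic}. Abbreviate $a=\frac{11+5\sqrt{5}}{2}$ and $b=\frac{11-5\sqrt{5}}{2}$, so that $a+b=11$ and $ab=-1$; that corollary gives
\begin{equation*}
\frac{d^2 W}{d g^2}
+\left\{\frac{1}{g}+\frac{1}{g+b}-\frac{1}{g+a}\right\}\frac{dW}{dg}
-\frac{5(1+\sqrt{5})}{2}\,\frac{g-(2+\sqrt{5})}{g(g+b)(g+a)^2}\,W=0.
\end{equation*}
Near $g=0$ the coefficient of $dW/dg$ has a simple pole and the coefficient of $W$ has at worst a simple pole, so $g=0$ is a regular singular point; its indicial polynomial is $\rho(\rho-1)+\rho=\rho^2$, a double root at $\rho=0$. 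Hence the equation has, up to a scalar, a unique solution holomorphic at $g=0$, the other (Frobenius) solution carrying a logarithm.

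First I would identify $W(\tau)$, viewed through the inverse map $\tau=\tau(g)$, with this holomorphic solution. Since $g(\tau)=q+O(q^2)$ and $W(\tau)=1+O(q)$ from the product expansions displayed at the start of the section, $g$ is a local coordinate at the cusp and $W|_{g=0}=1$. Being single-valued and holomorphic in $q$, $W$ cannot pick up the logarithmic branch, so it is the normalized holomorphic solution and admits an expansion $W=\sum_{n\ge0}t_5(n)g^n$ with $t_5(0)=1$, which is the asserted initial condition.

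Next I would clear denominators by multiplying through by $g(g+b)(g+a)^2$, turning the equation into a linear ODE whose leading term $g(g+b)(g+a)^2\,W''$ has degree four. Substituting $W=\sum_{n\ge0}t_5(n)g^n$ (with $t_5(k)=0$ for $k<0$) and reading off the coefficient of $g^{n}$ yields a four-term relation among $t_5(n+1),t_5(n),t_5(n-1),t_5(n-2)$. A short calculation shows that both the $W''$- and $W'$-contributions to the top index $n+1$ carry the constant term $a^2b=-a$, so the coefficient of $t_5(n+1)$ is $a^2b\,(n+1)^2$; dividing the relation by $a^2b$ normalizes it to $(n+1)^2$, and because $1/(-a)=b$ the bottom coefficient becomes $b(n-2)^2=\frac{11-5\sqrt{5}}{2}(n-2)^2$, in agreement with the stated recurrence. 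The bulk of the effort, and the main obstacle, is the remaining bookkeeping: expanding $(g+b)(g+a)^2+g(g+a)^2-g(g+b)(g+a)$ together with the degree-four product, tracking the three index shifts, and simplifying the surd coefficients via $a+b=11$, $ab=-1$ to recover the middle coefficients of $t_5(n)$ and $t_5(n-1)$. As cross-checks, the whole setup for $V$ in Corollary \ref{coro:V-Heun-level5-g-sporadic} is carried to the present one by the Galois substitution $\sqrt{5}\mapsto-\sqrt{5}$, so the $t_5(n)$ are the conjugates of the $s_5(n)$; and one may confirm $t_5(1)=\tfrac{5(\sqrt{5}-1)}{2}$ directly against the $q^1$-term of $W$, where $d_{1,5}(1)-d_{4,5}(1)=1$. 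Uniqueness of the holomorphic solution then completes the proof.
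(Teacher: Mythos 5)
Your proof is correct and takes essentially the paper's (implicit) approach: the paper states this corollary without proof, placing it immediately after Corollary \ref{coro:W-Heun-level5-g}, whose Heun equation is meant to yield the recurrence by exactly the power-series substitution you carry out. Your coefficient bookkeeping checks out (e.g. with $a=\frac{11+5\sqrt{5}}{2}$, $b=\frac{11-5\sqrt{5}}{2}$ one gets $b(11a-1)=-\frac{33-5\sqrt{5}}{2}$ and $b(2a^{2}-1)=-\frac{33+5\sqrt{5}}{2}$, which combine to the stated $t_5(n)$-coefficient), and your Frobenius analysis at $g=0$ with the no-logarithm argument supplies the normalization details the paper leaves unstated.
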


\section{Heun's equations for modular forms of level 5 (2)  }
\label{sec:Heun-level 5-(2)}

Throughout this section we set 
\begin{align*}
P(\tau)=&1+ 10 \sum_{n=1}^{\infty} (d_{2,5}(n)-d_{3,5}(n)) q^n, \,\,
Q(\tau)=3 + 10  \sum_{n=1}^{\infty} (d_{1,5}(n)-d_{4,5}(n)) q^n, \\
R(\tau)=&E_2(5\tau)=1-24\sum_{n=1}^{\infty} \sigma_1(n) q^{5n}=U(\tau). 
\end{align*}
In \cite{Matsuda2}, we proved that
\begin{align*}
%\label{eqn-level5-E_2(q^5)}
DP=&\frac{13P^3+39P^2Q-47PQ^2+9Q^3+50PR}{120},  \,\,
DQ=\frac{-9P^3-47P^2Q-39PQ^2+13Q^3+50QR}{120}, \\
DR=&\frac{P^4-3P^3Q-31P^2Q^2+3PQ^3+Q^4+125R^2}{300}, \,\,\,D=\frac{1}{2 \pi i} \frac{d}{d \tau}. 
\end{align*}
\par
Moreover we set 
\begin{align*}
S(\tau)=&
q
\prod_{n=1}^{\infty} 
\frac
{
(1-q^n)^2
}
{
(1-q^{5n-2})^5(1-q^{5n-3})^5    
}
=-\frac{3}{10}P+\frac{1}{10} Q   \\
%%%%%%%%%%%%%%%%%%%%%%%%%%%%%%%%%%%%%%%%%%%%%%%
=&
%%%%%%%%%%%%%%%%%%%%%%%%%%%%%%%%%%%%%%%%%%%%%%%
\sum_{n=1}^{\infty} (d_{1,5}(n)-d_{4,5}(n)) q^n
-3
\sum_{n=1}^{\infty} (d_{2,5}(n)-d_{3,5}(n)) q^n,  
%%%%%%%%%%%%%%%%%%%%%%%%%%%%%%%%%%%%%%%%%%%%%%%%%%%
\end{align*}
and 
\begin{align*}
T(\tau)=&
\prod_{n=1}^{\infty} 
\frac
{
(1-q^n)^2
}
{
(1-q^{5n-1})^5(1-q^{5n-4})^5    
}
=
\frac{1}{10} P+\frac{3}{10} Q  \\
=&
1+
3
\sum_{n=1}^{\infty} (d_{1,5}(n)-d_{4,5}(n)) q^n
+
\sum_{n=1}^{\infty} (d_{2,5}(n)-d_{3,5}(n)) q^n,
\end{align*}
which imply that 
\begin{align*}
DS=&\frac{-5 S^3-66 S^2 T+7 S T^2+5 S U}{12},  \,\, 
%%%%%%%%%%%%%%%%%%%%%%%%%%%%%%%%%%%%%%%%%%%%%%%%
DT=
\frac{7 S^2 T+66 S T^2-5 T^3+5 T U}{12},  \\   
%%%%%%%%%%%%%%%%%%%%%%%%%%%%%%%%%%%%%%%%%%%%%%%%%%
DU=&
\frac{-5 S^4-60 S^3 T-70 S^2 T^2+60 S T^3-5 T^4+5 U^2}{12}. 
\end{align*}

\subsection{On the modular function of level five}

\begin{theorem}
\label{thm:ODE-g-(1/5,1)-(3/5,1)}
{\it
For every $\tau\in\mathbb{H}^2$ set 
\begin{equation*}
g(\tau)=
q
\prod_{n=1}^{\infty}
\frac
{
(1-q^{5n-1})^5 (1-q^{5n-4})^5 
}
{
(1-q^{5n-2})^5 (1-q^{5n-3})^5 
}, \,\,\, q=\exp(2\pi i \tau).
\end{equation*}
Then we have 
\begin{equation*}
\{g,\tau\}
+
\frac{g^4+12 g^3+134 g^2-12 g+1}{2 g^2 \left(g^2+11 g-1\right)^2}
\left(g^{\prime} \right)^2=0. 
\end{equation*}
}
\end{theorem}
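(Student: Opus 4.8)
The plan is to exploit the fact that $g$ is a Möbius transform of the level-five modular function $f$ of Theorem~\ref{thm:ODE-f-(1,1/5)-(1,3/5)}, whose Schwarzian equation is already in hand. From Farkas and Kra \cite{Farkas-Kra} (as recorded in Corollary~\ref{coro:V-Heun-level5-g}) one has
\begin{equation*}
g=\frac{\frac{-11+5\sqrt{5}}{2}f-1}{f+\frac{-11+5\sqrt{5}}{2}}=:\phi(f),
\end{equation*}
a fractional linear transformation in $f$. First I would invoke the elementary but decisive fact that the Schwarzian of any Möbius map vanishes, so that the composition law
\begin{equation*}
\{g,\tau\}=\{g,f\}\left(\frac{df}{d\tau}\right)^2+\{f,\tau\}=\{\phi,f\}\left(\frac{df}{d\tau}\right)^2+\{f,\tau\}
\end{equation*}
collapses to $\{g,\tau\}=\{f,\tau\}$.

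Given this, Theorem~\ref{thm:ODE-f-(1,1/5)-(1,3/5)} supplies $\{f,\tau\}=-Q_f(f)(f')^2$ with $Q_f(f)=\dfrac{f^4-12f^3+134f^2+12f+1}{2f^2(f^2-11f-1)^2}$, while the target is $\{g,\tau\}=-Q_g(g)(g')^2$ with $Q_g$ the claimed coefficient. Since $g'=\phi'(f)f'$, the theorem is equivalent to the purely algebraic transformation law
\begin{equation*}
Q_f(f)=Q_g\bigl(\phi(f)\bigr)\,\phi'(f)^2,
\end{equation*}
i.e. the assertion that $Q(x)\,dx^2$ is one and the same quadratic differential in the $f$- and $g$-coordinates. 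Here $\phi'(f)=\dfrac{c^2+1}{(f+c)^2}$ with $c=\frac{-11+5\sqrt{5}}{2}$ and $c^2+1=\frac{250-110\sqrt{5}}{4}$.

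The verification I would organize by the factorization of the denominators. For instance the discriminant-type factor pulls back as $g^2+11g-1=\frac{125(11-5\sqrt{5})}{2}\cdot\frac{f}{(f+c)^2}$, the $f^2$- and constant terms cancelling because $c$ is a root of $x^2+11x-1$; one checks similarly that $g^2$ and the quartic $g^4+12g^3+134g^2-12g+1$ become $f$ times powers of $(f+c)$, whereupon the $(f+c)$ factors from $\phi'(f)^2$ cancel against those in $Q_g(\phi(f))$, leaving exactly $Q_f(f)$. A conceptual shortcut is to note that $Q_g(x)=Q_f(-x)$, so the claim is equivalent to the invariance of $Q_f(x)\,dx^2$ under the Möbius involution $x\mapsto-\phi(x)$; this reflects a symmetry of the branch divisor of the $j$-map and explains a priori why the two coefficients share their shape.

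The main obstacle is exactly this algebraic identity: it has to be pushed through with $\sqrt{5}$ present throughout, and the quartic numerators make the bookkeeping heavy. As an alternative that mirrors the earlier proofs (Theorems~\ref{thm:ODE-J-j}, \ref{thm:ODE-lambda}, \ref{thm:ODE-x-(1,1/3)}, \ref{thm:ODE-f-(1,1/5)-(1,3/5)}), one could substitute the Möbius relation into $j=\dfrac{(f^4-12f^3+14f^2+12f+1)^3}{f^5(f^2-11f-1)}$ to express $j$ as a rational function of $g$, then recall the $j$-equation~(\ref{eqn:ODE-j}) together with $\{j,\tau\}=\{j,g\}(g')^2+\{g,\tau\}$ and compute $dj/dg$, $d^2j/dg^2$, $d^3j/dg^3$, hence $\{j,g\}$, directly. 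This route trades the transformation identity for an equally mechanical, if longer, rational-function simplification.
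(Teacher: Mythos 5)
Your proposal is correct, but it takes a genuinely different route from the paper. The paper's own proof is essentially your closing ``alternative'': it proves the theorem ``in the same way as Theorem \ref{thm:ODE-f-(1,1/5)-(1,3/5)}'', starting from the formula
\begin{equation*}
j=\frac{\left(g^4-228 g^3+494 g^2+228 g+1\right)^3}{g \left(-g^2-11 g+1\right)^5},
\end{equation*}
which it quotes from Cooper and \cite{Matsuda2} rather than deriving from the M\"obius relation, and then runs the mechanical $\{j,g\}$ computation through the composition law with the ODE (\ref{eqn:ODE-j}). Your primary argument is shorter and more conceptual: since $g=\phi(f)$ is a fractional linear transformation of $f$ (the Farkas--Kra identity that the paper itself records in Corollary \ref{coro:V-Heun-level5-g}, which precedes this theorem, so there is no circularity), the Schwarzian composition law collapses to $\{g,\tau\}=\{f,\tau\}$, and Theorem \ref{thm:ODE-f-(1,1/5)-(1,3/5)} reduces the whole statement to the single degree-four polynomial identity $Q_f(f)=Q_g\bigl(\phi(f)\bigr)\phi'(f)^2$ over $\mathbb{Q}(\sqrt{5})$. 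That identity is true, and the cancellation works as you claim, though one intermediate description in your sketch is inaccurate: it is not the case that ``$g^2$ and the quartic become $f$ times powers of $(f+c)$''. Writing $c=\frac{-11+5\sqrt{5}}{2}$, so that $c^2+11c-1=0$ and $(c^2+1)^2=125c^2$, what actually happens is
\begin{gather*}
g^2+11g-1=\frac{-125\,c\,f}{(f+c)^2},\qquad (cf-1)(f+c)=c\,(f^2-11f-1),\\
g^4+12g^3+134g^2-12g+1=\frac{125\,c^2\left(f^4-12f^3+134f^2+12f+1\right)}{(f+c)^4};
\end{gather*}
only the factor $g^2+11g-1$ produces a bare $f$, while $g^2=(cf-1)^2/(f+c)^2$ combines with $(f+c)^2$ from $\phi'(f)^2$ via the middle identity to reproduce $c^2(f^2-11f-1)^2$, and the quartic pulls back to a constant multiple of the level-$f$ quartic. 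Inserting these, every power of $(f+c)$ cancels and one gets exactly $Q_f(f)$, completing your argument. As for what each approach buys: yours replaces a heavy rational-function Schwarzian computation by one compact algebraic identity (with the pleasant structural explanation $Q_g(x)=Q_f(-x)$ and the M\"obius involution $x\mapsto-\phi(x)$), at the cost of importing the Farkas--Kra relation and working over $\mathbb{Q}(\sqrt{5})$; the paper's route is independent of that relation and of the theorem for $f$, but requires the $j(g)$ formula and a much longer, purely mechanical simplification.
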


\begin{proof}
The theorem can be proved in the same way as Theorem \ref{thm:ODE-f-(1,1/5)-(1,3/5)}. 
Recall the formula, 
\begin{equation*}
j=
\frac{\left(g^4-228 g^3+494 g^2+228 g+1\right)^3}{g \left(-g^2-11 g+1\right)^5}. 
\end{equation*}
For the proof of this formula, see Cooper \cite[p. 322]{Cooper} or Matsuda \cite{Matsuda2}. 
\end{proof}

\subsection{Heun's equation for the modular form of level five}

\begin{theorem}(Beukers \cite{Beukers}) 
\label{thm:T-Heun-level 5-g}
{\it
For every $\tau\in\mathbb{H}^2$ set 
\begin{equation*}
g(\tau)=
q
\prod_{n=1}^{\infty}
\frac
{
(1-q^{5n-1})^5 (1-q^{5n-4})^5 
}
{
(1-q^{5n-2})^5 (1-q^{5n-3})^5 
}, \,\,\, q=\exp(2\pi i \tau).
\end{equation*}
Then the inverse function $\tau=\tau( g )$ yields $T(g)=T(\tau(g))$ and $T(g)$ satisfies Heun's equation:
\begin{equation*}
\frac{d^2 T}{d g^2}
+
\left\{
\frac{1}{ g }
+
\frac{1}{ g-  \frac{-11+5 \sqrt{5}}{2}  }
+
\frac{1}{  g-  \frac{-11-5 \sqrt{5}}{2}  }
\right\}
\frac{d T}{d g}
+
\frac
{
g+3
}
{
g( g-  \frac{-11+5 \sqrt{5}}{2})( g-\frac{-11-5 \sqrt{5}}{2}   )
}
T=0.
\end{equation*}
}
\end{theorem}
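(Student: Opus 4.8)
The plan is to follow the same template as the proofs of Theorem \ref{thm:S-Heun-level5-f} and Proposition \ref{prop:T-Heun-level 5-f}, computing directly from the system for $DS$, $DT$, $DU$ recorded at the start of this section. The first step is to identify the correct uniformizing variable: dividing the two infinite product representations of $S(\tau)$ and $T(\tau)$ gives
\[
\frac{S(\tau)}{T(\tau)}
=
q\prod_{n=1}^{\infty}
\frac{(1-q^{5n-1})^5(1-q^{5n-4})^5}{(1-q^{5n-2})^5(1-q^{5n-3})^5}
=
g(\tau),
\]
so $g=S/T$. This is the exact analogue of the relation $f=T/S$ exploited in the preceding subsection, with the roles of $S$ and $T$ interchanged.

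Next I would differentiate. Using $DS$ and $DT$,
\[
Dg
=
D\!\left(\frac{S}{T}\right)
=
\frac{(DS)\,T-S\,(DT)}{T^2}
=
-\frac{S\,(S^2+11ST-T^2)}{T},
\]
the cubic terms collapsing into the factor $S^2+11ST-T^2$. Substituting $S=gT$ gives $S^2+11ST-T^2=T^2(g^2+11g-1)$, and solving $g^2+11g-1=0$ produces the two finite singular points $\alpha=\frac{-11+5\sqrt5}{2}$ and $\beta=\frac{-11-5\sqrt5}{2}$ of the statement; here $\alpha+\beta=-11$ and $\alpha\beta=-1$, so that $(g-\alpha)(g-\beta)=g^2+11g-1$. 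I would then form
\[
\frac{dT}{dg}=\frac{DT}{Dg},
\qquad
\frac{d^2T}{dg^2}=\frac{1}{Dg}\,D\!\left(\frac{dT}{dg}\right),
\]
the second of these bringing in $DU$ in addition to $DS$ and $DT$.

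Finally, since $g(g-\alpha)(g-\beta)=g(g^2+11g-1)$, multiplying the asserted Heun equation through by this cubic reduces everything to the single identity
\[
g(g^2+11g-1)\frac{d^2T}{dg^2}
+
(3g^2+22g-1)\frac{dT}{dg}
=
-(g+3)\,T,
\]
where I have used that the first-order coefficient sum is $(g-\alpha)(g-\beta)+g(g-\alpha)+g(g-\beta)=3g^2+22g-1$ and that $(g+3)T=S+3T$. I would verify this identity by inserting the expressions for $dT/dg$ and $d^2T/dg^2$, clearing the common denominator $S(S^2+11ST-T^2)$, and re-expressing through $g=S/T$. I expect the genuine obstacle to be exactly this last simplification: the left-hand side is a priori a rational function in $S$, $T$, $U$ carrying the Eisenstein term $U$ and many high-degree monomials, and one must check that the entire $U$-dependence, together with all terms other than $S+3T$, cancels identically. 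As in Theorem \ref{thm:S-Heun-level5-f} this cancellation is forced by the shape of the differential system, and once it is confirmed the claimed Heun equation follows.
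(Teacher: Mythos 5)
Your proposal is correct and follows essentially the same route as the paper's own proof: you identify $g=S/T$ from the product representations, obtain the same key formula $Dg=-S\left(S^2+11ST-T^2\right)/T$, locate the singular points as the roots of $g^2+11g-1=0$, and reduce the Heun equation to the identity $g\left(g^2+11g-1\right)T''+\left(3g^2+22g-1\right)T'=-(g+3)T$, which is exactly the identity the paper verifies by computing $dT/dg=DT/Dg$ and $d^2T/dg^2$ via the chain rule and the system for $DS$, $DT$, $DU$. The only step you leave implicit is the explicit evaluation of $d^2T/dg^2$ (the paper displays it), but that is the same mechanical computation you describe.
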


\begin{proof}
We first have 
\begin{equation*}
D g=
D
\left(
\frac{S}{T}
\right)
=
-\frac{S \left(S^2+11 S T-T^2\right)}{T}. 
\end{equation*}
Then it follows that 
\begin{equation*}
\frac{d T}{d g}
=
\frac{d T}{ d\tau} \frac{d \tau}{dg}
=
\frac{DT}{D g}
=
-\frac{T \left(7 S^2 T+66 S T^2-5 T^3+5 T U\right)}{12 S \left(S^2+11 S T-T^2\right)},
\end{equation*}
and 
\begin{equation*}
\frac{d^2 T}{d g^2}
=
\frac{d }{d \tau}
\left(
\frac{d T}{d g}
\right)
\frac{d \tau}{d g} 
=
\frac{T^3 \left(9 S^4+184 S^3 T+1046 S^2 T^2+15 S^2 U-140 S T^3+110 S T U+5 T^4-5 T^2 U\right)}{12 S^2 \left(S^2+11 S T-T^2\right)^2},
\end{equation*}
which imply that 
\begin{align*}
&
g(g-\alpha)(g-\beta) \frac{d^2 T}{d g^2}
+(g-\alpha)(g-\beta) \frac{d T}{d g}
+g(g-\alpha) \frac{d T}{d g}
+g(g-\beta) \frac{d T}{d g}  \\
=&
-S-3 T
=
-T
\left(
\frac{S}{T}
+3
\right)
=
-T(g+3),
\end{align*}
which proves the theorem. 
\end{proof}

\begin{corollary}
\label{coro:T-Heun-level5-g}
{\it 
For every $\tau\in\mathbb{H}^2$ set 
$
\displaystyle
G(\tau)=
\frac{2g}{-11+5\sqrt{5}}. 
$
The inverse function $\tau=\tau(G)$ yields $T(G)=T(\tau(G))$ and $T(G)$ satisfies 
Heun's differential equation,
\begin{equation*}
\frac{d^2 T}{d G^2}
+
\left\{
\frac{1}{G}
+
\frac{1}{G-1 }
+
\frac{1}{G-  \frac{-123-55 \sqrt{5}}{2}  }
\right\}
\frac{d T}{d G}
+
\frac
{
G+
\frac{3(11+5\sqrt{5} ) }{2}
}
{
G(G-1)(G-\frac{-123-55 \sqrt{5}}{2}   )
}
T=0.
\end{equation*}
}
\end{corollary}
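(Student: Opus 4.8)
The plan is to deduce the corollary from Theorem~\ref{thm:T-Heun-level 5-g} by the linear substitution $g=\alpha G$, where $\alpha=\frac{-11+5\sqrt{5}}{2}$ and $\beta=\frac{-11-5\sqrt{5}}{2}$ are the two roots of $g^2+11g-1=0$; in particular $\alpha+\beta=-11$ and $\alpha\beta=-1$. Since $G(\tau)=\frac{2g}{-11+5\sqrt{5}}=g/\alpha$, this substitution is precisely the scaling that normalizes the finite singular points: the three regular singular points $g=0,\alpha,\beta$ of the Heun equation in Theorem~\ref{thm:T-Heun-level 5-g} are carried to $G=0,1,\beta/\alpha$. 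Because the change of independent variable in a second-order linear ODE preserves both the solution, $T(\tau(g))=T(\tau(G))$, and the Fuchsian (Heun) form, it suffices to track how the coefficients transform and then identify the resulting constants.

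First I would apply the chain rule $\frac{d}{dg}=\alpha^{-1}\frac{d}{dG}$, hence $\frac{d^2}{dg^2}=\alpha^{-2}\frac{d^2}{dG^2}$, and substitute these into the equation of Theorem~\ref{thm:T-Heun-level 5-g}. Multiplying the whole equation through by $\alpha^2$, the first-order coefficient $\frac{1}{g}+\frac{1}{g-\alpha}+\frac{1}{g-\beta}$ becomes $\frac{1}{G}+\frac{1}{G-1}+\frac{1}{G-\beta/\alpha}$, while the zeroth-order coefficient $\frac{g+3}{g(g-\alpha)(g-\beta)}$ becomes $\frac{G+3/\alpha}{G(G-1)(G-\beta/\alpha)}$. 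These are exactly the shapes appearing in the corollary, so everything reduces to evaluating the two constants $\beta/\alpha$ and $3/\alpha$ in closed form.

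Here the relation $\alpha\beta=-1$ does all the work, giving $\alpha^{-1}=-\beta$ and hence $\beta/\alpha=-\beta^2=\frac{-123-55\sqrt{5}}{2}$ and $3/\alpha=-3\beta=\frac{3(11+5\sqrt{5})}{2}$, which match the location of the third singular point and the numerator in the stated equation. I do not expect any genuine obstacle: the transformation is an honest change of variable that automatically preserves the Heun structure, and the only computation is this short $\sqrt{5}$-arithmetic, most conveniently carried out via $\alpha^{-1}=-\beta$ rather than by direct rationalization. Assembling these identifications yields the Heun equation for $T(G)$ asserted in Corollary~\ref{coro:T-Heun-level5-g}.
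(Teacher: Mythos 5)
Your proof is correct and is precisely the argument the paper leaves implicit: the corollary is stated without proof as an immediate consequence of Theorem \ref{thm:T-Heun-level 5-g}, obtained by the rescaling $g=\alpha G$ with $\alpha=\frac{-11+5\sqrt{5}}{2}$, which sends the singular points $0,\alpha,\beta$ to $0,1,\beta/\alpha$. Your evaluation of the constants via $\alpha\beta=-1$, giving $\beta/\alpha=-\beta^{2}=\frac{-123-55\sqrt{5}}{2}$ and $3/\alpha=-3\beta=\frac{3(11+5\sqrt{5})}{2}$, checks out exactly against the stated equation.
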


\section{Heun's equations for modular forms of level 6 (1)}
\label{sec:Heun-f-(1,2/3)}

Throughout this section we set 
\begin{align*}
P(\tau)=&
\frac
{
\eta(2\tau) \eta^6(3\tau)
}
{
\eta^2(\tau) \eta^3(6\tau)
}
=1+2\sum_{n=1}^{\infty} (d_{1,6}(n)+d_{2,6}(n)-d_{4,6}(n)-d_{5,6}(n)) q^n,   \\
%%%%%%%%%%%%%%%%%%%%%%%%%%%%%%%%%%%%%%%%%%%%%%%%%%%%%%%%%%%%%%%%%
Q(\tau)
=
&
\frac
{
\eta^6(\tau) \eta(6\tau)
}
{
\eta^3(2\tau) \eta^2(3\tau)
}
=
1-6\sum_{n=1}^{\infty} (d_{1,6}(n)-d_{5,6}(n)) q^n+18 \sum_{n=1}^{\infty} ( d_{2,6}(n)-d_{4,6}(n) )q^n,  \\
%%%%%%%%%%%%%%%%%%%%%%%%%%%%%%%%%%%%%%%%%%%%%%%%%%%%%%%%%%%%
R=&E_2(\tau), 
\end{align*}
and 
\begin{equation*}
f(\tau)=\frac{P(\tau)}{Q(\tau)}
=
\frac
{
\eta^4(2\tau) \eta^8(3\tau)
}
{
\eta^8(\tau) \eta^4(6\tau)
}.
\end{equation*}

In \cite{Matsuda4}, we proved that 
\begin{align*}
\label{eqn-level6-E_2(q)}
DP=&\frac{27 P^3-36P^2 Q+5P Q^2+4 P R}{48},  \,\,
DQ=\frac{ -27 P^2 Q+24 P Q^2-Q^3+4 Q R }{48}, \\
DR=&\frac{ -729 P^4+972 P^3 Q-270 P^2 Q^2+12 P Q^3-Q^4+16 R^2}{192}, \,\, \,D= \frac{1}{2 \pi i} \frac{d}{d\tau}.
\end{align*}

\subsection{On the modular function of level six}

\begin{theorem}
\label{thm:ODE-f-(1,2/3)}
{\it
For every $\tau\in\mathbb{H}^2$ set 
\begin{equation*}
f(\tau)=
\frac
{
\eta^4(2\tau) \eta^8(3\tau)
}
{
\eta^8(\tau) \eta^4(6\tau)
}.
\end{equation*}
Then we have 
\begin{equation*}
\{f,\tau\}
+
%%%%%%%%%%%%%%%%%%%%%%%%%%%%%%%%%%
\frac{81 f^4-108 f^3+102 f^2-12 f+1}{2 (1-9 f)^2 (f-1)^2 f^2}
(f^{\prime})^2=0.
\end{equation*}
}
\end{theorem}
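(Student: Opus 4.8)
The plan is to mimic the derivations of Theorem \ref{thm:ODE-f-(1,1/5)-(1,3/5)} and Theorem \ref{thm:ODE-x-(1,1/3)}, transporting the known Schwarzian equation for $j$ through the covering $\tau\mapsto f(\tau)$. First I would recall the $j$-equation (\ref{eqn:ODE-j}),
\begin{equation*}
\{j,\tau\}+\frac{j^2-2^4\cdot 3\cdot 41\,j+2^{15}\cdot 3^4}{2j^2(j-12^3)^2}(j')^2=0,
\end{equation*}
together with the cocycle identity for the Schwarzian derivative,
\begin{equation*}
\{j,\tau\}=\{j,f\}\left(\frac{df}{d\tau}\right)^2+\{f,\tau\}.
\end{equation*}
The essential input is an explicit rational expression of $j$ as a function of the level-$6$ Hauptmodul $f=\eta^4(2\tau)\eta^8(3\tau)/(\eta^8(\tau)\eta^4(6\tau))$. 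Since $[\mathrm{SL}_2(\Z):\Gamma_0(6)]=6\prod_{p\mid 6}(1+1/p)=12$, this function has a numerator of degree $12$ and a denominator supported on the cusps $f=0,1,\tfrac19,\infty$, which are precisely the points appearing in the target coefficient. I would take this formula from Cooper \cite{Cooper} or from the author's earlier work Matsuda \cite{Matsuda4}.

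Next I would differentiate $j=j(f)$ three times with respect to $f$, assemble
\begin{equation*}
\{j,f\}=\frac{j'''}{j'}-\frac32\left(\frac{j''}{j'}\right)^2
\end{equation*}
as a rational function of $f$, and separately expand the $j$-dependent coefficient in (\ref{eqn:ODE-j}) after the substitution $j'=(dj/df)\,f'$. Adding the two contributions, the coefficient of $(f')^2$ should collapse — once the common factors coming from the ramification of the $j$-map cancel — to
\begin{equation*}
\frac{81f^4-108f^3+102f^2-12f+1}{2(1-9f)^2(f-1)^2 f^2},
\end{equation*}
and solving for $\{f,\tau\}$ then yields the claim, exactly as in Theorem \ref{thm:ODE-J-j}.

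The main obstacle is purely computational: $\{j,f\}$ and the transported coefficient are each quotients of high-degree polynomials in $f$, with numerators reaching degree in the twenties, and the nontrivial point is that their sum simplifies to the compact quartic numerator above with only the three double poles at $f=0,1,\tfrac19$. Verifying this cancellation is routine with a computer algebra system but error-prone by hand; there is no conceptual difficulty once the correct $j(f)$ is in place, just as in the proofs of Theorem \ref{thm:ODE-f-(1,1/5)-(1,3/5)} and Theorem \ref{thm:ODE-g-(1/5,1)-(3/5,1)}.
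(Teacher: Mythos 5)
Your proposal matches the paper's own proof: the paper likewise applies the Schwarzian cocycle identity to the ODE (\ref{eqn:ODE-j}), substitutes the explicit formula $j=\frac{(3f-1)^3(243f^3-243f^2+9f-1)^3}{f^2(f-1)(9f-1)^3}$ (cited from Matsuda \cite{Matsuda4}), computes the three derivatives of $j$ with respect to $f$, and verifies by brute-force rational-function algebra that $\{j,f\}(f')^2$ plus the transported coefficient collapses to the stated quartic numerator over $2f^2(f-1)^2(1-9f)^2$. Your degree count (numerator of degree $12$, poles only at the cusps $f=0,1,\tfrac19,\infty$) is exactly what the paper's formula exhibits, so the argument is the same in structure and substance.
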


\begin{proof}
We recall the ODE (\ref{eqn:ODE-j}) 
and the well-known property of the Schwartzian derivative, 
\begin{equation*}
\{j, \tau \}
=\{j, f \}
\left(
\frac{df}{d\tau}
\right)^2+\{f, \tau\} \,\,
\mathrm{and} \,\,
j=
\frac{(3 f-1)^3 \left(243 f^3-243 f^2+9 f-1\right)^3}{ f^2  (f-1)  (9 f-1)^3}. 
\end{equation*}
For the proof of the formula of $j(\tau)$ see Matsuda \cite{Matsuda4}. 
Direct computation yields 
\begin{equation*}
\frac{dj}{d f}
=
\frac{2 \left(729 f^4-972 f^3+270 f^2-12 f+1\right)^2 \left(19683 f^6-39366 f^5+24057 f^4-4860 f^3+405 f^2+18 f-1\right)}{(1-9 f)^4 (f-1)^2 f^3}, 
\end{equation*}
%%%%%%%%%%%%%%%%%%%%%%%%%%%%%%%%%%%%%%%%%%%%%%%%%
%%%%%%%%%%%%%%%%%%%%%%%%%%%%%%%%%%%%%%%%%%%%%%%%
\begin{align*}
\frac{d^2j}{d f^2}
=&
\frac{2 (3 f-1) }{(f-1)^3 f^4 (9 f-1)^5}  \times \\
&\times
\Big(156905298045 f^{15}-784526490225 f^{14}+1639950929937 f^{13}-1861555449645 f^{12}   \\
&\hspace{8mm}
+1252530440937 f^{11}-516737621853 f^{10}+133164765693 f^9-21863305593 f^8+2275295751 f^7  \\
&\hspace{8mm}
-148035843 f^6+4638627 f^5-360855 f^4+28755 f^3-2799 f^2+143
   f-3\Big)
\end{align*}
and
%%%%%%%%%%%%%%%%%%%%%%%%%%%%%%%%%%%%%%%%%%%
%%%%%%%%%%%%%%%%%%%%%%%%%%%%%%%%%%%%%%%%%%%%%
%%%%%%%%%%%%%%%%%%%%%%%%%%%%%%%%%%%%%%%%%%%%%%
\begin{align*}
\frac{d^3j}{d f^3}
=&
\frac{24}{(1-9 f)^6 (f-1)^4 f^5}  \times \\
&\times
 \Big(1412147682405 f^{18}-8472886094430 f^{17}+21998122785909 f^{16}-32438717543970 f^{15}   \\
&\hspace{8mm}
+30039809876082 f^{14}-18300796872498 f^{13}+7510112698482 f^{12}-2103967478826 f^{11}   \\
&\hspace{8mm}
+405174161340 f^{10}-53490560166 f^9+4755176604 f^8-271927206 f^7+6192126 f^6  \\
&\hspace{8mm}
+1562490 f^5-203778 f^4+22642 f^3-1553 f^2+60 f-1\Big), 
\end{align*}
which imply that 
\begin{align*}
\{j, f\}
=&
\frac{-3 }{2 (1-9 f)^2 (1-3 f)^2 f^2 \left(-243 f^3+243 f^2-9 f+1\right)^2 }  \times \\
&\times
\frac{1}{\left(19683 f^6-39366 f^5+24057 f^4-4860 f^3+405 f^2+18 f-1\right)^2} \\
&\times 
\Big(194567119829443305 f^{22}-1167402718976659830 f^{21}+3086513961230883159 f^{20} \\
&\hspace{8mm}
-4751509792895399700 f^{19}+4749107729687628795 f^{18}-3260442825111513870 f^{17}  \\
&\hspace{8mm}
+1590064639627100085 f^{16}-562741730108061552 f^{15}+146299260016157418 f^{14}  \\
&\hspace{8mm}
-27993124770927372 f^{13}+3894278695217046 f^{12}-379532607843960 f^{11}   \\
&\hspace{8mm}
+24947612364294 f^{10}-1601012779308 f^9+249827934042 f^8-38521048176 f^7   \\
&\hspace{8mm}
+3390495165 f^6-149332734 f^5+2531331 f^4-6804 f^3+1647 f^2-70 f+1\Big). 
\end{align*}
Moreover we have 
\begin{align*}
&
\frac
{
j^2-2^4\cdot 3 \cdot 41 j+2^{15}\cdot 3^4
}
{
2 j^2(j-12^3)^2
}
(j^{\prime})^2
=
\frac
{
j^2-2^4\cdot 3 \cdot 41 j+2^{15}\cdot 3^4
}
{
2 j^2(j-12^3)^2
}
\left(
\frac{dj}{df}
\frac{d f}{d\tau}
\right)^2  \\
=&
\frac{2 }{(f-1)^2 f^2 (3 f-1)^2 (9 f-1)^2 \left(27 f^2-18 f-1\right)^2 } \times  \\
&\times
\frac
{1}{\left(243 f^3-243 f^2+9 f-1\right)^2 \left(729 f^4-972 f^3+270 f^2-36 f+1\right)^2} \times  \\
&\times
\Big(150094635296999121 f^{24}-1200757082375992968 f^{23}+4336067241913307940 f^{22}  \\
&\hspace{8mm}
-9354045913324093368 f^{21}+13454161917856933554 f^{20}-13647562321271173848 f^{19}   \\
&\hspace{8mm}
+10083015752190035076 f^{18}-5533117719052524264 f^{17}+2282756906629011087 f^{16}   \\
&\hspace{8mm}
-712960844770437840 f^{15}+168892648959528072 f^{14}-30199063458850992 f^{13}  \\
&\hspace{8mm}
+4014222500231676 f^{12}-385525591473456 f^{11}+26847128113128 f^{10}-1926192551760 f^9  \\
&\hspace{8mm}
+254289026943 f^8-33270096168 f^7+2669274324 f^6-110265624 f^5+1786482 f^4  \\
&\hspace{8mm}
-11640 f^3+1812 f^2-72 f+1\Big). 
\end{align*}
Therefore it follows that 
\begin{equation*}
\{f,\tau\}
+
%%%%%%%%%%%%%%%%%%%%%%%%%%%%%%%%%%
\frac{81 f^4-108 f^3+102 f^2-12 f+1}{2 (1-9 f)^2 (f-1)^2 f^2}
\left(f^{\prime} \right)^2=0. 
\end{equation*}
\end{proof}

\subsection{Heun's equation for $P(\tau)$}

\begin{proposition}
\label{prop:P-f-linear-E_2(q)}
{\it
For every $\tau\in\mathbb{H}^2$ set 
$
\displaystyle 
f(\tau)=
\frac
{
\eta^4(2\tau) \eta^8(3\tau)
}
{
\eta^8(\tau) \eta^4(6\tau)
}.
$ 
Then the inverse function $\tau=\tau(f)$ yields $P(f)=P(\tau(f))$ and 
$P(f)$ satisfies the following differential equation:
\begin{equation}
\frac{d^2 P}{d f^2}
+
\left\{
-
\frac{1}{f}
+
\frac{1}{f-1}
+
\frac{1}{f-\frac19}
\right\}
\frac{dP}{df}
-
\frac
{
f-\frac13
}
{
3 f^2(f-1)(f-\frac19)
}
P=0. 
\end{equation}
}
\end{proposition}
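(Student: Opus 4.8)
The plan is to follow verbatim the computational scheme already used in Proposition \ref{prop:b-x-linear-E_2(q)} and Theorem \ref{thm:S-Heun-level5-f}: express the $f$-derivatives of $P$ through the Ramanujan-type system for $(P,Q,R)$ recalled at the start of this section (from \cite{Matsuda4}), and then verify the claimed equation by direct substitution. First I set $D=\frac{1}{2\pi i}\frac{d}{d\tau}$ and, using $f=P/Q$ together with the formulas for $DP$ and $DQ$, compute
\begin{equation*}
Df=\frac{Q\,DP-P\,DQ}{Q^2}=\frac{P(9P-Q)(P-Q)}{8Q}.
\end{equation*}
Substituting $P=fQ$ turns the three factors $P$, $9P-Q$, $P-Q$ into $fQ$, $(9f-1)Q$, $(f-1)Q$, so that $Df=\tfrac18\,f(f-1)(9f-1)\,Q^2$. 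This exhibits the three finite regular singular points $f=0,1,\tfrac19$ appearing in the statement and explains why the Riemann scheme of the target equation has its singularities exactly there.

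Next I pass to $f$ as the independent variable via $\frac{d}{df}=\frac{1}{Df}\,D$, giving $\frac{dP}{df}=\frac{DP}{Df}$ and then $\frac{d^2P}{df^2}=\frac{1}{Df}\,D\!\left(\frac{dP}{df}\right)$, both computed as explicit rational expressions in $P,Q,R$. Both derivatives carry the Eisenstein term $R$ linearly, the second one also through $DR$ once the outer $D$ is applied.

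Then I form the Heun combination in the split form
\begin{equation*}
f(f-1)(f-\tfrac19)\frac{d^2P}{df^2}
-(f-1)(f-\tfrac19)\frac{dP}{df}
+f(f-\tfrac19)\frac{dP}{df}
+f(f-1)\frac{dP}{df},
\end{equation*}
whose first-order coefficient collapses to $-(f-1)(f-\tfrac19)+f(f-\tfrac19)+f(f-1)=f^2-\tfrac19$. I expect this combination to reduce to $\frac{3P-Q}{9}$, which upon writing $Q/P=1/f$ becomes $\frac{f-1/3}{3f}P$, exactly as the combination in Theorem \ref{thm:S-Heun-level5-f} was rewritten as $S(3-f)$. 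Dividing through by $f(f-1)(f-\tfrac19)$ then recovers the asserted equation, since $f^2-\tfrac19=-(f-1)(f-\tfrac19)+f(f-\tfrac19)+f(f-1)$ reproduces precisely the three simple poles $-\tfrac1f+\tfrac1{f-1}+\tfrac1{f-1/9}$, and the zeroth-order term yields $-\frac{f-1/3}{3f^2(f-1)(f-1/9)}P$.

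The substance of the verification, and the main obstacle, is that every occurrence of $R$ — including the $DR=\frac{-729P^4+972P^3Q-270P^2Q^2+12PQ^3-Q^4+16R^2}{192}$ contribution entering through the second derivative — must cancel identically in this specific combination, leaving an expression homogeneous in $P$ and $Q$ alone. The coefficients $-1,+1,+1$ at the three singular points and the constant $\tfrac13$ in the numerator of the apparent-singularity term are rigid: any other choice fails to kill $R$. I expect to organize the computation by clearing denominators against $Df$ and its $D$-derivative, reducing the whole identity to checking that a single numerator polynomial in $P,Q,R$ vanishes after the substitution $P=fQ$; this is mechanical but heavy, so the practical work lies in keeping the bookkeeping of the $R$- and $DR$-terms straight.
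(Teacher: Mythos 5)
Your proposal is correct and is essentially the paper's own proof: the paper likewise computes $Df=P(9P^2-10PQ+Q^2)/(8Q)$ (your factored form $(9P-Q)(P-Q)$), passes to $dP/df=DP/Df$ and $d^2P/df^2$, and forms the same split combination $f(f-1)(9f-1)P''+\{9f(f-1)-(f-1)(9f-1)+f(9f-1)\}P'$, which collapses to $3P-Q=\frac{3f-1}{f}P$ — exactly your monic version multiplied by $9$. The $R$-cancellation you flag as the main burden does occur exactly as you anticipate, so the remaining work is only the mechanical verification you describe.
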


\begin{proof}
We first have 
\begin{equation*}
Df=D\left(  \frac{P}{Q} \right)
=
\frac{P \left(9 P^2-10 P Q+Q^2\right)}{8 Q}. 
\end{equation*}
\par
We next obtain 
\begin{equation*}
\frac{d P}{d f}
=
\frac{d P}{d \tau}
\frac{d \tau}{d f}
=
\frac
{
D P
}
{
D f
}
=
\frac{Q \left(27 P^2-36 P Q+5 Q^2+4 R\right)}{6 \left(9 P^2-10 P Q+Q^2\right)},
\end{equation*}
and 
\begin{equation*}
\frac{d^2 P}{d f^2}
=
\frac{d}{d \tau}
\left(
\frac{d P}{d f}
\right)
\frac{d\tau}{d f} 
=
-\frac{Q^2 (3 P-Q) \left(81 P^3-135 P^2 Q+39 P Q^2+12 P R-Q^3+4 Q R\right)}{6 P \left(9 P^2-10 P Q+Q^2\right)^2}. 
\end{equation*}
\par
Therefore it follows that 
\begin{align*}
&
f(f-1)(9f-1)
\frac{d^2 P}{d f^2}
+
9
f(f-1) \frac{d P}{d f}
-
(f-1)(9f-1) \frac{d P}{d f}
+
(9f-1) f \frac{d P}{d f}  \\
=&
3 P-Q=
P
\left(
3-\frac{Q}{P}
\right)
=
P
\left(
3
-
\frac{1}{f}
\right)
=
\frac{3f-1}{f} P,
\end{align*}
which proves the proposition. 
\end{proof}

\begin{theorem}(Cooper \cite[pp. 391]{Cooper}) 
\label{thm:P-f-linear-E_2(q)-Heun}
{\it
For every $\tau\in\mathbb{H}^2$ set 
$
\displaystyle 
\tilde{f}(\tau)=
\frac
{
\eta^8(\tau) \eta^4(6\tau)
}
{
\eta^4(2\tau) \eta^8(3\tau)
}.
$ 
Then the inverse function $\tau=\tau(\tilde{f})$ yields $P(\tilde{f})=P(\tau(\tilde{f}))$ and 
$P(\tilde{f})$ satisfies Heun's differential equation:
\begin{equation}
\frac{d^2 P}{d \tilde{f}^2}
+
\left\{
\frac{1}{ \tilde{f} }
+
\frac{1}{ \tilde{f}-1}
+
\frac{1}{\tilde{f}-9}
\right\}
\frac{dP}{d  \tilde{f} }
+
\frac
{
\tilde{f}-3
}
{
\tilde{f} (\tilde{f}-1)(\tilde{f}- 9)
}
P=0. 
\end{equation}
}
\end{theorem}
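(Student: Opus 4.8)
The plan is to obtain this equation directly from Proposition \ref{prop:P-f-linear-E_2(q)} by the change of variable $\tilde{f}=1/f$, exactly as Theorem \ref{thm:T-Heun-f} was deduced from Proposition \ref{prop:T-Heun-level 5-f} and Theorem \ref{thm:E_4-hypergeometric} from Proposition \ref{prop:E_4-linear}. Since $\tilde{f}(\tau)=\eta^8(\tau)\eta^4(6\tau)/\{\eta^4(2\tau)\eta^8(3\tau)\}$ is precisely the reciprocal of the modular function $f(\tau)$ appearing in the proposition, it suffices to rewrite the Fuchsian equation satisfied by $P(f)$ in the new coordinate $f=1/\tilde{f}$, so no new differential-geometric input from Halphen's or the level-$6$ system is needed.

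First I would record the effect of the substitution on the derivatives. Writing $g=\tilde{f}$ and $f=1/g$, the chain rule gives $dP/df=-g^2\,dP/dg$ and $d^2P/df^2=g^4\,d^2P/dg^2+2g^3\,dP/dg$. Substituting these into the equation of Proposition \ref{prop:P-f-linear-E_2(q)} and clearing the overall factor $g^4$ converts the first-order coefficient $-1/f+1/(f-1)+1/(f-\tfrac19)$ into $1/g-1/\{g(1-g)\}-9/\{g(9-g)\}$; adding the extra $2/g$ produced by the second-derivative term gives $3/g-1/\{g(1-g)\}-9/\{g(9-g)\}$, and a partial-fraction decomposition then collapses this to $1/g+1/(g-1)+1/(g-9)$, which is exactly the claimed coefficient of $dP/d\tilde{f}$. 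In the same way I would substitute $f=1/g$ into the zeroth-order term $-(f-\tfrac13)/\{3f^2(f-1)(f-\tfrac19)\}$ and divide by $g^4$; after simplification this becomes $(g-3)/\{g(g-1)(g-9)\}$, matching the numerator $\tilde{f}-3$ and the three simple poles asserted in the statement. One also checks that the singular points $f=0,1,\tfrac19,\infty$ are carried to $\tilde{f}=\infty,1,9,0$, confirming that the transformed equation is Fuchsian with its finite singularities at $0,1,9$ and hence of the stated Heun type.

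The main obstacle is purely computational bookkeeping rather than any conceptual difficulty. The delicate point is that the second-derivative transformation injects the additional first-order contribution $2g^3\,dP/dg$, which must be combined with the pulled-back coefficient $-p(1/g)/g^2$ before any simplification is attempted; forgetting this term would spoil the residue at $g=0$ and produce the wrong exponent there. Once this is handled correctly, verifying that the rational coefficients reduce to the exact form in the theorem is a routine partial-fraction check, and the result then follows from Proposition \ref{prop:P-f-linear-E_2(q)}.
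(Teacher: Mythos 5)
Your proposal is correct and is exactly the paper's proof: the paper simply states that the theorem follows by changing $f \to 1/f$ in Proposition \ref{prop:P-f-linear-E_2(q)}, which is the substitution you carry out. Your explicit chain-rule bookkeeping (including the extra $2\tilde{f}^{3}\,dP/d\tilde{f}$ term from transforming the second derivative) and the resulting partial-fraction reductions to $1/\tilde{f}+1/(\tilde{f}-1)+1/(\tilde{f}-9)$ and $(\tilde{f}-3)/\{\tilde{f}(\tilde{f}-1)(\tilde{f}-9)\}$ check out.
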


\begin{proof}
The theorem can be proved by 
changing $\displaystyle f\longrightarrow \frac{1}{f}$ in Proposition \ref{prop:P-f-linear-E_2(q)}. 
\end{proof}

\begin{corollary}
\label{coro:P-u-linear-E_2(q)-Heun}
{\it
For every $\tau\in\mathbb{H}^2$ set 
$
\displaystyle 
u=
\frac{\tilde{f}(\tau)-1}{-8}=
\frac{P-Q}{8 P}. 
$ 
Then the inverse function $\tau=\tau(u)$ yields $P(u)=P(\tau(u))$ and 
$P(u)$ satisfies Heun's differential equation:
\begin{equation}
\frac{d^2 P}{d u^2}
+
\left\{
\frac{1}{ u}
+
\frac{1}{ u+1}
+
\frac{1}{u-\frac18 }
\right\}
\frac{dP}{d  u }
+
\frac
{
u+\frac14
}
{
u (u+1)(u-\frac18)
}
P=0. 
\end{equation}
}
\end{corollary}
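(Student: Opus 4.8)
The plan is to derive Corollary \ref{coro:P-u-linear-E_2(q)-Heun} from Theorem \ref{thm:P-f-linear-E_2(q)-Heun} by the affine change of variable $u=(\tilde{f}-1)/(-8)$, equivalently $\tilde{f}=1-8u$. Since this is a degree-one (affine) transformation, it carries a Heun equation to a Heun equation, so the only work is to track how the three finite singular points and the two coefficient functions are transported; no new differential-geometric input is needed beyond the chain rule.

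First I would record the relations $\tfrac{d}{d\tilde{f}}=-\tfrac18\,\tfrac{d}{du}$ and $\tfrac{d^2}{d\tilde{f}^2}=\tfrac1{64}\,\tfrac{d^2}{du^2}$, which follow at once from $d\tilde{f}/du=-8$. Writing the equation of Theorem \ref{thm:P-f-linear-E_2(q)-Heun} as $\tfrac{d^2P}{d\tilde f^2}+A(\tilde f)\tfrac{dP}{d\tilde f}+B(\tilde f)P=0$ with $A(\tilde f)=\tfrac1{\tilde f}+\tfrac1{\tilde f-1}+\tfrac1{\tilde f-9}$ and $B(\tilde f)=\tfrac{\tilde f-3}{\tilde f(\tilde f-1)(\tilde f-9)}$, substituting the chain-rule relations and clearing the factor $1/64$ converts it into $\tfrac{d^2P}{du^2}-8A\,\tfrac{dP}{du}+64B\,P=0$. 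It then remains to re-express $-8A$ and $64B$ in the variable $u$.

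Next I would compute the images of the singular points: from $\tilde{f}=1-8u$ one gets $\tilde f=0,1,9\Longleftrightarrow u=\tfrac18,0,-1$, together with $\tilde f-1=-8u$ and $\tilde f-9=-8(u+1)$. Substituting these into $A$ and using $-8/(1-8u)=1/(u-\tfrac18)$ (valid because $1-8u=-8(u-\tfrac18)$) yields $-8A=\tfrac1u+\tfrac1{u+1}+\tfrac1{u-1/8}$, which is exactly the first-order coefficient in the statement. For the zeroth-order term I would use $\tilde f-3=-8(u+\tfrac14)$ and $\tilde f(\tilde f-1)(\tilde f-9)=64\,u(u+1)(1-8u)$ to obtain $64B=\tfrac{u+1/4}{u(u+1)(u-1/8)}$, matching the corollary. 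Finally, to confirm that $u$ really equals $(P-Q)/(8P)$, I would invoke $\tilde f=Q/P$ (since $\tilde f=1/f$ and $f=P/Q$), whence $u=(\tilde f-1)/(-8)=(Q/P-1)/(-8)=(P-Q)/(8P)$.

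I do not expect a genuine obstacle, as the computation is elementary and essentially self-checking once the three singularities are matched. The one point deserving care is the numerator identity $\tilde f-3=-8(u+\tfrac14)$, since this is what fixes the accessory parameter of the transformed Heun equation; an arithmetic slip there would silently corrupt the zeroth-order coefficient while leaving the singular points correct, so I would verify it independently before concluding.
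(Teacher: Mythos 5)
Your proposal is correct and coincides with the paper's intended derivation: the paper states Corollary \ref{coro:P-u-linear-E_2(q)-Heun} without a separate proof precisely because it follows from Theorem \ref{thm:P-f-linear-E_2(q)-Heun} by the affine substitution $\tilde{f}=1-8u$, which is exactly what you carry out. Your coefficient computations are accurate, including the point you rightly flagged, $\tilde{f}-3=-8\left(u+\tfrac14\right)$, as well as $-8A=\tfrac{1}{u}+\tfrac{1}{u+1}+\tfrac{1}{u-1/8}$, $64B=\tfrac{u+1/4}{u(u+1)(u-1/8)}$, and the identification $u=(\tilde{f}-1)/(-8)=(P-Q)/(8P)$ via $\tilde{f}=Q/P$.
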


\subsection{Heun's equation for $Q(\tau)$}

\begin{theorem}(Cooper \cite[pp. 391]{Cooper})
\label{thm:Q-f-linear-E_2(q)}
{\it
For every $\tau\in\mathbb{H}^2$ set 
$
\displaystyle 
f(\tau)=
\frac
{
\eta^4(2\tau) \eta^8(3\tau)
}
{
\eta^8(\tau) \eta^4(6\tau)
}.
$ 
Then the inverse function $\tau=\tau(f)$ yields $Q(f)=Q(\tau(f))$ and 
$Q(f)$ satisfies Heun's differential equation:
\begin{equation}
\frac{d^2 Q}{d f^2}
+
\left\{
\frac{1}{f}
+
\frac{1}{f-1}
+
\frac{1}{f-\frac19}
\right\}
\frac{dQ}{df}
+
\frac
{
f-\frac13
}
{
f(f-1)(f-\frac19)
}
Q=0. 
\end{equation}
}
\end{theorem}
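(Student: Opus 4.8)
The plan is to follow the same direct route used for $P$ in Proposition \ref{prop:P-f-linear-E_2(q)}, but now differentiating $Q$ instead. First I would reuse the system for $P,Q,R=E_2$ recorded at the head of this section together with the expression
\begin{equation*}
Df=D\left(\frac{P}{Q}\right)=\frac{P\left(9P^2-10PQ+Q^2\right)}{8Q}=\frac{Q^2 f(f-1)(9f-1)}{8}
\end{equation*}
already obtained in that proof, where I have used $9P^2-10PQ+Q^2=(P-Q)(9P-Q)$ and $f=P/Q$; this exhibits the three singular points $f=0,1,\tfrac19$ as the loci $P=0$, $P=Q$, $9P=Q$. Then I compute
\begin{equation*}
\frac{dQ}{df}=\frac{DQ}{Df}=\frac{Q^2\left(-27P^2+24PQ-Q^2+4R\right)}{6P\left(9P^2-10PQ+Q^2\right)},
\end{equation*}
a rational function of $P,Q,R$ that is linear in $R$.

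Next I would form the second derivative in the usual way,
\begin{equation*}
\frac{d^2Q}{df^2}=\frac{d}{d\tau}\!\left(\frac{dQ}{df}\right)\frac{d\tau}{df}=\frac{1}{Df}\,D\!\left(\frac{dQ}{df}\right),
\end{equation*}
substituting $DP,DQ,DR$ from the system. The output is again a rational function of $P,Q,R$; the delicate point is that, although $DR$ contributes a term in $R^2$, these quadratic-in-$R$ contributions must cancel against the $R\cdot R$ terms produced by the quotient rule, so that $d^2Q/df^2$ stays linear in $R$ (this is exactly what occurs in Proposition \ref{prop:P-f-linear-E_2(q)}).

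With both derivatives in hand I would assemble the combination dictated by the target Heun operator, namely
\begin{equation*}
f(f-1)(9f-1)\frac{d^2Q}{df^2}+(f-1)(9f-1)\frac{dQ}{df}+f(9f-1)\frac{dQ}{df}+9f(f-1)\frac{dQ}{df},
\end{equation*}
the three first-order coefficients being $f(f-1)(9f-1)$ times $1/f$, $1/(f-1)$, $1/(f-\tfrac19)$ respectively. After replacing $f$ by $P/Q$ throughout, I expect every remaining $R$ to drop out and the whole expression to collapse to the linear form
\begin{equation*}
3Q-9P=Q\left(3-9\frac{P}{Q}\right)=(3-9f)\,Q=-(9f-3)\,Q,
\end{equation*}
which is precisely $-\,f(f-1)(9f-1)\cdot\frac{f-\frac13}{f(f-1)(f-\frac19)}\,Q$. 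Dividing back by $f(f-1)(9f-1)$ then yields the asserted Heun equation.

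The main obstacle is purely computational: carrying the $E_2=R$ dependence correctly through the second derivative and verifying the two-stage cancellation, first the $R^2$ terms in $d^2Q/df^2$ and then the residual linear-in-$R$ terms in the full Heun combination. No new idea beyond Proposition \ref{prop:P-f-linear-E_2(q)} is needed; the sign of the $1/f$ term and the numerator $f-\tfrac13$ differ from the $P$-equation simply because $Q$ vanishes to a different order than $P$ at the three singular points. As a consistency check one notes that $P\to1$ and $Q\to1$, hence $f\to1$, as $\tau\to i\infty$, so $f=1$ is the cusp at which the normalized solution $Q(f)\to1$ sits.
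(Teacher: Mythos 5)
Your proposal is correct and follows essentially the same route as the paper's own proof: the same $Df$ carried over from the $P$-equation, the same first derivative $\frac{dQ}{df}=\frac{DQ}{Df}$ (your expression agrees exactly with the paper's), the same second differentiation via $\frac{1}{Df}D\!\left(\frac{dQ}{df}\right)$, and the same Heun combination collapsing to $3Q-9P=-(9f-3)Q$. The one step you only predict rather than verify --- that the $R^2$ contributions cancel so $\frac{d^2Q}{df^2}$ stays linear in $R$ --- is precisely what the paper's explicit formula for the second derivative confirms.
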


\begin{proof}
We have 
\begin{equation*}
\frac{d Q}{d f}
=
\frac{d Q}{d \tau}
\frac{d \tau}{d f}
=\frac
{ D Q}{ D f}
=
\frac{Q \left(-27 P^2 Q+24 P Q^2-Q^3+4 Q R\right)}{6 P \left(9 P^2-10 P Q+Q^2\right)}, 
\end{equation*}
and 
\begin{equation*}
\frac{d^2 Q}{d f^2}
=
\frac
{d}{d \tau}
\left(
\frac{d Q}{d f}
\right)
\frac{d \tau}{d f}
=
\frac
{
Q^3 
\left(
243 P^4-486 P^3 Q+300 P^2 Q^2-108 P^2 R-26 P Q^3+80 P Q R+Q^4-4 Q^2 R
\right)
}
{6 P^2 \left(9 P^2-10 P Q+Q^2\right)^2}, 
\end{equation*}
which imply that 
\begin{align*}
&
f(f-1)(9f-1)
\frac{d^2 Q}{d f^2}
+
9
f(f-1)
\frac{d Q}{d f}
+
(f-1)(9f-1)
\frac{d Q}{d f}
+
(9f-1) f
\frac{d Q}{d f}  \\
=&
-9 P + 3 Q
=
-3 Q
\left(
3
\frac{P}{Q}
-1
\right)
=
-3 Q
(3 f-1),
\end{align*}
which proves the theorem. 
\end{proof}

\begin{corollary}
\label{coro:Q-f-linear-E_2(q)}
{\it
For every $\tau\in\mathbb{H}^2$ set 
$
\displaystyle 
v(\tau)=
\frac{f-1}{8}=
\frac
{
P-Q
}
{
8 Q
}.
$ 
Then the inverse function $\tau=\tau(v)$ yields $Q(v)=Q(\tau(v))$ and 
$Q(v)$ satisfies Heun's differential equation:
\begin{equation}
\frac{d^2 Q}{d v^2}
+
\left\{
\frac{1}{v}
+
\frac{1}{v+\frac18}
+
\frac{1}{v+\frac19}
\right\}
\frac{dQ}{dv}
+
\frac
{
v+\frac{1}{12}
}
{
v(v+\frac18)(v+\frac19)
}
Q=0. 
\end{equation}
}
\end{corollary}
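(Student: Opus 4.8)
The plan is to derive this corollary directly from Theorem \ref{thm:Q-f-linear-E_2(q)} by the affine change of variable $v = (f-1)/8$, equivalently $f = 8v+1$. Because this substitution is linear in $f$, its derivative $df/dv = 8$ is a nonzero constant, so the transformation of the differential operators is elementary: $\frac{dQ}{df} = \frac18\frac{dQ}{dv}$ and $\frac{d^2Q}{df^2} = \frac{1}{64}\frac{d^2Q}{dv^2}$. This is the observation that reduces the whole proof to bookkeeping, since unlike a genuine Möbius change of variable no Schwarzian cocycle term intervenes and the Heun form is preserved automatically.

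First I would record the images of the three regular singular points and of the apparent-singularity numerator. Writing $f = 8v+1$ gives the factorizations
\begin{equation*}
f = 8\left(v+\tfrac18\right), \quad f-1 = 8v, \quad f-\tfrac19 = 8\left(v+\tfrac19\right), \quad f-\tfrac13 = 8\left(v+\tfrac{1}{12}\right),
\end{equation*}
so the singularities $f = 0,1,\tfrac19$ become $v = -\tfrac18,0,-\tfrac19$ and the numerator $f-\tfrac13$ becomes $8(v+\tfrac{1}{12})$, precisely the data appearing in the asserted equation.

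Next I would substitute these expressions into the equation of Theorem \ref{thm:Q-f-linear-E_2(q)} and multiply the whole equation by $64$. The leading term becomes exactly $\frac{d^2Q}{dv^2}$. In the first-order coefficient the overall factor $8 = 64\cdot\tfrac18$ cancels the factor of $8$ sitting in each denominator $f=8(v+\tfrac18)$, $f-1=8v$, $f-\tfrac19=8(v+\tfrac19)$, leaving $\frac1v + \frac{1}{v+1/8} + \frac{1}{v+1/9}$. In the zeroth-order coefficient the overall $64$ cancels against the product of the three factors of $8$ in the denominator together with the single factor of $8$ in the numerator, since $64\cdot\tfrac{8}{512}=1$, leaving $\frac{v+1/12}{v(v+1/8)(v+1/9)}$. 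Finally the identity $v = (f-1)/8 = (P-Q)/(8Q)$ follows at once from $f = P/Q$, which justifies the second displayed form of $v(\tau)$.

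The computation contains no genuine obstacle; the only point requiring care is tracking the powers of $8$ so that all three residues normalize to $1$ and the accessory numerator lands at $v=-\tfrac{1}{12}$. Everything else is immediate from the affineness of the substitution.
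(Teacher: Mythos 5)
Your proof is correct, and it is exactly the argument the paper intends: the corollary is stated immediately after Theorem \ref{thm:Q-f-linear-E_2(q)} with no written proof because it follows from the affine substitution $f=8v+1$, which is precisely what you carry out (and your bookkeeping of the singular points $v=0,-\tfrac18,-\tfrac19$, the numerator $8(v+\tfrac1{12})$, and the cancellation $64\cdot 8/512=1$ all check out). Nothing further is needed.
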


\subsection{On $a(\tau)$ and $a(2\tau)$} 
In this subsection we set $l=a(\tau)$ and $m=a(2\tau).$

\begin{theorem}
\label{thm:a-q-f-linear}
{\it
For every $\tau\in\mathbb{H}^2$ set 
$
\displaystyle 
f(\tau)=
\frac
{
\eta^4(2\tau) \eta^8(3\tau)
}
{
\eta^8(\tau) \eta^4(6\tau)
}.
$ 
Then the inverse function $\tau=\tau(f)$ yields $l(f)=a(\tau(f))$ and 
$l(f)$ satisfies the following differential equation:
\begin{equation*}
\frac{d^2 l}{d f^2}
+
\left\{
\frac{1}{f}
+
\frac{1}{f-1}
-
\frac{2}{f-\frac13}
+
\frac{1}{f-\frac19}
\right\}
\frac{dl}{df}
-
\frac
{
8
}
{
27
(f-1)(f-\frac19)
(f-\frac13)^2
}
l=0. 
\end{equation*}
}
\end{theorem}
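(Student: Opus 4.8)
The plan is to follow the same scheme as the preceding theorems of this section: first express the cubic theta function $a(\tau)$ as an explicit combination of the two weight-one eta quotients $P(\tau)$ and $Q(\tau)$, and then transport the Ramanujan-type system $DP,DQ,DR$ of this section through the change of variables $\tau\mapsto f$. The essential preliminary step is the algebraic identity
$$a(\tau)=\frac{3P(\tau)-Q(\tau)}{2}.$$
To establish it I would compare $q$-expansions: $a$, $P$ and $Q$ all lie in a common finite-dimensional space of weight-one forms on $\Gamma_0(6)$, so matching finitely many Fourier coefficients (beyond the relevant Sturm bound) forces equality. Using $a(q)=1+6q+6q^3+6q^4+\cdots$ together with the expansions of $P$ and $Q$ recorded at the start of the section, one checks $a=\tfrac32P-\tfrac12Q$ on the first several coefficients, which suffices.

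Given this identity, the direct route mimics Proposition \ref{prop:P-f-linear-E_2(q)} and Theorem \ref{thm:Q-f-linear-E_2(q)}. From the section's system one has $Df=D(P/Q)=\frac{P(9P^2-10PQ+Q^2)}{8Q}$, and $Dl=\frac{3DP-DQ}{2}$ combines a cubic form in $P,Q$ with a single term $\tfrac{1}{12}E_2\,l$ (using $3P-Q=2l$). One then forms $\frac{dl}{df}=Dl/Df$ and $\frac{d^2l}{df^2}=\frac{d}{d\tau}\!\left(\frac{dl}{df}\right)\frac{d\tau}{df}$ and substitutes into the left side of the asserted ODE. As in the neighbouring proofs, the Eisenstein contributions from $E_2$ (and from $DR$) must cancel in this particular combination, leaving a rational function of $f=P/Q$ times $l$; verifying that this rational function is exactly $-\frac{8}{27(f-1)(f-\frac19)(f-\frac13)^2}$ is the computational core.

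A cleaner equivalent route, which I would prefer to present, exploits that $f=P/Q$ gives $a=\frac{3P-Q}{2}=\frac{3f-1}{2}\,Q$. Thus $l$ is obtained from the solution $Q(f)$ of the Heun equation of Theorem \ref{thm:Q-f-linear-E_2(q)} by the elementary gauge transformation $l=\frac{3f-1}{2}Q$. Writing $\phi=\frac{3f-1}{2}$ (so $\phi'=\frac32$, $\phi''=0$, $\phi'/\phi=(f-\frac13)^{-1}$) and substituting $Q=l/\phi$ into that Heun equation immediately reproduces the drift coefficient $\frac1f+\frac{1}{f-1}+\frac{1}{f-\frac19}-\frac{2}{f-\frac13}$, while the coefficient of $l$ becomes
$$\frac{2}{(f-\frac13)^2}-\frac{1}{f-\frac13}\left(\frac1f+\frac{1}{f-1}+\frac{1}{f-\frac19}\right)+\frac{f-\frac13}{f(f-1)(f-\frac19)}.$$
Clearing the common denominator $f(f-1)(f-\frac19)(f-\frac13)^2$, the numerator collapses to $-\frac{8}{27}f$, exactly matching the required coefficient. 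This also explains the shape of the equation: the extra singular point $f=\frac13$, the residue $-2$ in the drift term, and the double pole in the coefficient of $l$ are all forced by the factor $3f-1$, and $f=\frac13$ is an apparent singularity with exponents $1$ and $2$ — it is precisely the locus $3P=Q$ where $l=a$ vanishes.

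The main obstacle is thus the modular-forms identity $a=\frac{3P-Q}{2}$: once it is secured, everything reduces either to a mechanical (if lengthy) cancellation of $E_2$ in the direct approach, or to the short substitution computation above. I would therefore concentrate the care on that identity — pinning down the correct weight-one space for $\Gamma_0(6)$ and the number of coefficients needed — and then close the proof with the gauge-transformation calculation.
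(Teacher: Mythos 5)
Your proposal is correct, but it takes a genuinely different route from the paper's. The paper recalls from \cite{Matsuda4} the \emph{two} identities $a(\tau)=\tfrac32 P-\tfrac12 Q$ and $c^3(\tau)=\tfrac{27}{8}(P^3-P^2Q)$, deduces $c^3(\tau)/a^3(\tau)=27f^2(f-1)/(3f-1)^3$, and then obtains the stated ODE by substituting $x=27f^2(f-1)/(3f-1)^3$ into the level-three hypergeometric equation of Theorem \ref{thm:a-x-hypergeometric-E_2(q)}; this degree-three pullback is the uniform mechanism the paper applies to every $a$, $b$, $c$ statement in the level-six sections. You instead stay in the variable $f$ and gauge-transform the Heun equation for $Q$ (Theorem \ref{thm:Q-f-linear-E_2(q)}) by the factor $\phi=\tfrac{3f-1}{2}$, using only the single identity $a=\tfrac{3P-Q}{2}$. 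Your computation checks out: with $\phi'/\phi=(f-\tfrac13)^{-1}$ the drift acquires exactly the extra term $-2/(f-\tfrac13)$, and the zeroth-order coefficient has numerator
\begin{equation*}
2f(f-1)\left(f-\tfrac19\right)
-\left(f-\tfrac13\right)\left[(f-1)\left(f-\tfrac19\right)+f\left(f-\tfrac19\right)+f(f-1)\right]
+\left(f-\tfrac13\right)^3
=-\tfrac{8}{27}f,
\end{equation*}
which upon division by $f(f-1)(f-\tfrac19)(f-\tfrac13)^2$ gives precisely the stated coefficient; the exponents $1,2$ at the apparent singularity $f=\tfrac13$ are also as you say. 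What each approach buys: yours is shorter, needs no formula for $c^3$ in terms of $P,Q$, and explains structurally why $f=\tfrac13$ enters (it is the zero locus of $a$, where $3P=Q$); the paper's is uniform across all of its parallel theorems and feeds off the already-established hypergeometric equation for $a$ rather than the Heun equation for $Q$. One remark: the paper takes $a=\tfrac32 P-\tfrac12 Q$ purely as a citation to \cite{Matsuda4}, so your proposed Sturm-bound verification of that identity, while sound, re-proves a prerequisite the paper does not; in both treatments that identity is the only modular input beyond the differential systems already set up in this section.
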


\begin{proof}
From Matsuda \cite{Matsuda4}, we recall 
\begin{equation*}
a(\tau)=\frac32 P-\frac12 Q \,\,
\mathrm{and} \,\,
c^3(\tau)=\frac{27}{8}(P^3-P^2 Q),
\end{equation*}
which imply that 
\begin{equation*}
\frac{ c^3(\tau) }{ a^3(\tau) }=
\frac{27 (P^3-P^2 Q)}{(3P-Q)^3}=
\frac{27 f^2  (f-1) }{(3 f-1)^3}. 
\end{equation*}
The theorem can be proved by setting 
$
\displaystyle
x=\frac{27 f^2  (f-1) }{(3 f-1)^3}
$ 
in Theorem \ref{thm:a-x-hypergeometric-E_2(q)}. 
\end{proof}

\begin{theorem}
\label{thm:a-q^2-f-linear}
{\it
For every $\tau\in\mathbb{H}^2$ set 
$
\displaystyle 
f(\tau)=
\frac
{
\eta^4(2\tau) \eta^8(3\tau)
}
{
\eta^8(\tau) \eta^4(6\tau)
}.
$ 
Then the inverse function $\tau=\tau(f)$ yields $m(f)=a(2\tau(f))$ and 
$m(f)$ satisfies the following differential equation:
\begin{equation*}
\frac{d^2 m }{d f^2}
+
\left\{
\frac{1}{f}
+
\frac{1}{f-1}
-
\frac{2}{f+\frac13}
+
\frac{1}{f-\frac19}
\right\}
\frac{d m}{df}
-
\frac
{
2
}
{
3
f
(f+\frac13)^2
}
m=0. 
\end{equation*}
}
\end{theorem}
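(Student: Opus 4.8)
The plan is to follow the proof of Theorem~\ref{thm:a-q-f-linear} verbatim, with $\tau$ replaced by $2\tau$: I would express the level-three modular function $c^3(2\tau)/a^3(2\tau)$ as a rational function of $f$ and then substitute it for $x$ in Theorem~\ref{thm:a-x-hypergeometric-E_2(q)}, which asserts that $a=a(x)$ solves $x(1-x)a''+(1-2x)a'-\tfrac29 a=0$. Since $a(2\tau)={ }_2 F_1(\tfrac13,\tfrac23;1;c^3(2\tau)/a^3(2\tau))$ by Theorem~\ref{thm:a(q)-hypergeometric} applied at $2\tau$, the function $m(f)=a(2\tau(f))$ is an $a$-type hypergeometric solution in the variable $x=c^3(2\tau)/a^3(2\tau)$, and the desired ODE in $f$ is just the pullback of the hypergeometric equation along $x=x(f)$.

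The key modular input is a pair of formulas companion to $a(\tau)=\tfrac32P-\tfrac12Q$ and $c^3(\tau)=\tfrac{27}{8}(P^3-P^2Q)$ used in Theorem~\ref{thm:a-q-f-linear}, namely
\begin{equation*}
a(2\tau)=\frac{3P+Q}{4},\qquad c^3(2\tau)=\frac{27}{64}\,P(P-Q)^2 .
\end{equation*}
These can be recalled from Matsuda~\cite{Matsuda4} or checked directly: the $q$-expansions give $a(2\tau)=1+6q^2+\cdots$ against $\tfrac14(3P+Q)=1+6q^2+\cdots$, and $c^3(2\tau)=27q^2+\cdots$ against $\tfrac{27}{64}P(P-Q)^2=27q^2+\cdots$; since both sides of each identity are eta quotients of the same weight on $\Gamma_0(6)$, agreement of enough coefficients forces equality. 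Dividing numerator and denominator by $Q^3$ and using $f=P/Q$ then yields
\begin{equation*}
x=\frac{c^3(2\tau)}{a^3(2\tau)}=\frac{27P(P-Q)^2}{(3P+Q)^3}=\frac{27f(f-1)^2}{(3f+1)^3}.
\end{equation*}

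It remains to substitute this $x=x(f)$ into Theorem~\ref{thm:a-x-hypergeometric-E_2(q)}. The useful auxiliary factorization is $x-1=-(9f-1)^2/(3f+1)^3$, so that $x$ has a simple zero at $f=0$, a double zero at $f=1$, a triple pole at $f=-\tfrac13$, while $x-1$ has a double zero at $f=\tfrac19$; these are exactly the four finite singular points in the statement. Writing $L=(\log x)'=\tfrac1f+\tfrac2{f-1}-\tfrac3{f+1/3}$, the standard pullback produces the new first-order coefficient $-\bigl(\tfrac{x''}{x'}-\tfrac{(1-2x)x'}{x(1-x)}\bigr)=-\tfrac{L'}{L}-\tfrac{xL}{1-x}$ and the new zeroth-order coefficient $-\tfrac{2(x')^2}{9\,x(1-x)}$. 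A partial-fraction reduction of these two expressions reproduces $\tfrac1f+\tfrac1{f-1}-\tfrac2{f+1/3}+\tfrac1{f-1/9}$ and $-\tfrac{2}{3f(f+1/3)^2}$. One can shortcut the bookkeeping by checking that the indicial roots at $f=0,1,-\tfrac13,\tfrac19$ are $0,0$; $0,0$; $1,2$; $0,0$, which are forced by the branching orders $1,2,3,2$ over the hypergeometric singular points $x=0,0,\infty,1$ together with the Fuchs relation.

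The main obstacle is the first step: pinning down the closed forms for $a(2\tau)$ and $c^3(2\tau)$ in terms of $P$ and $Q$, equivalently proving the single rational identity $c^3(2\tau)/a^3(2\tau)=27f(f-1)^2/(3f+1)^3$. Once that identity is in hand, the remainder is the same routine change of variables already carried out for Theorem~\ref{thm:a-q-f-linear}, and the only labor is the partial-fraction verification of the two coefficient functions.
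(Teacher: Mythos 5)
Your proposal is correct and follows essentially the same route as the paper: the paper's proof also recalls $a(2\tau)=\tfrac34 P+\tfrac14 Q$ and $c^3(2\tau)=\tfrac{27}{64}(P^3-2P^2Q+PQ^2)=\tfrac{27}{64}P(P-Q)^2$ from \cite{Matsuda4}, deduces $c^3(2\tau)/a^3(2\tau)=27f(f-1)^2/(3f+1)^3$, and sets $x=27f(f-1)^2/(3f+1)^3$ in Theorem \ref{thm:a-x-hypergeometric-E_2(q)}. The only difference is that you carry out explicitly the pullback and partial-fraction bookkeeping (correctly, as your coefficients $\tfrac1f+\tfrac1{f-1}-\tfrac{2}{f+1/3}+\tfrac1{f-1/9}$ and $-\tfrac{2}{3f(f+1/3)^2}$ check out) which the paper leaves implicit.
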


\begin{proof}
From Matsuda \cite{Matsuda4}, we recall 
\begin{equation*}
a(2\tau)=\frac34 P+\frac14 Q \,\,
\mathrm{and} \,\,
c^3(2\tau)=\frac{27}{64}(P^3-2P^2 Q+P Q^2),
\end{equation*}
which imply that 
\begin{equation*}
\frac{ c^3(2\tau) }{ a^3(2\tau) }=
\frac{27 P (P-Q)^2  }{(3P+Q)^3}=
\frac{27 (f-1)^2 f}{(3 f+1)^3}. 
\end{equation*}
The theorem can be proved by setting 
$
\displaystyle
x=\frac{27 (f-1)^2 f}{(3 f+1)^3} 
$ 
in Theorem \ref{thm:a-x-hypergeometric-E_2(q)}. 
\end{proof}

\subsection{On $b(\tau)$ and $b(2\tau)$} 
In this subsection we set $l=b(\tau)$ and $m=b(2\tau).$

\begin{theorem}
\label{thm:b-q-f-linear}
{\it
For every $\tau\in\mathbb{H}^2$ set 
$
\displaystyle 
f(\tau)=
\frac
{
\eta^4(2\tau) \eta^8(3\tau)
}
{
\eta^8(\tau) \eta^4(6\tau)
}.
$ 
Then the inverse function $\tau=\tau(f)$ yields $l(f)=b(\tau(f))$ and 
$l(f)$ satisfies the following differential equation:
\begin{equation*}
\frac{d^2 l}{d f^2}
+
\left\{
\frac{1}{f}
+
\frac{1}{f-1}
+
\frac{ \frac13 }{f-\frac19}
\right\}
\frac{dl}{df}
+
\frac
{
4
(f-\frac13)
}
{
9
(f-1)(f-\frac19)^2
}
l=0. 
\end{equation*}
}
\end{theorem}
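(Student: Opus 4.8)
The plan is to mirror the method used for Theorem~\ref{thm:a-q-f-linear}, only replacing the hypergeometric equation for $a$ by the linear equation for $b$. First I would recall from Matsuda \cite{Matsuda4} the level-six expressions $a(\tau)=\frac32 P-\frac12 Q$ and $c^3(\tau)=\frac{27}{8}(P^3-P^2Q)$; since $f=P/Q$, these give the rational parametrization
\begin{equation*}
x:=\frac{c^3(\tau)}{a^3(\tau)}=\frac{27(P^3-P^2Q)}{(3P-Q)^3}=\frac{27f^2(f-1)}{(3f-1)^3},
\end{equation*}
which is exactly the change of variable $x=x(f)$ already occurring in Theorem~\ref{thm:a-q-f-linear}. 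Thus no new modular identity is required.

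Since $l(f)=b(\tau(f))=b(x(f))$ is the pullback along $x=x(f)$ of the function $b(x)$ governed by Proposition~\ref{prop:b-x-linear-E_2(q)}, the next step is to transport that proposition's equation
\begin{equation*}
\frac{d^2 b}{dx^2}+\left(\frac1x+\frac{1}{3(x-1)}\right)\frac{db}{dx}+\frac{1}{9x(x-1)^2}\,b=0
\end{equation*}
through this substitution. Writing $'=d/df$ and using $\dfrac{d^2 l}{dx^2}=\dfrac{1}{(x')^2}\dfrac{d^2 l}{df^2}-\dfrac{x''}{(x')^3}\dfrac{dl}{df}$, the transformed equation acquires first-order coefficient $-x''/x'+p(x(f))\,x'$ and zeroth-order coefficient $q(x(f))\,(x')^2$, where $p,q$ denote the coefficients above. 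A short computation yields the convenient identities $x'/x=2/[f(f-1)(3f-1)]$, hence $x'=54f/(3f-1)^4$, together with $x-1=(1-9f)/(3f-1)^3$. Feeding these in and simplifying should collapse the first-order coefficient to $\frac1f+\frac{1}{f-1}+\frac{1/3}{f-1/9}$ and the zeroth-order coefficient to $\frac{4(f-1/3)}{9(f-1)(f-1/9)^2}$, which is the asserted equation.

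The main obstacle is the algebraic simplification, and in particular the behaviour over $x=\infty$. The map $x=x(f)$ ramifies to order three at $f=1/3$ (there $x\sim -2/[27(f-1/3)^3]$), so the exponent pair $\{0,\tfrac13\}$ of Proposition~\ref{prop:b-x-linear-E_2(q)} at $x=\infty$ is scaled by $3$ to $\{0,1\}$; one must check that the pulled-back equation in fact has an \emph{ordinary} point (no logarithmic solution) at $f=1/3$, so that $f=1/3$ does not enter the final equation as a singularity but survives only as the numerator zero $f-\tfrac13$ of the zeroth-order coefficient. Granting this cancellation, the genuine singular points are $f=0,1,\tfrac19,\infty$ with exponent pairs $\{0,0\},\{0,0\},\{\tfrac13,\tfrac13\},\{\tfrac23,\tfrac23\}$, whose exponents sum to $2$ in accordance with Fuchs' relation for four regular singular points --- a useful arithmetic check on the whole computation.
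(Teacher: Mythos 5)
Your proposal is correct, and your intermediate identities check out: with $x=\frac{27f^2(f-1)}{(3f-1)^3}$ one has $x'=\frac{54f}{(3f-1)^4}$ and $x-1=\frac{1-9f}{(3f-1)^3}$, and the transported coefficients do collapse to $\frac{1}{f}+\frac{1}{f-1}+\frac{1/3}{f-1/9}$ and $\frac{12(3f-1)}{(f-1)(9f-1)^2}=\frac{4(f-\frac13)}{9(f-1)(f-\frac19)^2}$, exactly as asserted. Your route differs from the paper's only in the choice of intermediate coordinate. The paper recalls $b^3(\tau)=\frac18(9PQ^2-Q^3)$, forms $y=\frac{a^3(\tau)}{b^3(\tau)}=\frac{(3f-1)^3}{9f-1}$, and substitutes this into the hypergeometric equation of Theorem \ref{thm:b-y-hypergeometric-E_2(q)}; you instead reuse $x=\frac{c^3(\tau)}{a^3(\tau)}=\frac{27f^2(f-1)}{(3f-1)^3}$, already established in Theorem \ref{thm:a-q-f-linear}, and pull back Proposition \ref{prop:b-x-linear-E_2(q)}. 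Since $a^3=b^3+c^3$ gives $y=\frac{1}{1-x}$, and the paper obtains Theorem \ref{thm:b-y-hypergeometric-E_2(q)} from Proposition \ref{prop:b-x-linear-E_2(q)} by exactly that substitution, the two computations coincide up to where the M\"obius factor is absorbed; what your version buys is that no modular identity beyond those already used for $a(\tau)$ is required, while the paper's choice works with the tidier hypergeometric form and a degree-$3$ rather than degree-$3$-with-pole substitution. One correction of emphasis: the regularity of the pulled-back equation at $f=\frac13$ is not something that needs to be granted as a separate hypothesis. The chain-rule transport of a linear ODE is a purely algebraic operation, and the coefficients you computed are manifestly regular at $f=\frac13$, so that point is automatically ordinary; the ramification/exponent discussion and the Fuchs-relation count $0+0+0+0+\frac13+\frac13+\frac23+\frac23=2$ are pleasant sanity checks on the algebra, not logical steps the proof depends on.
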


\begin{proof}
From Matsuda \cite{Matsuda4}, we recall 
\begin{equation*}
a(\tau)=\frac32 P-\frac12 Q \,\,
\mathrm{and} \,\,
b^3(\tau)=\frac{1}{8} (9P Q^2-Q^3),
\end{equation*}
which imply that 
\begin{equation*}
\frac{ a^3(\tau) }{ b^3(\tau) }=
\frac{(3 P-Q)^3}{Q^2 (9 P-Q)}=
\frac{(3 f-1)^3}{9 f-1}. 
\end{equation*}
The theorem can be proved by setting 
$
\displaystyle
y=\frac{(3 f-1)^3}{9 f-1}
$ 
in Theorem \ref{thm:b-y-hypergeometric-E_2(q)}. 
\end{proof}

\begin{theorem}
\label{thm:b-q^2-f-linear}
{\it
For every $\tau\in\mathbb{H}^2$ set 
$
\displaystyle 
f(\tau)=
\frac
{
\eta^4(2\tau) \eta^8(3\tau)
}
{
\eta^8(\tau) \eta^4(6\tau)
}.
$ 
Then the inverse function $\tau=\tau(f)$ yields $m(f)=b(2\tau(f))$ and 
$m(f)$ satisfies the following differential equation:
\begin{equation*}
\frac{d^2 m }{d f^2}
+
\left\{
\frac{1}{f}
+
\frac{1}{f-1}
+
\frac{\frac13}{f-\frac19}
\right\}
\frac{d m}{df}
+
\frac
{
f+\frac13
}
{
9
f
(f-\frac19)^2
}
m=0. 
\end{equation*}
}
\end{theorem}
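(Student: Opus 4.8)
The plan is to mirror the reduction used for $b(\tau)$ in Theorem~\ref{thm:b-q-f-linear}: exhibit $m(f)=b(2\tau(f))$ as a solution of the level-three hypergeometric equation of Theorem~\ref{thm:b-y-hypergeometric-E_2(q)} in the variable $y=a^3(2\tau)/b^3(2\tau)$, and then pull that equation back along the rational map $y=y(f)$ expressing this ratio through $f=P/Q$. Replacing $\tau$ by $2\tau$ throughout the level-three theory changes $D$ only by an overall factor $2$, which cancels in every quotient $Dm/Dy$; hence Theorem~\ref{thm:b-y-hypergeometric-E_2(q)} applies verbatim to $b(2\tau)$ regarded as a function of $y=a^3(2\tau)/b^3(2\tau)$.

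First I would record the needed polynomial expressions in $P,Q$. Theorem~\ref{thm:a-q^2-f-linear} already recalls
\begin{equation*}
a(2\tau)=\frac34 P+\frac14 Q,\qquad c^3(2\tau)=\frac{27}{64}\left(P^3-2P^2Q+PQ^2\right),
\end{equation*}
and combining these with the Borwein cubic identity $a^3=b^3+c^3$ (see \cite{Borweins}) gives
\begin{equation*}
b^3(2\tau)=a^3(2\tau)-c^3(2\tau)=\frac{1}{64}\left(81P^2Q-18PQ^2+Q^3\right)=\frac{1}{64}\,Q\,(9P-Q)^2 .
\end{equation*}
Dividing through by $Q^3$ and writing $f=P/Q$ yields the modular-function identity
\begin{equation*}
y=\frac{a^3(2\tau)}{b^3(2\tau)}=\frac{(3P+Q)^3}{Q(9P-Q)^2}=\frac{(3f+1)^3}{(9f-1)^2},
\end{equation*}
which is exactly the rational change of variables to be inserted into Theorem~\ref{thm:b-y-hypergeometric-E_2(q)}.

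The remaining work is the routine but heavy chain-rule substitution. From the display above one computes $y-1=27f(f-1)^2/(9f-1)^2$ and $y'=27(3f+1)^2(f-1)/(9f-1)^3$, and the standard transformation of a linear second-order ODE sends the coefficients $p_1=(\tfrac23-\tfrac53 y)/(y(1-y))$ and $p_0=-\tfrac19/(y(1-y))$ of Theorem~\ref{thm:b-y-hypergeometric-E_2(q)} to $-y''/y'+p_1y'$ and $p_0(y')^2$, respectively. The zeroth-order coefficient simplifies cleanly, since
\begin{equation*}
p_0(y')^2=\frac19\,\frac{(y')^2}{y(y-1)}=\frac{3(3f+1)}{f(9f-1)^2}=\frac{f+\frac13}{9f\,(f-\frac19)^2},
\end{equation*}
which is precisely the stated zeroth-order term. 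The three regular singular points in $f$ arise as the preimages of $y=1$ (namely $f=0$, unramified, and $f=1$, a double point) and of $y=\infty$ (namely $f=\tfrac19$, a double point), matching the poles $0,1,\tfrac19$ of the claimed equation.

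I expect the one genuinely delicate point to be the first-order residue at $f=\tfrac19$. There $y$ has a double pole, so the exponents $\{a,b\}=\{\tfrac13,\tfrac13\}$ of the hypergeometric equation at $y=\infty$ are doubled to $\{\tfrac23,\tfrac23\}$, and one must track $-y''/y'+p_1y'$ carefully: near $f=\tfrac19$ the two pieces contribute residues $3$ and $-\tfrac{10}{3}$, for a total residue $-\tfrac13$ (consistent with the doubled exponents, since $1-\tfrac43=-\tfrac13$). At $f=0$ and $f=1$ the exponents are the logarithmic pair $\{0,0\}$ inherited from $y=1$, giving residue $1$ at each. Assembling the three local contributions produces the first-order coefficient $\tfrac1f+\tfrac1{f-1}-\tfrac{1/3}{f-\tfrac19}$ and completes the identification of the Fuchsian equation satisfied by $m(f)=b(2\tau(f))$; the only care needed beyond bookkeeping is the sign of the $\tfrac13$-residue at $f=\tfrac19$ forced by the order-two branching of $y$ there.
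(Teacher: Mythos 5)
Your proposal follows essentially the same route as the paper's own proof: the paper simply recalls $a(2\tau)=\tfrac34P+\tfrac14Q$ and $b^3(2\tau)=\tfrac1{64}\left(81P^2Q-18PQ^2+Q^3\right)$ from the preprint cited as \cite{Matsuda4}, deduces $a^3(2\tau)/b^3(2\tau)=(3f+1)^3/(1-9f)^2$, and declares the theorem proved ``by setting $y=(3f+1)^3/(1-9f)^2$ in Theorem \ref{thm:b-y-hypergeometric-E_2(q)}.'' You do exactly this, with two improvements: you derive $b^3(2\tau)=\tfrac1{64}Q(9P-Q)^2$ from the Borwein identity $a^3=b^3+c^3$ and the formulas for $a(2\tau)$, $c^3(2\tau)$ already quoted in Theorem \ref{thm:a-q^2-f-linear} (so you avoid leaning on the unpublished source), and you actually perform the change of variables that the paper leaves implicit, including the remark that $\tau\mapsto 2\tau$ only rescales $D$ and so does not affect Theorem \ref{thm:b-y-hypergeometric-E_2(q)}. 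Your intermediate formulas for $y-1$, $y'$, the zeroth-order coefficient, and the residues $3$ and $-\tfrac{10}{3}$ at $f=\tfrac19$ all check out; the only expository gap is that your residue assembly should also note that the preimage $f=-\tfrac13$ of $y=0$ becomes an ordinary point (the exponents $\{0,\tfrac13\}$ triple to $\{0,1\}$ and the residues of $-y''/y'$ and $p_1y'$ cancel there).

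The point you must make explicit, however, is that the equation you prove is \emph{not} the equation printed in the statement: your first-order coefficient is
\begin{equation*}
\frac1f+\frac1{f-1}-\frac{\;\frac13\;}{f-\frac19},
\end{equation*}
with a minus sign, while the theorem displays $+\frac13/(f-\tfrac19)$. Your sign is the correct one; the printed statement contains a typo. Since $y=(3f+1)^3/(1-9f)^2$ has a double pole at $f=\tfrac19$, the exponents $\{\tfrac13,\tfrac13\}$ at $y=\infty$ pull back to $\{\tfrac23,\tfrac23\}$, forcing the residue $1-\tfrac43=-\tfrac13$ exactly as you argue; with the printed $+\tfrac13$ the indicial equation at $f=\tfrac19$ would be $\rho^2-\tfrac23\rho+\tfrac49=0$, whose roots are non-real, which is impossible for the pullback of a hypergeometric equation under a rational map. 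The paper itself corroborates this: Theorem \ref{thm:b-q-g-linear} and Theorem \ref{thm:b-q^{1/2}-t-linear} use the identical substitution $y=(3\,\cdot+1)^3/(1-9\,\cdot)^2$ and both print the minus sign, whereas the plus sign occurs precisely in the theorems (\ref{thm:b-q-f-linear}, \ref{thm:b-q^2-g-linear}, \ref{thm:b-q-t-linear}, \ref{thm:b-q-u-linear}) where $y$ has only a simple pole. So your proof is correct and matches the paper's method, but a complete answer should state plainly that Theorem \ref{thm:b-q^2-f-linear} as printed must be corrected to read $-\frac13/(f-\tfrac19)$, rather than presenting your computation as confirming the displayed equation.
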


\begin{proof}
From Matsuda \cite{Matsuda4}, we recall 
\begin{equation*}
a(2\tau)=\frac34 P+\frac14 Q \,\,
\mathrm{and} \,\,
b^3(2\tau)=\frac{1}{64} (81 P^2 Q-18 P Q^2+Q^3),
\end{equation*}
which imply that 
\begin{equation*}
\frac{ a^3(2\tau) }{ b^3(2\tau) }=
\frac{(3 P+Q)^3}{Q (Q-9 P)^2}=
\frac{(3 f+1)^3}{(1-9 f)^2}. 
\end{equation*}
The theorem can be proved by setting 
$
\displaystyle
y=\frac{(3 f+1)^3}{(1-9 f)^2}
$ 
in Theorem \ref{thm:b-y-hypergeometric-E_2(q)}. 
\end{proof}

\subsection{On $c(\tau)$ and $c(2\tau)$} 
In this subsection we set $l=c(\tau)$ and $m=c(2\tau).$

\begin{theorem}
\label{thm:c-q-f-linear}
{\it
For every $\tau\in\mathbb{H}^2$ set 
$
\displaystyle 
f(\tau)=
\frac
{
\eta^4(2\tau) \eta^8(3\tau)
}
{
\eta^8(\tau) \eta^4(6\tau)
}.
$ 
Then the inverse function $\tau=\tau(f)$ yields $l(f)=c(\tau(f))$ and 
$l(f)$ satisfies the following differential equation:
\begin{equation*}
\frac{d^2 l}{d f^2}
+
\left\{
-
\frac{\frac13 }{f}
+
\frac{\frac13}{f-1}
+
\frac{ 1 }{f-\frac19}
\right\}
\frac{dl}{df}
+
\frac
{
4
(f-\frac13)
}
{
27 f^2(f-1)^2(f-\frac19) 
}
l=0. 
\end{equation*}
}
\end{theorem}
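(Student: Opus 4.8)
The plan is to proceed exactly as in the proofs of Theorems \ref{thm:a-q-f-linear} and \ref{thm:b-q-f-linear}: I would realize the level-three variable governing $c(\tau)$ as a rational function of $f$ and then pull back the hypergeometric equation of Theorem \ref{thm:c-z-hypergeometric-E_2(q^3)} under this substitution. The relevant level-three variable here is $z=a^3(\tau)/c^3(\tau)$, since Theorem \ref{thm:c-z-hypergeometric-E_2(q^3)} states that $c$, viewed as a function of $z$, satisfies the hypergeometric equation (\ref{eqn:c-z-hypergeometric}).

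First I would recall from Matsuda \cite{Matsuda4} the level-six identities
\begin{equation*}
a(\tau)=\frac32 P-\frac12 Q \quad\mathrm{and}\quad c^3(\tau)=\frac{27}{8}(P^3-P^2 Q),
\end{equation*}
which already appear in the proof of Theorem \ref{thm:a-q-f-linear}. Dividing and substituting $f=P/Q$ yields
\begin{equation*}
z=\frac{a^3(\tau)}{c^3(\tau)}=\frac{(3P-Q)^3}{27 P^2(P-Q)}=\frac{(3f-1)^3}{27 f^2(f-1)}.
\end{equation*}
Setting $z$ equal to this rational function of $f$ in Theorem \ref{thm:c-z-hypergeometric-E_2(q^3)} and invoking the chain rule to rewrite $dc/dz$ and $d^2c/dz^2$ in terms of $dl/df$ and $d^2 l/df^2$ should deliver the asserted equation for $l=c(\tau)$.

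The only real computation is this change of variables; the main subtlety is the triple factor $(3f-1)^3$ in the numerator of $z$. It forces the regular singular point $z=0$ of (\ref{eqn:c-z-hypergeometric}), whose local exponents are $0$ and $1/3$, to be pulled back to $f=1/3$ with exponents $0$ and $1$, i.e. to an ordinary point. This explains why $f=1/3$ is absent from the poles of the first-order coefficient and survives only as the zero $(f-1/3)$ in the numerator of the zeroth-order term. As an a priori check before grinding through the algebra, I would track the images of the three singular points of (\ref{eqn:c-z-hypergeometric}): using the identity $(3f-1)^3-27 f^2(f-1)=9f-1$ one sees that $z=0,1,\infty$ map to $f=1/3$, $f=1/9$ and $f=0,1$ respectively, which matches the singular structure $\{0,1,1/9\}$ together with the apparent point $f=1/3$ of the target equation.
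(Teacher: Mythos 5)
Your proposal matches the paper's own proof essentially verbatim: the paper likewise recalls $a(\tau)=\tfrac32 P-\tfrac12 Q$ and $c^3(\tau)=\tfrac{27}{8}(P^3-P^2Q)$ from \cite{Matsuda4}, forms $z=a^3(\tau)/c^3(\tau)=(3f-1)^3/\bigl(27f^2(f-1)\bigr)$, and substitutes this into Theorem \ref{thm:c-z-hypergeometric-E_2(q^3)}. Your additional bookkeeping of how the singular points $z=0,1,\infty$ pull back (with $f=\tfrac13$ becoming an ordinary point via the identity $(3f-1)^3-27f^2(f-1)=9f-1$) is a sound consistency check that the paper omits, but the argument is the same.
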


\begin{proof}
From Matsuda \cite{Matsuda4}, we recall 
\begin{equation*}
a(\tau)=\frac32 P-\frac12 Q \,\,
\mathrm{and} \,\,
c^3(\tau)=\frac{27}{8}(P^3-P^2 Q),
\end{equation*}
which imply that 
\begin{equation*}
\frac{ a^3(\tau) }{ c^3(\tau) }=
\frac{(3P-Q)^3}{27 (P^3-P^2 Q)}=
\frac{(3 f-1)^3}{27 f^2  (f-1) }. 
\end{equation*}
The theorem can be proved by setting 
$
\displaystyle
z=\frac{(3 f-1)^3}{27 f^2  (f-1) }
$ 
in Theorem \ref{thm:c-z-hypergeometric-E_2(q^3)}. 
\end{proof}

\begin{theorem}
\label{thm:c-q^2-f-linear}
{\it
For every $\tau\in\mathbb{H}^2$ set 
$
\displaystyle 
f(\tau)=
\frac
{
\eta^4(2\tau) \eta^8(3\tau)
}
{
\eta^8(\tau) \eta^4(6\tau)
}.
$ 
Then the inverse function $\tau=\tau(f)$ yields $m(f)=c(2\tau(f))$ and 
$m(f)$ satisfies the following differential equation:
\begin{equation*}
\frac{d^2 m}{d f^2}
+
\left\{
\frac{\frac13 }{f}
-
\frac{\frac13}{f-1}
+
\frac{ 1 }{f-\frac19}
\right\}
\frac{d m }{df}
+
\frac
{
(f+\frac13)
}
{
3 f^2(f-1)^2   
}
m=0. 
\end{equation*}
}
\end{theorem}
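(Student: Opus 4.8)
The plan is to reduce this to Theorem \ref{thm:c-z-hypergeometric-E_2(q^3)} by a rational change of variable, exactly as in the proofs of Theorem \ref{thm:c-q-f-linear} and Theorem \ref{thm:a-q^2-f-linear}. First I would recall from Matsuda \cite{Matsuda4} the two identities $a(2\tau)=\tfrac34 P+\tfrac14 Q$ and $c^3(2\tau)=\tfrac{27}{64}(P^3-2P^2Q+PQ^2)$ already invoked in Theorem \ref{thm:a-q^2-f-linear}, where $P,Q$ are the level-six forms fixed at the start of this section. Forming the relevant ratio and dividing numerator and denominator by $Q^3$, these give
\begin{equation*}
z:=\frac{a^3(2\tau)}{c^3(2\tau)}=\frac{(3P+Q)^3}{27P(P-Q)^2}=\frac{(3f+1)^3}{27f(f-1)^2},
\end{equation*}
so that $z$ is an explicit rational function of $f$.

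Next I would apply Theorem \ref{thm:c-z-hypergeometric-E_2(q^3)} with $\tau$ replaced by $2\tau$. That theorem is valid for every point of $\mathbb{H}^2$, so it holds verbatim at $2\tau$: with $z=a^3(2\tau)/c^3(2\tau)$ the function $m=c(2\tau)$, viewed as a function of $z$, satisfies the hypergeometric equation (\ref{eqn:c-z-hypergeometric}), namely $z(1-z)m_{zz}+(\tfrac23-\tfrac53 z)m_z-\tfrac19 m=0$. Writing this in normalized form $m_{zz}+P(z)m_z+Q(z)m=0$ with $P(z)=\frac{2/3-5z/3}{z(1-z)}$ and $Q(z)=\frac{-1/9}{z(1-z)}$, the substitution $z=z(f)$ produces $m_{ff}+p(f)m_f+q(f)m=0$ with $p=P(z)z'-z''/z'$ and $q=Q(z)(z')^2$, where $z'=dz/df$.

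The remaining work is to simplify $p$ and $q$ into the claimed partial-fraction form. The key algebraic fact is the factorization $27f(f-1)^2-(3f+1)^3=-(9f-1)^2$, which yields $1-z=\frac{-(9f-1)^2}{27f(f-1)^2}$ and, via the logarithmic derivative, $z'/z=\frac{-(9f-1)}{f(3f+1)(f-1)}$. Substituting these into $q=Q(z)(z')^2$ collapses all the $(3f+1)$ and $(9f-1)$ factors and gives $q=\frac{3f+1}{9f^2(f-1)^2}=\frac{f+1/3}{3f^2(f-1)^2}$; a parallel computation produces the stated $p=\frac{1/3}{f}-\frac{1/3}{f-1}+\frac{1}{f-1/9}$. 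I expect the main obstacle to be precisely this simplification: one must check that the preimage $f=-1/3$ of the regular singular point $z=0$ becomes an ordinary point of the $f$-equation (its local exponents $0,1$ forcing the $(3f+1)$ factors to cancel from both $p$ and $q$), so that the singular set reduces to $\{0,1,\tfrac19\}$ with exponents dictated by the degrees of the covering $z(f)$ over $z=\infty$ and $z=1$. Once $f=-1/3$ is confirmed to be free of a genuine singularity, the residue computations at $f=0,1,\tfrac19$ are routine and match the statement.
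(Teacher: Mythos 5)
Your proposal is correct and is essentially the paper's own proof: the paper likewise recalls $a(2\tau)=\tfrac34 P+\tfrac14 Q$ and $c^3(2\tau)=\tfrac{27}{64}P(P-Q)^2$ from \cite{Matsuda4}, obtains $z=a^3(2\tau)/c^3(2\tau)=(3f+1)^3/\bigl(27f(f-1)^2\bigr)$, and substitutes this $z$ into Theorem \ref{thm:c-z-hypergeometric-E_2(q^3)}. The only difference is that you carry out the change-of-variables algebra explicitly (the factorization $27f(f-1)^2-(3f+1)^3=-(9f-1)^2$ and the cancellation of the $(3f+1)$ factors making $f=-\tfrac13$ an ordinary point), which the paper leaves implicit; your computation is accurate and confirms the stated coefficients.
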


\begin{proof}
From Matsuda \cite{Matsuda4}, we recall 
\begin{equation*}
a(2\tau)=\frac34 P+\frac14 Q \,\,
\mathrm{and} \,\,
c^3(2\tau)=\frac{27}{64}(P^3-2P^2 Q+P Q^2),
\end{equation*}
which imply that 
\begin{equation*}
\frac{ a^3(2\tau) }{ c^3(2\tau) }  =
\frac{(3P+Q)^3}{27 P (P-Q)^2  }=
\frac{(3 f+1)^3}{27 (f-1)^2 f}
\end{equation*}
The theorem can be proved by setting 
$
\displaystyle
z=\frac{(3 f+1)^3}{27 (f-1)^2 f}
$ 
in Theorem \ref{thm:c-z-hypergeometric-E_2(q^3)}. 
\end{proof}

\section{Heun's equations for modular forms of level 6 (2)}
\label{sec:Heun-g-(2/3,1)}

Throughout this section we set 
\begin{align*}
P(\tau)=&
\frac
{
\eta^6(2\tau) \eta(3\tau)
}
{
\eta^3(\tau) \eta^2(6\tau)
}
=1+3\sum_{n=1}^{\infty} (d_{1,6}(n)-d_{5,6}(n)) q^n,   \\
%%%%%%%%%%%%%%%%%%%%%%%%%%%%%%%%%%%%%%%%%%%%%%%%%%%%%%%%%%%%%%%%%
Q(\tau)
=
&
\frac
{
\eta(\tau) \eta^6(6\tau)
}
{
\eta^2(2\tau) \eta^3(3\tau)
}
=
\sum_{n=1}^{\infty} (d_{1,6}(n)-d_{5,6}(n)) q^n-2 \sum_{n=1}^{\infty} ( d_{1,3}(n)-d_{2,3}(n) )q^{2n},  \\
%%%%%%%%%%%%%%%%%%%%%%%%%%%%%%%%%%%%%%%%%%%%%%%%%%%%%%%%%%%%
R(\tau)=&E_2(6\tau), 
\end{align*}
and 
\begin{equation*}
g(\tau)=\frac{Q(\tau)}{P(\tau)}
=
\frac
{
\eta^4(\tau) \eta^8(6\tau)
}
{
\eta^8(2\tau) \eta^4(3\tau)
}.
\end{equation*}

In \cite{Matsuda4}, we proved that 
\begin{align*}
DP=&\frac{-P^3+12P^2Q-15PQ^2+PR}{2},    \,\,
DQ=\frac{P^2Q-8PQ^2+3Q^3+QR}{2} \\
DR=&\frac{-P^4+12P^3Q-30P^2Q^2+12PQ^3-9Q^4+R^2}{2},  \,\,\,D=\frac{1}{2 \pi i} \frac{d}{d\tau}.   
\end{align*}

\subsection{On the modular function of level six }

\begin{theorem}
\label{thm:ODE-g-(2/3,1)}
{\it
For every $\tau\in\mathbb{H}^2$ set 
\begin{equation*}
g(\tau)=
\frac
{
\eta^4(\tau) \eta^8(6\tau)
}
{
\eta^8(2 \tau) \eta^4(3\tau)
}.
\end{equation*}
Then we have 
\begin{equation*}
\{g,\tau\}
+
%%%%%%%%%%%%%%%%%%%%%%%%%%%%%%%%%%
\frac{81 g^4-108 g^3+102 g^2-12 g+1}{2 (9 g-1)^2 (g-1)^2 g^2}
(g^{\prime})^2=0.
\end{equation*}
}
\end{theorem}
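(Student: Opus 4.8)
The plan is to imitate the proof of Theorem~\ref{thm:ODE-f-(1,2/3)}, using that $g$ is a Hauptmodul for $\Gamma_0(6)$ so that $j$ is a rational function of $g$. First I would record the representation
\begin{equation*}
j=\frac{(3g+1)^3\left(243g^3-405g^2+225g+1\right)^3}{g(g-1)^2(9g-1)^6},
\end{equation*}
in which the cubes in the numerator produce the order-three ramification over $j=0$, while the denominator shows poles of orders $1,2,6$ at $g=0,1,\tfrac19$ and (the numerator having degree $12$ and the denominator degree $9$) a pole of order $3$ at $g=\infty$, the orders $1+2+6+3=12$ matching the index $[\mathrm{SL}_2(\mathbb{Z}):\Gamma_0(6)]$ and the four cusps. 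This formula may be cited from Matsuda~\cite{Matsuda4} or derived as explained below.

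With this in hand, I would invoke the Dedekind equation (\ref{eqn:ODE-j}) together with the chain rule $\{j,\tau\}=\{j,g\}(g')^2+\{g,\tau\}$. Concretely I would compute $dj/dg$, $d^2j/dg^2$, $d^3j/dg^3$ from the displayed formula, assemble $\{j,g\}$, and separately express $\frac{j^2-2^4\cdot3\cdot41\,j+2^{15}\cdot3^4}{2j^2(j-12^3)^2}(j')^2$ as a rational multiple of $(g')^2$; feeding both into (\ref{eqn:ODE-j}) and simplifying must collapse the result to $-\frac{81g^4-108g^3+102g^2-12g+1}{2(9g-1)^2(g-1)^2g^2}(g')^2$. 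I expect the only real difficulty to be computational bulk: because $j$ has an order-six pole at $g=\tfrac19$, the numerators of $d^3j/dg^3$ and of $\{j,g\}$ are polynomials of degree in the low twenties, so the massive cancellation has to be organized by a computer algebra system exactly as in the $f$-case.

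A shorter route, which I would in fact prefer, avoids these computations entirely. The two Hauptmoduls of Sections~\ref{sec:Heun-f-(1,2/3)} and~\ref{sec:Heun-g-(2/3,1)} are related by the eta-quotient identity
\begin{equation*}
g=\frac{f-1}{9f-1},\qquad\text{equivalently}\qquad f=\frac{g-1}{9g-1},
\end{equation*}
an involution pinned down by matching the cusp data ($f=1\mapsto g=0$ at $\tau\to i\infty$, and $f=\infty\mapsto g=\tfrac19$ at the cusp $0$, using the Fricke involution $\tau\mapsto-1/(6\tau)$) together with the leading $q$-coefficient. Since the Schwarzian derivative is invariant under M\"obius post-composition, $\{g,\tau\}=\{f,\tau\}$; and from $g'=\tfrac{8}{(9f-1)^2}f'$ a one-line substitution shows that the quadratic differential $\frac{81x^4-108x^3+102x^2-12x+1}{2(9x-1)^2(x-1)^2x^2}(dx)^2$ is invariant under $x\mapsto\frac{x-1}{9x-1}$, so the second term for $g$ equals the second term for $f$. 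The assertion then follows at once from Theorem~\ref{thm:ODE-f-(1,2/3)}.
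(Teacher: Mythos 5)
Your first route is exactly the paper's proof: the paper quotes the same expression of $j$ as a rational function of $g$ from \cite{Matsuda4} and then states that the theorem can be proved "in the same way as Theorem~\ref{thm:ODE-f-(1,2/3)}", i.e.\ via the ODE (\ref{eqn:ODE-j}), the chain rule $\{j,\tau\}=\{j,g\}(g')^2+\{g,\tau\}$, and the bulk computation you outline; that part needs no further comment.

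Your preferred second route is genuinely different from the paper's, and it is correct up to one point that should be shored up. Granting $g=\frac{f-1}{9f-1}$, one indeed has $g-1=\frac{-8f}{9f-1}$, $9g-1=\frac{-8}{9f-1}$, $g'=\frac{8f'}{(9f-1)^2}$, and the degree-four polynomial identity
\begin{equation*}
81g^4-108g^3+102g^2-12g+1
=\frac{64\left(81f^4-108f^3+102f^2-12f+1\right)}{(9f-1)^4},
\end{equation*}
so the quadratic-differential term for $g$ coincides with the one for $f$; together with the M\"obius invariance of the Schwarzian, $\{g,\tau\}=\{f,\tau\}$, Theorem~\ref{thm:ODE-g-(2/3,1)} becomes literally Theorem~\ref{thm:ODE-f-(1,2/3)} re-read in the coordinate $g$. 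What this buys is clear: the paper's route forces polynomial cancellations in degrees in the twenties (and needs the $j$-$g$ formula from \cite{Matsuda4}), whereas yours needs only the already-proved $f$-case plus one quartic identity. The weak spot is the justification of $g=\frac{f-1}{9f-1}$: matching two cusps and a leading $q$-coefficient pins down the M\"obius map only \emph{after} you know $g$ is a M\"obius function of $f$ at all; that is true because both are Hauptmoduln of the genus-zero group $\Gamma_0(6)$, but it must be stated or cited, and your Fricke computation of the cusp values is itself left implicit. You can bypass this appeal entirely using identities the paper already quotes: Sections~\ref{sec:Heun-f-(1,2/3)} and~\ref{sec:Heun-g-(2/3,1)} recall from \cite{Matsuda4} that $a(\tau)=\frac32P_f-\frac12Q_f=P_g+3Q_g$ and $a(2\tau)=\frac34P_f+\frac14Q_f=P_g-3Q_g$, where $(P_f,Q_f)$ and $(P_g,Q_g)$ denote the pairs $(P,Q)$ of the respective sections. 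Solving these two linear equations gives $P_g=\frac18\left(9P_f-Q_f\right)$ and $Q_g=\frac18\left(P_f-Q_f\right)$, hence $g=\frac{Q_g}{P_g}=\frac{P_f-Q_f}{9P_f-Q_f}=\frac{f-1}{9f-1}$ upon dividing by $Q_f$. With that substitution your argument is complete, and it is arguably a cleaner proof than the one in the paper.
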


\begin{proof}
From Matsuda \cite{Matsuda4} recall the following formula,
\begin{equation*}
j(\tau)=
\frac
{
(3g+1)^3 (243 g^3-405 g^2+225 g+1)^3
}
{
g(g-1)^2(9g-1)^6
}.
\end{equation*}
The theorem can be proved in the same way as Theorem \ref{thm:ODE-f-(1,2/3)}. 
\end{proof}

\subsection{Heun's equation for the modular form of level six  }

\begin{theorem}(Cooper \cite[pp. 391]{Cooper})
\label{thm:P-g-linear-E_2(q^6)}
{\it
For every $\tau\in\mathbb{H}^2$ set 
$
\displaystyle 
g(\tau)=
\frac
{
\eta^4(\tau) \eta^8(6\tau)
}
{
\eta^8(2\tau) \eta^4(3\tau)
}.
$ 
Then the inverse function $\tau=\tau(g)$ yields $P(g)=P(\tau(g))$ and 
$P(g)$ satisfies Heun's differential equation:
\begin{equation}
\frac{d^2 P}{d g^2}
+
\left\{
\frac{1}{g}
+
\frac{1}{g-1}
+
\frac{1}{g-\frac19}
\right\}
\frac{dP}{dg}
+
\frac
{
g-\frac13
}
{
g(g-1)(g-\frac19)
}
P=0. 
\end{equation}
}
\end{theorem}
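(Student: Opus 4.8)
The plan is to follow exactly the template used for the level-five and level-six Heun equations proved earlier, most closely the proof of Theorem \ref{thm:Q-f-linear-E_2(q)}: convert the $\tau$-derivatives furnished by the $D$-system at the start of Section \ref{sec:Heun-g-(2/3,1)} into derivatives with respect to $g=Q/P$ by the chain rule, and then verify directly that the claimed Heun operator annihilates $P$.

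First I would compute $Dg$. Since $g=Q/P$, the quotient rule gives $Dg=(P\,DQ-Q\,DP)/P^2$, and after substituting the expressions for $DP$ and $DQ$ the numerator collapses to $PQ(P^2-10PQ+9Q^2)=PQ(P-Q)(P-9Q)$, so that
\begin{equation*}
Dg=\frac{Q(P-Q)(P-9Q)}{P}=g(1-g)(1-9g)P^2 .
\end{equation*}
The three factors $g$, $1-g$, $1-9g$ vanish precisely where $Q=0$, $Q=P$, and $9Q=P$, which already pins down the regular singular points $g=0,1,\tfrac19$ of the target equation.

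Next I would form $\dfrac{dP}{dg}=\dfrac{DP}{Dg}$ and then $\dfrac{d^2P}{dg^2}=\dfrac{1}{Dg}\,D\!\left(\dfrac{dP}{dg}\right)$, applying the $D$-system a second time (this is where $DR$, and hence the full $P,Q,R$ dependence, enters). Both quantities come out as explicit rational functions of $P$, $Q$, $R$. Substituting them into the combination
\begin{equation*}
g(g-1)(9g-1)\frac{d^2P}{dg^2}
+\Big\{(g-1)(9g-1)+g(9g-1)+9g(g-1)\Big\}\frac{dP}{dg}
\end{equation*}
and clearing denominators, I expect the dependence on $R=E_2(6\tau)$ to cancel completely and the result to reduce to $3P-9Q=-3(3g-1)P$. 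Dividing back by $g(g-1)(9g-1)$ then produces the stated equation: the first-order coefficient becomes $\tfrac1g+\tfrac1{g-1}+\tfrac{9}{9g-1}=\tfrac1g+\tfrac1{g-1}+\tfrac1{g-1/9}$, while the zeroth-order coefficient is $\dfrac{3(3g-1)}{g(g-1)(9g-1)}=\dfrac{9(g-\tfrac13)}{g(g-1)(9g-1)}=\dfrac{g-\tfrac13}{g(g-1)(g-\tfrac19)}$, using $(9g-1)/(g-\tfrac19)=9$.

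The only genuine obstacle is the bookkeeping in the second step: the formula for $d^2P/dg^2$ is a sizeable rational expression, and what must be checked is that, after multiplication by the cubic $g(g-1)(9g-1)$, every monomial involving $R$ and every term of degree higher than one in $P,Q$ cancels, leaving the clean linear remainder $3P-9Q$. This cancellation is forced by the fact that $P$ is a bona fide modular solution, but confirming it is a mechanical polynomial identity, best carried out with computer algebra exactly as in the parallel proofs of Theorems \ref{thm:Q-f-linear-E_2(q)} and \ref{thm:P-f-linear-E_2(q)-Heun}.
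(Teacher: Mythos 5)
Your proposal is correct and is essentially identical to the paper's own proof: the paper likewise computes $Dg=Q(P-Q)(P-9Q)/P$ from the Section~\ref{sec:Heun-g-(2/3,1)} system, forms $dP/dg=DP/Dg$ and $d^2P/dg^2$ by the chain rule (which is where $DR$ enters), and verifies that $g(g-1)(9g-1)P''+\bigl\{9g(g-1)+(g-1)(9g-1)+g(9g-1)\bigr\}P'$ collapses to $3(P-3Q)=3P(1-3g)$, with all $R$-terms cancelling. Your algebraic bookkeeping (the factorization of the numerator of $Dg$, the identification of the singular points, and the rewriting of the zeroth-order coefficient via $(9g-1)/(g-\tfrac19)=9$) matches the paper's computation exactly.
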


\begin{proof}
We first have 
\begin{equation*}
Dg=
D
\left(
\frac{Q}{P}
\right)
=
\frac{Q (P-Q) (P-9 Q)}{P}.
\end{equation*}
We next obtain 
\begin{equation*}
\frac{d P}{d g}
=
\frac{d P}{d \tau}
\frac{d \tau}{d g}
=
\frac{D P}{D g}
=
\frac{P \left(-P^3+12 P^2 Q-15 P Q^2+P R\right)}{2 Q (P-Q) (P-9 Q)}, 
\end{equation*}
and 
\begin{equation*}
\frac{d^2 P}{d g^2}
=
\frac{d}{d \tau}
\left(
\frac{d P}{d g}
\right)
\frac{d\tau}{d g}
=
\frac{P^3 \left(P^4-26 P^3 Q+204 P^2 Q^2-P^2 R-390 P Q^3+20 P Q R+243 Q^4-27 Q^2 R\right)}{2 Q^2 (P-9 Q)^2 (P-Q)^2},
\end{equation*}
which imply that 
\begin{align*}
&
g(g-1)(9g-1)
\frac{d^2 P}{d g^2}
+
9
g(g-1)
\frac{d P}{d g}
+
(g-1)(9g-1)
\frac{d P}{d g}
+
(9g-1) g
\frac{d P}{d g}  \\
=&
3(P-3 Q)
=
3P
\left(
1
-
3
\frac{Q}{P}
\right)
=
3P(1-3 g),
\end{align*}
which proves the theorem. 
\end{proof}

\subsection{On $a(\tau)$ and $a(2\tau)$} 
In this subsection we set $l=a(\tau)$ and $m=a(2\tau).$

\begin{theorem}
\label{thm:a-q-g-linear}
{\it
For every $\tau\in\mathbb{H}^2$ set 
$
\displaystyle 
g(\tau)=
\frac
{
\eta^4(\tau) \eta^8(6\tau)
}
{
\eta^8(2\tau) \eta^4(3\tau)
}.
$ 
Then the inverse function $\tau=\tau(g)$ yields $l(g)=a(\tau(g))$ and 
$l(g)$ satisfies the following differential equation:
\begin{equation*}
\frac{d^2 l}{d g^2}
+
\left\{
\frac{1}{g }
+
\frac{1}{g-1}
-
\frac{2}{g+\frac13}
+
\frac{1}{g-\frac19}
\right\}
\frac{dl}{dg }
-
\frac
{
2
}
{
3
g(g+\frac13)^2
}
l=0. 
\end{equation*}
}
\end{theorem}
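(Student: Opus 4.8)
The plan is to follow verbatim the template of the proofs of Theorems \ref{thm:a-q-f-linear} and \ref{thm:a-q^2-f-linear}: rather than integrating the system for $P,Q,R$ directly in the variable $g$, I would reduce the assertion to the already-established hypergeometric equation of Theorem \ref{thm:a-x-hypergeometric-E_2(q)} by means of a rational change of the independent variable. First I would recall from Matsuda \cite{Matsuda4} the expressions of $a(\tau)$ and of $c^3(\tau)$ as a linear and a cubic form, respectively, in the functions $P$ and $Q$ of this section. Substituting these into $x=c^3(\tau)/a^3(\tau)$ and homogenizing by $P$, everything should collapse to a rational function of $g=Q/P$ alone.

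The expected outcome of that computation is
\begin{equation*}
\frac{c^3(\tau)}{a^3(\tau)}=\frac{27\,g(g-1)^2}{(3g+1)^3},
\end{equation*}
which is the exact counterpart, with $g$ in place of $f$, of the formula $27 f(f-1)^2/(3f+1)^3$ appearing in the proof of Theorem \ref{thm:a-q^2-f-linear}; this explains why the target equation coincides in shape with the one satisfied there by $a(2\tau)$. I would then set $x=27g(g-1)^2/(3g+1)^3$ in Theorem \ref{thm:a-x-hypergeometric-E_2(q)} and transform $x(1-x)a''+(1-2x)a'-\tfrac29 a=0$ through the chain rule, reading off the coefficients of the resulting second-order equation in $g$ and matching them against the stated ODE.

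The point that has to be checked --- and the only place where something could go wrong --- is that this pullback produces no apparent singularity at a point where the map $x=x(g)$ is merely critical. Here the map has degree $3$, with $x^{-1}(0)=\{g=0,\,g=1\}$ (the second a double point), $x^{-1}(\infty)=\{g=-\tfrac13\}$ (triple), and $x^{-1}(1)=\{g=\tfrac19\ (\text{double}),\,g=\infty\}$, since $27g(g-1)^2-(3g+1)^3=-(9g-1)^2$. Thus all ramification is concentrated over $x\in\{0,1,\infty\}$, exactly the three singular points of the hypergeometric equation, and Riemann--Hurwitz ($2\cdot 3-2=4$, matching the ramification contributions $1+1+2$ at $g=1,\,\tfrac19,\,-\tfrac13$) leaves no critical point over a regular value. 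Hence the transformed equation is Fuchsian with singularities only at $g=0,1,\tfrac19,-\tfrac13$. Finally I would confirm the indicial data: the exponents $0,0$ at $g=0,1,\tfrac19$ come from $\gamma=1$ and $\gamma-\alpha-\beta=0$ (the latter two doubled by ramification but still $0,0$), while the exponents $1,2$ at $g=-\tfrac13$ arise as three times $\{\tfrac13,\tfrac23\}$; these agree with the residue $-2$ and the double pole $2/(g+\tfrac13)^2$ read off from the stated coefficients, and the Fuchs relation $0+0+0+(1+2)=3$ closes the verification.
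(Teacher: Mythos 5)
Your proposal is correct and is essentially the paper's own proof: the paper likewise recalls from \cite{Matsuda4} that $a(\tau)=P+3Q$ and $c^3(\tau)=27P^2Q-54PQ^2+27Q^3=27Q(P-Q)^2$, obtains $c^3(\tau)/a^3(\tau)=27(1-g)^2g/(3g+1)^3$ (identical to your $27g(g-1)^2/(3g+1)^3$), and concludes by setting $x=27(1-g)^2g/(3g+1)^3$ in Theorem \ref{thm:a-x-hypergeometric-E_2(q)}. One small slip in your (extra, not strictly needed) singularity bookkeeping: $g=\infty$ is the unramified preimage of $x=1$ and is therefore a fifth regular singular point of the pulled-back equation, with exponents $0,0$ --- which is exactly why your exponent sum $0+0+0+(1+2)=3$ satisfies the Fuchs relation as $5-2$, whereas four singular points would require the sum to be $2$.
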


\begin{proof}
From Matsuda \cite{Matsuda4}, we recall 
\begin{equation*}
a(\tau)=P+3 Q \,\,
\mathrm{and} \,\,
c^3(\tau)=27 P^2 Q-54 P Q^2+27 Q^3,
\end{equation*}
which imply that 
\begin{equation*}
\frac{ c^3(\tau) }{ a^3(\tau) }=
\frac{27 Q (P-Q)^2}{(P+3 Q)^3}=
\frac{27 (1-g)^2 g}{(3 g+1)^3}.  
\end{equation*}
The theorem can be proved by setting 
$
\displaystyle
x=\frac{27 (1-g)^2 g}{(3 g+1)^3}
$ 
in Theorem \ref{thm:a-x-hypergeometric-E_2(q)}. 
\end{proof}

\begin{theorem}
\label{thm:a-q^2-g-linear}
{\it
For every $\tau\in\mathbb{H}^2$ set 
$
\displaystyle 
g(\tau)=
\frac
{
\eta^4(\tau) \eta^8(6\tau)
}
{
\eta^8(2\tau) \eta^4(3\tau)
}.
$ 
Then the inverse function $\tau=\tau(g)$ yields $m(g)=a(2\tau(g))$ and 
$m(g )$ satisfies the following differential equation:
\begin{equation*}
\frac{d^2 m }{d g^2}
+
\left\{
\frac{1}{g }
+
\frac{1}{g-1}
-
\frac{2}{f-\frac13}
+
\frac{1}{g-\frac19}
\right\}
\frac{d m}{dg }
-
\frac
{
8
}
{
27
(g-1)
(g-\frac19)
(g-\frac13)^2
}
m=0. 
\end{equation*}
}
\end{theorem}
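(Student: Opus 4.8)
The plan is to follow verbatim the template already used for Theorems~\ref{thm:a-q-f-linear}, \ref{thm:a-q^2-f-linear} and the companion Theorem~\ref{thm:a-q-g-linear}: reduce the assertion to a pullback of the hypergeometric equation of Theorem~\ref{thm:a-x-hypergeometric-E_2(q)} along an explicit degree-three rational substitution in $g$. First I would quote from Matsuda~\cite{Matsuda4} the closed forms of $a(2\tau)$ and $c^3(2\tau)$ in the generators $P,Q$ of this section. In analogy with the pair $a(\tau)=P+3Q,\ c^3(\tau)=27Q(P-Q)^2$ used for $l(g)=a(\tau(g))$, the relevant expressions are $a(2\tau)=P-3Q$ and $c^3(2\tau)=27Q^2(P-Q)$, normalized so that $a(2\tau)\to 1$ and $c^3(2\tau)\sim 27q^2$ as $q\to 0$; via the cubic identity $a^3=b^3+c^3$ this is further consistent with $b^3(2\tau)=P^2(P-9Q)$.

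Next I would form the modular function $x=c^3(2\tau)/a^3(2\tau)$ and rewrite it through $g=Q/P$. Clearing $P^3$ from numerator and denominator gives
\begin{equation*}
\frac{c^3(2\tau)}{a^3(2\tau)}=\frac{27Q^2(P-Q)}{(P-3Q)^3}=\frac{27g^2(g-1)}{(3g-1)^3}.
\end{equation*}
Substituting $x=27g^2(g-1)/(3g-1)^3$ into the hypergeometric equation $x(1-x)a''+(1-2x)a'-\frac29 a=0$ of Theorem~\ref{thm:a-x-hypergeometric-E_2(q)} and transforming by the chain rule $da/dx=(da/dg)/(dx/dg)$ together with the second-order rule for a change of independent variable should return precisely the stated Fuchsian equation for $m(g)$.

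I would organize the verification around the branching of the degree-three cover $x=x(g)$. Its fibres over the hypergeometric singular points are $x=0\mapsto\{g=0\ (\text{multiplicity }2),\,g=1\}$, $x=\infty\mapsto\{g=\tfrac13\ (\text{multiplicity }3)\}$ and $x=1\mapsto\{g=\tfrac19,\,g=\infty\ (\text{multiplicity }2)\}$, so the pulled-back equation is regular singular exactly at $g\in\{0,1,\tfrac19,\tfrac13,\infty\}$; this explains the four finite poles displayed and the fact that the equation is not put into Heun form. Since the total branching $1+2+1=4$ already exhausts the Riemann--Hurwitz bound $2\cdot 3-2$, no apparent singularity is introduced, which is why the transformed coefficients come out as clean as claimed. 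As a cheap guard against algebraic slips I would record that at $g=\tfrac13$ the indicial equation reads $r(r-1)-2r+2=(r-1)(r-2)=0$ (exponents $1,2$, matching the drift term $-2/(g-\tfrac13)$ and the double factor $(g-\tfrac13)^2$ in the final coefficient), while the other four points each carry exponents $0,0$; the exponent sum $3$ then equals $n-2=3$, confirming the Fuchs relation.

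The one genuinely laborious ingredient is the explicit second-order change of variables: because $x$ has degree three, $dx/dg$, $d^2x/dg^2$ and the resubstituted $a',a''$ are high-degree rational functions of $g$, and the collapse to the displayed coefficients $\frac1g+\frac1{g-1}-\frac2{g-\frac13}+\frac1{g-\frac19}$ and $-\frac{8}{27(g-1)(g-\frac19)(g-\frac13)^2}$ is where all the bookkeeping sits. Exactly as in the cited proofs, this is a routine (machine-assisted) simplification rather than a conceptual hurdle, so I would merely state the substitution, cite the Matsuda formulas for the identity $c^3(2\tau)/a^3(2\tau)=27g^2(g-1)/(3g-1)^3$, and invoke Theorem~\ref{thm:a-x-hypergeometric-E_2(q)}.
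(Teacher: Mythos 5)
Your proposal is correct and takes essentially the same route as the paper: the paper's proof likewise quotes $a(2\tau)=P-3Q$ and $c^3(2\tau)=27PQ^2-27Q^3=27Q^2(P-Q)$ from \cite{Matsuda4}, forms $c^3(2\tau)/a^3(2\tau)=27(1-g)g^2/(1-3g)^3$ (identical to your $27g^2(g-1)/(3g-1)^3$ after cancelling signs), and concludes by setting this expression equal to $x$ in Theorem \ref{thm:a-x-hypergeometric-E_2(q)}. Your supplementary checks (the degree-three branching data, the Riemann--Hurwitz count ruling out apparent singularities, and the indicial exponents with the Fuchs relation) are all accurate but go beyond what the paper records.
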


\begin{proof}
From Matsuda \cite{Matsuda4}, we recall 
\begin{equation*}
a(2\tau)=P-3 Q \,\,
\mathrm{and} \,\,
c^3(2\tau)=27 P Q^2-27 Q^3,  
\end{equation*}
which imply that 
\begin{equation*}
\frac{ c^3(2\tau) }{ a^3(2\tau) }=
\frac{27 Q^2 (P-Q)}{(P-3 Q)^3}=
\frac{27 (1-g) g^2}{(1-3 g)^3}.  
\end{equation*}
The theorem can be proved by setting 
$
\displaystyle
x=\frac{27 (1-g) g^2}{(1-3 g)^3}
$ 
in Theorem \ref{thm:a-x-hypergeometric-E_2(q)}. 
\end{proof}

\subsection{On $b(\tau)$ and $b(2\tau)$} 
In this subsection we set $l=b(\tau)$ and $m=b(2\tau).$

\begin{theorem}
\label{thm:b-q-g-linear}
{\it
For every $\tau\in\mathbb{H}^2$ set 
$
\displaystyle 
g(\tau)=
\frac
{
\eta^4(\tau) \eta^8(6\tau)
}
{
\eta^8(2\tau) \eta^4(3\tau)
}.
$ 
Then the inverse function $\tau=\tau(g)$ yields $l(g)=b(\tau(g))$ and 
$l(g)$ satisfies the following differential equation:
\begin{equation*}
\frac{d^2 l}{d g^2}
+
\left\{
\frac{1}{f}
+
\frac{1}{f-1}
-
\frac{ \frac13 }{f-\frac19}
\right\}
\frac{dl}{dg}
+
\frac
{
(g+\frac13)
}
{
9
g(g-\frac19)^2
}
l=0. 
\end{equation*}
}
\end{theorem}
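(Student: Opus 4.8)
The plan is to follow the template of Theorems \ref{thm:a-q-g-linear} and \ref{thm:a-q^2-g-linear}: express the cubic theta data in terms of the level-six forms $P,Q$ of this section, write the modular ratio $a^3/b^3$ as a rational function of $g=Q/P$, and then pull back the hypergeometric equation of Theorem \ref{thm:b-y-hypergeometric-E_2(q)}. From Matsuda \cite{Matsuda4} I would recall
\begin{equation*}
a(\tau)=P+3Q,\qquad b^3(\tau)=P^3-18P^2Q+81PQ^2=P(P-9Q)^2
\end{equation*}
(consistent with the cubic theta identity $a^3=b^3+c^3$ together with the value $c^3(\tau)=27Q(P-Q)^2$ already used in the proof of Theorem \ref{thm:a-q-g-linear}).

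Since $g=Q/P$, dividing numerator and denominator by $P^3$ gives
\begin{equation*}
\frac{a^3(\tau)}{b^3(\tau)}=\frac{(P+3Q)^3}{P(P-9Q)^2}=\frac{(1+3g)^3}{(1-9g)^2}.
\end{equation*}
Theorem \ref{thm:b-y-hypergeometric-E_2(q)} states that, with $y=a^3(\tau)/b^3(\tau)$, the function $l=b$ satisfies $y(1-y)l''+(\frac23-\frac53 y)l'-\frac19 l=0$. So I would substitute
\begin{equation*}
y=\frac{(1+3g)^3}{(1-9g)^2},
\end{equation*}
set $l(g)=b(y(g))$, and apply the chain rule to obtain a second-order ODE in $g$, which I would then compare with the asserted equation (where the occurrences of $f$ in the bracketed first-order coefficient are to be read as $g$).

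The central, and only nontrivial, step is this change of variables. With $A(y)=y(1-y)$, $B(y)=\frac23-\frac53 y$, $C(y)=-\frac19$, the pulled-back equation is
\begin{equation*}
l''+\left(\frac{B(y)}{A(y)}\frac{dy}{dg}-\frac{d^2y/dg^2}{dy/dg}\right)l'+\frac{C(y)}{A(y)}\left(\frac{dy}{dg}\right)^2 l=0,
\end{equation*}
so I would compute $dy/dg$ and $d^2y/dg^2$ from $y=(1+3g)^3/(1-9g)^2$ and simplify. The point most likely to require care is the singular-point bookkeeping: the branch points of the map are $y=0\Leftrightarrow g=-\frac13$, $y=1\Leftrightarrow 27g(g-1)^2=0$ (so $g\in\{0,1\}$), and $y=\infty\Leftrightarrow g=\frac19$. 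Because $y\sim(g+\frac13)^3$ near $g=-\frac13$, the exponents $0,\frac13$ of the hypergeometric equation at $y=0$ become $0,1$ there, so $g=-\frac13$ is an ordinary point and drops out of the final equation; this is precisely why only $g=0,1,\frac19$ (together with $g=\infty$) survive as singularities. I would finish by confirming the indicial exponents of the resulting equation — double exponent $0$ at each of $g=0$ and $g=1$ (matching the logarithmic point $y=1$, where $\gamma-\alpha-\beta=0$) and double exponent $\frac23$ at $g=\frac19$ (matching $y=\infty$, where $\alpha=\beta=\frac13$) — as a check that the pulled-back equation coincides with the asserted one.
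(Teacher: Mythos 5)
Your proposal is correct and follows essentially the same route as the paper: the paper's proof likewise recalls $a(\tau)=P+3Q$ and $b^3(\tau)=P^3-18P^2Q+81PQ^2$ from \cite{Matsuda4}, computes $a^3(\tau)/b^3(\tau)=(3g+1)^3/(1-9g)^2$, and sets $y$ equal to this expression in Theorem \ref{thm:b-y-hypergeometric-E_2(q)}. Your additional chain-rule formula, the identity $y-1=27g(g-1)^2/(1-9g)^2$, and the exponent bookkeeping (including the correct observation that $g=-\tfrac13$ becomes an ordinary point, and that the $f$'s in the stated first-order coefficient are typos for $g$) simply make explicit the verification the paper leaves to the reader.
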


\begin{proof}
From Matsuda \cite{Matsuda4}, we recall 
\begin{equation*}
a(\tau)=P+3Q \,\,
\mathrm{and} \,\,
b^3(\tau)=P^3-18 P^2 Q+81 P Q^2,  
\end{equation*}
which imply that 
\begin{equation*}
\frac{ a^3(\tau) }{ b^3(\tau) }=
\frac{(P+3 Q)^3}{P (P-9 Q)^2}=
\frac{(3 g+1)^3}{(1-9 g)^2}.
\end{equation*}
The theorem can be proved by setting 
$
\displaystyle
y=\frac{(3 g+1)^3}{(1-9 g)^2}
$ 
in Theorem \ref{thm:b-y-hypergeometric-E_2(q)}. 
\end{proof}

\begin{theorem}
\label{thm:b-q^2-g-linear}
{\it
For every $\tau\in\mathbb{H}^2$ set 
$
\displaystyle 
g(\tau)=
\frac
{
\eta^4(\tau) \eta^8(6\tau)
}
{
\eta^8(2\tau) \eta^4(3\tau)
}.
$ 
Then the inverse function $\tau=\tau(g)$ yields $m(g)=b(2\tau(g))$ and 
$m(g)$ satisfies the following differential equation:
\begin{equation*}
\frac{d^2 m }{d g^2}
+
\left\{
\frac{1}{g}
+
\frac{1}{g-1}
+
\frac{\frac13}{g-\frac19}
\right\}
\frac{d m}{dg}
+
\frac
{
4
(
g-\frac13
)
}
{
9
(g-1)
(g-\frac19)^2
}
m=0. 
\end{equation*}
}
\end{theorem}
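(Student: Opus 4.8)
The plan is to reproduce the mechanism used in Theorems~\ref{thm:b-q-g-linear}, \ref{thm:b-q-f-linear} and \ref{thm:b-q^2-f-linear}: realise $b(2\tau)$ as a pullback of the hypergeometric function governed by Theorem~\ref{thm:b-y-hypergeometric-E_2(q)}, and then transport that equation to the $g$-variable by a rational change of independent variable.

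First I would express $a(2\tau)$ and $b^3(2\tau)$ through the present $P$ and $Q$. From the preceding subsection we already have $a(2\tau)=P-3Q$ and $c^3(2\tau)=27Q^2(P-Q)$; together with the cubic relation $a^3=b^3+c^3$ these give
\begin{equation*}
b^3(2\tau)=(P-3Q)^3-27Q^2(P-Q)=P^3-9P^2Q=P^2(P-9Q),
\end{equation*}
a formula also recorded in Matsuda~\cite{Matsuda4}. Since Theorem~\ref{thm:b-y-hypergeometric-E_2(q)} is an identity valid at every point of $\mathbb{H}^2$, it holds with $\tau$ replaced by $2\tau$, so $b(2\tau)$ satisfies the hypergeometric equation $y(1-y)b''+(2/3-5/3\,y)b'-(1/9)b=0$ in the variable
\begin{equation*}
y=\frac{a^3(2\tau)}{b^3(2\tau)}=\frac{(P-3Q)^3}{P^2(P-9Q)}=\frac{(1-3g)^3}{1-9g}=\frac{(3g-1)^3}{9g-1},
\end{equation*}
where $g=Q/P$. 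Substituting $y=(3g-1)^3/(9g-1)$ into this equation and applying the standard transformation law for a second-order linear ODE under a change of the independent variable should yield precisely the stated equation for $m=b(2\tau)$.

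The algebra of the ratio is routine; the delicate point is the change of variables itself. The map $g\mapsto y=(3g-1)^3/(9g-1)$ has degree three, with $y=0$ attained only at $g=1/3$ (with multiplicity three), $y=1$ at $g=0$ (double) and $g=1$, and $y=\infty$ at $g=1/9$ and at $g=\infty$ (double). A priori the pullback could be singular at all of these points together with the ramification points of the map, whereas the statement records only $g=0,1,1/9,\infty$. The crucial cancellation is at $g=1/3$: pulling the hypergeometric exponents $\{0,1/3\}$ at $y=0$ back through the triple ramification produces local exponents $\{0,1\}$, which, provided no logarithmic term appears, are exactly those of an ordinary point, so $g=1/3$ drops out. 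I would confirm this, and pin down all the coefficients, by computing $dy/dg$, forming $db/dy$ and $d^2b/dy^2$ from Theorem~\ref{thm:b-y-hypergeometric-E_2(q)}, and simplifying the resulting rational coefficients of $dm/dg$ and $m$ into the claimed partial-fraction form.
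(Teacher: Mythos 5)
Your proposal is correct and takes essentially the same route as the paper: the paper likewise recalls $a(2\tau)=P-3Q$ and $b^3(2\tau)=P^3-9P^2Q$ from \cite{Matsuda4}, forms $\frac{a^3(2\tau)}{b^3(2\tau)}=\frac{(1-3g)^3}{1-9g}$, and obtains the stated equation by setting $y=\frac{(1-3g)^3}{1-9g}$ in Theorem \ref{thm:b-y-hypergeometric-E_2(q)} (applied with $\tau$ replaced by $2\tau$). Your only deviations, deriving $b^3(2\tau)$ from $c^3(2\tau)$ via the Borweins' identity $a^3=b^3+c^3$ instead of quoting it directly, and the exponent analysis explaining why $g=\tfrac13$ is not a singular point of the pulled-back equation, are harmless additions to the same argument.
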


\begin{proof}
From Matsuda \cite{Matsuda4}, we recall 
\begin{equation*}
a(2\tau)=P-3Q,  \,\,
\mathrm{and} \,\,
b^3(2\tau)=P^3-9 P^2 Q, 
\end{equation*}
which imply that 
\begin{equation*}
\frac{ a^3(2\tau) }{ b^3(2\tau) }=
\frac{(P-3 Q)^3}{P^2 (P-9 Q)}=
\frac{(1-3 g)^3}{1-9 g}. 
\end{equation*}
The theorem can be proved by setting 
$
\displaystyle
y=\frac{(1-3 g)^3}{1-9 g}
$ 
in Theorem \ref{thm:b-y-hypergeometric-E_2(q)}. 
\end{proof}

\subsection{On $c(\tau)$ and $c(2\tau)$} 
In this subsection we set $l=c(\tau)$ and $m=c(2\tau).$

\begin{theorem}
\label{thm:c-q-g-linear}
{\it
For every $\tau\in\mathbb{H}^2$ set 
$
\displaystyle 
g(\tau)=
\frac
{
\eta^4(\tau) \eta^8(6\tau)
}
{
\eta^8(2\tau) \eta^4(3\tau)
}.
$ 
Then the inverse function $\tau=\tau(g)$ yields $l(g)=c(\tau(g))$ and 
$l(g)$ satisfies the following differential equation:
\begin{equation*}
\frac{d^2 l}{d g^2}
+
\left\{
\frac{\frac13 }{g}
-
\frac{\frac13}{g-1}
+
\frac{ 1 }{ g-\frac19}
\right\}
\frac{dl}{dg}
+
\frac
{
g+\frac13
}
{
3 g^2(g-1)^2
}
l=0. 
\end{equation*}
}
\end{theorem}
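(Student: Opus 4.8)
The plan is to follow the same template already used throughout this section, reducing the assertion to the cubic hypergeometric equation for $c$ established in Theorem~\ref{thm:c-z-hypergeometric-E_2(q^3)}, exactly as in the companion result Theorem~\ref{thm:a-q-g-linear}. That theorem shows that, with $z=a^3(\tau)/c^3(\tau)$, the function $c(z)=c(\tau(z))$ satisfies $z(1-z)\,c''+(\tfrac23-\tfrac53 z)\,c'-\tfrac19\,c=0$. Since $z$ is a rational function of $g$, the desired equation should arise by pulling this hypergeometric equation back along the map $z=z(g)$ and checking that the three claimed regular singular points and the stated coefficient functions come out correctly.

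First I would recall from Matsuda~\cite{Matsuda4} the cubic-theta expressions in the present $P$ and $Q$, precisely as used in Theorem~\ref{thm:a-q-g-linear}: $a(\tau)=P+3Q$ and $c^3(\tau)=27P^2Q-54PQ^2+27Q^3=27Q(P-Q)^2$. Forming the ratio and homogenising by $P$ with $g=Q/P$ then yields
\[
z=\frac{a^3(\tau)}{c^3(\tau)}=\frac{(P+3Q)^3}{27Q(P-Q)^2}=\frac{(3g+1)^3}{27\,g\,(1-g)^2},
\]
which is exactly the reciprocal of the variable $x=c^3/a^3$ appearing in Theorem~\ref{thm:a-q-g-linear}. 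This identity is the only input specific to $c(\tau)$; everything after it is formal.

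Next I would substitute $z=z(g)$ into the equation of Theorem~\ref{thm:c-z-hypergeometric-E_2(q^3)} and apply the chain rule, $\frac{d}{dg}=z'(g)\frac{d}{dz}$ and $\frac{d^2}{dg^2}=z'(g)^2\frac{d^2}{dz^2}+z''(g)\frac{d}{dz}$, so that $l(g)=c(z(g))$ satisfies a second-order equation with coefficients rational in $g$. The singular-point bookkeeping is the conceptual heart of the computation: $z=\infty$ is attained at $g=0$ simply (as $z\sim\tfrac{1}{27}g^{-1}$) and at $g=1$ with ramification two (as $z\sim\tfrac{64}{27}(1-g)^{-2}$), while $z=1$ is attained at $g=\tfrac19$ with ramification two, since $(3g+1)^3-27g(1-g)^2=(9g-1)^2$. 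The exponents $\{\tfrac13,\tfrac13\}$ at $z=\infty$ then force residue $\tfrac13$ at $g=0$ and residue $-\tfrac13$ at $g=1$, and the logarithmic pair $\{0,0\}$ at $z=1$ forces the simple pole $1/(g-\tfrac19)$, reproducing the stated first-order coefficient $\tfrac{1/3}{g}-\tfrac{1/3}{g-1}+\tfrac{1}{g-\frac19}$.

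The step I expect to be the main obstacle is confirming that the fourth potential singularity genuinely disappears. The point $z=0$ is attained only at $g=-\tfrac13$, with ramification three, so the exponents $\{0,\tfrac13\}$ at $z=0$ are scaled to $\{0,1\}$ at $g=-\tfrac13$; these differ by an integer, so one must verify there is no logarithmic term and that $g=-\tfrac13$ is therefore an apparent singularity, cancelling out of both coefficient functions. Establishing this cancellation, together with matching the zeroth-order coefficient $\frac{g+\frac13}{3g^2(g-1)^2}$ — whose double poles of strength $\tfrac19$ at $g=0$ and $\tfrac49$ at $g=1$ are exactly those predicted by the indicial equations at those points, and which has no pole at $g=\tfrac19$ since the exponents there vanish — is the one genuinely computational check. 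Once these rational-function identities are verified, the claimed equation follows, completing the proof.
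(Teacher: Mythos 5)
Your proposal is correct and is essentially the paper's own proof: both recall from \cite{Matsuda4} that $a(\tau)=P+3Q$ and $c^3(\tau)=27P^2Q-54PQ^2+27Q^3=27Q(P-Q)^2$, deduce $a^3(\tau)/c^3(\tau)=(3g+1)^3/\bigl(27g(1-g)^2\bigr)$, and obtain the stated equation by setting $z=(3g+1)^3/\bigl(27g(1-g)^2\bigr)$ in Theorem \ref{thm:c-z-hypergeometric-E_2(q^3)}. The paper asserts this substitution without further detail; your Riemann-scheme bookkeeping (residues forced by the exponents, the apparent singularity at $g=-\frac13$) is simply a more explicit verification of that same pullback.
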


\begin{proof}
From Matsuda \cite{Matsuda4}, we recall 
\begin{equation*}
a(\tau)=P+3 Q \,\,
\mathrm{and} \,\,
c^3(\tau)=27 P^2 Q - 54 P Q^2 + 27 Q^3,     
\end{equation*}
which imply that 
\begin{equation*}
\frac{ a^3(\tau) }{ c^3(\tau) }=
\frac{(P+3 Q)^3}{27 Q (P-Q)^2}=
\frac{(3 g+1)^3}{27 (1-g)^2 g}. 
\end{equation*}
The theorem can be proved by setting 
$
\displaystyle
z=\frac{(3 g+1)^3}{27 (1-g)^2 g}
$ 
in Theorem \ref{thm:c-z-hypergeometric-E_2(q^3)}. 
\end{proof}

\begin{theorem}
\label{thm:c-q^2-g-linear}
{\it
For every $\tau\in\mathbb{H}^2$ set 
$
\displaystyle 
g(\tau)=
\frac
{
\eta^4(\tau) \eta^8(6\tau)
}
{
\eta^8(2\tau) \eta^4(3\tau)
}.
$ 
Then the inverse function $\tau=\tau(g)$ yields $m(g)=c(2\tau(g))$ and 
$m(g)$ satisfies the following differential equation:
\begin{equation*}
\frac{d^2 m}{d g^2}
+
\left\{
-
\frac{\frac13 }{g}
+
\frac{\frac13}{g-1}
+
\frac{ 1 }{g-\frac19}
\right\}
\frac{d m }{dg}
+
\frac
{
4
(g-\frac13)
}
{
27 g^2(g-1)^2
(g-\frac19)   
}
m=0. 
\end{equation*}
}
\end{theorem}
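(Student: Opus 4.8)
The plan is to imitate the proofs of the companion results in this section, in particular Theorem \ref{thm:a-q^2-g-linear} and Theorem \ref{thm:c-q-g-linear}: I would express the ratio $a^{3}(2\tau)/c^{3}(2\tau)$ as a rational function of $g=Q/P$ and then reduce the claim to a change of the independent variable in the hypergeometric equation of Theorem \ref{thm:c-z-hypergeometric-E_2(q^3)}.

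First I would recall from Matsuda \cite{Matsuda4}---the same identities already used in Theorem \ref{thm:a-q^2-g-linear}---that
\[
a(2\tau)=P-3Q,\qquad c^{3}(2\tau)=27PQ^{2}-27Q^{3}=27Q^{2}(P-Q).
\]
Dividing the numerator and denominator of the quotient by $P^{3}$ and substituting $g=Q/P$ then yields the key identity
\[
\frac{a^{3}(2\tau)}{c^{3}(2\tau)}=\frac{(P-3Q)^{3}}{27Q^{2}(P-Q)}=\frac{(1-3g)^{3}}{27g^{2}(1-g)}.
\]
In the notation of Theorem \ref{thm:c-z-hypergeometric-E_2(q^3)}, this means that the modular function $z=a^{3}/c^{3}$ there specializes to $z=z(g)=\dfrac{(1-3g)^{3}}{27g^{2}(1-g)}$, and that $m(g)=c(2\tau(g))$ coincides with the solution $c(z)$ of the hypergeometric equation composed with $z=z(g)$.

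To finish, I would substitute $z=z(g)$ into the equation $z(1-z)c''+\left(\tfrac23-\tfrac53 z\right)c'-\tfrac19 c=0$ of Theorem \ref{thm:c-z-hypergeometric-E_2(q^3)}. Writing $m(g)=c(z(g))$ and using
\[
\frac{dc}{dz}=\frac{1}{z'}\frac{dm}{dg},\qquad \frac{d^{2}c}{dz^{2}}=\frac{1}{(z')^{2}}\frac{d^{2}m}{dg^{2}}-\frac{z''}{(z')^{3}}\frac{dm}{dg},
\]
I would insert these together with the rational functions $z,z',z''$ of $g$, clear denominators, and simplify. I expect the only real labour---and the main obstacle---to be this lengthy but entirely mechanical simplification. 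The single conceptual point needing care is the fibre over $z=0$: since $z$ has a triple zero at $g=\tfrac13$ while the exponents of the hypergeometric equation at $z=0$ are $0$ and $1-\tfrac23=\tfrac13$, the pulled-back exponents at $g=\tfrac13$ become $0$ and $1$, so that point is only an apparent singularity. This is exactly why $g=\tfrac13$ enters the coefficient of $m$ through the numerator factor $(g-\tfrac13)$ and not as a pole, leaving the pre-images $g=0,1$ of $z=\infty$ and $g=\tfrac19$ of $z=1$ as the genuine regular singular points of the resulting equation.
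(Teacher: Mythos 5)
Your proposal is correct and follows exactly the paper's own proof: recall from Matsuda \cite{Matsuda4} that $a(2\tau)=P-3Q$ and $c^3(2\tau)=27PQ^2-27Q^3$, deduce $a^3(2\tau)/c^3(2\tau)=(1-3g)^3/\bigl(27(1-g)g^2\bigr)$, and set $z$ equal to this expression in Theorem \ref{thm:c-z-hypergeometric-E_2(q^3)}. The chain-rule substitution you spell out (and the exponent check at the apparent singularity $g=\tfrac13$) is precisely the computation the paper leaves implicit, so there is no substantive difference.
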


\begin{proof}
From Matsuda \cite{Matsuda4}, we recall 
\begin{equation*}
a(2\tau)=P-3Q \,\,
\mathrm{and} \,\,
c^3(2\tau)=27 P Q^2-27 Q^3,   
\end{equation*}
which imply that 
\begin{equation*}
\frac{ a^3(2\tau) }{ c^3(2\tau) }  =
\frac{(P-3 Q)^3}{27 Q^2 (P-Q)}=
\frac{(1-3 g)^3}{27 (1-g) g^2}.  
\end{equation*}
The theorem can be proved by setting 
$
\displaystyle
z=\frac{(1-3 g)^3}{27 (1-g) g^2}
$ 
in Theorem \ref{thm:c-z-hypergeometric-E_2(q^3)}. 
\end{proof}

\section{Heun's equations for modular forms of level 6 (3)}
\label{sec:Heun-t-(0,1/3)}

Throughout this section we set 
\begin{align*}
P(\tau)=&a(\tau)=\sum_{m,n\in\mathbb{Z}} q^{m^2+mn+n^2},  \,\,\
Q(\tau)=
\frac
{
\eta\left(\tau/2 \right) \eta^6(3\tau)
}
{
\eta^2(\tau) \eta^3 \left(3\tau/2 \right)
}
=\sum_{n=1}^{\infty}(d_{1,3}^{*}(n)-d_{2,3}^{*}(n)) q^{\frac{n}{2}},  \\
%%%%%%%%%%%%%%%%%%%%%%%%%%%%%%%%%%%%%%%%%%%%%%%%%%%%%%%%%%%%%%%%%
T(\tau)
=
&
\frac
{
\eta^6(\tau) \eta(3\tau/2)
}
{
\eta^3(\tau/2) \eta^2(3\tau)
}
=
1+3\sum_{n=1}^{\infty} (d_{1,3}^{*}(n)-d_{2,3}^{*}(n)) q^{\frac{n}{2}}+6 \sum_{n=1}^{\infty} ( d_{1,3}(n)-d_{2,3}(n) )q^n,  \\
%%%%%%%%%%%%%%%%%%%%%%%%%%%%%%%%%%%%%%%%%%%%%%%%%%%%%%%%%%%%
S(\tau)=&E_2(\tau), 
\end{align*}
and
\begin{equation*}
t=t(\tau)=\frac{Q(\tau)}{T(\tau)}=\frac
{
\eta^4 \left( \tau/2 \right) \eta^8(3\tau)
}
{
\eta^8( \tau) \eta^4 \left( 3\tau/2 \right)
}.
\end{equation*}

In \cite{Matsuda4}, we proved that 
\begin{align*}
D
Q=&
\frac{ 27 Q^3-36 Q^2 T+5 Q T^2 +Q S }{12},   \,\,
%%%%%%%%%%%%%%%%%%%%
D
T=
\frac{-27 Q^2 T+24 Q T^2-T^3 +S T }{12},   \\
%%%%%%%%%%%%%%%
D
S=&
\frac{-729 Q^4+972 Q^3 T-270 Q^2 T^2+12 Q T^3-T^4  +S^2}{12},  \,\,
D=\frac{1}{2 \pi i} \frac{d}{d\tau}. 
\end{align*}

\subsection{On the modular function of level six}

\begin{theorem}
\label{thm:ODE-t-(0,1/3)}
{\it
For every $\tau\in\mathbb{H}^2$ set 
\begin{equation*}
t(\tau)=
\frac
{
\eta^4 \left( \tau/2 \right) \eta^8(3\tau)
}
{
\eta^8( \tau) \eta^4 \left( 3\tau/2 \right)
}.
\end{equation*}
Then we have 
\begin{equation*}
\{t,\tau\}
+
%%%%%%%%%%%%%%%%%%%%%%%%%%%%%%%%%%
\frac{81 t^4-108 t^3+102 t^2-12 t+1}{2 t^2 (9 t-1)^2 (t-1)^2 }
(t^{\prime})^2=0.
\end{equation*}
}
\end{theorem}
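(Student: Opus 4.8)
The plan is to follow the proof of Theorem~\ref{thm:ODE-f-(1,2/3)} line for line, since the coefficient asserted here, namely $\frac{81t^4-108t^3+102t^2-12t+1}{2t^2(9t-1)^2(t-1)^2}$, is exactly the one produced there for the level-six function $f$. Three ingredients are needed: the base equation~(\ref{eqn:ODE-j}) for $j$, the cocycle identity $\{j,\tau\}=\{j,t\}\,(t^{\prime})^2+\{t,\tau\}$ for the Schwarzian derivative, and an explicit rational expression of $j$ in terms of $t$. The first two are already available, so the only genuinely new input is the $j$-$t$ relation, which I would recall from Matsuda~\cite{Matsuda4}. Since $t\sim q^{1/2}\to 0$ as $\tau\to+i\infty$ while the $f$ of Section~\ref{sec:Heun-f-(1,2/3)} tends to $1$, this relation is a genuinely different rational function from the $j$-$f$ formula of Theorem~\ref{thm:ODE-f-(1,2/3)}; it must, however, carry the same local exponent data over $j=0,12^3,\infty$, equivalently $t$ and $f$ are Hauptmoduls differing by a M\"obius transformation permuting the singular set $\{0,1,1/9,\infty\}$, and this is exactly what forces the final coefficient to coincide.

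With the $j$-$t$ formula fixed, the rest is mechanical. First I would differentiate $j=j(t)$ three times to obtain $dj/dt$, $d^2j/dt^2$, $d^3j/dt^3$ and assemble $\{j,t\}$ as a rational function of $t$. Next I would expand the potential term $\frac{j^2-2^4\cdot 3\cdot 41\,j+2^{15}\cdot 3^4}{2j^2(j-12^3)^2}\bigl(dj/dt\bigr)^2$ as a rational function of $t$ times $(dj/dt)^2$. Substituting both into the cocycle identity and eliminating $\{j,\tau\}$ by means of~(\ref{eqn:ODE-j}), the large factors coming from $j$ and from its cubic inner factor are forced to cancel, leaving
\[
\{t,\tau\}=-\frac{81t^4-108t^3+102t^2-12t+1}{2t^2(9t-1)^2(t-1)^2}\,(t^{\prime})^2,
\]
which is the assertion.

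The main obstacle is purely computational: both $\{j,t\}$ and the potential term are quotients of polynomials of degree around twenty in $t$, and confirming that everything collapses to a degree-four numerator over $t^2(9t-1)^2(t-1)^2$ is the delicate step. This is precisely the difficulty already surmounted in Theorems~\ref{thm:ODE-f-(1,2/3)} and~\ref{thm:ODE-g-(2/3,1)}, so the prudent course is to carry out the symbolic algebra in exactly the same manner.

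Finally, I would record the structural reason for the coincidence with Theorem~\ref{thm:ODE-f-(1,2/3)}, which also furnishes a second, $j$-free derivation. Under the rescaling $(Q,T,S)\mapsto(2Q,2T,S)$ the nonlinear system for $(Q,T,S)$ displayed at the head of this section turns into the system for $(P,Q,R)$ of Section~\ref{sec:Heun-f-(1,2/3)}, and the ratio $t=Q/T$ is invariant under this rescaling, so it corresponds formally to $f=P/Q$. Because the Schwarzian coefficient can be computed from the ODE system alone---one forms $Dt$, $D^2t$, $D^3t$ from the system and then $\{t,\tau\}=(2\pi i)^2\bigl(D^3t/Dt-\tfrac32(D^2t/Dt)^2\bigr)$, in which the $E_2=S$ terms must cancel since the Schwarzian of a modular function is a genuine weight-four modular form---the outcome is forced to be the same rational function of $t$ that Theorem~\ref{thm:ODE-f-(1,2/3)} gives for $f$.
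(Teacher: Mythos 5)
Your main line of attack is precisely the paper's proof: recall from \cite{Matsuda4} the expression of $j$ as a rational function of $t$, then run the Schwarzian computation of Theorem \ref{thm:ODE-f-(1,2/3)} through the cocycle identity and the base equation (\ref{eqn:ODE-j}). So the proposal is correct in substance. One of your structural predictions is false, however, and it is worth correcting because it concerns the single new ingredient you invoke. The formula recalled in the paper is
\begin{equation*}
j(\tau)=\frac{(3t-1)^3\left(243t^3-243t^2+9t-1\right)^3}{t^2(t-1)(9t-1)^3},
\end{equation*}
which is \emph{the same} rational function $F$ as in the $j$--$f$ relation of Theorem \ref{thm:ODE-f-(1,2/3)}, not a genuinely different one. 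Your inference (``$t\to 0$ while $f\to 1$ at $i\infty$, hence the rational functions differ'') is a non sequitur: $F$ has degree $12$ and its fibre over $j=\infty$ is $\{0,1,1/9,\infty\}$ with multiplicities $2,1,3,6$, so $t\sim q^{1/2}\to 0$ (a double pole of $F$, giving $F(t)\sim 1/q$) and $f-1\sim cq$ (a simple pole) are both compatible with one and the same $F$. This literal coincidence of the two relations---not a M\"obius transformation permuting $\{0,1,1/9,\infty\}$, which is neither needed nor established---is the entire content of the paper's phrase ``can be proved in the same way'': one renames $f$ as $t$ in the earlier proof and is finished. Your misprediction would have cost you only superfluous labor (a fresh degree-twenty computation), not correctness.

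Your closing ``$j$-free'' observation is correct and is a genuinely nice addition: the substitution $(Q,T,S)\mapsto(2Q,2T,S)$ really does carry the system at the head of Section \ref{sec:Heun-t-(0,1/3)} onto the system of Section \ref{sec:Heun-f-(1,2/3)} (all three equations scale consistently), with $t=Q/T$ corresponding to $f=P/Q$. To promote it to a complete second proof, though, you must justify the transfer step: Theorem \ref{thm:ODE-f-(1,2/3)} is a statement about specific modular forms, not about arbitrary solutions of the quadratic system, so you would either have to carry out the Schwarzian computation directly from the system and verify that all $S=E_2$ terms cancel (a mechanical check, which your weight argument predicts), or argue that the resulting identity, being polynomial in the dependent variables, propagates from the modular solution to all solutions of the system. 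As written, this step is asserted rather than proved.
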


\begin{proof}
From Matsuda \cite{Matsuda4} recall the following formula,
\begin{equation*}
j(\tau)=
\frac{(3 t-1)^3 \left(243 t^3-243 t^2+9 t-1\right)^3}{ t^2 (t-1)  (9 t-1)^3}. 
\end{equation*}
The theorem can be proved in the same way as Theorem \ref{thm:ODE-f-(1,2/3)}. 
\end{proof}

\subsection{Heun's equation for the modular form of level six  }

\begin{theorem}
\label{thm:T-t-linear-E_2(q)}
{\it
For every $\tau\in\mathbb{H}^2$ set 
$
\displaystyle 
t(\tau)=
\frac
{
\eta^4 \left( \tau/2 \right) \eta^8(3\tau)
}
{
\eta^8( \tau) \eta^4 \left( 3\tau/2 \right)
}.
$ 
Then the inverse function $\tau=\tau(t)$ yields $T(t)=T(\tau(t))$ and 
$T(t)$ satisfies Heun's differential equation:
\begin{equation}
\frac{d^2 T}{d t^2}
+
\left\{
\frac{1}{t}
+
\frac{1}{t-1}
+
\frac{1}{t-\frac19}
\right\}
\frac{dT}{dt}
+
\frac
{
t-\frac13
}
{
t(t-1)(t-\frac19)
}
T=0. 
\end{equation}
}
\end{theorem}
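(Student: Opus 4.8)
The plan is to follow the computational template already used for Theorem~\ref{thm:Q-f-linear-E_2(q)}, whose target Heun equation is \emph{identical in shape} to the one here (indeed the $j$-relation and Schwartzian ODE for $t$ in Theorem~\ref{thm:ODE-t-(0,1/3)} coincide with those for the function $f$ of Section~\ref{sec:Heun-f-(1,2/3)}). Since the weight-one form $T$ and the Hauptmodul $t=Q/T$ are built here from different eta quotients, I cannot simply quote that theorem; instead I redo the derivation using the present system
\begin{equation*}
DQ=\tfrac{27Q^3-36Q^2T+5QT^2+QS}{12},\quad
DT=\tfrac{-27Q^2T+24QT^2-T^3+ST}{12},\quad
DS=\tfrac{-729Q^4+972Q^3T-270Q^2T^2+12QT^3-T^4+S^2}{12}.
\end{equation*}
First I compute $Dt=D(Q/T)=\dfrac{T\,DQ-Q\,DT}{T^2}$. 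The quasimodular term $S$ drops out and the numerator factors, giving
\begin{equation*}
Dt=\frac{Q\left(9Q^2-10QT+T^2\right)}{2T}=\frac{Q(9Q-T)(Q-T)}{2T}.
\end{equation*}
Writing $Q=tT$ shows $9Q^2-10QT+T^2=(9t-1)(t-1)T^2$, so the vanishing loci of $Dt$ are exactly the finite singular points $t=1$ and $t=\tfrac19$ of the expected Heun operator; the remaining singularities $t=0$ and $t=\infty$ sit at the cusps.

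Next I obtain the first and second derivatives along the standard chain-rule route, using $\dfrac{dT}{dt}=\dfrac{DT}{Dt}$ and $\dfrac{d^2T}{dt^2}=\dfrac{1}{Dt}\,D\!\left(\dfrac{dT}{dt}\right)$. The first step yields
\begin{equation*}
\frac{dT}{dt}=\frac{T^2\left(-27Q^2+24QT-T^2+S\right)}{6Q\left(9Q^2-10QT+T^2\right)},
\end{equation*}
a rational function of $Q,T,S$; applying $D$ once more (again via the system above) and dividing by $Dt$ produces $d^2T/dt^2$ as a higher-degree rational function in $Q,T,S$ over $\bigl(9Q^2-10QT+T^2\bigr)^2$. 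Then I form the combination suggested by the target equation,
\begin{equation*}
t(t-1)(9t-1)\frac{d^2T}{dt^2}
+\Big[\,9t(t-1)+(t-1)(9t-1)+t(9t-1)\,\Big]\frac{dT}{dt},
\end{equation*}
where the bracket is $t(t-1)(9t-1)$ times $\bigl(\tfrac1t+\tfrac1{t-1}+\tfrac1{t-1/9}\bigr)$. The claim is that after clearing the common factor $9Q^2-10QT+T^2$ this collapses to the linear expression $-3(3Q-T)=-3(3t-1)T$; dividing through by $t(t-1)(9t-1)=9\,t(t-1)(t-\tfrac19)$ and noting $3t-1=3(t-\tfrac13)$ then reproduces precisely the stated Heun equation.

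The main obstacle is purely the algebraic verification of that final collapse: forming $d^2T/dt^2$ introduces polynomials of large degree in $Q,T,S$, and one must check that in the weighted combination every term of degree higher than one, together with all dependence on $S$, cancels identically, leaving only $-9Q+3T$. Conceptually this cancellation is forced — $t$ is a Hauptmodul and $T$ a weight-one modular form, so $T$ regarded as a function of $t$ must satisfy a second-order Fuchsian equation with regular singular points exactly at $t=0,1,\tfrac19,\infty$, i.e. a Heun equation, and the Schwartzian relation of Theorem~\ref{thm:ODE-t-(0,1/3)} already fixes its projective (exponent) data. What the explicit calculation pins down is the accessory parameter, encoded in the numerator $t-\tfrac13$; I would carry out the polynomial reduction with computer algebra, using $Q=tT$ to convert the final modular expression into the rational function of $t$ displayed in the statement.
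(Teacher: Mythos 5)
Your proposal is correct and takes essentially the same approach as the paper: the paper likewise computes $Dt=Q(Q-T)(9Q-T)/(2T)$, then $dT/dt=DT/Dt$ and $d^2T/dt^2$ by the chain rule, and verifies that the combination $t(t-1)(9t-1)\,T''+\bigl[9t(t-1)+(t-1)(9t-1)+t(9t-1)\bigr]T'$ collapses to $-9Q+3T=-3(3t-1)T$, exactly as you claim. Your intermediate formulas for $Dt$ and $dT/dt$ agree with the paper's, and the algebraic collapse you defer to computer algebra is precisely the step the paper settles by displaying the explicit rational expression for $d^2T/dt^2$ in $Q,T,S$.
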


\begin{proof}
We first have 
\begin{equation*}
D t=D
\left(
\frac{Q}{T}
\right)
=\frac{Q (Q-T) (9 Q-T)}{2 T}. 
\end{equation*}
We next obtain  
\begin{equation*}
\frac{d T}{d t}
=
\frac{d T}{d \tau}
\frac{d \tau}{d t}
=
\frac
{
D T
}
{
D t
}=
-\frac{T^2 \left(27 Q^2-24 Q T-S+T^2\right)}{6 Q (Q-T) (9 Q-T)}
\end{equation*}
and 
\begin{equation*}
\frac{d^2 T}{d t^2}
=
\frac{d}{d\tau}
\left(
\frac{d T}{d t}
\right)
\frac{d \tau}{d t} 
=
\frac{T^3 \left(243 Q^4-486 Q^3 T-27 Q^2 S+300 Q^2 T^2+20 Q S T-26 Q T^3-S T^2+T^4\right)}{6 Q^2 (Q-T)^2 (9 Q-T)^2},
\end{equation*}
which imply that 
\begin{align*}
&
t(t-1)(9t-1) \frac{d^2 T}{d t^2}
+
9t(t-1)\frac{d T}{d t}
+
(t-1)(9t-1)\frac{d T}{d t}
+
t(9t-1) \frac{d T}{d t}   \\
=&
-9 Q+3 T
=
3 T
\left(
-3 
\frac{Q}{T}
+1
\right)
=3T(-3 t+1),
\end{align*}
which proves the theorem. 
\end{proof}

\subsection{On $a(\tau/2)$ and $a(\tau)$} 
In this subsection we set $k=a(\tau/2)$ and $l=a(\tau).$

\begin{theorem}
\label{thm:a-q^{1/2}-t-linear}
{\it
For every $\tau\in\mathbb{H}^2$ set 
$
\displaystyle 
t(\tau)=
\frac
{
\eta^4 \left( \tau/2 \right) \eta^8(3\tau)
}
{
\eta^8( \tau) \eta^4 \left( 3\tau/2 \right)
}.
$ 
Then the inverse function $\tau=\tau(t)$ yields $k(t)=a(\tau(t)/2)$ and 
$k(t )$ satisfies the following differential equation:
\begin{equation*}
\frac{d^2 k }{d t^2}
+
\left\{
\frac{1}{t }
+
\frac{1}{t-1}
-
\frac{2}{t+\frac13}
+
\frac{1}{t-\frac19}
\right\}
\frac{d k}{d t }
-
\frac
{
2
}
{
3
t
(t+\frac13)^2
}
k=0. 
\end{equation*}
}
\end{theorem}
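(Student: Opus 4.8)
The plan is to reduce this to the Gauss hypergeometric equation for $a$ proved in Theorem~\ref{thm:a-x-hypergeometric-E_2(q)}, exactly along the lines of the companion results Theorem~\ref{thm:a-q-f-linear} and Theorem~\ref{thm:a-q^2-f-linear}. The underlying principle is that, for \emph{every} $\sigma\in\mathbb{H}^2$, the function $a(\sigma)$ regarded as a function of $x=c^3(\sigma)/a^3(\sigma)$ solves
\[
x(1-x)\frac{d^2 a}{dx^2}+(1-2x)\frac{da}{dx}-\frac29\,a=0 .
\]
Here I would apply this with $\sigma=\tau/2$ (legitimate, since $\tau\in\mathbb{H}^2$ forces $\tau/2\in\mathbb{H}^2$) and then transport the equation from the variable $x$ to the modular variable $t$ by a rational change of variable.

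First I would recall from Matsuda~\cite{Matsuda4} the expressions for the relevant cubic theta combinations in terms of the modular forms $Q$ and $T$ of this section, namely
\[
a(\tau/2)=3Q+T,\qquad c^3(\tau/2)=27\left(Q^3-2Q^2T+QT^2\right)=27\,Q(Q-T)^2 .
\]
Since $t=Q/T$, dividing gives the key closed form
\[
\frac{c^3(\tau/2)}{a^3(\tau/2)}=\frac{27\,Q(Q-T)^2}{(3Q+T)^3}=\frac{27\,t(t-1)^2}{(3t+1)^3}=:x(t).
\]
This is precisely the rational function of the modular variable that already occurs (written in $f$) in Theorem~\ref{thm:a-q^2-f-linear}, so one expects the substitution $x=x(t)$ in Theorem~\ref{thm:a-x-hypergeometric-E_2(q)} to reproduce the asserted equation verbatim.

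The decisive step is then the change of variables itself. Using the factorizations $x=\dfrac{27\,t(t-1)^2}{(3t+1)^3}$ and $1-x=\dfrac{(9t-1)^2}{(3t+1)^3}$, I would compute $dx/dt$ and $d^2x/dt^2$, insert
\[
\frac{dk}{dt}=\frac{da}{dx}\frac{dx}{dt},\qquad
\frac{d^2k}{dt^2}=\frac{d^2a}{dx^2}\left(\frac{dx}{dt}\right)^2+\frac{da}{dx}\frac{d^2x}{dt^2},
\]
and eliminate $d^2a/dx^2$ and $da/dx$ through the hypergeometric equation. Tracking the zeros and poles of $x$, $1-x$ and $dx/dt$ produces the four regular singular points $t=0,1,\tfrac19,-\tfrac13$: the simple zero of $x$ at $t=0$, the double zeros of $x$ at $t=1$ and of $1-x$ at $t=\tfrac19$ (each giving a residue $1$ in the drift term and the logarithmic exponents $0,0$), and the triple pole at $t=-\tfrac13$, which accounts for the $-2/(t+\tfrac13)$ term in the drift coefficient and for the apparent singularity $(t+\tfrac13)^2$ in the denominator of the zeroth-order term, with local exponents $1,2$.

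The hard part will be the symbolic bookkeeping of this substitution: confirming that the transported first-order coefficient collapses to exactly $\frac1t+\frac{1}{t-1}-\frac{2}{t+\frac13}+\frac{1}{t-\frac19}$ and that the zeroth-order coefficient reduces to $-\frac{2}{3t(t+\frac13)^2}$, with no spurious residue surviving at $t=1$ or $t=\tfrac19$. Because the computation is formally identical to that of Theorem~\ref{thm:a-q^2-f-linear}, differing only in the name of the modular variable, I anticipate that setting $x=\dfrac{27\,t(t-1)^2}{(3t+1)^3}$ in Theorem~\ref{thm:a-x-hypergeometric-E_2(q)} yields the statement directly, the only real labor being the simplification of $d^2x/dt^2$.
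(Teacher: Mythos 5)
Your proposal is correct and follows essentially the same route as the paper's own proof: both recall from \cite{Matsuda4} that $a(\tau/2)=T+3Q$ and $c^3(\tau/2)=27Q(Q-T)^2$, deduce $c^3(\tau/2)/a^3(\tau/2)=\dfrac{27\,t(t-1)^2}{(3t+1)^3}$, and obtain the stated ODE by substituting this rational function for $x$ in Theorem~\ref{thm:a-x-hypergeometric-E_2(q)}. Your additional remarks (the identity $1-x=(9t-1)^2/(3t+1)^3$, the bookkeeping of singular points, and the observation that the computation is literally that of Theorem~\ref{thm:a-q^2-f-linear} in the variable $t$) are sound but go beyond what the paper records.
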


\begin{proof}
From Matsuda \cite{Matsuda4}, we recall 
\begin{equation*}
a(\tau/2)=P+6 Q=T+3 Q \,\,
\mathrm{and} \,\,
c^3(\tau/2)=27 P^2 Q+108 P Q^2+108 Q^3=27 Q^3-54 Q^2T+27 Q T^2,    
\end{equation*}
which imply that 
\begin{equation*}
\frac{ c^3(2\tau) }{ a^3(2\tau) }=
\frac{27 Q (Q-T)^2}{(3 Q+T)^3}=
\frac{27 (t-1)^2 t}{(3 t+1)^3}.    
\end{equation*}
The theorem can be proved by setting 
$
\displaystyle
x=\frac{27 (t-1)^2 t}{(3 t+1)^3}
$ 
in Theorem \ref{thm:a-x-hypergeometric-E_2(q)}. 
\end{proof}

\begin{theorem}
\label{thm:a-q-t-linear}
{\it
For every $\tau\in\mathbb{H}^2$ set 
$
\displaystyle 
t(\tau)=
\frac
{
\eta^4 \left( \tau/2 \right) \eta^8(3\tau)
}
{
\eta^8( \tau) \eta^4 \left( 3\tau/2 \right)
}.
$ 
Then the inverse function $\tau=\tau(t)$ yields $l(t)=a(\tau(t))$ and 
$l(t)$ satisfies the following differential equation:
\begin{equation*}
\frac{d^2 l}{d g^2}
+
\left\{
\frac{1}{g }
+
\frac{1}{g-1}
-
\frac{2}{g+\frac13}
+
\frac{1}{g-\frac19}
\right\}
\frac{dl}{dt }
-
\frac
{
8
}
{
27
(t-1)(t-\frac19) (t-\frac13)^2
}
l=0. 
\end{equation*}
}
\end{theorem}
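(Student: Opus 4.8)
The plan is to reduce the statement to the hypergeometric Theorem~\ref{thm:a-x-hypergeometric-E_2(q)} by a rational change of the independent variable, exactly as in the proofs of Theorems~\ref{thm:a-q-f-linear} and \ref{thm:a-q^{1/2}-t-linear}. First I would record from Matsuda~\cite{Matsuda4} the expressions for $a(\tau)$ and $c^3(\tau)$ in terms of the functions $Q(\tau)$ and $T(\tau)$ fixed at the start of this section. Since $P=a(\tau)$ by definition and the relation $T=P+3Q$ already occurs in the proof of Theorem~\ref{thm:a-q^{1/2}-t-linear}, we have $a(\tau)=T-3Q$, while the companion formula for $c^3(\tau)$ is a homogeneous cubic in $Q$ and $T$.

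The central step is to form the modular invariant $c^3(\tau)/a^3(\tau)$ and express it as a rational function of $t=Q(\tau)/T(\tau)$. Clearing $T^3$ from numerator and denominator should yield
\begin{equation*}
\frac{c^3(\tau)}{a^3(\tau)}=\frac{27\,t^2(t-1)}{(3t-1)^3}=:x .
\end{equation*}
I would then substitute this value of $x$ into the hypergeometric equation of Theorem~\ref{thm:a-x-hypergeometric-E_2(q)}, converting the $x$-derivatives into $t$-derivatives by means of $dx/dt$ and $d^2x/dt^2$. Since $a(\tau(t))$ is the same function of $\tau$ whether it is approached through $x$ or through $t$, the transformed second-order linear ODE in $t$ is precisely the equation satisfied by $l(t)=a(\tau(t))$.

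To read off the coefficients it is convenient to locate the branch points of the covering $x=27\,t^2(t-1)/(3t-1)^3$: one finds $x=0$ at $t=0$ (with ramification two) and at $t=1$, a triple pole at $t=\frac13$ where $x=\infty$, and, using $x-1=(1-9t)/(3t-1)^3$, a single preimage of $x=1$ at $t=\frac19$. Pulling back the local exponents of the hypergeometric equation --- $\{0,0\}$ at $x=0$, $\{0,0\}$ at $x=1$ (the logarithmic case $\gamma-\alpha-\beta=0$ of Lemma~\ref{lem:hypergeometric-limit-1-divergence}), and $\{\frac13,\frac23\}$ at $x=\infty$ --- through this map produces simple poles of the drift term at $t=0,1,\frac19$ with residue $1$ and a pole at $t=\frac13$ with residue $-2$, together with a double pole $(t-\frac13)^2$ in the zeroth-order coefficient. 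This is the stated equation; in particular the ``$t+\frac13$'' printed in the drift should read $t-\frac13$, consistent with the $(t-\frac13)^2$ in the last term and with Theorem~\ref{thm:a-q-f-linear}.

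The only genuine obstacle is the change-of-variable bookkeeping: writing $da/dx$ and $d^2a/dx^2$ in the variable $t$ and collapsing the rational coefficients into the advertised partial-fraction form. This is mechanical once the explicit cubic for $c^3(\tau)$ is in hand, and I expect it to simplify exactly as in Theorem~\ref{thm:a-q-f-linear}, whose final equation coincides with the present one under $f\mapsto t$.
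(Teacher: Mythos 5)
Your proposal is correct and follows the paper's proof essentially verbatim: the paper likewise recalls from \cite{Matsuda4} that $a(\tau)=T-3Q$ and $c^3(\tau)=27Q^2(T-Q)$, computes $\dfrac{c^3(\tau)}{a^3(\tau)}=\dfrac{27\,t^2(t-1)}{(3t-1)^3}$, and concludes by substituting this expression for $x$ into Theorem \ref{thm:a-x-hypergeometric-E_2(q)}. Your side remark is also right: since this pullback map is literally the same rational function as in Theorem \ref{thm:a-q-f-linear}, the printed ``$g+\frac13$'' in the drift is a typo for $t-\frac13$, and the equation is that of Theorem \ref{thm:a-q-f-linear} with $f$ renamed $t$.
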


\begin{proof}
From Matsuda \cite{Matsuda4}, we recall 
\begin{equation*}
P=a(\tau)=T-3 Q \,\,
\mathrm{and} \,\,
c^3(\tau)=27P Q^2+54 Q^3=-27Q^3+27 Q^2 T, 
\end{equation*}
which imply that 
\begin{equation*}
\frac{ c^3(\tau) }{ a^3(\tau) }=
\frac{27 Q^2 (Q-T)}{(3 Q-T)^3}=
\frac{27 (t-1) t^2}{(3 t-1)^3}.  
\end{equation*}
The theorem can be proved by setting 
$
\displaystyle
x=\frac{27 (t-1) t^2}{(3 t-1)^3}
$ 
in Theorem \ref{thm:a-x-hypergeometric-E_2(q)}. 
\end{proof}

\subsection{On $b(\tau/2)$ and $b(\tau)$} 
In this subsection we set $k=b(\tau/2)$ and $l=b(\tau).$

\begin{theorem}
\label{thm:b-q^{1/2}-t-linear}
{\it
For every $\tau\in\mathbb{H}^2$ set 
$
\displaystyle 
t(\tau)=
\frac
{
\eta^4 \left( \tau/2 \right) \eta^8(3\tau)
}
{
\eta^8( \tau) \eta^4 \left( 3\tau/2 \right)
}.
$ 
Then the inverse function $\tau=\tau(t)$ yields $k(t)=b(\tau(t)/2)$ and 
$k(t)$ satisfies the following differential equation:
\begin{equation*}
\frac{d^2 k}{d t^2}
+
\left\{
\frac{1}{t}
+
\frac{1}{t-1}
-
\frac{ \frac13 }{t-\frac19}
\right\}
\frac{dk}{dt}
+
\frac
{
(t+\frac13)
}
{
9
t(t-\frac19)^2
}
k=0. 
\end{equation*}
}
\end{theorem}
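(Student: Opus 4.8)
The plan is to follow the template already established for the other $b$-theorems of this paper (Theorems \ref{thm:b-q-f-linear} and \ref{thm:b-q-g-linear}): reduce the statement to the single hypergeometric equation of Theorem \ref{thm:b-y-hypergeometric-E_2(q)} by realizing the modular quantity $y=a^3/b^3$ as an explicit rational function of the level-six modular function $t$, and then transplant that equation to the $t$-coordinate by a change of variables. Recall that Theorem \ref{thm:b-y-hypergeometric-E_2(q)} asserts that, with $y=a^3(\tau)/b^3(\tau)$, the function $b(\tau)$ viewed as a function of $y$ satisfies
\begin{equation*}
y(1-y)\frac{d^2 b}{dy^2}+\left(\frac23-\frac53 y\right)\frac{db}{dy}-\frac19 b=0.
\end{equation*}
Replacing $\tau$ by $\tau/2$ everywhere, the same equation governs $k(\tau)=b(\tau/2)$ as a function of $y=a^3(\tau/2)/b^3(\tau/2)$.

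First I would record, from Matsuda \cite{Matsuda4} and exactly parallel to the $a(\tau/2)$ computation earlier in this section, the two eta-quotient identities
\begin{equation*}
a(\tau/2)=T+3Q \qquad\text{and}\qquad b^3(\tau/2)=T^3-18T^2Q+81TQ^2=T(T-9Q)^2,
\end{equation*}
where $Q$ and $T$ are the modular forms fixed at the start of Section \ref{sec:Heun-t-(0,1/3)}. Dividing numerator and denominator by $T^3$ and using $t=Q/T$ then yields the key rational expression
\begin{equation*}
y=\frac{a^3(\tau/2)}{b^3(\tau/2)}=\frac{(T+3Q)^3}{T(T-9Q)^2}=\frac{(3t+1)^3}{(1-9t)^2}.
\end{equation*}
This is formally identical to the situation of Theorem \ref{thm:b-q-g-linear}, where precisely the same ratio appears as a function of $g$; consequently the target equation is obtained by the same change of variables, and no genuinely new phenomenon arises.

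It then remains to push the hypergeometric equation for $k=b(y)$ through the substitution $y=(3t+1)^3/(1-9t)^2$ by the chain rule, computing $dy/dt$ and $d^2y/dt^2$ and re-expressing the result in the form $d^2k/dt^2+p(t)\,dk/dt+q(t)\,k=0$. I expect the main (purely computational) obstacle to be the verification that the singularities of the transformed equation fall exactly on $t=0,1,\tfrac19$ with the displayed exponents. The degree-three map $t\mapsto y$ ramifies with index $3$ over $y=0$ (at $t=-\tfrac13$, since $y=(3t+1)^3/(1-9t)^2$), with indices $1$ and $2$ over $y=1$ (at $t=0$ and $t=1$, using $y-1=27t(t-1)^2/(1-9t)^2$), and with indices $2$ and $1$ over $y=\infty$ (at $t=\tfrac19$ and $t=\infty$). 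The delicate point is that multiplying the exponents $\{0,\tfrac13\}$ of the hypergeometric equation at $y=0$ by the ramification index $3$ produces $\{0,1\}$, so $t=-\tfrac13$ must be checked to be an apparent (in fact ordinary) point with no logarithm, which is why the coefficient $3t+1$ does not appear in the final ODE; meanwhile the indices over $y=1$ and $y=\infty$, together with the Jacobian factors from the transformation law for a linear second-order equation, must reproduce exactly the first-order coefficient $\tfrac1t+\tfrac1{t-1}-\tfrac{1/3}{t-1/9}$ and the zeroth-order coefficient $(t+\tfrac13)/[9t(t-\tfrac19)^2]$. Once these cancellations at $t=-\tfrac13$ and the residue bookkeeping at $t=0,1,\tfrac19$ are confirmed, the claimed equation follows; since the algebra runs in complete parallel with Theorem \ref{thm:b-q-g-linear}, it is routine.
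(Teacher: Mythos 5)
Your proposal is correct and follows essentially the same route as the paper: the paper's proof likewise recalls from \cite{Matsuda4} that $a(\tau/2)=T+3Q$ and $b^3(\tau/2)=T^3-18QT^2+81Q^2T=T(T-9Q)^2$, concludes $a^3(\tau/2)/b^3(\tau/2)=(3t+1)^3/(9t-1)^2$, and invokes Theorem \ref{thm:b-y-hypergeometric-E_2(q)} with this substitution. Your additional ramification bookkeeping (the index-$3$ point over $y=0$ at $t=-\tfrac13$, and $y-1=27t(t-1)^2/(1-9t)^2$) is accurate and simply makes explicit the change-of-variables computation the paper leaves implicit.
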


\begin{proof}
From Matsuda \cite{Matsuda4}, we recall 
\begin{equation*}
a(\tau/2)=P+6Q=T+3Q \,\,
\mathrm{and} \,\,
b^3(\tau/2)=P^3-9 P^2 Q+108 Q^3=81 Q^2 T-18 Q T^2+T^3, 
\end{equation*}
which imply that 
\begin{equation*}
\frac{ a^3(\tau/2) }{ b^3(\tau/2) }=
\frac{(3 Q+T)^3}{T (9 Q-T)^2}=
\frac{(3 t+1)^3}{(9 t-1)^2}. 
\end{equation*}
The theorem can be proved by setting 
$
\displaystyle
y=\frac{(3 t+1)^3}{(9 t-1)^2}
$ 
in Theorem \ref{thm:b-y-hypergeometric-E_2(q)}. 
\end{proof}

\begin{theorem}
\label{thm:b-q-t-linear}
{\it
For every $\tau\in\mathbb{H}^2$ set 
$
\displaystyle 
t(\tau)=
\frac
{
\eta^4 \left( \tau/2 \right) \eta^8(3\tau)
}
{
\eta^8( \tau) \eta^4 \left( 3\tau/2 \right)
}.
$ 
Then the inverse function $\tau=\tau(t)$ yields $l(t)=b(\tau(t))$ and 
$l(t)$ satisfies the following differential equation:
\begin{equation*}
\frac{d^2 l }{d t^2}
+
\left\{
\frac{1}{t}
+
\frac{1}{t-1}
+
\frac{\frac13}{t-\frac19}
\right\}
\frac{d l }{d  t}
+
\frac
{
4
(
t-\frac13
)
}
{
9
(t-1)
(t-\frac19)^2
}
l=0. 
\end{equation*}
}
\end{theorem}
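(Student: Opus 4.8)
The plan is to follow the template already used throughout this section and reduce the assertion to the hypergeometric equation of Theorem~\ref{thm:b-y-hypergeometric-E_2(q)} by writing the modular ratio $a^3(\tau)/b^3(\tau)$ as an explicit rational function of the uniformizer $t$. First I would recall from Matsuda \cite{Matsuda4} the two eta-quotient identities
\begin{equation*}
a(\tau)=T-3Q
\qquad\text{and}\qquad
b^3(\tau)=T^2(T-9Q),
\end{equation*}
where $Q$ and $T$ are the generators fixed at the start of the section. The second identity is the one new input; it is consistent with the cubic relation $a^3=b^3+c^3$ together with the value $c^3(\tau)=27Q^2(T-Q)$ recalled in the proof of Theorem~\ref{thm:a-q-t-linear}, since $(T-3Q)^3-27Q^2(T-Q)=T^3-9QT^2$.

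Next I would form the ratio and substitute $t=Q/T$. Clearing the common power of $T$ gives
\begin{equation*}
\frac{a^3(\tau)}{b^3(\tau)}
=\frac{(T-3Q)^3}{T^2(T-9Q)}
=\frac{(1-3t)^3}{1-9t},
\end{equation*}
so the quantity $y=a^3(\tau)/b^3(\tau)$ appearing in Theorem~\ref{thm:b-y-hypergeometric-E_2(q)} becomes $y=(1-3t)^3/(1-9t)$. This is exactly the rational change of variable that occurs in Theorem~\ref{thm:b-q^2-g-linear}, with the section-8 variable $g$ now played by $t$; thus the transformed equation is forced to coincide with the one displayed there after the renaming $g\mapsto t$, which is precisely the equation to be proved.

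It then remains to pull the hypergeometric equation
\begin{equation*}
y(1-y)\frac{d^2 l}{dy^2}
+\left(\frac23-\frac53\,y\right)\frac{dl}{dy}
-\frac19\,l=0
\end{equation*}
back through $y=(1-3t)^3/(1-9t)$. Writing $y'=dy/dt=54\,t(1-3t)^2/(1-9t)^2$ and $y''=d^2y/dt^2$, and using
\begin{equation*}
\frac{dl}{dy}=\frac{1}{y'}\frac{dl}{dt},
\qquad
\frac{d^2 l}{dy^2}=\frac{1}{(y')^2}\frac{d^2 l}{dt^2}-\frac{y''}{(y')^3}\frac{dl}{dt},
\end{equation*}
I would substitute, multiply through by $(y')^2$, and collect the rational coefficients of $l''$, $l'$, and $l$ as functions of $t$.

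The main obstacle --- really the only point requiring care --- is the bookkeeping of this last substitution and, in particular, the disappearance of the point $t=\tfrac13$ from the singular locus. The branch points $y\in\{0,1,\infty\}$ of the hypergeometric equation pull back to $t=\tfrac13$, to $t\in\{0,1\}$, and to $t=\tfrac19$ respectively; since $y$ vanishes to order three at $t=\tfrac13$ while the exponents of the hypergeometric equation at $y=0$ are $\{0,\tfrac13\}$ (its parameters being $a=b=\tfrac13$, $c=\tfrac23$), the pulled-back exponents at $t=\tfrac13$ are $\{0,1\}$, so $t=\tfrac13$ is an ordinary point and drops out. After confirming this cancellation one is left with regular singular points only at $t=0,1,\tfrac19$, and matching residues reproduces the first-order coefficient $\tfrac1t+\tfrac1{t-1}+\tfrac{1/3}{t-1/9}$ and the zeroth-order coefficient $\tfrac{4(t-1/3)}{9(t-1)(t-1/9)^2}$. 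Because this computation is formally identical to the one underlying Theorem~\ref{thm:b-q^2-g-linear}, no genuinely new difficulty arises.
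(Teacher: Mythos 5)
Your proposal is correct and follows essentially the same route as the paper's proof: both recall from \cite{Matsuda4} that $a(\tau)=T-3Q$ and $b^3(\tau)=T^3-9QT^2$, deduce $a^3(\tau)/b^3(\tau)=(1-3t)^3/(1-9t)$ (the paper writes the same function as $(3t-1)^3/(9t-1)$), and conclude by substituting this for $y$ in Theorem \ref{thm:b-y-hypergeometric-E_2(q)}. Your added consistency check via the cubic identity $a^3=b^3+c^3$ and the exponent analysis at $t=\tfrac13$ are details the paper leaves implicit (its proof ends exactly where yours does, deferring the pullback bookkeeping), so the two arguments coincide in substance.
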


\begin{proof}
From Matsuda \cite{Matsuda4}, we recall 
\begin{equation*}
P=a(\tau)=T-3Q,  \,\,
\mathrm{and} \,\,
b^3(\tau)=P^3-27 P Q^2 - 54 Q^3 =T^3-9 Q T^2,
\end{equation*}
which imply that 
\begin{equation*}
\frac{ a^3(\tau) }{ b^3(\tau) }=
\frac{(3 Q-T)^3}{T^2 (9 Q-T)}=
\frac{(3 t-1)^3}{9 t-1}.  
\end{equation*}
The theorem can be proved by setting 
$
\displaystyle
y=\frac{(3 t-1)^3}{9 t-1}  
$ 
in Theorem \ref{thm:b-y-hypergeometric-E_2(q)}. 
\end{proof}

\subsection{On $c(\tau/2)$ and $c(\tau)$} 
In this subsection we set $k=c(\tau/2)$ and $l=c(\tau).$

\begin{theorem}
\label{thm:c-q^{1/2}-t-linear}
{\it
For every $\tau\in\mathbb{H}^2$ set 
$
\displaystyle 
t(\tau)=
\frac
{
\eta^4 \left( \tau/2 \right) \eta^8(3\tau)
}
{
\eta^8( \tau) \eta^4 \left( 3\tau/2 \right)
}.
$ 
Then the inverse function $\tau=\tau(t)$ yields $k(t)=c(\tau(t)/2)$ and 
$k(t)$ satisfies the following differential equation:
\begin{equation*}
\frac{d^2 k  }{d t^2}
+
\left\{
\frac{\frac13 }{t}
-
\frac{\frac13}{t-1}
+
\frac{ 1 }{ t-\frac19}
\right\}
\frac{dk }{d  t}
+
\frac
{
t+\frac13
}
{
3 t^2(t-1)^2
}
k=0. 
\end{equation*}
}
\end{theorem}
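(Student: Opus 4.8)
The plan is to reproduce, almost verbatim, the mechanism of the preceding theorems of this section. First I would recall from Matsuda \cite{Matsuda4} (these are exactly the identities already invoked in Theorem \ref{thm:a-q^{1/2}-t-linear}) the two expansions $a(\tau/2)=T+3Q$ and $c^{3}(\tau/2)=27Q^{3}-54Q^{2}T+27QT^{2}=27Q(Q-T)^{2}$. The function $c(\tau/2)$ is governed by the master equation of Theorem \ref{thm:c-z-hypergeometric-E_2(q^3)}, so I would form the level-six modular function $z=a^{3}(\tau/2)/c^{3}(\tau/2)$, express it rationally in $t=Q/T$, and then pull the hypergeometric equation back along $z=z(t)$.

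Concretely, substituting $Q=tT$ gives
\[
z=\frac{a^{3}(\tau/2)}{c^{3}(\tau/2)}=\frac{(T+3Q)^{3}}{27\,Q\,(Q-T)^{2}}=\frac{(3t+1)^{3}}{27\,t\,(t-1)^{2}}.
\]
Applying Theorem \ref{thm:c-z-hypergeometric-E_2(q^3)} with $\tau$ replaced by $\tau/2$, the quantity $c(\tau/2)$, viewed as a function of $z$, satisfies
\[
z(1-z)\frac{d^{2}c}{dz^{2}}+\Bigl(\tfrac{2}{3}-\tfrac{5}{3}z\Bigr)\frac{dc}{dz}-\tfrac{1}{9}\,c=0,
\]
the hypergeometric equation with $a=b=\tfrac13$, $c=\tfrac23$. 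The remaining step is the chain-rule conversion $dc/dt=(dc/dz)(dz/dt)$, together with its second-order analogue, which turns this into a second-order equation in $t$.

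Before carrying out the substitution I would predict the singular structure from the degree-three map $t\mapsto z$, which makes the final answer almost forced. Its fibres over the three singular values are: over $z=0$ the point $t=-\tfrac13$ with ramification three; over $z=1$ the points $t=\tfrac19$ (ramification two) and $t=\infty$ (unramified); over $z=\infty$ the points $t=0$ (unramified) and $t=1$ (ramification two). Multiplying the hypergeometric exponents $\{0,\tfrac13\}$, $\{0,0\}$, $\{\tfrac13,\tfrac13\}$ at $z=0,1,\infty$ by the local ramification indices yields exponents $\{0,1\}$ at $t=-\tfrac13$, $\{0,0\}$ at $t=\tfrac19$ and $t=\infty$, $\{\tfrac13,\tfrac13\}$ at $t=0$, and $\{\tfrac23,\tfrac23\}$ at $t=1$. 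The integer pair $\{0,1\}$ at $t=-\tfrac13$ signals an apparent singularity, which is precisely why $t=-\tfrac13$ is absent from the asserted equation, and the Fuchs relation $\tfrac23+0+\tfrac43+0=2$ over the four genuine singular points $\{0,\tfrac19,1,\infty\}$ confirms internal consistency.

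The main obstacle is the explicit pullback: after clearing $dz/dt$ and $d^{2}z/dt^{2}$ one must verify that the first-order coefficient collapses to $\tfrac{1/3}{t}-\tfrac{1/3}{t-1}+\tfrac{1}{t-1/9}$ and the zeroth-order coefficient to $\tfrac{t+1/3}{3t^{2}(t-1)^{2}}$, in particular that the accessory parameter comes out correctly and that the would-be pole at $t=-\tfrac13$ genuinely cancels rather than merely being apparent. This is a lengthy but routine rational-function identity; the safest route is to place every term over the common denominator $t^{2}(t-1)^{2}(9t-1)$ and match numerators, using the exponent and Fuchs bookkeeping above as an independent check on the result.
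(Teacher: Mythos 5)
Your proposal is correct and is essentially the paper's own proof: the paper likewise recalls $a(\tau/2)=T+3Q$ and $c^3(\tau/2)=27Q(Q-T)^2$ from \cite{Matsuda4}, obtains $a^3(\tau/2)/c^3(\tau/2)=(3t+1)^3/\bigl(27\,t\,(1-t)^2\bigr)$, and concludes by substituting this expression for $z$ into Theorem \ref{thm:c-z-hypergeometric-E_2(q^3)}. Your Riemann-scheme and Fuchs-relation bookkeeping is an added consistency check on the (routine) pullback computation that the paper leaves implicit, not a different method.
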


\begin{proof}
From Matsuda \cite{Matsuda4}, we recall 
\begin{equation*}
a(\tau/2)=P+6 Q=T+3Q \,\,
\mathrm{and} \,\,
c^3(\tau/2)=27 P^2 Q+108 P Q^2+108 Q^3=27 Q^3-54 Q^2 T+27 QT^2,   
\end{equation*}
which imply that 
\begin{equation*}
\frac{ a^3(\tau/2) }{ c^3(\tau/2) }=
\frac{(3Q+T)^3}{27 Q (Q-T)^2}=
\frac{(3 t+1)^3}{27 (1-t)^2 t}. 
\end{equation*}
The theorem can be proved by setting 
$
\displaystyle
z=\frac{(3 t+1)^3}{27 (1-t)^2 t} 
$ 
in Theorem \ref{thm:c-z-hypergeometric-E_2(q^3)}. 
\end{proof}

\begin{theorem}
\label{thm:c-q-t-linear}
{\it
For every $\tau\in\mathbb{H}^2$ set 
$
\displaystyle 
t(\tau)=
\frac
{
\eta^4 \left( \tau/2 \right) \eta^8(3\tau)
}
{
\eta^8( \tau) \eta^4 \left( 3\tau/2 \right)
}.
$ 
Then the inverse function $\tau=\tau(t)$ yields $l(t)=c(\tau(t))$ and 
$l(t)$ satisfies the following differential equation:
\begin{equation*}
\frac{d^2 l}{d t^2}
+
\left\{
-
\frac{\frac13 }{t}
+
\frac{\frac13}{t-1}
+
\frac{ 1 }{t-\frac19}
\right\}
\frac{d l }{dt}
+
\frac
{
4
(t-\frac13)
}
{
27 t^2(t-1)^2
(t-\frac19)   
}
l=0. 
\end{equation*}
}
\end{theorem}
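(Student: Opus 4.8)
The plan is to deduce this equation from the hypergeometric equation for $c$ recorded in Theorem~\ref{thm:c-z-hypergeometric-E_2(q^3)}, by an explicit algebraic substitution, exactly in the manner of the companion proofs of Theorems~\ref{thm:c-q-f-linear}, \ref{thm:c-q^2-f-linear}, \ref{thm:c-q-g-linear}, \ref{thm:c-q^2-g-linear} and~\ref{thm:c-q^{1/2}-t-linear}. The mechanism in each case is that the argument $z=a^{3}(\tau)/c^{3}(\tau)$ of Theorem~\ref{thm:c-z-hypergeometric-E_2(q^3)} is a rational function of the level-six modular function $t(\tau)=Q(\tau)/T(\tau)$, so that pulling the second-order equation~(\ref{eqn:c-z-hypergeometric}) back along this rational map yields the asserted Fuchsian equation in the variable $t$.

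First I would recall from Matsuda~\cite{Matsuda4} the eta-quotient identities $a(\tau)=T-3Q$ and $c^{3}(\tau)=27PQ^{2}+54Q^{3}=27Q^{2}(T-Q)$, written in the notation $P=a(\tau)$, $Q=Q(\tau)$, $T=T(\tau)$ fixed at the head of this section. Dividing numerator and denominator of $a^{3}/c^{3}$ by $T^{3}$ and using $t=Q/T$ then gives
\begin{equation*}
z=\frac{a^{3}(\tau)}{c^{3}(\tau)}=\frac{(T-3Q)^{3}}{27\,Q^{2}(T-Q)}=\frac{(1-3t)^{3}}{27\,t^{2}(1-t)}.
\end{equation*}
This is precisely the rational map (with $g$ renamed $t$) that already appears in Theorem~\ref{thm:c-q^2-g-linear}, so I expect the pullback to coincide term-for-term with the equation proved there, which is exactly the asserted form.

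Next I would insert $z=(1-3t)^{3}/\bigl(27t^{2}(1-t)\bigr)$ into~(\ref{eqn:c-z-hypergeometric}) and transform derivatives by the chain rule, using $dc/dz=(dt/dz)\,dc/dt$ and the corresponding second-order relation, after which one clears the common rational factor. The only point that needs genuine care is the distribution of the three hypergeometric singularities under this degree-three covering: the fibre over $z=\infty$ consists of $t=0$ (ramification index $2$) and $t=1$ (index $1$), the fibre over $z=1$ consists of $t=\tfrac19$ (index $1$) and $t=\infty$ (index $2$), and the fibre over $z=0$ is the single point $t=\tfrac13$ (index $3$). The arithmetic of exponents under ramification is what fixes the first-order coefficient $-\tfrac13/t+\tfrac13/(t-1)+1/(t-\tfrac19)$ and the placement of the zero $t-\tfrac13$ in the numerator of the last term; tracking residues via the Fuchs relation provides a convenient internal check that the four surviving exponent pairs sum to $2$.

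I expect the one subtle step --- and the main thing that must be checked rather than merely computed --- to be that $t=\tfrac13$ is in fact an ordinary point of the resulting equation, not a genuine singularity. At $z=0$ the hypergeometric exponents are $0$ and $1-\gamma=\tfrac13$; pulled back through the triple ramification they become $0$ and $1$, and since the local hypergeometric solutions $c=1+O(z)$ and $c=z^{1/3}(1+O(z))$ both lift to functions holomorphic in $t-\tfrac13$, no logarithm appears and the apparent singularity at $t=\tfrac13$ is removable. Granting this, the surviving singular points are exactly $t=0,\tfrac19,1,\infty$, and the cleared equation is the one in the statement; this is the same verification already carried out for Theorem~\ref{thm:c-q^2-g-linear}, so the computation terminates without further difficulty.
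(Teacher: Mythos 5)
Your proposal is correct and follows essentially the same route as the paper: recall from \cite{Matsuda4} the identities $a(\tau)=T-3Q$ and $c^3(\tau)=27PQ^2+54Q^3=27Q^2(T-Q)$, deduce $z=a^3(\tau)/c^3(\tau)=(1-3t)^3/\bigl(27t^2(1-t)\bigr)$, and pull back the hypergeometric equation of Theorem \ref{thm:c-z-hypergeometric-E_2(q^3)} along this rational map (the paper states this substitution without carrying out the chain-rule computation, and your observation that the map, and hence the resulting ODE, coincides with that of Theorem \ref{thm:c-q^2-g-linear} is a valid shortcut). Your added analysis of the ramification and of why $t=\tfrac13$ remains an ordinary point is sound extra detail, and incidentally your form of the $c^3$ identity is the consistent one (the paper's proof misprints it as $27PQ^2+27Q^3$).
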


\begin{proof}
From Matsuda \cite{Matsuda4}, we recall 
\begin{equation*}
P=a(\tau)=T-3Q \,\,
\mathrm{and} \,\,
c^3(\tau)=27 P Q^2+27 Q^3=-27 Q^3+27 Q^2 T,   
\end{equation*}
which imply that 
\begin{equation*}
\frac{ a^3(\tau) }{ c^3(\tau) }  =
\frac{(T-3 Q)^3}{27 Q^2 (T-Q)}=
\frac{(1-3 t)^3}{27 (1-t) t^2}.  
\end{equation*}
The theorem can be proved by setting 
$
\displaystyle
z=\frac{(1-3 t)^3}{27 (1-t) t^2}  
$ 
in Theorem \ref{thm:c-z-hypergeometric-E_2(q^3)}. 
\end{proof}

\section{Heun's equations for modular forms of level 6 (4)}
\label{sec:Heun-u-(0,2/3)}

Throughout this section we set 
\begin{align*}
R(\tau)=&
\frac
{
\eta(\tau)
\eta^3\left(3\tau/2 \right) 
\eta^3(6\tau)
}
{
\eta(\tau/2) \eta \left(2\tau \right)
\eta^3(3\tau)
}
=\sum_{n=1}^{\infty}(d_{1,6}^{*}(n)+ d_{2,6}^{*}(n)   -d_{4,6}^{*}(n) -  d_{5,6}^{*}(n) ) q^{\frac{n}{2}},  \\
%%%%%%%%%%%%%%%%%%%%%%%%%%%%%%%%%%%%%%%%%%%%%%%%%%%%%%%%%%%%%%%%%
U(\tau)
=
&
\frac
{
\eta^3(\tau/2) \eta^3(2\tau)  
\eta(3\tau)
}
{
\eta^3(\tau) \eta(3\tau/2)  \eta(6\tau)
}
=
1-3  \sum_{n=1}^{\infty}(d_{1,6}^{*}(n)+d_{2,6}^{*}(n)  -d_{4,6}^{*}(n)-d_{5,6}^{*}(n))q^{\frac{n}{2}}    \\
&\hspace{60mm}+6  \sum_{n=1}^{\infty}  (d_{1,3}(n)-d_{2,3}(n)) q^{n} \\
%%%%%%%%%%%%%%%%%%%%%%%%%%%%%%%%%%%%%%%%%%%%%%%%%%%%%%%%%%%%
S(\tau)=&E_2(\tau), 
\end{align*}
and
\begin{equation*}
u=u(\tau)=\frac{R(\tau)}{U(\tau)}
=
\frac
{
\eta^4 \left( \tau  \right) 
\eta^4(3\tau/2)  
\eta^4(6\tau)
}
{
\eta^4( \tau/2) \eta^4 \left( 2\tau \right) 
\eta^4(3\tau)
}.
\end{equation*}

In \cite{Matsuda4}, we proved that 
\begin{align*}
D
R
=&
\frac{27 R^3+36 R^2 U+5 R U^2  +R S }{12},  \,\,
%%%%%%%%%%%%%%%%%%%
D
U
= 
\frac{-27 R^2 U-24 R U^2-U^3+S U}{12},  \\
%%%%%%%%%%%%%%%%%%%%%%%%
D
S
=&
\frac{-729 R^4-972 R^3 U-270 R^2 U^2-12 R U^3-U^4 +S^2 }{12}, \,\,
D=\frac{1}{2\pi i} \frac{d}{d \tau}. 
\end{align*}

\subsection{On the modular function of level six}

\begin{theorem}
\label{thm:ODE-u-(0,2/3)}
{\it
For every $\tau\in\mathbb{H}^2$ set 
\begin{equation*}
u(\tau)=
\frac
{
\eta^4(\tau)
\eta^4 \left( 3\tau/2 \right) 
\eta^4(6\tau)
}
{
\eta^4 \left( \tau/2 \right)
\eta^4(2\tau)
\eta^4(3\tau)
}.
\end{equation*}
Then we have 
\begin{equation*}
\{u,\tau\}
+
%%%%%%%%%%%%%%%%%%%%%%%%%%%%%%%%%%
\frac{81 u^4+108 u^3+102 u^2+12 u+1}{2 u^2 (u+1)^2 (9 u+1)^2}
(u^{\prime})^2=0.
\end{equation*}
}
\end{theorem}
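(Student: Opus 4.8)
The plan is to follow exactly the template established in the proof of Theorem \ref{thm:ODE-f-(1,2/3)} and reused for Theorems \ref{thm:ODE-g-(2/3,1)} and \ref{thm:ODE-t-(0,1/3)}: treat $u$ as a Hauptmodul-type coordinate on the relevant genus-zero level-six modular curve, express Klein's $j$-function as an explicit rational function of $u$, and transport the Schwarzian ODE (\ref{eqn:ODE-j}) for $j$ back to $u$ by the cocycle identity for the Schwarzian derivative. Comparing the target coefficient $\tfrac{81u^4+108u^3+102u^2+12u+1}{2u^2(u+1)^2(9u+1)^2}$ with the one in Theorem \ref{thm:ODE-t-(0,1/3)} shows that the two differ precisely by the formal replacement $t\mapsto -u$; this predicts that the correct expression here is
\begin{equation*}
j(\tau)=\frac{(3u+1)^3\left(243u^3+243u^2+9u+1\right)^3}{u^2(u+1)(9u+1)^3},
\end{equation*}
which is exactly the $j$-formula of Theorem \ref{thm:ODE-t-(0,1/3)} under $t\mapsto -u$. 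The first step is therefore to establish this identity for the defining $\eta$-quotient of $u$, either by citing the companion computation in Matsuda \cite{Matsuda4} or by matching the $q$-expansions of both sides to sufficiently high order.

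Once the $j$-formula is in hand, I would invoke the composition law
\begin{equation*}
\{j,\tau\}=\{j,u\}\left(\frac{du}{d\tau}\right)^2+\{u,\tau\}
\end{equation*}
together with the relation (\ref{eqn:ODE-j}) coming from Theorem \ref{thm:ODE-J-j}. Differentiating the rational function $j=j(u)$ three times produces $\{j,u\}$ as an explicit rational function of $u$, and substituting $j'=(dj/du)\,u'$ converts the $j$-term of (\ref{eqn:ODE-j}) into a rational multiple of $(u')^2$. Solving the composition law for $\{u,\tau\}$ then expresses it as a rational function of $u$ times $(u')^2$, which is claimed to equal $-\tfrac{81u^4+108u^3+102u^2+12u+1}{2u^2(u+1)^2(9u+1)^2}(u')^2$. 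Because the entire derivation depends only on the functional relation $j=j(u)$, and that relation is the $t$-relation under $t\mapsto -u$, the resulting ODE is automatically the $u$-analogue of Theorem \ref{thm:ODE-t-(0,1/3)}: the Schwarzian $\{\,\cdot\,,\tau\}$ is invariant under the sign change of the dependent variable and $(u')^2$ is likewise unchanged, so beyond recording this symmetry no separate simplification is genuinely required.

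The main obstacle is computational and bookkeeping rather than conceptual. The factor $\{j,u\}$ carries a numerator of degree over twenty in $u$—compare the degree-$22$ and degree-$24$ polynomials that surface in the proof of Theorem \ref{thm:ODE-f-(1,2/3)}—so verifying by hand that this large intermediate expression collapses to the compact quartic-over-sextic form is the delicate part, and in practice it is discharged by computer algebra exactly as in the earlier level-six cases. The one genuinely new input is the $j$-formula for $u$, and the cleanest safeguard against sign or normalization errors is to check the leading $q$-behaviour: from its $\eta$-quotient one finds $u\sim q^{1/2}\to 0$ as $\tau\to +i\infty$, and the proposed formula must then reproduce the cusp expansion $j\sim q^{-1}$ via $j\sim u^{-2}$, which simultaneously pins down the power of $u$ in the denominator and fixes the overall constant to be $1$.
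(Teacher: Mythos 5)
Your proposal is correct and follows essentially the paper's own route: the paper likewise recalls from \cite{Matsuda4} the identity $j(\tau)=\dfrac{(3u+1)^3\left(243u^3+243u^2+9u+1\right)^3}{u^2(u+1)(9u+1)^3}$ and then appeals to the same Schwarzian-cocycle computation used for Theorem \ref{thm:ODE-f-(1,2/3)}. Your extra observation that this $j$-formula and the target coefficient are exactly those of Theorem \ref{thm:ODE-t-(0,1/3)} under $t\mapsto -u$, so that the ODE follows from that theorem's derivation without repeating the heavy elimination (the Schwarzian $\{u,\tau\}$ and $(u')^2$ being unchanged by the sign flip), is a legitimate shortcut the paper does not record, but it does not alter the underlying method.
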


\begin{proof}
From Matsuda \cite{Matsuda4} recall the following formula,
\begin{equation*}
j(\tau)=
\frac{(3 u+1)^3 \left(243 u^3+243 u^2+9 u+1\right)^3}{u^2 (u+1) (9 u+1)^3}. 
\end{equation*}
The theorem can be proved in the same way as Theorem \ref{thm:ODE-f-(1,2/3)}. 
\end{proof}

\subsection{Heun's equation for the modular form of level six}

\begin{theorem}
\label{prop:U-u-linear-E_2(q)}
{\it
For every $\tau\in\mathbb{H}^2$ set 
$
\displaystyle 
u(\tau)=
\frac
{
\eta^4(\tau)
\eta^4 \left( 3\tau/2\right) 
\eta^4(6\tau)
}
{
\eta^4 \left(\tau/2 \right)
\eta^4(2\tau)
\eta^4(3\tau)
}.
$ 
Then the inverse function $\tau=\tau(u)$ yields $U(u)=U(\tau(u))$ and 
$U(u)$ satisfies Heun's differential equation:
\begin{equation}
\frac{d^2 U}{d u^2}
+
\left\{
\frac{1}{u}
+
\frac{1}{u+1}
+
\frac{1}{u+\frac19}
\right\}
\frac{dU}{du}
+
\frac
{
u+\frac13
}
{
u(u+1)(u+\frac19)
}
U=0. 
\end{equation}
}
\end{theorem}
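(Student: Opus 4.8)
The plan is to follow verbatim the template already used for the level-six modular functions, most directly the proof of Theorem~\ref{thm:T-t-linear-E_2(q)} (and its companion Theorem~\ref{thm:P-g-linear-E_2(q^6)}), since $u=R/U$ plays here exactly the role that $t=Q/T$ played there. First I would differentiate $u=R/U$ along $\tau$ using the system recorded just above the statement, writing $Du=(U\,DR-R\,DU)/U^2$ and substituting the given expressions for $DR$ and $DU$. The $RS$ and $SU$ terms cancel, and the numerator collapses to $6RU\,(9R^2+10RU+U^2)$; the crucial observation is the factorization $9R^2+10RU+U^2=(R+U)(9R+U)$, giving
\begin{equation*}
Du=\frac{R\,(R+U)\,(9R+U)}{2U}.
\end{equation*}
The three zeros $R=0$, $R+U=0$, $9R+U=0$ translate under $u=R/U$ into $u=0,\,-1,\,-\tfrac19$, which are precisely the singular points appearing in the proposed Heun equation, so this factorization is the structural heart of the argument.

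Next I would obtain the first derivative by the chain rule, $dU/du=DU/Du$, which yields
\begin{equation*}
\frac{dU}{du}=\frac{U^2\,(S-27R^2-24RU-U^2)}{6R\,(R+U)\,(9R+U)}.
\end{equation*}
The second derivative is then $d^2U/du^2=D(dU/du)/Du$, and this is where the one genuine calculation lies: differentiating the quotient above with $D$ introduces $DR$, $DU$ and $DS$, so I would invoke all three relations of the system to re-express the result as a single rational function in $R,U,S$ with denominator $6R^2(R+U)^2(9R+U)^2$. The appearance of $DS$, hence of the quartic block $-729R^4-\cdots-U^4$, makes this the most error-prone step; I expect to keep $S$ symbolic throughout rather than eliminating it prematurely.

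Finally I would assemble the Heun operator in the grouped form used earlier, namely
\begin{align*}
&u(u+1)(9u+1)\frac{d^2U}{du^2}+9u(u+1)\frac{dU}{du}+(u+1)(9u+1)\frac{dU}{du}+u(9u+1)\frac{dU}{du},
\end{align*}
which are exactly the terms produced by clearing the denominator $u(u+1)(u+\tfrac19)$ in the asserted equation (the coefficient $9u(u+1)$ arising because $(9u+1)/(u+\tfrac19)=9$). Substituting the expressions for $dU/du$ and $d^2U/du^2$, I expect every occurrence of $S$ and all higher-degree monomials to cancel, leaving the compact value $-9R-3U=-3(3R+U)$. Since $u=R/U$ gives $3R+U=U(3u+1)$, this is exactly $-(9u+3)U$, which is the negative of the zeroth-order term $(9u+3)U$ obtained by the same clearing of denominators; the identity therefore is equivalent to the stated Heun equation. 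The main obstacle is purely the bookkeeping in the second-derivative computation: once the factorization $(R+U)(9R+U)$ and the cancellation of $S$ are secured, the proof closes exactly as in the earlier level-six cases.
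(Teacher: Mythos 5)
Your proposal is correct and follows essentially the same route as the paper's own proof: the paper likewise computes $Du=D(R/U)=\frac{R(R+U)(9R+U)}{2U}$, then $\frac{dU}{du}=\frac{DU}{Du}$ and $\frac{d^2U}{du^2}=D\bigl(\tfrac{dU}{du}\bigr)/Du$ with the same denominators you predict, and assembles the identical grouped operator $u(u+1)(9u+1)\frac{d^2U}{du^2}+9u(u+1)\frac{dU}{du}+(u+1)(9u+1)\frac{dU}{du}+u(9u+1)\frac{dU}{du}=-3(3R+U)=-3(3u+1)U$. Your intermediate expressions (the factorization $9R^2+10RU+U^2=(R+U)(9R+U)$, the cancellation of the $RS$ and $SU$ terms, and the final value $-3(3R+U)$) all match the paper exactly.
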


\begin{proof}
We first have 
\begin{equation*}
Du=
D
\left(
\frac{R}{U}
\right)
=
\frac{R (R + U) (9 R + U)}{2 U}. 
\end{equation*}
We next obtain  
\begin{equation*}
\frac{d U}{d u}
=
\frac{d U}{d\tau} \frac{d \tau}{d u}
=
\frac{U \left(-27 R^2 U-24 R U^2+S U-U^3\right)}{6 R (R+U) (9 R+U)},
\end{equation*}
and 
\begin{equation*}
\frac{d^2 U}{d u^2}
=
\frac{d}{d \tau}
\left(
\frac{d U}{d u}
\right)
\frac{d \tau}{d u}
=
\frac{U^3 \left(243 R^4+486 R^3 U-27 R^2 S+300 R^2 U^2-20 R S U+26 R U^3-S U^2+U^4\right)}{6 R^2 (R+U)^2 (9 R+U)^2},
\end{equation*}
which imply that 
\begin{align*}
&
u(u+1)(9u+1)\frac{d^2 U}{d u^2}
+
9u(u+1)\frac{d U}{d u}
+
(u+1)(9u+1)\frac{d U}{d u}
+
(9u+1)u\frac{d U}{d u}  \\
=&
-3(3R+U)
=
-3 U
\left(
3
\frac{R}{U}
+1
\right)
=
-3(3u+1)U,
\end{align*}
which proves the theorem. 
\end{proof}

\subsection{On $a(\tau)$} 
In this subsection we set $l=a(\tau).$

\begin{theorem}
\label{thm:a-q-u-linear}
{\it
For every $\tau\in\mathbb{H}^2$ set 
$
\displaystyle 
u(\tau)=
\frac
{
\eta^4(\tau)
\eta^4 \left( 3\tau/2\right) 
\eta^4(6\tau)
}
{
\eta^4 \left(\tau/2 \right)
\eta^4(2\tau)
\eta^4(3\tau)
}.
$ 
Then the inverse function $\tau=\tau(u)$ yields $l(u)=a(\tau(u))$ and 
$l(u )$ satisfies the following differential equation:
\begin{equation*}
\frac{d^2 l }{d u^2}
+
\left\{
\frac{1}{u }
+
\frac{1}{u+1}
-
\frac{2}{u+\frac13}
+
\frac{1}{u+\frac19}
\right\}
\frac{d l}{d u }
-
\frac
{
8
}
{
27
(u+1)
(u+\frac19)
(u+\frac13)^2
}
l=0. 
\end{equation*}
}
\end{theorem}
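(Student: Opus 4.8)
The plan is to reduce the statement to the hypergeometric equation of Theorem~\ref{thm:a-x-hypergeometric-E_2(q)} by an algebraic change of variable, following verbatim the strategy of Theorems~\ref{thm:a-q-f-linear}, \ref{thm:a-q-g-linear} and \ref{thm:a-q-t-linear}. First I would recall from Matsuda~\cite{Matsuda4} the expressions of the level-three form $a(\tau)$ and of $c^3(\tau)$ in terms of the generators $R(\tau)$ and $U(\tau)$ fixed in this section; in view of the normalizations $R=q^{1/2}+\cdots$, $U=1-3q^{1/2}+\cdots$ and of the displayed system for $R,U,S$, these read
\begin{equation*}
a(\tau)=U+3R,\qquad c^3(\tau)=27R^2(R+U).
\end{equation*}
A quick consistency check supports these: the half-integer powers of $q$ cancel in $U+3R$, so $a(\tau)=1+\cdots$ has only integral $q$-powers, while $c^3(\tau)=27R^2(R+U)=27q+\cdots$ matches $c(q)=3q^{1/3}\prod_{n\ge 1}(1-q^{3n})^3/(1-q^n)$.

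Next I would form the modular ratio. Since $u=R/U$, factoring out $U$ gives
\begin{equation*}
\frac{c^3(\tau)}{a^3(\tau)}=\frac{27R^2(R+U)}{(U+3R)^3}=\frac{27u^2(u+1)}{(3u+1)^3},
\end{equation*}
a rational function of $u$ of degree three. Observe that this is exactly the function $x(f)=27f^2(f-1)/(3f-1)^3$ of Theorem~\ref{thm:a-q-f-linear} after the substitution $f\mapsto-u$, which is the same substitution that carries the $j$-formula of Section~\ref{sec:Heun-f-(1,2/3)} into the one of Theorem~\ref{thm:ODE-u-(0,2/3)}. Setting $x=27u^2(u+1)/(3u+1)^3$ and invoking Theorem~\ref{thm:a-x-hypergeometric-E_2(q)}, the function $l(u)=a(\tau(u))$ is $a$ evaluated at $x=x(u)$ and hence solves the pullback under $x=x(u)$ of
\begin{equation*}
x(1-x)\frac{d^2a}{dx^2}+(1-2x)\frac{da}{dx}-\frac29 a=0.
\end{equation*}
Applying the chain rule $\frac{dl}{du}=\frac{da}{dx}\,x'(u)$ and $\frac{d^2l}{du^2}=\frac{d^2a}{dx^2}\,x'(u)^2+\frac{da}{dx}\,x''(u)$ and eliminating the $x$-derivatives produces a second-order linear equation in $u$ that I would simplify to the asserted form.

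The main obstacle is purely computational: confirming that the pullback reproduces exactly the stated coefficients, namely that the first-order term collapses to $\frac1u+\frac1{u+1}-\frac2{u+\frac13}+\frac1{u+\frac19}$ and the zeroth-order term to $-\frac{8}{27(u+1)(u+\frac19)(u+\frac13)^2}$. The three simple poles at $u=0,-1,-\tfrac19$ are the preimages under $x(u)$ of the hypergeometric singular points $x=0,0,1$ and inherit the local exponents there; the only nonroutine feature is the extra regular singular point at $u=-\tfrac13$, which lies over $x=\infty$ because of the factor $(3u+1)^3$ in the denominator and carries exponents $1,2$, producing simultaneously the residue $-2$ in the first-order coefficient (via $1-1-2=-2$) and the double pole $(u+\tfrac13)^2$ in the zeroth-order coefficient (via $1\cdot 2=2$); that it is an \emph{apparent} singularity, free of logarithms despite the integral exponent difference, is automatic since $l=a$ is single-valued there. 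Rather than redoing this algebra, I would instead transport the equation of Theorem~\ref{thm:a-q-f-linear} through $f\mapsto-u$, under which $\{0,1,\tfrac19,\tfrac13\}\mapsto\{0,-1,-\tfrac19,-\tfrac13\}$ and the coefficients transform precisely into those claimed, yielding the result at once.
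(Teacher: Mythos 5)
Your proposal is correct and follows essentially the same route as the paper: recall from \cite{Matsuda4} that $a(\tau)=U+3R$ and $c^3(\tau)=27R^2(R+U)$, deduce $c^3(\tau)/a^3(\tau)=27u^2(u+1)/(3u+1)^3$, and substitute this for $x$ in Theorem \ref{thm:a-x-hypergeometric-E_2(q)}. Your closing observation that the resulting equation can be read off from Theorem \ref{thm:a-q-f-linear} via $f\mapsto -u$ is a harmless shortcut for the final algebra, which the paper likewise leaves implicit.
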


\begin{proof}
From Matsuda \cite{Matsuda4}, we recall 
\begin{equation*}
P=a(\tau)=U+3 R \,\,
\mathrm{and} \,\,
c^3(\tau)=27 P R^2 - 54 R^3=27 R^3+27 R^2 U,    
\end{equation*}
which imply that 
\begin{equation*}
\frac{ c^3(\tau) }{ a^3(\tau) }=
\frac{27 R^2 (R+U)}{(3 R+U)^3}=
\frac{27 u^2 (u+1)}{(3 u+1)^3}.  
\end{equation*}
The theorem can be proved by setting 
$
\displaystyle
x=\frac{27 u^2 (u+1)}{(3 u+1)^3}
$ 
in Theorem \ref{thm:a-x-hypergeometric-E_2(q)}. 
\end{proof}

\subsection{On $b(\tau)$} 
In this subsection we set $l=b(\tau).$

\begin{theorem}
\label{thm:b-q-u-linear}
{\it
For every $\tau\in\mathbb{H}^2$ set 
$
\displaystyle 
u(\tau)=
\frac
{
\eta^4(\tau)
\eta^4 \left( 3\tau/2\right) 
\eta^4(6\tau)
}
{
\eta^4 \left(\tau/2 \right)
\eta^4(2\tau)
\eta^4(3\tau)
}.
$ 
Then the inverse function $\tau=\tau(u)$ yields $l(u)=b(\tau(u))$ and 
$l(u)$ satisfies the following differential equation:
\begin{equation*}
\frac{d^2 l }{d u^2}
+
\left\{
\frac{1}{u}
+
\frac{1}{u+1}
+
\frac{\frac13}{u+\frac19}
\right\}
\frac{d l }{d  u}
+
\frac
{
4
(
u+\frac13
)
}
{
9
(u+1)
(u+\frac19)^2
}
l=0. 
\end{equation*}
}
\end{theorem}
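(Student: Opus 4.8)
The plan is to follow the template of the preceding theorems in this section and reduce the claim to the hypergeometric equation of Theorem~\ref{thm:b-y-hypergeometric-E_2(q)} through a rational change of variable. First I would recall from Matsuda~\cite{Matsuda4} the representations
\[
a(\tau)=U+3R,\qquad b^{3}(\tau)=U^{2}(U+9R),
\]
where $R=R(\tau)$ and $U=U(\tau)$ are the forms fixed at the start of this section. The first identity and the value $c^{3}(\tau)=27R^{2}(R+U)$ were already used in the proof of Theorem~\ref{thm:a-q-u-linear}; the second can then be read off from the cubic theta relation $a^{3}=b^{3}+c^{3}$, since $(U+3R)^{3}-27R^{2}(R+U)=U^{3}+9U^{2}R$.

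Next I would form the ratio that drives Theorem~\ref{thm:b-y-hypergeometric-E_2(q)}. Dividing numerator and denominator by $U^{3}$ and writing $u=R/U$ gives
\[
y:=\frac{a^{3}(\tau)}{b^{3}(\tau)}=\frac{(U+3R)^{3}}{U^{2}(U+9R)}=\frac{(3u+1)^{3}}{9u+1}.
\]
By Theorem~\ref{thm:b-y-hypergeometric-E_2(q)}, the form $b$, viewed as a function of this $y$, satisfies
\[
y(1-y)\frac{d^{2}b}{dy^{2}}+\left(\frac23-\frac53 y\right)\frac{db}{dy}-\frac19 b=0.
\]
The theorem then follows by substituting $y=(3u+1)^{3}/(9u+1)$ and transforming this equation into a second-order ODE in $u$ by the chain rule, exactly as the final line of each analogous proof indicates.

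The step requiring care is the verification that after this degree-three substitution the coefficients collapse to the stated Fuchsian form. It is clarifying to track the branch points: from $(3u+1)^{3}-(9u+1)=27u^{2}(u+1)$ one sees that $y=0$ is attained at the triple point $u=-\tfrac13$, that $y=1$ is attained at $u=0$ (doubly) and at $u=-1$, and that $y=\infty$ is attained at $u=-\tfrac19$. The hypergeometric exponents $\{0,\tfrac13\}$ at $y=0$, $\{0,0\}$ at $y=1$, and $\{\tfrac13,\tfrac13\}$ at $y=\infty$ therefore pull back to the exponents $0,0$ at $u=0$ and $u=-1$, the double exponent $\tfrac13$ at $u=-\tfrac19$, and the pair $\{0,1\}$ at $u=-\tfrac13$. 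The last pair differs by the integer $1$, so $u=-\tfrac13$ is only an apparent singularity; the main obstacle is to confirm that the accessory coefficient cancels there, so that $u=-\tfrac13$ is in fact an ordinary point of the written equation. This cancellation is exactly what yields the simple factor $(u+\tfrac13)$ in the numerator of the zeroth-order coefficient and the absence of any $1/(u+\tfrac13)$ term in the first-order coefficient. Checking the indicial equations at $u=0,-1,-\tfrac19$ against the claimed coefficients $\tfrac1u+\tfrac1{u+1}+\tfrac{1/3}{u+1/9}$ and $\tfrac{4(u+\tfrac13)}{9(u+1)(u+\tfrac19)^{2}}$ then completes the identification.
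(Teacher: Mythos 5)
Your proposal is correct and follows essentially the same route as the paper: both proofs reduce the statement to Theorem~\ref{thm:b-y-hypergeometric-E_2(q)} via the substitution $y=a^{3}(\tau)/b^{3}(\tau)=(3u+1)^{3}/(9u+1)$ with $u=R/U$. The only cosmetic differences are that you derive $b^{3}(\tau)=U^{2}(U+9R)$ from the cubic identity $a^{3}=b^{3}+c^{3}$ and the value of $c^{3}$ used in Theorem~\ref{thm:a-q-u-linear} instead of quoting it directly from \cite{Matsuda4}, and that you supplement the chain-rule computation with a (correct) check of the singular points and exponents, including the apparent singularity at $u=-\tfrac13$.
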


\begin{proof}
From Matsuda \cite{Matsuda4}, we recall 
\begin{equation*}
P=a(\tau)=U+3 R,  \,\,
\mathrm{and} \,\,
b^3(\tau)=P^3 - 27 P R^2 + 54 R^3 =9 R U^2+U^3, 
\end{equation*}
which imply that 
\begin{equation*}
\frac{ a^3(\tau) }{ b^3(\tau) }=
\frac{(3 R+U)^3}{U^2 (9 R+U)}=
\frac{(3 u+1)^3}{9 u+1}.  
\end{equation*}
The theorem can be proved by setting 
$
\displaystyle
y=\frac{(3 u+1)^3}{9 u+1}
$ 
in Theorem \ref{thm:b-y-hypergeometric-E_2(q)}. 
\end{proof}

\subsection{On $c(\tau)$} 
In this subsection we set $l=c(\tau).$

\begin{theorem}
\label{thm:c-q-u-linear}
{\it
For every $\tau\in\mathbb{H}^2$ set 
$
\displaystyle 
u(\tau)=
\frac
{
\eta^4(\tau)
\eta^4 \left( 3\tau/2\right) 
\eta^4(6\tau)
}
{
\eta^4 \left(\tau/2 \right)
\eta^4(2\tau)
\eta^4(3\tau)
}.
$ 
Then the inverse function $\tau=\tau(u)$ yields $l(u)=c(\tau(u))$ and 
$l(u)$ satisfies the following differential equation:
\begin{equation*}
\frac{d^2 l}{d u^2}
+
\left\{
-
\frac{\frac13 }{u}
+
\frac{\frac13}{u+1}
+
\frac{ 1 }{u+\frac19}
\right\}
\frac{d l }{du}
+
\frac
{
4
(u+\frac13)
}
{
27 u^2(u+1)^2
(u+\frac19)   
}
l=0. 
\end{equation*}
}
\end{theorem}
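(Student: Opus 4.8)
The plan is to reduce the assertion to the cubic hypergeometric equation of Theorem \ref{thm:c-z-hypergeometric-E_2(q^3)} by an explicit rational change of the independent variable, exactly as in the companion statements for $c(\tau)$ earlier in this section. First I would recall from Matsuda \cite{Matsuda4} the representations $a(\tau)=U+3R$ and $c^3(\tau)=27PR^2-54R^3=27R^2(R+U)$, which already enter the proof of Theorem \ref{thm:a-q-u-linear}. Dividing numerator and denominator by $U^3$ and writing $u=R/U$ then gives
\[
z:=\frac{a^3(\tau)}{c^3(\tau)}=\frac{(3R+U)^3}{27R^2(R+U)}=\frac{(3u+1)^3}{27u^2(u+1)}.
\]

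Next I would substitute this expression for $z$ into the equation $z(1-z)\,c''+\left(\tfrac23-\tfrac53 z\right)c'-\tfrac19 c=0$ of Theorem \ref{thm:c-z-hypergeometric-E_2(q^3)}, regarding $l(u)=c(\tau(u))=c(z(u))$. Using the chain-rule identities
\[
\frac{dc}{dz}=\frac{dc/du}{dz/du},\qquad
\frac{d^2c}{dz^2}=\frac{(dz/du)\,(d^2c/du^2)-(dc/du)\,(d^2z/du^2)}{(dz/du)^3},
\]
and clearing the common factors coming from $dz/du$, the hypergeometric equation becomes a second-order linear ODE in $u$, which I expect to simplify precisely to the claimed equation with regular singular points at $u=0,-1,-\tfrac19$.

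The main obstacle is the algebraic bookkeeping, and in particular the behaviour at $u=-\tfrac13$, where $z$ has a triple zero $z\sim C(u+\tfrac13)^3$. This point sits over the hypergeometric singularity $z=0$, whose exponents $0,\tfrac13$ are multiplied by the ramification index $3$ to become $0,1$ in the $u$-variable; since these differ by a positive integer with no logarithmic term, $u=-\tfrac13$ is only an apparent singularity, and in fact an ordinary point, so the factor $(u+\tfrac13)$ must survive only in the numerator of the coefficient of $l$ and not as a pole of either coefficient. I would confirm the outcome by reading off the Riemann scheme at the three genuine singular points: $u=0$ and $u=-1$ lie over $z=\infty$ (with ramification $2$ and $1$) and $u=-\tfrac19$ lies over $z=1$ (with ramification $1$), so the indicial equations should give the double exponent $\tfrac23$ at $u=0$, the double exponent $\tfrac13$ at $u=-1$, and the double exponent $0$ at $u=-\tfrac19$, matching the coefficients of the target ODE. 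Once these local exponents and the single finite apparent singularity are accounted for, the transformed equation is forced to coincide with the stated one, completing the proof.
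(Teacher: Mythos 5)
Your proposal is correct and is essentially the paper's own proof: the paper likewise recalls $a(\tau)=U+3R$ and $c^3(\tau)=27PR^2-54R^3=27R^2(R+U)$ from \cite{Matsuda4}, obtains $a^3(\tau)/c^3(\tau)=(3u+1)^3/\bigl(27u^2(u+1)\bigr)$, and concludes by setting $z=(3u+1)^3/\bigl(27u^2(u+1)\bigr)$ in Theorem \ref{thm:c-z-hypergeometric-E_2(q^3)}, which is precisely the chain-rule substitution you outline. Your Riemann-scheme discussion is a reasonable consistency check but is not part of the paper's argument and would not suffice on its own (local exponents do not determine the accessory parameter), so the direct substitution remains the actual proof in both versions.
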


\begin{proof}
From Matsuda \cite{Matsuda4}, we recall 
\begin{equation*}
P=a(\tau)=U+3R \,\,
\mathrm{and} \,\,
c^3(\tau)=27 P R^2 - 54 R^3=27 R^3+27 R^2 U,    
\end{equation*}
which imply that 
\begin{equation*}
\frac{ a^3(\tau) }{ c^3(\tau) }  =
\frac{(3 R+U)^3}{27 R^2 (R+U)}=
\frac{(3 u+1)^3}{27 u^2 (u+1)}. 
\end{equation*}
The theorem can be proved by setting 
$
\displaystyle
z=\frac{(3 u+1)^3}{27 u^2 (u+1)}
$ 
in Theorem \ref{thm:c-z-hypergeometric-E_2(q^3)}. 
\end{proof}

\section{ODEs for modular forms of level 6 (5)}
\label{sec:Fuchsian-(0,1/3)-(0,2/3)}
In this section we set 
\begin{equation*}
P(\tau)=a(\tau)=\sum_{m,n} q^{m^2+mn+n^2}, \,\,
%%%%%%%%%%%%%%%%%%%%%%%%%%%%
Q(\tau)=
\frac
{
\eta\left(\tau/2 \right) \eta^6(3\tau)
}
{
\eta^2(\tau) \eta^3 \left(3\tau/2 \right)
}, \,\,
%%%%%%%%%%%%%%%%%%%%%%%%%
R(\tau)=
\frac
{
\eta(\tau)
\eta^3\left(3\tau/2 \right) 
\eta^3(6\tau)
}
{
\eta(\tau/2) \eta \left(2\tau \right)
\eta^3(3\tau)
}, \,\,
%%%%%%%%%%%%%%%%%%%%%%%%
S(\tau)=E_2(\tau),
\end{equation*}
and 
\begin{equation*}
x=\frac{1}{y}=
\frac{R}{Q}=
\frac
{
\eta^3(\tau) \eta^6(3\tau/2) \eta^3(6\tau)
}
{
\eta^2(\tau/2) \eta(2 \tau) \eta^9(3\tau)
}.
\end{equation*}
%%%%%%%%%%%%%%%%%%%%%%%%%%%%%%
In \cite{Matsuda4}, we proved that 
\begin{align*}
D P=&\frac{-P^3+108 P Q^2+216 Q^3+P S  }{12   }=\frac{ -P^3+108 P R^2-216 R^3+PS  }{ 12 }, \\
D Q=&\frac{ 5 P^2 Q-6 P Q^2-36Q^3+QS }{ 12 },  \,\,
%%%%%%%%%%%%%%%%%%%%%%%%%%%%%%%%%%%%%%
D R=\frac{ 5 P^2 R+6 P R^2-36 R^3+ RS }{12  },  \\
DS=&\frac{ -P^4-216 P^2 Q^2-432 P Q^3+S^2 }{ 12  }=\frac{ -P^4-216P^2 R^2+432 P R^3+S^2 }{  12}, \,\,D=\frac{1}{2 \pi i} \frac{d}{d\tau},
\end{align*}
and 
\begin{equation}
\label{eqn:(P,Q,R)-relation}
P(Q-R)+2(Q^2-QR+R^2)=0,
\end{equation}
which imply that 
\begin{align*}
DQ
=&
-\frac{Q \left(4 Q^4-8 Q^3 R-48 Q^2 R^2+52 Q R^3-20 R^4-Q^2 S+2 Q R S-R^2 S\right)}{12 (Q-R)^2}, \\
%%%%%%%%%%%%%%%%%%%%%%%%%%%%%%%%%%%%%%%
DR
=&
-\frac{R \left(-20 Q^4+52 Q^3 R-48 Q^2 R^2-8 Q R^3+4 R^4-Q^2 S+2 Q R S-R^2 S\right)}{12  (Q-R)^2}, \,\,
\end{align*}
and 
\begin{align*}
DS=&
\frac{-1}{12 (Q-R)^4} \times \\
\times&
\Big(
16 Q^8-64 Q^7 R+1024 Q^6 R^2-2848 Q^5 R^3 +3760 Q^4 R^4 -2848 Q^3 R^5  +1024 Q^2 R^6-64 Q R^7  +16 R^8 \\
&\hspace{5mm}
-Q^4 S^2+4 Q^3 R S^2-6 Q^2 R^2 S^2+4 Q R^3 S^2-R^4 S^2 \Big). 
\end{align*}

\subsection{On the modular function of level 6}

\begin{theorem}
\label{thm:ODE-x-(0,1/3)-(0,2/3)}
{\it
For every $\tau\in\mathbb{H}^2$ set 
$
\displaystyle 
x=
\frac
{
\eta^3(\tau) \eta^6(3\tau/2) \eta^3(6\tau)
}
{
\eta^2(\tau/2) \eta(2 \tau) \eta^9(3\tau)
}.
$ 
Then we have 
\begin{equation*}
\left\{x, \tau \right\}+
\frac{2 \left(x^2-x+1\right)^4}{ x^2 (x-1)^2  (x+1)^2  (x-2)^2(2 x-1)^2  }
(x^{\prime})^2=0. 
\end{equation*}
}
\end{theorem}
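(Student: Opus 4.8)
The plan is to follow verbatim the strategy of Theorem \ref{thm:ODE-f-(1,2/3)} and its level-six analogues (Theorems \ref{thm:ODE-g-(2/3,1)}, \ref{thm:ODE-t-(0,1/3)}, \ref{thm:ODE-u-(0,2/3)}): pull back the Schwarzian equation (\ref{eqn:ODE-j}) for Klein's $j$-function along the covering $\tau\mapsto x(\tau)$. Using the composition law
\[
\{j,\tau\}=\{j,x\}(x')^2+\{x,\tau\}
\]
together with (\ref{eqn:ODE-j}) and $(j')^2=(dj/dx)^2(x')^2$, the assertion is equivalent to the single-variable algebraic identity
\[
\{j,x\}+\frac{N(j)}{2\,j^2(j-12^3)^2}\left(\frac{dj}{dx}\right)^2=\frac{2\left(x^2-x+1\right)^4}{x^2(x-1)^2(x+1)^2(x-2)^2(2x-1)^2},\qquad N(j)=j^2-2^4\cdot3\cdot41\,j+2^{15}\cdot3^4 .
\]
Once $j$ is known as a rational function of $x$, the left-hand side becomes a rational function of $x$ alone and the claim is a finite polynomial verification.

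First I would record the expression $j=j(x)$. As in the companion sections this formula is supplied by Matsuda \cite{Matsuda4}; it can also be reconstructed from the data of this section. Indeed, the quadratic relation (\ref{eqn:(P,Q,R)-relation}) gives, with $x=R/Q$,
\[
\frac{P}{Q}=\frac{2\left(x^2-x+1\right)}{x-1},
\]
so that $P,Q,R$ are all expressible through $x$ up to the common factor $Q$; feeding these into $j=27\,(1+8X)^3/\{X(1-X)^3\}$ with $X=c^3/a^3$ (cf. Theorem \ref{thm:ODE-x-(1,1/3)}), via the identifications of $a,b,c$ in terms of $P,Q,R$ from \cite{Matsuda4}, yields the desired $j(x)$. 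This also explains the shape of the answer: the six points $x\in\{0,1,-1,2,\tfrac12,\infty\}$ are the cusps (each contributing the coefficient $\tfrac12$, whence the double poles in the denominator), while the zeros of $x^2-x+1$ correspond to $P=a(\tau)=0$, i.e. to the order-three elliptic points, accounting for the factor $(x^2-x+1)^4$ in the numerator.

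With $j(x)$ fixed, I would compute $dj/dx$, $d^2j/dx^2$, $d^3j/dx^3$, assemble
\[
\{j,x\}=\frac{d^3j/dx^3}{dj/dx}-\frac32\left(\frac{d^2j/dx^2}{dj/dx}\right)^2,
\]
substitute into the displayed identity and simplify. This last step is the main obstacle: exactly as in the analogous calculations of Theorems \ref{thm:ODE-f-(1,2/3)} and \ref{thm:ODE-t-(0,1/3)}, the intermediate numerators are polynomials of very large degree (here the numerator of $\{j,x\}$ will have degree roughly $20$–$24$ in $x$), so the cancellations collapsing everything to the compact right-hand side are only practical with a computer algebra system.

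As an alternative route that bypasses the $j$-function, one may compute directly from the system for $Q,R,S$ recorded above. Writing $'=d/d\tau=2\pi i\,D$, one has $Dx=D(R/Q)$, and then $D^2x$, $D^3x$ are rational in $Q,R,S$; forming $D^3x/Dx-\tfrac32(D^2x/Dx)^2$ and eliminating $P$ and $S$ by means of (\ref{eqn:(P,Q,R)-relation}) should reproduce $-\dfrac{2(x^2-x+1)^4}{x^2(x-1)^2(x+1)^2(x-2)^2(2x-1)^2}(Dx)^2$. The difficulty is again the sheer bulk of the algebra, together with the care required to clear the $S$-dependence using the quadratic relation.
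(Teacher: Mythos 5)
Your proposal is correct and coincides with the paper's own proof: the paper simply recalls from \cite{Matsuda4} the expression of $j$ as a rational function of $x$ (equivalently of $Q,R$) and then invokes the same Schwarzian pullback computation as in Theorem \ref{thm:ODE-f-(1,2/3)} — the composition law applied to (\ref{eqn:ODE-j}) followed by a large but finite rational-function verification, which is exactly your main route. Your additional reconstruction of $j(x)$ from the quadratic relation (\ref{eqn:(P,Q,R)-relation}) and the level-three formula $j=27(1+8X)^3/\{X(1-X)^3\}$ with $X=c^3/a^3$ reproduces the formula the paper cites (note $4(x^2-x+1)^3-27x^2(x-1)^2=\left[(2x-1)(x-2)(x+1)\right]^2$, which yields the sixth-power factors in its denominator), so it only makes the argument more self-contained.
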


\begin{proof}
From \cite{Matsuda4} we recall 
\begin{align*}
j(\tau)
=&
\frac{256 \left(Q^2-Q R+R^2\right)^3 \left(Q^6-3 Q^5 R+60 Q^4 R^2-115 Q^3 R^3+60 Q^2 R^4-3 Q R^5+R^6\right)^3}
{Q^2 R^2 (Q-2 R)^6 (Q-R)^2 (R-2 Q)^6 (Q+R)^6}  \\
%%%%%%%%%%%%%%%%%%%%%%%%%%%%%%%%%%%%%
=&
\frac{256 \left(x^2-x+1\right)^3 \left(x^6-3 x^5+60 x^4-115 x^3+60 x^2-3 x+1\right)^3}{(1-2 x)^6 (1-x)^2 (x-2)^6 x^2 (x+1)^6}. 
\end{align*}
The theorem can be proved in the same way as Theorem \ref{thm:ODE-f-(1,2/3)}. 
\end{proof}

\subsection{On $Q(\tau)$}

\begin{theorem}
\label{thm:Q-x-linear-(0,1/3)-(0,2/3)}
{\it
For every $\tau\in\mathbb{H}^2$ set 
$
\displaystyle 
x=
\frac
{
\eta^3(\tau) \eta^6(3\tau/2) \eta^3(6\tau)
}
{
\eta^2(\tau/2) \eta(2 \tau) \eta^9(3\tau)
}.
$ 
Then the inverse function $\tau=\tau(x)$ yields $Q(x)=Q(\tau(x))$ and 
$Q(x)$ satisfies the following differential equation:
\begin{equation*}
\frac{d^2 Q}{ d x^2}
+
\left\{
\frac{1}{x}
-
\frac{1}{x-1}
+
\frac{1}{x+1}
+
\frac{1}{x-2}
+
\frac{1}{x-\frac12}
\right\}
\frac{d Q}{d x}
+
\frac{2  \left(x^2-x+1\right)}{(x-1)^2 (x+1) (2 x-1)} Q
=0.
\end{equation*}
}
\end{theorem}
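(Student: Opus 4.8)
The plan is to follow the same computational scheme used for the preceding Heun and Fuchsian theorems in this paper, now applied to the chain $Q \to x = R/Q \to \tau$. First I would record, from the reduced system already displayed in this section, the $\tau$-derivative $Dx = D(R/Q) = (Q\,DR - R\,DQ)/Q^2$. A direct computation shows that the term $S(Q-R)^2$ common to the reduced $DR$ and $DQ$ cancels, and that the remaining quartic difference factors completely, giving
\[
Dx = \frac{R(Q+R)(2Q-R)(Q-2R)}{Q(Q-R)}.
\]
Since $x=R/Q$, the four numerator factors vanish precisely at $x=0,-1,2,\tfrac12$ and the denominator vanishes at $x=1$; this already pins down the five finite singular points $0,1,-1,2,\tfrac12$ of the claimed equation and is consistent with the denominator of the Schwarzian in Theorem \ref{thm:ODE-x-(0,1/3)-(0,2/3)}.

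Next I would compute $dQ/dx = DQ/Dx$ and then $d^2Q/dx^2 = D(dQ/dx)/Dx$, each as a rational function of $Q,R,S$, using the reduced expressions for $DQ,DR,DS$ to carry out the second $\tau$-differentiation. The decisive simplification is that the relation (\ref{eqn:(P,Q,R)-relation}) lets me write $P = -2(Q^2-QR+R^2)/(Q-R)$, i.e. $P = \alpha(x)\,Q$ with $\alpha(x) = -2(x^2-x+1)/(1-x)$ after dividing by $Q$. Substituting this makes every quantity homogeneous and reduces $dQ/dx$ to the form $f(x)\,Q + g(x)\,S/Q$ for explicit rational $f,g$. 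When one differentiates this and divides by $Dx$ (which is $S$-free), the $S^2$ contributions coming from $DS$ cancel against those from $S\,DQ$, so $d^2Q/dx^2$ again has the shape $\tilde f(x)\,Q + \tilde g(x)\,S/Q$. Thus all three of $Q,\ dQ/dx,\ d^2Q/dx^2$ lie in the two-dimensional $\mathbb{C}(x)$-span of $\{Q,\ S/Q\}$.

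To finish I would substitute these expressions into the claimed equation. Because $S=E_2$ is quasimodular and $S/Q^2$ is not a modular function, $Q$ and $S/Q$ are linearly independent over $\mathbb{C}(x)$, so the single claimed identity splits into two rational-function identities in the one variable $x$: the vanishing of the coefficient of $Q$ and the vanishing of the coefficient of $S/Q$. Concretely, multiplying through by $x(x-1)^2(x+1)(x-2)(2x-1)$ clears all denominators, turning the $Q$-term coefficient into $2x(x-2)(x^2-x+1)$ and the $dQ/dx$-coefficient into a degree-five polynomial, so each identity becomes a polynomial identity in $x$ after inserting $\alpha(x)$.

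I expect the main obstacle to be purely the size of the algebra: the second derivative $d^2Q/dx^2$ produces lengthy rational expressions (of the scale already visible in the Schwarzian computations of this paper), and correctly carrying out the factorizations together with the $S^2$-cancellation is where errors would creep in. The verification is otherwise entirely mechanical and is most safely done with computer algebra; the conceptual content is limited to the factorization of $Dx$ and the $\{Q,S/Q\}$ splitting, both isolated above.
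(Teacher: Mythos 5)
Your proposal is correct and follows essentially the same route as the paper's proof: compute $Dx=D(R/Q)=\frac{R(Q-2R)(2Q-R)(Q+R)}{Q(Q-R)}$ from the reduced system with $P$ eliminated via (\ref{eqn:(P,Q,R)-relation}), then $dQ/dx=DQ/Dx$ and $d^2Q/dx^2=D(dQ/dx)/Dx$, and verify the claimed equation by rational-function algebra (the paper indeed finds that $\frac{d^2Q}{dx^2}+\{\cdots\}\frac{dQ}{dx}$ collapses to the $S$-free expression $\frac{2Q^3(Q^2-QR+R^2)}{(Q-2R)(Q-R)^2(Q+R)}=-\frac{2Q(x^2-x+1)}{(x-1)^2(x+1)(2x-1)}$, which is exactly your statement that the coefficient of $S/Q$ vanishes). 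Your early homogenization $R=xQ$ and the splitting into the $\{Q,\,S/Q\}$-span are just a bookkeeping repackaging of the same computation, so no substantive difference or gap.
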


\begin{proof}
We first have 
\begin{equation*}
Dx=
D
\left(
\frac{R}{Q}
\right)
=
\frac{R (Q-2 R) (2 Q-R) (Q+R)}{Q (Q-R)},
\end{equation*}
%%%%%%%%%%%%%%%%%%%%%%%%%%
\begin{equation*}
\frac{d Q}{d x}
=
\frac{d Q}{d \tau}
\frac{d \tau}{d x} 
=
\frac{ D Q}{D x} 
=
-\frac{Q^2 \left(4 Q^4-8 Q^3 R-48 Q^2 R^2-Q^2 S+52 Q R^3+2 Q R S-20 R^4-R^2 S\right)}{12 R (Q-2 R) (Q-R) (2 Q-R) (Q+R)}
\end{equation*}
and
\begin{align*}
\frac{d^2 Q}{d x^2}
=&
\frac{d}{d \tau}
\left(
\frac{d Q}{d x}
\right)
\frac{d \tau}{d x}   \\
=&
\frac{Q^3    }{6 R^2 (Q-2 R)^2 (Q-R)^2 (2 Q-R)^2 (Q+R)^2} \times \\
&\times 
\Big(4 Q^8-20 Q^7 R+12 Q^6 R^2+104 Q^5 R^3+8 Q^4 R^4 -384 Q^3 R^5 +424 Q^2 R^6-188 Q R^7 +36 R^8\\
&
\hspace{10mm} 
-Q^6 S+5 Q^5 R S-4 Q^4 R^2 S-10 Q^3 R^3 S+20 Q^2 R^4 S-13 Q R^5 S+3 R^6 S\Big).
\end{align*}
Therefore it follows that 
\begin{align*}
&
\frac{d^2 Q}{d x^2}+
\left\{
\frac{1}{x}
-
\frac{1}{x-1}
+
\frac{1}{x+1}
+
\frac{1}{x-2}
+
\frac{1}{x-\frac12}
\right\}
\frac{d Q}{d x} \\
=&
\frac{2 Q^3 \left(Q^2-Q R+R^2\right)}{(Q-2 R) (Q-R)^2 (Q+R)} 
=
-
\frac{2 Q \left(x^2-x+1\right)}{(x-1)^2 (x+1) (2 x-1)},
\end{align*}
which proves the theorem. 
\end{proof}

\subsection{On  $R(\tau)$}

\begin{theorem}
\label{thm:R-y-linear-(0,1/3)-(0,2/3)}
{\it
For every $\tau\in\mathbb{H}^2$ set 
$
\displaystyle 
y=
\frac
{
\eta^2(\tau/2) \eta(2 \tau) \eta^9(3\tau)
}
{
\eta^3(\tau) \eta^6(3\tau/2) \eta^3(6\tau)
}. 
$ 
Then the inverse function $\tau=\tau(y)$ yields $R(y)=R(\tau(y))$ and 
$R(y)$ satisfies the following differential equation:
\begin{equation*}
\frac{d^2R}{ d y^2}
+
\left\{
\frac{1}{y}
-
\frac{1}{y-1}
+
\frac{1}{y+1}
+
\frac{1}{y-2}
+
\frac{1}{y-\frac12}
\right\}
\frac{d R}{d y}
+
\frac{2  \left(y^2-y+1\right)}{(y-1)^2 (y+1) (2 y-1)} R
=0.
\end{equation*}
}
\end{theorem}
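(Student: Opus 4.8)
The plan is to deduce the statement from the already-proved Theorem~\ref{thm:Q-x-linear-(0,1/3)-(0,2/3)} by exploiting the symmetry of the underlying differential system under the interchange $Q\leftrightarrow R$. Since the two modular functions of this section are $x=R/Q$ and $y=Q/R$, this interchange sends $x$ to $y$, and it carries the pair ``the modular form $Q$, read as a function of $x$'' to the pair ``the modular form $R$, read as a function of $y$''. Because the derivation in Theorem~\ref{thm:Q-x-linear-(0,1/3)-(0,2/3)} is a purely symbolic manipulation of the system, the entire argument should transfer after relabelling, producing the asserted Fuchsian equation for $R(y)$ with coefficients of exactly the same shape as those for $Q(x)$.

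First I would isolate the reduced system in $Q,R,S$ alone, that is, the forms of $DQ$, $DR$, $DS$ obtained after eliminating $P$ through the relation (\ref{eqn:(P,Q,R)-relation}), which gives $P=-2(Q^{2}-QR+R^{2})/(Q-R)$. Writing these as $DQ=F(Q,R,S)$, $DR=G(Q,R,S)$, $DS=H(Q,R,S)$, the key point is the pair of algebraic identities $G(Q,R,S)=F(R,Q,S)$ and $H(Q,R,S)=H(R,Q,S)$. Both are immediate from the displayed formulas: $(Q-R)^{2}$ is even under the swap, the numerators of $F$ and $G$ are interchanged by it (their $S$-parts both equal $-(Q-R)^{2}S$ and are symmetric), and the octic numerator of $DS$ is palindromic in $Q,R$. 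Hence $(R,Q,S)$ solves the same system whenever $(Q,R,S)$ does, and in particular $Dy=D(Q/R)$ is precisely the image under $Q\leftrightarrow R$ of $Dx=D(R/Q)=\frac{R(Q-2R)(2Q-R)(Q+R)}{Q(Q-R)}$.

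With this symmetry established, the conclusion follows by applying the substitution $Q\leftrightarrow R$ (hence $x\mapsto y$) to the algebraic identity that the proof of Theorem~\ref{thm:Q-x-linear-(0,1/3)-(0,2/3)} produces for $dQ/dx$ and $d^{2}Q/dx^{2}$: it turns that identity into the corresponding one for $dR/dy$ and $d^{2}R/dy^{2}$, and the first-order and zeroth-order coefficients are merely relabelled from $x$ to $y$, matching the statement. As a fail-safe one can instead repeat the direct computation of Theorem~\ref{thm:Q-x-linear-(0,1/3)-(0,2/3)} verbatim, forming $dR/dy=DR/Dy$, differentiating once more, and checking the Fuchsian combination. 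I expect the only real difficulty to be bookkeeping, namely verifying that the reduced $DQ,DR$ are genuinely swap-symmetric. The apparent asymmetry in the original $P$-containing system (the $+6PR^{2}$ in $DR$ against the $-6PQ^{2}$ in $DQ$) is a red herring, since $P=-2(Q^{2}-QR+R^{2})/(Q-R)$ is itself odd under the interchange and disappears upon elimination; once this is confirmed, no further modular input is required.
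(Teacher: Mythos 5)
Your proposal is correct, but it takes a genuinely different route from the paper. The paper proves Theorem \ref{thm:R-y-linear-(0,1/3)-(0,2/3)} by exactly the ``fail-safe'' you mention: it recomputes everything from scratch, forming $Dy=D(Q/R)=-\frac{Q(Q-2R)(2Q-R)(Q+R)}{R(Q-R)}$, then $dR/dy=DR/Dy$ and $d^2R/dy^2$ as rational functions of $(Q,R,S)$, and verifying the Fuchsian combination directly. Your main argument instead derives the result from Theorem \ref{thm:Q-x-linear-(0,1/3)-(0,2/3)} via the involution $(Q,R,S)\mapsto(R,Q,S)$, and the facts you need do check out: in the reduced system the numerators of $DQ$ and $DR$ are interchanged by the swap (the $S$-parts of both equal $-(Q-R)^2S$), the numerator of $DS$ is the palindromic octic plus $-(Q-R)^4S^2$, and $P=-2(Q^2-QR+R^2)/(Q-R)$ from (\ref{eqn:(P,Q,R)-relation}) is odd, so the swapped triple solves the same system, $x=R/Q$ maps to $y=Q/R$, and the swap of $Dx$ is precisely the paper's $Dy$. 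Since the proof of Theorem \ref{thm:Q-x-linear-(0,1/3)-(0,2/3)} is a formal consequence of the reduced system alone (every quantity appearing there is a rational function of $(Q,R,S)$ differentiated via the system), applying it to the swapped solution yields exactly the asserted equation for $R(y)$, with the coefficients relabelled from $x$ to $y$. What your route buys is the elimination of the second long computation and a conceptual explanation of why the two ODEs have identical shape; what the paper's route buys is a self-contained verification that never needs the meta-observation that the earlier proof uses nothing beyond the system (and in particular sidesteps any worry about which algebraic relations among $Q,R,S$ are being used). The one point you should make explicit rather than leave implicit is precisely that meta-observation, together with the standing nonvanishing assumptions on $Q$, $R$, $Q\pm R$, $Q-2R$, $2Q-R$ that both proofs share.
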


\begin{proof}
We first have 
\begin{equation*}
Dy=
D
\left(
\frac{Q}{R}
\right)
=
-\frac{Q (Q-2 R) (2 Q-R) (Q+R)}{R (Q-R)}. 
\end{equation*}
We next obtain 
\begin{equation*}
\frac{d R}{d y}
=
\frac{d R}{d \tau}
\frac{d \tau}{ d y}
=
\frac{ D R}{D y} 
=
-\frac{R^2 \left(20 Q^4-52 Q^3 R+48 Q^2 R^2+8 Q R^3-4 R^4+Q^2 S-2 Q R S+R^2 S\right)}{12 Q (Q-2 R) (Q-R) (2 Q-R) (Q+R)}, 
\end{equation*}
and 
\begin{align*}
\frac{d^2 R}{dy^2}
=&
\frac{d}{d \tau}
\left(
\frac{d R}{d y}
\right)
\frac{d \tau}{d y}   \\
=&
\frac{R^3}{6 Q^2 (Q-2 R)^2 (Q-R)^2 (2 Q-R)^2 (Q+R)^2}  \times \\
\times&
\Big(
36 Q^8-188 Q^7 R+424 Q^6 R^2-384 Q^5 R^3+8 Q^4 R^4+104 Q^3 R^5+12 Q^2 R^6-20 Q R^7+4 R^8  \\
&\hspace{5mm} 
+3 Q^6 S-13 Q^5 R S+20 Q^4 R^2 S-10 Q^3 R^3 S-4 Q^2 R^4 S+5 Q R^5 S-R^6 S\Big). 
\end{align*}
Therefore it follows that 
\begin{align*}
&
\frac{d^2 R}{d y^2}+
\left\{
\frac{1}{y}
-
\frac{1}{y-1}
+
\frac{1}{y+1}
+
\frac{1}{y-2}
+
\frac{1}{y-\frac12}
\right\}
\frac{d R}{d y}   \\
=&
-\frac{2 R^3 \left(Q^2-Q R+R^2\right)}{(Q-R)^2 (2 Q-R) (Q+R)}
=
-\frac{2 R \left(y^2-y+1\right)}{(y-1)^2 (y+1) (2 y-1)},
\end{align*}
which proves the theorem. 
\end{proof}

%\begin{acknowledgements}
%If you'd like to thank anyone, place your comments here
%and remove the percent signs.
%\end{acknowledgements}

% BibTeX users please use one of
%\bibliographystyle{spbasic}      % basic style, author-year citations
%\bibliographystyle{spmpsci}      % mathematics and physical sciences
%\bibliographystyle{spphys}       % APS-like style for physics
%\bibliography{}   % name your BibTeX data base

\begin{thebibliography}{}
%
% and use \bibitem to create references. Consult the Instructions
% for authors for reference list style.
%

\bibitem{Apostol}
T. Apostol, 
{\it
Modular functions and Dirichlet series in number theory. 
Second edition. 
} 
Graduate Texts in Mathematics, {\bf 41}. Springer-Verlag, New York, 1990. 




\bibitem{Berndt}
B. Berndt,
{\it
Ramanujan's notebooks. Part III.
}
Springer-Verlag, New York, (1991).


\bibitem{Berndt-Bhargava-Garvan}
B. Berndt, S. Bhargava and F. Garvan, 
{\it
Ramanujan's theories of elliptic functions to alternative bases, 
}
Trans. Amer. Math. Soc. {\bf 347}  (1995), 4163-4244. 



\bibitem{Berndt-1}
B. Berndt, 
{\it Number theory in the spirit of Ramanujan.}  
Student Mathematical Library, 34. American Mathematical Society, Providence, RI, 2006. xx+187 pp.





\bibitem{Beukers}
F. Beukers, 
{\it 
Irrationality of $\pi^2$, periods of an elliptic curve and $\Gamma_1(5)$. 
} 
Diophantine approximations and transcendental numbers (Luminy,1982), 
Progr. Math., {\bf 31}, (Birkhauser, Boston, Mass., 1983) 47-66. 



\bibitem{Borweins}
J. M. Borwein and P. B. Borwein, 
{\it 
A cubic counterpart of Jacobi's identity and the AGM. 
} 
Trans. Amer. Math. Soc. {\bf  323} (1991), 691-701. 




\bibitem{Chazy}
J. Chazy,
{\it
Sur les \'equations diff\'erentielles du troisi\'eme ordre et d'ordre sup\'erieur dont l'int\'egrale g\'en\'erale a ses points critiques fixes.
}
Acta Math. {\bf 34}  (1911), 317-385.


\bibitem{Cooper}
S. Cooper, Shaun,
{\it
Ramanujan's theta functions,
}
Springer, Cham, (2017).




\bibitem{Darboux}
G. Darboux,
{\it Sur la th\'eorie des coordonn\'ees curvilignes et les syst\'emes orthogonaux,}
Ann. Ecole Normale Sup\'erieure {\bf 7}  (1878) 101-150



\bibitem{Dedekind}
R. Dedekind, 
{\it
Schreiben an Herrn Borchardt uber die elliptischen Modul-Functionen, 
}
J. Reine Angew. Math. {\bf 83}, (1877) 265-292.




\bibitem{Farkas-Kra}
H. M. Farkas and I. Kra,
{\it Theta constants, Riemann surfaces and the modular group,}
AMS Grad. Studies in Math. {\bf 37} (2001).



%\bibitem{Harnad}
%J. Harnad, and J. McKay,
%{\it Modular solutions to equations of generalized Halphen type,}
%R. Soc. Lond. Proc. Ser. A Math. Phys. Eng. Sci. {\bf 456} (2000), 261-294.






\bibitem{Halphen}
G. Halphen,
{\it Trait\'e des fonctions elliptiques et de leurs applications. Partie 1,}
Gauthier-V-llars, Paris, (1886).



\bibitem{Matsuda1}
K. Matsuda,
{\it
Differential equations involving cubic theta functions and Eisenstein series,
}
arXiv:1609.07481,
to appear in Osaka J. Math.




\bibitem{Matsuda2}
K. Matsuda, 
{\it
Differential equations satisfied by modular forms of level 5, 
}
Int. J. Number Theory {\bf 15}  (2019), 213-250. 



\bibitem{Matsuda3}
K. Matsuda,
{\it
Note on Halphen's system, 
}
preprint.





\bibitem{Matsuda4}
K. Matsuda,
{\it
Cubic theta functions and modular forms of level six,
}
preprint.








\bibitem{Ohyama0}
Y. Ohyama,
{\it
Differential relations of theta functions.
}
Osaka J. Math. {\bf 32}  (1995), 431-450.



\bibitem{Ohyama-nonlinear}
Y. Ohyama, 
{\it
Systems of nonlinear differential equations related to second order linear equations.
}  
Osaka J. Math. {\bf  33} (1996), 927-949. 





\bibitem{Ramanujan}
S. Ramanujan,
{\it On certain arithmetical functions,}
Trans. Cambridge Philos. Soc. {\bf 22}, no. 9, (1916) 159-184.






\bibitem{WW}
E. T. Whittaker and G. N. Watson,
{\it A course of modern analysis. An introduction to the general theory of infinite processes and of analytic functions: with an account of the principal transcendental functions,}
Fourth edition. Reprinted Cambridge University Press, New York, (1962)


\end{thebibliography}

% Non-BibTeX users please use

\end{document}